\theoremstyle{definition}
\newtheorem*{conj*}{Conjecture}
\newtheorem*{ack}{Acknowledgements}
\newtheorem*{thm*}{Theorem}
\newtheorem*{thm1}{Theorem~\ref{compat1}}
\newtheorem*{thm3}{Theorem~\ref{GCDtheorem}}
\newtheorem*{thm2}{Theorem~\ref{banal-part-factors}}
\newtheorem*{thm4}{Theorem~\ref{cuspidal}}
\newtheorem{prop}{Proposition}[section]
\newtheorem{LM}[prop]{Lemma}
\newtheorem{thm}[prop]{Theorem}
\newtheorem{df}[prop]{Definition}
\newtheorem{cor}[prop]{Corollary}
\newtheoremstyle{pourlesremarques}{\topsep}{\topsep}{\normalfont}{}{\bfseries}{.}{ }{}
\theoremstyle{pourlesremarques}
\newtheorem{rem}[prop]{Remark}
\newtheorem*{rem*}{Remark}
\newtheoremstyle{pourlesexemples}{\topsep}{\topsep}{\normalfont}{}{\bfseries}{.}{ }{}
\theoremstyle{pourlesexemples}
\def\presuper#1#2%
\renewcommand{\o}{\mathfrak{o}}
\newcommand{\p}{\mathfrak{p}}
\newcommand{\w}{\varpi}
\renewcommand{\d}{\delta}
\renewcommand{\l}{\lambda}
\newcommand{\C}{\mathbb{C}}
\newcommand{\Q}{\mathbb{Q}}
\newcommand{\N}{\mathbb{N}}
\newcommand{\Z}{\mathbb{Z}}
\newcommand{\1}{\mathbf{1}}
\newcommand{\sm}{\mathcal{C}^\infty}
\def\GL{\operatorname{GL}}
\def\Hom{\operatorname{Hom}}
\def\Id{\operatorname{Id}}
\def\bJ{\mathbf{J}}
\def\St{\operatorname{St}}
\def\Ql{\overline{\mathbb{Q}_{\ell}}}
\def\Zl{\overline{\mathbb{Z}_{\ell}}}
\def\Fl{\overline{\mathbb{F}_{\ell}}}
\def\nutwo{\nu}
\def\Gal{\operatorname{Gal}}
\def\Mat{\operatorname{Mat}}
\def\ind{\operatorname{ind}}
\def\L{\mathcal{L}}
\def\diag{\operatorname{diag}}
\def\GCD{\mathop{\mathbf{GCD}}}
\def\Ind{\operatorname{Ind}}
\newcommand{\D}{\Delta}
\def\presuper#1#2%
\title {\textbf{Rankin--Selberg local factors modulo~$\ell$}}
\author{Robert Kurinczuk\footnote{Robert Kurinczuk, Heilbronn Institute for Mathematical Research, Department of Mathematics, University of Bristol, BS8 1TW, United Kingdom. Email: robkurinczuk@gmail.com}, Nadir Matringe\footnote{Nadir Matringe, Universit\'e de Poitiers, Laboratoire de Math\'ematiques et Applications,
T\'el\'eport 2 - BP 30179, Boulevard Marie et Pierre Curie, 86962, Futuroscope Chasseneuil Cedex. Email: Nadir.Matringe@math.univ-poitiers.fr}}
\begin{document}
\maketitle
\begin{abstract}
\noindent 
After extending the theory of Rankin--Selberg local factors to pairs of~$\ell$-modular representations of Whittaker type, of general linear groups over a non-archimedean local field, we study the reduction modulo~$\ell$ of~$\ell$-adic local factors and their relation to these~$\ell$-modular local factors.  While the~$\ell$-modular local~$\gamma$-factor we associate to such a pair turns out to always coincide with the reduction modulo~$\ell$ of the~$\ell$-adic~$\gamma$-factor of any Whittaker lifts of this pair, the local~$L$-factor exhibits a more interesting behaviour; always dividing the reduction modulo-$\ell$ of the~$\ell$-adic~$L$-factor of any Whittaker lifts, but with the possibility of a strict division occurring. In our main results, we completely describe~$\ell$-modular~$L$-factors in the generic case.  We obtain two simple to state nice formulae:  Let~$\pi,\pi'$ be generic~$\ell$-modular representations; then, writing~$\pi_b,\pi'_b$ for their banal parts, we have 
\[L(X,\pi,\pi')=L(X,\pi_b,\pi_b').\]  
Using this formula, we obtain the inductivity relations for local factors of generic representations.  Secondly, we show that
\[L(X,\pi,\pi')=\GCD(r_{\ell}(L(X,\tau,\tau'))),\]
where the divisor is over all integral generic~$\ell$-adic representations~$\tau$ and~$\tau'$ which contain~$\pi$ and~$\pi'$, respectively, as subquotients after reduction modulo~$\ell$.
\end{abstract}
\section{Introduction}
Let~$F$ be a (locally compact) non-archimedean local field of residual characteristic~$p$ and residual cardinality~$q$, and let~$R$ be an algebraically closed field of characteristic~$\ell$ prime to~$p$ or zero.  In this article, following Jacquet--Piatetski-Shapiro--Shalika in \cite{JPS2} for complex representations, we associate local Rankin--Selberg integrals to pairs of~$R$-representations of Whittaker type~$\rho$ and~$\rho'$ of~$\GL_n(F)$ and~$\GL_m(F)$, and show that they define~$L$-factors~$L(X,\rho,\rho')$ and satisfy a functional equation defining local~$\gamma$-factors.  The purpose of this article lies both in the future study of~$R$-representations by these invariants and in the relationship between~$\ell$-modular local factors and the reductions modulo~$\ell$ of~$\ell$-adic local factors (we quote our main theorems towards this goal at the end of this introduction).

The theory of~$\ell$-modular representations of~$\GL_n(F)$ was developed by Vign\'eras in \cite{V}, culminating in her~$\ell$-modular local Langlands correspondence for~$\GL_n(F)$ in \cite{Viginv}, which is characterised initially on supercuspidal~$\ell$-modular representations by compatibility with the~$\ell$-adic local Langlands correspondence. The possibility of characterising such a correspondence with natural invariants forms part of the motivation for this work.  Indeed, already for~$\GL_2(F)$ this is an interesting question, an answer given in this special case by Vign\'eras in \cite{V1}. 

We follow \cite{JPS2} to define local factors for~$R$-representations of Whittaker type, after understanding the splitting of~$R$-Haar measures required for this approach in Section \ref{haarmeasures}.  In Section \ref{SectionRSdefs}, we show that an~$L$-factor attached to~$\ell$-adic representations of Whittaker type is equal to the inverse of a polynomial with coefficients in~$\Zl$, allowing for a natural reduction modulo~$\ell$ map on the set of~$\ell$-adic~$L$-factors. Furthermore, for~$\ell$-modular representations~$\pi$ and~$\pi'$ of Whittaker type of~$\GL_n(F)$ and~$\GL_m(F)$, there exist~$\ell$-adic representations~$\tau$ and~$\tau'$ of Whittaker type of~$\GL_n(F)$ and~$\GL_m(F)$ which stabilise natural~$\Zl$-lattices~$\Lambda$ and~$\Lambda'$ in their respective Whittaker models such that the~$\ell$-modular representations induced on~$\Lambda\otimes_{\Zl}\Fl$ and~$\Lambda'\otimes_{\Zl}\Fl$ are isomorphic to the Whittaker models of~$\pi$ and~$\pi'$. Our first main result is a comparison between the~$L$-factors and local~$\gamma$-factors defined by these two reduction modulo~$\ell$ maps.
%
%
%
%
\begin{thm1}
\begin{enumerate}
\item The~$\ell$-modular~$L$-factor~$L(X,\pi,\pi')$ divides the reduction modulo~$\ell$ of the~$\ell$-adic~$L$-factor~$L(X,\tau,\tau')$.  Moreover, the division of~$L$-factors may not be an equality.  
\item Let~$\theta$ be an~$\ell$-adic character of~$F$. The local~$\gamma$-factor associated to~$\pi,\pi'$ and to the reduction modulo~$\ell$ of~$\theta$ is equal to the reduction modulo~$\ell$ of the local~$\gamma$-factor associated to~$\tau,\tau'$ and~$\theta$.
\end{enumerate}
\end{thm1}

Due to their nice behaviour with reduction modulo~$\ell$, we obtain the inductivity relation of~$\ell$-modular~$\gamma$-factors of representations of Whittaker type (Theorem~\ref{inductivity-gamma}) by choosing appropriate lifts and reducing the~$\ell$-adic inductivity relation of~$\gamma$-factors, and from this we obtain a ``division inductivity relation'' for~$\ell$-modular~$L$-factors (Lemma~\ref{Lintermediate}).  The remainder of the article is concerned with~$\ell$-modular~$L$-factors and their relation to~$\ell$-adic~$L$-factors.  Our first main result in this direction is the complete answer for cuspidal~$\ell$-modular representations.

\begin{thm4}
Let~$\pi_1$ and~$\pi_2$ be two cuspidal~$\ell$-modular representations of~$G_{n_1}$ and~$G_{n_2}$. Then 
$L(X,\pi_1,\pi_2)$ is equal to~$1$, except in the following case:~$\pi_1$ is banal (hence supercuspidal), and~$\pi_2\simeq \chi\pi_1^\vee$ for some 
unramified character~$\chi$ of~$F^\times~$ (in particular~$n_1=n_2$). In this case, let~$e$ be the common ramification index of~$\pi_1$ and~$\pi_2$, we have 
\[L(X,\pi_1,\pi_2)=\frac{1}{1-(\chi(\w_F)X)^{n/e}}\]
and this factor is the reduction modulo~$\ell$ of the~$L$-factor of any cuspidal lifts of~$\pi_1$ and~$\pi_2$.
\end{thm4}

The proof of this theorem is diverse and uses the main result of \cite{KM15} on test vectors for~$\ell$-adic representations for the banal case, the division inductivity relation of~$L$-factors for the cuspidal non-supercuspidal case, and a separate examination of the non-banal supercuspidal case by studying their lifts.

By restricting to pairs of banal generic representations we obtain the inductivity relation of~$L$-factors in this setting, and an explicit formula analogous to the~$\ell$-adic case (Theorem \ref{inductivity-banal-generic}).  Banal~$\ell$-modular representations were introduced and studied for inner forms of~$\GL_n$ by~M\'inguez and S\'echerre in \cite{MSComp}.  In our Preliminaries (Section \ref{repwhit}), we show that a generic representation~$\pi$ of~$\GL_n(F)$ can be written as a product of its nicely behaved banal part~$\pi_b$ and its totally non-banal part~$\pi_{tnb}$.  Our third main result shows that for generic representations the~$L$-factor depends only on the banal parts of the representations. 

\begin{thm2}
Let~$\pi=\pi_{b}\times \pi_{tnb}$ be a generic~$\ell$-modular representations of~$\GL_n(F)$, and~$\pi'=\pi_{b}'\times \pi_{tnb}'$ be a generic~$\ell$-modular representation of~$\GL_{m}(F)$, then we have
\[ L(X,\pi,\pi')= L(X,\pi_b,\pi_b').\]
\end{thm2}
From this theorem and the banal inductivity relation, we obtain the inductivity relation of~$L$-factors for generic~$\ell$-modular representations (Corollary \ref{generic-inductivity}).

Our fourth main result gives an equality between the local~$L$-factor of a pair of generic~$\ell$-modular representations we have defined via~$\ell$-modular Rankin--Selberg integrals, and the greatest common divisor of the reductions modulo~$\ell$ of certain~$\ell$-adic~$L$-factors.
\begin{thm3}
Let~$\pi$ and~$\pi'$ be two generic~$\ell$-modular representations of~$\GL_n(F)$ and~$\GL_m(F)$, then
\[L(X,\pi,\pi')=\GCD(L(X,\tau,\tau')).\]
where the divisor is over all integral generic~$\ell$-adic representations~$\tau$ of~$\GL_n(F)$ and~$\tau'$ of~$\GL_m(F)$ which contain~$\pi$ and~$\pi'$, respectively, as subquotients after reduction modulo~$\ell$.
\end{thm3}

At the end of the article we remark how the work of Vign\'eras together with Theorem \ref{GCDtheorem} suggests a possible generalisation of~$L$-factors to pairs of irreducible~$\ell$-modular representations.  

%
This work further develops the theory of~$\ell$-modular local~$L$-factors of M\'inguez in \cite{M}. In particular, we use his results on Tate~$L$-factors modulo~$\ell$. Recently, using our results from Section \ref{haarmeasures} on the splitting of Haar measures, Moss in \cite{Moss3} has studied Rankin--Selberg~$\gamma$-factors in families over more general rings.  
%

\begin{ack}
We thank Alberto M\'inguez and Vincent S\'echerre for useful explanations.  This work was started at the University of Bristol and finished at the Universit\'e de Poitiers, and the authors would like to thank them for their hospitality. Both authors benefited from support from the grant ANR-13-BS01-0012 FERPLAY.
\end{ack}

\section{Preliminaries}
Before embarking on the study of local~$L$-factors in positive characteristic, we introduce background on representations of the general linear group and extend the general theory.  In particular,  starting with results given in the standard reference \cite{V}, we show how integration behaves with respect to group decompositions.  Indeed, this deserves checking as not all formulae follow from mimicking the proofs in the characteristic zero setting, due to the presence of compact open subgroups of measure zero.   Additionally, in Section \ref{repwhit}, we review the theory of~$\ell$-adic and~$\ell$-modular representations of Whittaker type and reduction modulo~$\ell$, drawing on results originally in \cite{V3}, while our exposition will be influenced by the recent generalisation to inner forms of general linear groups in \cite{MS}.  In Theorem \ref{integral-whittaker}, we prove a technical result on lattices in representations of Whittaker type, in preparation for our results in the next section on the reduction modulo~$\ell$ of~$\ell$-adic local factors. In Section \ref{genericsection}, we specialise to generic representations, and, in particular, notice that a generic~$\ell$-modular representation has a banal part and a totally non-banal part, and prove the commutation of derivatives with reduction modulo~$\ell$.  Finally, in Section \ref{liftingsection}, we show that a non-banal supercuspidal~$\ell$-modular representation has two lifts which are in different inertial classes.

\subsection{Notations}
Let~$F$ be a (locally compact) non-archimedean local field of residual characteristic~$p$ with normalised absolute value~$|~|$.  Let~$\o$ denote the ring of integers in~$F$,~$\p=\w\o$ the unique maximal ideal of~$\o$, and~$q$ the cardinality of~$k=\o/\p$. 

Let~$R$ be a commutative ring with identity of characteristic~$\ell$ not equal to~$p$.  If~$R$ contains a square root of~$q$, we fix such a choice~$q^{1/2}$.

Let
\[\mathcal{M}_{n,m}=\Mat(n,m,F),\quad\mathcal{M}_n=\Mat(n,n,F),\quad G_n=\GL_n(F),\]
and~$\eta$ be the row vector~$(0,\ldots,0,1) \in\mathcal{M}_{1,n}$.  We write~$\nu_R$ for the character~$|~|\circ \det:G_n\rightarrow R^\times$, although we will often more simply write~$\nu$ for~$\nu_R$ when the coefficient ring is clear. For~$k\in\mathbb{Z}$, let
\[G_n^k=\{g\in G_n: \nu_{\mathbb{C}}(g)=q^{-k}\},\] 
and more generally~$X^k=X\cap G_n^k$, for any~$X\subset G_n$.  Let~$B_n$ be the Borel subgroup of upper triangular matrices,~$A_n$ the diagonal torus,~$N_n$ the unipotent radical of~$B_n$. 

We fix a nontrivial smooth character~$\theta$ from~$(F,+)$ to~$R^\times$ and, by abuse of notation, we will denote by~$\theta$ the character~$x\mapsto \theta(\sum_{i=1}^{n-1} n_{i,i+1})$ of~$N_n$.

If~$\lambda$ is an ordered partition of~$n$, we denote by~$P_{\lambda}$ the standard 
parabolic subgroup of~$G_n$ attached to it, by~$M_{\lambda}$ the standard Levi factor of~$P_\lambda$, and by~$N_{\lambda}$ its unipotent radical.
 If~$t+r=n$, we let 
\[U_{t,r}=\left\{\begin{pmatrix} I_t & x \\ & y \end{pmatrix}:x\in \mathcal{M}_{t,r},y\in N_r\right\},\] 
and~$H_{t,r}= G_tU_{t,r}$. By restriction,~$\theta$ defines a character of~$U_{t,r}$. We let~$P_n=H_{n-1,1}$ denote the \emph{mirabolic subgroup} of~$G_n$.

We denote by~$w_n$ the antidiagonal matrix of~$G_n$ with ones on the second diagonal, and if~$n=r+t$, we denote by~$w_{t,r}$ the matrix~$\diag(I_t,w_r)$. Notice that our notations are different from those of  
\cite{JPS1} for~$U_{r,t}$,~$H_{t,r}$, and~$w_{t,r}$. 

If~$\Phi \in\mathcal{C}_c^\infty(F^n)$, we denote by~$\widehat{\Phi}$ its Fourier transform with respect to the~$\theta$-self-dual~$R$-Haar measure~$dx$ on~$F^n$ satisfying~$dx(\o)=q^{-l/2}$, where the integer~$l$ satisfies that~$\theta\mid_{\p^l}$ is trivial, but~$\theta\mid_{\p^{l-1}}$ is non-trivial.

For~$G$ a locally profinite group, we let~$\mathfrak{R}_R(G)$ denote the abelian category of smooth~$R$-representations of~$G$.  All~$R$-representations henceforth considered are assumed to be smooth.  For~$\pi$ an~$R$-representation with central character, for example an~$R$-representation of~$G_n$ parabolically induced from an irreducible~$R$-representation, we denote its central character by~$c_{\pi}$.

Let~$\Ql$ be an algebraic closure of the~$\ell$-adic numbers,~$\Zl$ its ring of integers, and~$\Fl$ its residue field, which is an algebraic closure of the finite field of~$\ell$ elements. By an \emph{$\ell$-adic representation} of~$G$ we mean a representation of~$G$ on a~$\Ql$-vector space, and by an \emph{$\ell$-modular} representation of~$G$ we mean a representation of~$G$ on a~$\Fl$-vector space.  For~$H$ a closed subgroup of~$G$, we write~$\Ind_H^G$ for the functor of normalised smooth induction from~$\mathfrak{R}_R(H)$ to~$\mathfrak{R}_R(G)$, and write~$\ind_H^G$ for the functor of normalised smooth induction with compact support. 
 
We assume that our choice of square roots of~$q$ in~$\Fl$ and~$\Ql$ are \emph{compatible}; in the sense that the former is the reduction modulo~$\ell$ of the latter, which is chosen in~$\Zl$.  

\subsection{$R$-Haar Measures}\label{haarmeasures}
Let~$R$ be a commutative ring with identity of characteristic~$\ell$ and let~$G$ be a locally profinite group which admits a compact open subgroup of pro-order invertible in~$R$. We let 
\[\mathcal{C}_c^{\infty}(G,R)=\{f:G\rightarrow R:f \text{ is locally constant and compactly supported}\},\] 
(we sometimes write this, more simply, as~$\mathcal{C}_c^{\infty}(G)$ according to the context). A \emph{left} (resp. \emph{right}) \emph{~$R$-Haar measure} on~$G$ is a nonzero linear form on~$\mathcal{C}_c^{\infty}(G,R)$ which is invariant under left (resp. right) translation by~$G$.  If~$\mu$ is a left (or right)~$R$-Haar measure on~$G$ and~$f\in \mathcal{C}_c^{\infty}(G,R)$, we write
\[\mu(f)=\int_G f(g)\mu.\]
By \cite[I 2.4]{V}, for each compact open subgroup~$K$ of~$G$ of pro-order invertible in~$R$ there exists a unique left~$R$-Haar measure~$\mu$ such that~$\mu(K)=1$. The \emph{volume}~$\mu(K')=\mu(\mathbf{1}_{K'})$ of a compact open subgroup~$K'$ of~$G$ is equal to zero if and only if 
the pro-order of~$K'$ is equal to zero in~$R$. In the present work, the \emph{modulus character} of~$G$ is the unique character~$\delta_G:G\rightarrow R^\times$ such that, if~$\mu$ is a left~$R$-Haar measure on~$G$,~$\delta_G\mu$ is a right~$R$-Haar measure on~$G$. 
More generally, if~$H$ is a closed subgroup of~$G$, we let~$\delta={\delta_{G}^{-1}}\mid_{H}\delta_H$, and  
\[\mathcal{C}_c^{\infty}(H\backslash G,\delta, R)\] be the space of functions from~$G$ to~$R$, fixed on the right by a compact open subgroup of~$G$, compactly supported modulo~$H$, and
which transform by~$\delta$ under H on the left  (we sometimes write this as~$\mathcal{C}_c^{\infty}(H\backslash G,\delta)$). For~$f\in \mathcal{C}_c^{\infty}(G,R)$, we denote by~$f^H$ the function in 
$\mathcal{C}_c^{\infty}(H\backslash G,\delta, R)$ defined by 
\[f^H(g)=\int_{H} f(h)\d^{-1}(h)dh,\] 
for~$dh$ a right~$R$-Haar measure on~$H$. 
It is proved in \cite[I 2.8]{V} that the map~$f\mapsto f^H$ is surjective, and that there is a unique, up to an invertible scalar, nonzero linear form~$d_{H\backslash G}g$ on~$\mathcal{C}_c^{\infty}(H\backslash G,\delta, R)$, which is right invariant under~$G$. We call such a nonzero linear form on~$\mathcal{C}_c^{\infty}(H\backslash G,\delta, R)$ a \emph{$\delta$-quasi-invariant quotient measure} on~$H\backslash G$ and, for~$f\in\mathcal{C}_c^{\infty}(H\backslash G,\delta, R)$, we write
\[d_{H\backslash G}g(f)=\int_{H\backslash G} f(g)d_{H\backslash G}g.\]
With the correct normalisation, for all~$f\in\mathcal{C}_c^{\infty}(G,R)$, we have 
\[\int_{G}f(g)dg=\int_{H\backslash G} f^H(g)d_{H\backslash G}g.\]

\begin{rem}\label{intvalues}
Let~$H$ be a closed subgroup of~$G$.  Let~$\mathcal{C}_{c,e}^\infty(H)$ denote the subspace of functions in~$\mathcal{C}_c^\infty(H,\Ql)$ which take integral values. Up to a correct normalisation of the~$\Ql$-Haar measure~$dh$ on~$H$, for all 
$f\in \mathcal{C}_{c,e}^\infty(H)$, the integral~$\int_{H} f(h)dh$ belongs to~$\Zl$.  Suppose~$K$ is a closed subgroup of~$H$, for which there is a~$\delta$-quasi-invariant quotient measure~$d_{K\backslash H}h$ on~$K\backslash H$.  We write~$\mathcal{C}_{c,e}^\infty(K\backslash H,\delta)$ for the subspace of functions in~$\mathcal{C}_{c}^\infty(K\backslash H,\delta)$ which take integral values.  Similarly, up to correct normalisation of the quotient measure, the value of 
$\int_{K \backslash H} f(h)d_{K\backslash H}h$ belongs to~$\Zl$ when~$f\in \mathcal{C}_{c,e}^\infty(K\backslash H,\delta)$.  Moreover, for all 
$f$ in~$\mathcal{C}_{c,e}^\infty(K\backslash H,\delta)$, we have \[r_{\ell}\left(\int_{K \backslash H} f(h)d_{K\backslash H}h\right)=\int_{K \backslash H} r_{\ell}(f(h))d_{K\backslash H}h.\] 
We write~$\mathcal{C}_{c,e}^\infty(F^n)$ for the subspace of functions in~$\mathcal{C}_c^\infty(F^n)$ which take values in~$\Zl$.  Note that if~$\Phi\in \mathcal{C}_{c,e}^\infty(F^n)$, then~$\hat{\Phi}\in \mathcal{C}_{c,e}^\infty(F^n)$.
\end{rem}

For the remainder of this section, let~$G$ denote a unimodular locally profinite group. Suppose that~$B$ is a closed subgroup of~$G$,~$K$ is a compact open subgroup 
of~$G$ such that~$G=BK$.

\begin{LM}\label{Haar1}
Let~$dg$ be an~$R$-Haar measure on~$G$.  There exist a right~$R$-Haar measure~$db$ on~$B$ and a right~$K$-invariant measure~$dk$ on~$K\cap B\backslash K$ such that, for all~$f\in\sm_c(G,R)$, we have
\[\int_{G}f(g)dg= \int_{K\cap B\backslash K} \int_{B}f(bk)\delta_B(b)^{-1}db dk.\] 
\end{LM}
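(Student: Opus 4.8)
The plan is to reduce the identity to the standard quotient-integration formula of \cite[I 2.8]{V} applied to the closed subgroup $B$ of $G$, combined with a careful decomposition of the resulting quotient measure on $B \backslash G$ using the compact open subgroup $K$. First I would fix a right $R$-Haar measure $db$ on $B$ and consider, for $f \in \sm_c(G,R)$, the averaged function $f^B(g) = \int_B f(bg)\delta_B(b)^{-1}\,db$, which lies in $\sm_c(B \backslash G, \delta, R)$ with $\delta = \delta_G^{-1}\!\mid_B \delta_B = \delta_B$ since $G$ is unimodular. By the cited result of Vign\'eras there is a $\delta$-quasi-invariant quotient measure $d_{B\backslash G}g$, unique up to scalar, with $\int_G f(g)\,dg = \int_{B\backslash G} f^B(g)\,d_{B\backslash G}g$ for a suitable normalisation; so it remains to identify $\int_{B\backslash G} \varphi(g)\,d_{B\backslash G}g$ with $\int_{K\cap B \backslash K} \varphi(k)\,dk$ for $\varphi \in \sm_c(B\backslash G,\delta,R)$, where $dk$ is a right $K$-invariant measure on $K \cap B \backslash K$.

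The key step is the second identification. Since $G = BK$, the natural map $K \to B \backslash G$ is surjective, and its fibres are exactly the right cosets of $K \cap B$ in $K$; thus restriction of functions along $K \hookrightarrow G$ gives a linear bijection $\sm_c(B\backslash G,\delta,R) \to \sm_c(K\cap B\backslash K, R)$ (the target being genuine functions since $\delta_B$ restricted to the compact group $K \cap B$ is trivial, as its values lie in $R^\times$ and $K\cap B$ is pro-$p$ up to finite order of order invertible in $R$ — here I would invoke that $K$ has pro-order invertible in $R$). Then I would check that the linear form $\varphi \mapsto \int_{K\cap B\backslash K} \varphi(k)\,dk$ on $\sm_c(B\backslash G,\delta,R)$ is right $G$-invariant: right invariance under $K$ is immediate from the choice of $dk$, and right invariance under $B$ is forced by the transformation law $\varphi(bg) = \delta_B(b)\varphi(g)$ together with unimodularity — more precisely, one uses that for $b \in B$ the right translate $\varphi(\,\cdot\, b)$, when restricted to $K$, can be rewritten via the $BK$ decomposition and the cocycle identity so that the integral is unchanged. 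Since $G = BK$ is generated by $B$ and $K$, this gives right $G$-invariance, and by the uniqueness clause of \cite[I 2.8]{V} this linear form agrees with $d_{B\backslash G}g$ up to an invertible scalar, which we absorb into the normalisation of $dk$. Substituting back yields the claimed formula.

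The main obstacle I expect is the measure-theoretic bookkeeping in the $\ell$-modular setting: one must ensure that all the measures involved are genuinely nonzero, i.e.\ that $K$, $K \cap B$, and the relevant compact open subgroups have pro-order invertible in $R$, so that the normalisations $\mu(K)=1$ etc.\ make sense and the surjectivity/uniqueness statements from \cite[I 2.4, I 2.8]{V} apply verbatim. In characteristic zero this is automatic, but here it is exactly the kind of subtlety flagged in the preamble to this section (``compact open subgroups of measure zero''), so the proof must either assume or verify that $K$ is chosen with pro-order invertible in $R$ — which is harmless since $G$ admits such subgroups by hypothesis and they are cofinal. A secondary technical point is justifying the Fubini-type interchange implicit in defining $f^B$ and integrating it over $K\cap B\backslash K$; this follows from the compact support of $f$ (so only finitely many cosets contribute) but should be stated cleanly.
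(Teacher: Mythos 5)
Your overall plan is the same as the paper's: pass to the quotient formula $\int_G f\,dg = \int_{B\backslash G} f^B\,d_{B\backslash G}g$ and identify $d_{B\backslash G}g$ with the measure $\mu:\phi\mapsto dk(\phi|_K)$ via the restriction map $\sm_c(B\backslash G,\delta_B,R)\to\sm_c((K\cap B)\backslash K,R)$. But there are two gaps, one serious.

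The serious one is your argument for right $G$-invariance of $\mu$. You factor it as $K$-invariance (trivial) plus $B$-invariance, and claim the latter ``is forced by the transformation law $\varphi(bg)=\delta_B(b)\varphi(g)$ together with unimodularity.'' That is not a simple cocycle computation. Writing $kb_0 = \beta(k)\kappa(k)$ with $\beta(k)\in B$, $\kappa(k)\in K$, what you would need is precisely the statement that the pushforward of $dk$ under $k\mapsto\kappa(k)$ equals $\delta_B(\beta(\kappa^{-1}(\cdot)))^{-1}\,dk$ — i.e.\ the Radon--Nikodym cocycle of the $B$-action on $B\backslash G$. This is exactly the nontrivial content of the existence of a $\delta_B$-quasi-invariant quotient measure, so trying to establish it directly is circular. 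The clean route — and this is what the paper's proof does, implicitly — is to apply the uniqueness statement of \cite[I 2.8]{V} on the $K$-side rather than the $G$-side: the known measure $d_{B\backslash G}g$ transports under the restriction isomorphism to a right $K$-invariant linear form on $\sm_c((K\cap B)\backslash K,R)$, which by uniqueness of such measures is an invertible scalar times $dk$; hence $\mu$ is proportional to $d_{B\backslash G}g$, and $G$-invariance comes for free. Invoking uniqueness on $\sm_c(B\backslash G,\delta_B,R)$, as you do, only helps once $G$-invariance of $\mu$ is already known — which is what you were trying to prove.

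The secondary gap, which you do flag as an obstacle but never actually discharge, is surjectivity of the restriction map. Injectivity does follow from $G=BK$ as you say, but surjectivity needs a computation precisely because of the modular-coefficients subtlety. The paper's proof does this explicitly: the characteristic functions $\1_{(K\cap B)k\mathcal{U}}$, with $\mathcal{U}\subset K$ of pro-order invertible in $R$, span the target, and one checks that $\1_{k\mathcal{U}}^B\mid_K = db(B\cap k\mathcal{U}k^{-1})\,\1_{(K\cap B)k\mathcal{U}}$ with $db(B\cap k\mathcal{U}k^{-1})$ invertible in $R$. That invertibility is exactly where the pro-order hypothesis is used; your proposal gestures at it but leaves the verification implicit.
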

\begin{proof}
We observe first that the map~$\phi\mapsto \phi\mid_{K}$ is a vector space isomorphism between~$\mathcal{C}_c^{\infty}(B\backslash G,\delta_{B}, R)$ and
$\mathcal{C}_c^{\infty}((K\cap B)\backslash K, R).$ It is injective because~$G=BK$. To show surjectivity, we recall that the characteristic functions~$\1_{(K\cap B)k \mathcal{U}}$, with~$ \mathcal{U}$ a compact subgroup of~$K$ of 
pro-order invertible in~$R$ and~$k\in K$,  span~$\mathcal{C}_c^{\infty}((K\cap B)\backslash K, R)$. Moreover, the function~$\mathbf{1}_{k \mathcal{U}}^B$ belongs to~$\mathcal{C}_c^{\infty}(B\backslash G,\delta_{B}, R)$, 
and a computation shows~${\mathbf{1}_{k \mathcal{U}}^B}\mid_{K}=db(B\cap k \mathcal{U}k^{-1})\mathbf{1}_{(K\cap B)k \mathcal{U}}$. Surjectivity follows as~$db(B\cap k \mathcal{U}k^{-1})$ is invertible in~$R$. In particular, if~$dk$ is a right~$K$-invariant measure on~$\mathcal{C}_c^{\infty}((K\cap B)\backslash K, R)$, the map~$\mu:\phi\mapsto dk(\phi\mid_{K})$ is a right~$G$-invariant measure on 
$B\backslash G$. The result follows from the formula~$\int_{G}f(g)dg=\mu(f^B)$.
\end{proof}

\begin{rem}
Let~$K_n=\GL_n(\mathfrak{o}_F)$.  By the Iwasawa decomposition, we have~$G_n=B_nK_n$. Let~$\mu_{G_n}$ be an~$R$-Haar measure on~$G_n$. If~$\ell=0$, or more generally~$\ell\nmid q-1$, then, for all~$f\in\mathcal{C}_c^\infty(G_n,R)$, we have
\[\int_{G_n}f(g)dg=\int_{B_n}\int_{K_n}f(bk)dbdk,\]
for good choices of a left~$R$-Haar measure~$db$ on~$B_n$ and an~$R$-Haar measure~$dk$ on~$K_n$. As noticed by M\'inguez in \cite{M}, this is no longer true in general. More 
precisely, it is not true when~$\ell\mid q-1$ as the restriction of an~$R$-Haar measure on~$K$ to~$\mathcal{C}_c^\infty((K_n\cap B_n)\backslash K_n)$ is zero. That is why we use a right 
invariant measure on~$K\cap B\backslash K$ in Lemma \ref{Haar1}.
\end{rem}

%
%

Let~$Q$,~$L$ and~$N$ be closed subgroups of~$G$ such that~$Q=LN$ and~$L$ normalises~$N$.  Suppose that there exists a compact open subgroup~$\mathcal{U}$ of~$G$ of pro-order invertible in~$R$ such that~$Q\cap \mathcal{U}=(N\cap \mathcal{U})(M\cap \mathcal{U})$. Let~$dl$ be an~$R$-Haar measure on~$L$ and~$dn$ be an~$R$-Haar measure on~$N$. 

\begin{LM}\label{HaarB}
Let~$f\in\mathcal{C}^\infty_c(Q,R)$.  There exists a unique right~$R$-Haar measure~$dq$ on~$Q$ such that 
\[\int_Qf(q)dq=\int_L\int_Nf(nl)dldn.\]
\end{LM}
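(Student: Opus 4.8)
The plan is to check directly that the linear form $I$ on $\mathcal{C}_c^\infty(Q,R)$ given by
\[
I(f)=\int_L\Bigl(\int_N f(nl)\,dn\Bigr)dl
\]
is a right $R$-Haar measure on $Q$; then $dq:=I$ has the required property, and it is the only right $R$-Haar measure that does, since the identity pins down $dq$ on all of $\mathcal{C}_c^\infty(Q,R)$. Observe first that $Q\cap\mathcal{U}=(N\cap\mathcal{U})(L\cap\mathcal{U})$ is a compact open subgroup of $Q$ whose pro-order, dividing that of $\mathcal{U}$, is invertible in $R$; hence $Q$ admits $R$-Haar measures, and by \cite[I 2.4]{V} any nonzero right-invariant linear form on $\mathcal{C}_c^\infty(Q,R)$ is one. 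It therefore remains to show that $I$ is (i) well defined, (ii) nonzero, and (iii) invariant under right translation by $Q$.

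For (i), fix $f\in\mathcal{C}_c^\infty(Q,R)$. For $l\in L$, right translation by $l$ is a homeomorphism of $Q$ carrying the closed subgroup $N$ onto the closed set $Nl$, so $n\mapsto f(nl)$ is locally constant with support contained in the compact set $(Nl\cap\operatorname{supp}f)l^{-1}$, hence lies in $\mathcal{C}_c^\infty(N,R)$ and the inner integral makes sense. Picking a compact open subgroup $\mathcal{U}'\subseteq\mathcal{U}$ fixing $f$ on the right, the function $\phi\colon l\mapsto\int_N f(nl)\,dn$ is invariant under right translation by $L\cap\mathcal{U}'$, hence locally constant. To see $\phi$ is compactly supported, I would cover $\operatorname{supp}f$ by finitely many cosets $q_i(Q\cap\mathcal{U})$ with $q_i=n_il_i\in Q$, and use the hypothesis $Q\cap\mathcal{U}=(N\cap\mathcal{U})(L\cap\mathcal{U})$ to write each as $n_i\,\bigl(l_i(N\cap\mathcal{U})l_i^{-1}\bigr)\,\bigl(l_i(L\cap\mathcal{U})\bigr)$, a product of a point of $N$, a compact subset of $N$, and a compact subset $B_i$ of $L$; this yields $\operatorname{supp}\phi\subseteq\{l\in L:Nl\cap\operatorname{supp}f\neq\emptyset\}\subseteq\bigcup_i(N\cap L)B_i$, which is compact (using that $N\cap L$ is compact, indeed trivial in the intended applications), and being clopen in $L$ the set $\operatorname{supp}\phi$ is then compact. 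So $I$ is a well-defined linear form. For (ii), taking $f=\1_{Q\cap\mathcal{U}}$ gives $I(f)=dn(N\cap\mathcal{U})\cdot dl(L\cap\mathcal{U})$, a product of units of $R$, hence nonzero.

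For (iii), let $q_0=n_0l_0$ with $n_0\in N$, $l_0\in L$, and set $(R_{q_0}f)(q)=f(qq_0)$. Using $ln_0=(ln_0l^{-1})l$ and $ln_0l^{-1}\in N$, the inner integral transforms as
\[
\int_N f\bigl(nl\,n_0l_0\bigr)\,dn=\int_N f\bigl(n\,(ln_0l^{-1})\,ll_0\bigr)\,dn=\int_N f\bigl(n\,ll_0\bigr)\,dn,
\]
the last equality by right invariance of $dn$ on $N$; crucially we translate only \emph{within} $N$, so no modulus character enters. Integrating over $L$ and then substituting $l\mapsto ll_0^{-1}$, valid because $dl$ is right invariant on $L$, gives $I(R_{q_0}f)=I(f)$. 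Hence $I$ is a right $R$-Haar measure $dq$ on $Q$ satisfying the stated identity, and it is the unique such measure.

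The step I expect to be the real obstacle is (i), precisely the compact support of $\phi$. Over $\C$ this is a routine consequence of smoothing by an averaging idempotent, but that argument collapses when the relevant compact open subgroups have pro-order divisible by $\ell$; the hypothesis on $\mathcal{U}$ is designed exactly to replace it, by exhibiting a product decomposition of $Q$ along $N$ and $L$ already at the level of a compact open subgroup of pro-order invertible in $R$.
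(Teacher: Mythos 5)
Your proof is correct and follows essentially the same approach as the paper's: right $Q$-invariance from the fact that $L$ normalises $N$ together with translation invariance of $dn$ and $dl$, and non-vanishing from the factorisation $Q\cap\mathcal{U}=(N\cap\mathcal{U})(L\cap\mathcal{U})$. The paper's proof is considerably terser, leaving these verifications --- as well as the well-definedness of the iterated integral, which you handle carefully in step (i) --- to the reader.
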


\begin{proof}
As~$L$ normalises~$N$, we see that~$\int_L\int_Nf(nl)dldn$ is a right~$Q$-invariant linear form on~$C_c^\infty(Q,R)$. But as~$Q\cap  \mathcal{U}=(N\cap  \mathcal{U})(M\cap  \mathcal{U})$, it is easy to see that~$\int_L\int_N1_{Q\cap  \mathcal{U}}(nl)dldn$ is nonzero. 
\end{proof}

\begin{rem}
A typical instance is when~$G=G_n$,~$Q=LN$ is a standard parabolic subgroup of~$G_n$, and~$K_1$ is the pro-$p$ unipotent radical of~$K_n$.
\end{rem}

We have the following corollary to Lemmas \ref{Haar1} and \ref{HaarB}.

\begin{cor}\label{iwasplit}
Let~$dg$ be an~$R$-Haar measure on~$G$.  There exist an~$R$-Haar measure~$da$ on~$A_n$, and a right~$K_n$-invariant measure~$dk$ on~$(K_n\cap B_n)\backslash K_n$ such that, for all~$f\in \mathcal{C}_c^\infty(N_n\backslash G_n,R)$, we have
\[\int_{N_n\backslash G_n}f(g)dg= \int_{(K_n\cap B_n)\backslash K_n}\int_{A_n} f(ak)\d_{B_n}(a)^{-1}dadk.\]
\end{cor}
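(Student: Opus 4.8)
The plan is to combine the Iwasawa-type splitting from Lemma~\ref{Haar1} with the ``unfolding'' of the unipotent integral from Lemma~\ref{HaarB}, applied to the group $G = G_n$, which is unimodular.  First I would apply Lemma~\ref{Haar1} with $B = B_n$ and $K = K_n$; by the Iwasawa decomposition $G_n = B_nK_n$, so there is a right $R$-Haar measure $db$ on $B_n$ and a right $K_n$-invariant measure $dk$ on $(K_n\cap B_n)\backslash K_n$ with
\[\int_{G_n} f(g)\,dg = \int_{(K_n\cap B_n)\backslash K_n}\int_{B_n} f(bk)\,\delta_{B_n}(b)^{-1}\,db\,dk\]
for all $f\in\mathcal{C}_c^\infty(G_n,R)$.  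This already produces the outer $K_n$-integral and the factor $\delta_{B_n}^{-1}$ that appears in the statement, so the work remaining is to re-express the integral over $B_n$, against $db$, as an integral over $A_n$ against $da$ when the integrand descends to $N_n\backslash G_n$.

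Next I would use Lemma~\ref{HaarB} with $Q = B_n = A_n N_n$, $L = A_n$, $N = N_n$ (the torus normalises $N_n$, and one may take $\mathcal{U} = K_1$, the pro-$p$ radical of $K_n$, so that the hypothesis $B_n\cap\mathcal{U} = (N_n\cap\mathcal{U})(A_n\cap\mathcal{U})$ holds): for suitable Haar measures $da$ on $A_n$ and $dn$ on $N_n$ there is a right $R$-Haar measure on $B_n$ — which, by uniqueness of right Haar measure up to scalar, we may take to be the $db$ above after rescaling — such that
\[\int_{B_n} \phi(b)\,db = \int_{A_n}\int_{N_n} \phi(na)\,da\,dn\]
for all $\phi\in\mathcal{C}_c^\infty(B_n,R)$.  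Applying this to $\phi(b) = f(bk)\,\delta_{B_n}(b)^{-1}$ for fixed $k$, and using that $\delta_{B_n}(na) = \delta_{B_n}(a)$ since $N_n\subset\ker\delta_{B_n}$, the inner $B_n$-integral becomes $\int_{A_n}\int_{N_n} f(nak)\,\delta_{B_n}(a)^{-1}\,da\,dn$.

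The final step is to collapse the $N_n$-integral.  Since $f\in\mathcal{C}_c^\infty(N_n\backslash G_n,R)$ is genuinely left-$N_n$-invariant (here the relevant $\delta$ for the quotient is trivial because $G_n$ is unimodular and $\delta_{B_n}^{-1}|_{N_n}\delta_{N_n}$ is trivial, $N_n$ being unimodular), we have $f(nak) = f(ak)$ for all $n\in N_n$, so the inner integral is $\big(\int_{N_n} dn\big)\, f(ak)\,\delta_{B_n}(a)^{-1}$; absorbing the finite positive constant $\int_{N_n} dn$ into the normalisation of $da$ and reconciling it with the quotient measure $d_{N_n\backslash G_n}g$ via the compatibility $\int_{G_n} f'(g)\,dg = \int_{N_n\backslash G_n}(f')^{N_n}(g)\,d_{N_n\backslash G_n}g$ gives exactly
\[\int_{N_n\backslash G_n} f(g)\,dg = \int_{(K_n\cap B_n)\backslash K_n}\int_{A_n} f(ak)\,\delta_{B_n}(a)^{-1}\,da\,dk,\]
after renaming $d_{N_n\backslash G_n}g$ as $dg$.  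I expect the main obstacle to be the bookkeeping of measure normalisations — in particular checking that the right $R$-Haar measure on $B_n$ supplied by Lemma~\ref{HaarB} can be matched (up to an $R$-invertible scalar, which one then has the freedom to fix) with the one coming out of Lemma~\ref{Haar1}, and confirming that all the compact-volume constants appearing ($\int_{N_n}dn$, etc.) are invertible in $R$ so that the argument is not vacuous in the $\ell\mid q-1$ situation; here the fact that $f$ lives on $N_n\backslash G_n$, rather than needing a genuine splitting of Haar measure over $K_n$, is what makes the statement survive that bad case, exactly as in the remark following Lemma~\ref{Haar1}.
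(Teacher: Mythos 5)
There is a genuine gap here, and it comes right at the start: you apply Lemma~\ref{Haar1} to $f$ itself, but $f\in\mathcal{C}_c^\infty(N_n\backslash G_n,R)$ is \emph{not} an element of $\mathcal{C}_c^\infty(G_n,R)$. Being left $N_n$-invariant with $N_n$ non-compact, $f$ cannot be compactly supported on $G_n$, so the hypothesis of Lemma~\ref{Haar1} is violated and the first displayed equation in your argument has no meaning. The same problem propagates to the application of Lemma~\ref{HaarB}: your $\phi(b)=f(bk)\delta_{B_n}(b)^{-1}$ is constant along $N_n$-cosets, hence not in $\mathcal{C}_c^\infty(B_n,R)$ either. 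Finally, the ``collapse'' step is where the symptom becomes visible: you treat $\int_{N_n}dn$ as a ``finite positive constant'' to be absorbed into $da$, but $N_n$ is the full unipotent radical of the Borel, which is non-compact, so $\int_{N_n}dn$ is not a finite volume and cannot be normalised away.

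The fix --- and this is what the paper does --- is to invoke the surjectivity of $h\mapsto h^{N_n}$ from Vign\'eras's construction of quotient measures: choose $h\in\mathcal{C}_c^\infty(G_n,R)$ with $f=h^{N_n}$, apply Lemmas~\ref{Haar1} and~\ref{HaarB} to the genuinely compactly supported $h$ to get
$\int_{G_n}h(g)\,dg=\int_{(K_n\cap B_n)\backslash K_n}\int_{A_n}\int_{N_n}h(nak)\,\delta_{B_n}(a)^{-1}\,dn\,da\,dk$,
and then recognise the inner $N_n$-integral as $h^{N_n}(ak)=f(ak)$ (a finite expression, by definition of $h^{N_n}$), while the left side is $\int_{N_n\backslash G_n}f(g)\,dg$ by the defining compatibility of the quotient measure. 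No divergent constant appears, and the measures match up automatically. Your high-level decomposition strategy (Iwasawa via Lemma~\ref{Haar1}, then split $B_n=A_nN_n$ via Lemma~\ref{HaarB}) is exactly right; what is missing is the lift $f=h^{N_n}$ that makes the lemmas legitimately applicable.
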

\begin{proof} 
Let~$f\in \mathcal{C}_c^\infty(N_n\backslash G_n,R)$, then~$f=h^{N_n}$ for 
$h\in \mathcal{C}_c^\infty(G_n,R)$. By Lemmas \ref{Haar1} and \ref{HaarB},
\[\int_{G_n} h(g)dg=\int_{K_n\cap B_n\backslash K_n}\int_{ A_n } \int_{N_n}h(nak)\d_{B_n}(a)^{-1}dn da dk,\] 
However, \[\int_{G_n} h(g)dg=\int_{N_n\backslash G_n}f(g)dg\] and 
\[\int_{N_n} h(nak)dn=h^{N_n}(ak)=f(ak).\] 
\end{proof}

From Iwasawa decomposition, we also have~$G_n=P_nZ_nK_n$. We use the following integration formula, which is proved in a similar fashion.

\begin{cor}\label{iwasplit2}
Let~$dg$ be an~$R$-Haar measure on~$G$.  There exist an~$R$-Haar measure~$dz$ on~$Z_n$, a~$\delta$-quasi-invariant quotient measure~$dp$ on~$N_n\backslash P_n$, and a right~$K_n$-invariant measure~$dk$ on~$(K_n\cap B_n)\backslash K_n$ such that, for all~$f\in \mathcal{C}_c^\infty(N_n\backslash G_n,R)$,
\[\int_{N_n\backslash G_n}f(g)dg= \int_{(K_n\cap P_n)\backslash K_n}\int_{Z_n}\int_{N_n\backslash P_n}  f(pzk)\det(p)^{-1}dpdzdk.\]
\end{cor}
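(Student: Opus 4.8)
The plan is to derive Corollary~\ref{iwasplit2} from Corollary~\ref{iwasplit} by factoring the Iwasawa decomposition through the mirabolic subgroup. First I would record the group-theoretic input: since $B_n\subset P_nZ_n$ and $A_n=(A_n\cap P_n)Z_n$, the Iwasawa decomposition $G_n=B_nK_n$ refines to $G_n=P_nZ_nK_n$, and moreover $K_n\cap P_n$ together with $Z_n$ and $K_n\cap B_n$ are compatibly placed so that the relevant subgroups of a small compact open $\mathcal{U}$ (pro-$p$, pro-order invertible in $R$) decompose as products. This is exactly the kind of hypothesis needed to invoke Lemma~\ref{HaarB}, applied here with $G=G_n$, $Q=P_nZ_n=Z_nP_n$ (note $Z_n$ is central, hence normalises $P_n$), $L=Z_n$, $N=P_n$; it produces a right $R$-Haar measure $dq$ on $P_nZ_n$ with $\int_{P_nZ_n}f\,dq=\int_{Z_n}\int_{P_n}f(pz)\,dp\,dz$ for a suitable choice of right Haar measures on the factors.

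Next I would feed this into the argument of Corollary~\ref{iwasplit}. Given $f\in\mathcal{C}_c^\infty(N_n\backslash G_n,R)$, write $f=h^{N_n}$ for some $h\in\mathcal{C}_c^\infty(G_n,R)$ using the surjectivity of $g\mapsto g^{N_n}$ recalled from \cite[I 2.8]{V}. Applying Lemma~\ref{Haar1} to the decomposition $G_n=(P_nZ_n)K_n$ (so $B=P_nZ_n$ in the notation there, with $B\cap K_n=P_n\cap K_n$, using $Z_n\cap K_n\subset P_n\cap K_n$ after adjusting by the centre, or more precisely $K_n\cap P_nZ_n=K_n\cap P_n$ since $Z_n\cap K_n$ already lies in $K_n\cap P_n$), we get
\[\int_{G_n}h(g)\,dg=\int_{(K_n\cap P_n)\backslash K_n}\int_{P_nZ_n}h(qk)\,\delta_{P_nZ_n}(q)^{-1}\,dq\,dk.\]
Then substituting the Lemma~\ref{HaarB} identity for the inner integral over $P_nZ_n=Z_n P_n$, and finally splitting $P_n=N_n\rtimes(\text{something})$ — more precisely performing the $N_n$-integration via $\int_{N_n}h(npzk)\,dn=h^{N_n}(pzk)=f(pzk)$ — collapses the $h$ back to $f$. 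One checks that $\delta_{P_nZ_n}$ restricted through this factorisation contributes the factor $\det(p)^{-1}$: indeed $P_n$ is its own modulus-character-free piece only up to the $\nu$-twist, and the modular characters of $P_nZ_n$ and of $N_n\backslash P_n$ combine to give exactly $\det(p)^{-1}$, matching the normalisation already used implicitly in Corollary~\ref{iwasplit} where $\delta_{B_n}(a)^{-1}$ appears.

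The main obstacle, and the point that genuinely ``is proved in a similar fashion'' rather than being automatic, is verifying the product-decomposition hypothesis $Q\cap\mathcal{U}=(N\cap\mathcal{U})(L\cap\mathcal{U})$ of Lemma~\ref{HaarB} in the modular setting, together with the correct bookkeeping of modulus characters. In characteristic zero one would just invoke density/measure-positivity arguments, but here, as the surrounding remarks stress, compact open subgroups of measure zero (when $\ell\mid q-1$) obstruct the naive restriction of Haar measures; so one must choose $\mathcal{U}$ to be a pro-$p$ group (e.g. a principal congruence subgroup $K_{n,r}$), for which the Iwahori-type factorisation $\mathcal{U}=(\mathcal{U}\cap N_n)(\mathcal{U}\cap Z_n)(\mathcal{U}\cap P_n^{\mathrm{opp}\text{-ish}})$ holds and has pro-order invertible in $R$. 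Granting that choice, the positivity of $\int_{Z_n}\int_{P_n}\mathbf{1}_{Q\cap\mathcal{U}}(pz)\,dp\,dz$ follows as in the proof of Lemma~\ref{HaarB}, and the rest is the routine chain of substitutions sketched above; I would present it briefly, pointing to the proof of Corollary~\ref{iwasplit} for the parallel details and only highlighting where $Z_n$ and the $\det$-twist enter.
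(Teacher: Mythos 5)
Your approach — applying Lemma~\ref{Haar1} with $B=P_nZ_n$ (using the refinement $G_n=P_nZ_nK_n$ of the Iwasawa decomposition), then Lemma~\ref{HaarB} with $L=Z_n$, $N=P_n$, then performing the $N_n$-integration to collapse $h^{N_n}=f$ — is exactly the ``similar fashion'' the paper has in mind, and the modulus bookkeeping ($\delta_{P_nZ_n}(pz)=\delta_{P_n}(p)=\nu(p)$, whence the factor $\det(p)^{-1}$) is correct.

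There is, however, a concrete factual error in the step where you identify the compact quotient. You assert that $K_n\cap P_nZ_n=K_n\cap P_n$, ``since $Z_n\cap K_n$ already lies in $K_n\cap P_n$.'' This is false: $Z_n\cap K_n=\{tI_n:t\in\o^\times\}$, and a scalar $tI_n$ lies in $P_n$ only when $t=1$, since elements of $P_n$ have last row $(0,\ldots,0,1)$. In fact $K_n\cap P_nZ_n=(K_n\cap P_n)(Z_n\cap K_n)$, which strictly contains $K_n\cap P_n$, the quotient being isomorphic to $\o^\times$. So what Lemma~\ref{Haar1} actually produces is an integral over $(K_n\cap P_nZ_n)\backslash K_n$, not over $(K_n\cap P_n)\backslash K_n$. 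This is not a harmless discrepancy in the very setting the section is built for: passing between these two coset spaces introduces a factor of $\mathrm{vol}(\o^\times)$, and when $\ell\mid q-1$ this volume is zero for any $R$-Haar measure on $\o^\times$ (since $\o^\times$ surjects onto a group of order $q-1$). So you cannot simply renormalise it away, and the chain of substitutions as you wrote it does not literally yield the formula with $(K_n\cap P_n)\backslash K_n$ in the troublesome case. To be fair, the paper's own statement of the corollary is loose at exactly this point (the displayed formula integrates over $(K_n\cap P_n)\backslash K_n$ while the text introduces $dk$ on $(K_n\cap B_n)\backslash K_n$); the clean statement that your argument proves verbatim, and which suffices for the applications, has the outer integral over $(K_n\cap P_nZ_n)\backslash K_n$. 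You should either state it that way, or, if you want to keep the paper's formulation, explain how the inner integrand is $K_n\cap P_nZ_n$-invariant and how the measure $dk$ is to be interpreted so that the extra $\o^\times$-fibre does not kill the formula — but you cannot dodge this by the incorrect set-theoretic identity $K_n\cap P_nZ_n=K_n\cap P_n$.
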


Henceforth, equalities involving integrals will be true only up to the correct normalisation of measures.

\subsection{Derivatives}\label{der}

Henceforth, we suppose that~$R$ is an algebraically closed field.  Following \cite{BZ}, we define the following exact functors: 
\begin{enumerate}
\item~$\Psi^+:\mathfrak{R}_R(G_{n-1})\rightarrow \mathfrak{R}_R(P_n)$, extension by the trivial representation twisted by~$\nu^{\frac{1}{2}}$.
\item~$\Psi^-: \mathfrak{R}_R(P_n)\rightarrow\mathfrak{R}_R(G_{n-1})$, the functor of~$U_{n-1}$-coinvariants twisted by~$\nu^{-\frac{1}{2}}$.
\item~$\Phi^+:\mathfrak{R}_R(P_{n-1})\rightarrow \mathfrak{R}_R(P_n)$, the functor~$\Phi^+(X)=\ind_{P_{n-1}U_n}^{P_n}(X\otimes\theta)$.
\item~$\Phi^-: \mathfrak{R}_R(P_{n})\rightarrow\mathfrak{R}_R(P_{n-1})$, the functor of~$(U_{n-1},\theta)$-coinvariants twisted by~$\nu^{-\frac{1}{2}}$.
\item~$\Phi^+_{nc}:\mathfrak{R}_R(P_{n-1})\rightarrow \mathfrak{R}_R(P_n)$, the functor~$\Phi^+_{nc}(X)=\Ind_{P_{n-1}U_n}^{P_n}(X\otimes\theta)$.
\end{enumerate}

\begin{thm}[{\cite{BZ0} \& \cite{BZ} (cf. \cite[III 1.3]{V})}]
~
\begin{enumerate}
\item We have~$\Psi^{-}\Phi^+=\Psi^-\Phi_{nc}^+=\Phi^-\Psi^+=0$, and~$\Psi^-$, resp.~$\Phi^+$, resp.~$\Phi^-$, is left adjoint to~$\Psi^+$, resp.~$\Phi^-$, resp.~$\Phi^+_{nc}$.
\item The identity functor~$\1:\mathfrak{R}_R(P_n)\rightarrow \mathfrak{R}_R(P_n)$ admits a filtration
\[0=T_n\subset T_{n-1}\subset \cdots\subset T_0=\1\]
such that~$T_k=(\Phi^+)^k(\Phi^-)^k$ and~$T_{k-1}/T_k=(\Phi^+)^{k-1}\Psi^+\Psi^-(\Phi^-)^{k-1}$.
\end{enumerate}
\end{thm}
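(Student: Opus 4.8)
The plan is to prove the Bernstein--Zelevinsky filtration theorem by establishing the adjunctions and vanishing relations in part (1) first, and then bootstrapping the filtration in part (2) from them.

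First I would verify the vanishing relations. The composite $\Phi^-\Psi^+$ is zero because $\Psi^+$ extends a $G_{n-1}$-representation by making $U_{n-1}$ act trivially, whereas $\Phi^-$ is the functor of $(U_{n-1},\theta)$-coinvariants with $\theta$ nontrivial; since $\theta$ is a nontrivial character of $U_{n-1}$ and acts trivially on $\Psi^+(X)$, the $(U_{n-1},\theta)$-coinvariants vanish by a standard averaging argument over a compact open subgroup of $U_{n-1}$ of pro-order invertible in $R$ (this is where one must be a little careful in the modular setting, but $U_{n-1}$ is a union of pro-$p$ groups and $\ell\neq p$, so such subgroups exist). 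Dually, $\Psi^-\Phi^+$ and $\Psi^-\Phi^+_{nc}$ vanish: $\Phi^+(X)=\ind_{P_{n-1}U_n}^{P_n}(X\otimes\theta)$, and computing $U_{n-1}$-coinvariants of an induced representation via a Mackey-type / geometric lemma argument, the $\theta$-twist on the $U_n$-part forces the relevant Jacquet module to vanish. For the adjunctions, $\Psi^-\dashv\Psi^+$ is Frobenius reciprocity for the coinvariants/invariants pair (coinvariants being left adjoint to the functor making a subgroup act trivially), and $\Phi^-\dashv\Phi^+_{nc}$ together with $\Phi^+\dashv\Phi^-$ are instances of the adjunction between $\ind$ (left adjoint) and the twisted Jacquet functor, and between the twisted Jacquet functor and $\Ind$, respectively — all compatible with the $\nu^{\pm 1/2}$ normalising twists, which match up because $\Psi^\pm,\Phi^\pm$ are each others' ``partial inverses'' up to these twists.

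Next, for part (2), I would construct the filtration of the identity functor on $\mathfrak{R}_R(P_n)$ by setting $T_k=(\Phi^+)^k(\Phi^-)^k$ and producing natural transformations $T_k\hookrightarrow T_{k-1}$ from the counit of the adjunction $\Phi^+\dashv\Phi^-$, namely $\Phi^+\Phi^-\to\1$, iterated. The key computation is identifying the successive quotients. Using the adjunction $\Phi^-\dashv\Phi^+_{nc}$ one shows the cokernel of $\Phi^+\Phi^-\to\1$ on $\mathfrak{R}_R(P_n)$ is $\Psi^+\Psi^-$ — this is the bottom step $T_1\subset T_0=\1$ with $T_0/T_1=\Psi^+\Psi^-$ and $T_1=\Phi^+\Phi^-$. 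Then applying $(\Phi^+)^{k-1}(\text{--})(\Phi^-)^{k-1}$ to this short exact sequence of functors, and using exactness of $\Phi^+$ and $\Phi^-$ (needed so that applying these functors preserves the exact sequences — this exactness follows because $\ind$, $\Ind$ over this particular pair of groups are exact, and Jacquet functors are always exact), yields $T_{k-1}/T_k=(\Phi^+)^{k-1}\Psi^+\Psi^-(\Phi^-)^{k-1}$, and that $(\Phi^-)^n=0$ on $\mathfrak{R}_R(P_n)$ because $P_1$ is trivial, giving $T_n=0$.

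The main obstacle is ensuring that every step of the classical Bernstein--Zelevinsky argument survives in characteristic $\ell$. Concretely: (i) the existence of compact open subgroups of pro-order invertible in $R$ needed to form coinvariants and to run the averaging/idempotent arguments — here one invokes that $P_n$, $U_n$, $U_{n-1}$ are unions of pro-$p$ groups and $\ell\neq p$; (ii) the exactness of $\Phi^+=\ind_{P_{n-1}U_n}^{P_n}(-\otimes\theta)$, which in the classical setting uses that the quotient is nice, and must be re-examined as a statement about smooth induction over locally profinite groups with $\ell$ invertible on the relevant compact subgroups; and (iii) the geometric lemma computations of Jacquet modules of induced representations, which go through formally but require the Haar-measure and decomposition results of Section~\ref{haarmeasures} to make sense of the relevant integrals. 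Since the cited references \cite{BZ0}, \cite{BZ}, and \cite[III 1.3]{V} already carry this out, I would structure the proof as a careful reduction to those sources, flagging precisely the two or three places where the modular subtleties enter and citing \cite[I 2.4, I 2.8]{V} for the measure-theoretic input.
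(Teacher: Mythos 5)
The paper does not prove this theorem; it states it as a known result with citations to \cite{BZ0}, \cite{BZ}, and \cite[III~1.3]{V}, and there is no proof environment attached. So there is no proof of the paper's to compare against, and the relevant comparison is with the cited sources. Your sketch is a faithful outline of the Bernstein--Zelevinsky argument as adapted to $\ell$-modular coefficients in Vign\'eras' book: the vanishing relations via nontriviality of $\theta$ on $U_{n-1}$ (which over any field where $1-\theta(u)$ is invertible for some $u$, in particular over $\Fl$ with $\ell\ne p$, kills the twisted coinvariants), the three adjunctions via the two versions of Frobenius reciprocity, and the filtration via the counit $\Phi^+\Phi^-\to\1$ together with exactness of $\Phi^\pm$ to propagate the bottom short exact sequence $0\to\Phi^+\Phi^-\to\1\to\Psi^+\Psi^-\to0$. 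You also correctly flag the two places where the modular setting needs care: existence of compact open subgroups of pro-order invertible in $R$ (guaranteed since $P_n$, $U_{n}$, $U_{n-1}$ have pro-$p$ compact subgroups and $\ell\ne p$) and the measure-theoretic input from \cite[I~2.4, I~2.8]{V}.

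One spot where your sketch is a little too quick is the terminal vanishing $T_n=0$. The remark ``$(\Phi^-)^n=0$ on $\mathfrak{R}_R(P_n)$ because $P_1$ is trivial'' is not quite the right bookkeeping: $(\Phi^-)^{n-1}$ lands in $\mathfrak{R}_R(P_1)$, which is just $R$-vector spaces, and a further application of $\Phi^-$ needs the conventions on $P_0$ and the twisted coinvariants to be spelled out to see why the composite dies. The cleaner way to close the argument (as in \cite{BZ}) is to track the successive quotients $T_{k-1}/T_k=(\Phi^+)^{k-1}\Psi^+\Psi^-(\Phi^-)^{k-1}$ and observe that the filtration has exactly $n$ nontrivial graded pieces, with $T_n$ forced to vanish by degree considerations on the mirabolic tower (or equivalently by checking that the whole representation is recovered as an iterated extension of the graded pieces). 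This is a presentation issue rather than a genuine gap, but in a written proof you would want to make it precise rather than appeal to $P_1$ being trivial. Otherwise your plan to structure the proof as a careful reduction to \cite{BZ0}, \cite{BZ}, and \cite[III~1.3]{V}, isolating the handful of modular subtleties, is exactly what one would do, and it matches how the present paper uses the result.
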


Let~$\tau$ be an~$R$-representation of~$P_n$.  Let~$\tau_{(k)}=(\Phi^-)^{k-1}(\tau)$.  The \emph{$k$-th derivative}~$\tau^{(k)}$ of~$\tau$ is defined by~$\tau^{(k)}=\Psi^-(\tau_{(k)})$.  We recall the classification of irreducible~$R$-representations of~$P_n$.
\begin{LM}[{\cite{BZ0} (cf. \cite[III 1.5]{V})}]
Let~$\tau$ be an irreducible~$R$-representation of~$P_n$.  There exists a unique nonzero derivative of~$\tau$.  Furthermore, for k in~$\{1,\ldots,n\}$, if~$\tau^{(k)}\neq 0$ then~$\tau=(\Phi^+)^k\Psi^+(\tau^{(k)})$.  Conversely, if~$\rho$ is an irreducible~$R$-representation of~$G_{n-k}$, then~$\pi=(\Phi^+)^k\Psi^+(\rho)$ is irreducible and~$\rho$ is the~$k$-th derivative of~$\pi$.
\end{LM}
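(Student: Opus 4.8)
The statement is the Bernstein--Zelevinsky classification of~$\Irr(P_n)$ by derivatives (\cite{BZ0}; cf.\ \cite[III.1.5]{V} over a general~$R$), and the plan is to deduce it formally from the filtration of~$\1$ just recalled, together with the exactness, the adjunctions, the vanishing relations~$\Psi^-\Phi^+=\Phi^-\Psi^+=0$, and two further ingredients: the section isomorphisms~$\Psi^-\Psi^+\cong\Id$ and~$\Phi^-\Phi^+\cong\Id$, and the fact that~$\Psi^+$ and~$\Phi^+$ preserve finite length. The first section isomorphism I would read off the filtration itself: applied on~$\mathfrak{R}_R(P_n)$ to~$\Psi^+(\rho)$ it gives~$T_1\Psi^+(\rho)=\Phi^+\Phi^-\Psi^+(\rho)=0$ since~$\Phi^-\Psi^+=0$, whence~$\Psi^+(\rho)\cong(T_0/T_1)\Psi^+(\rho)=\Psi^+\Psi^-\Psi^+(\rho)$; as~$\Psi^+$ is fully faithful (a~$P_n$-morphism between representations on which the unipotent radical acts trivially is the same as a~$G_{n-1}$-morphism), hence reflects isomorphisms, this forces~$\rho\cong\Psi^-\Psi^+(\rho)$. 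The isomorphism~$\Phi^-\Phi^+\cong\Id$ and the finite-length statement I would quote from the Bernstein--Zelevinsky theory (\cite{BZ0}; \cite[III.1]{V}). Throughout I use that the exact functors~$\Psi^{\pm},\Phi^{\pm}$ are faithful, hence send nonzero objects to nonzero objects.

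\emph{Existence and uniqueness of a nonzero derivative, and recovery of~$\tau$.} Given an irreducible~$\tau$, since the filtration ends with~$T_n=(\Phi^+)^n(\Phi^-)^n=0$ and~$(\Phi^+)^n$ is faithful, one has~$(\Phi^-)^n\tau=0$; I would then let~$k\in\{1,\dots,n\}$ be maximal with~$\tau_{(k)}=(\Phi^-)^{k-1}\tau\neq0$ (possible as~$\tau_{(1)}=\tau\neq0$), so that~$\Phi^-\tau_{(k)}=0$. Feeding~$\tau_{(k)}$ into the filtration on~$\mathfrak{R}_R(P_{n-k+1})$ kills~$T_1$ and gives~$\tau_{(k)}\cong\Psi^+\Psi^-\tau_{(k)}=\Psi^+(\tau^{(k)})$, so in particular~$\tau^{(k)}\neq0$. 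Back on~$\mathfrak{R}_R(P_n)$, maximality of~$k$ yields~$T_k\tau=(\Phi^+)^k(\Phi^-)^k\tau=0$, whence~$T_{k-1}\tau=(T_{k-1}/T_k)\tau=(\Phi^+)^{k-1}\Psi^+\Psi^-(\Phi^-)^{k-1}\tau=(\Phi^+)^{k-1}\Psi^+(\tau^{(k)})$, which is nonzero; being a nonzero subrepresentation of the irreducible~$\tau$, it is all of~$\tau$. This is the recovery formula~$\tau\cong(\Phi^+)^{k-1}\Psi^+(\tau^{(k)})$ (the exponent~$k-1$ being the one for which the composite lands back in~$\mathfrak{R}_R(P_n)$). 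Substituting it back and cancelling pairs~$\Phi^-\Phi^+$ gives~$\tau^{(j)}\cong\Psi^-(\Phi^+)^{k-j}\Psi^+(\tau^{(k)})$ for~$j\leq k$ and~$\tau_{(j)}\cong(\Phi^-)^{j-k}\Psi^+(\tau^{(k)})$ for~$j\geq k$; invoking~$\Psi^-\Phi^+=0$ and~$\Phi^-\Psi^+=0$, these force~$\tau^{(j)}=0$ for every~$j\neq k$, which is the uniqueness.

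\emph{The converse.} Given an irreducible representation~$\rho$ of~$G_{n-k}$, I would set~$\pi=(\Phi^+)^{k-1}\Psi^+(\rho)$, a nonzero (by faithfulness) representation of~$P_n$, and first compute its derivatives: using~$\Phi^-\Phi^+\cong\Id$, $\Phi^-\Psi^+=0$ and~$\Psi^-\Psi^+\cong\Id$ one gets~$(\Phi^-)^{k-1}\pi\cong\Psi^+(\rho)$, hence~$\pi^{(k)}\cong\rho$, while~$\pi^{(j)}=0$ for~$j\neq k$ (via~$\Psi^-\Phi^+=0$ when~$j<k$ and~$\Phi^-\Psi^+=0$ when~$j>k$). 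Since~$\rho$ has finite length and~$\Psi^+,\Phi^+$ preserve finite length, $\pi$ is of finite length, so it admits an irreducible quotient~$\bar\pi$. By exactness of the derivative functors~$\bar\pi^{(j)}$ is a quotient of~$\pi^{(j)}$, hence~$0$ for~$j\neq k$; by the first part~$\bar\pi$ has exactly one nonzero derivative, so it is the~$k$-th, and~$\bar\pi^{(k)}$ is a nonzero quotient of the irreducible~$\pi^{(k)}\cong\rho$, so~$\bar\pi^{(k)}\cong\rho$. The recovery formula then gives~$\bar\pi\cong(\Phi^+)^{k-1}\Psi^+(\bar\pi^{(k)})\cong\pi$; thus the finite-length representation~$\pi$ has an irreducible quotient isomorphic to itself, so it has length one, i.e.\ is irreducible, and~$\pi^{(k)}\cong\rho$ exhibits~$\rho$ as the~$k$-th derivative of~$\pi$.

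\emph{Main obstacle.} Modulo the formalism recalled in the excerpt, the only genuinely non-formal input is the finiteness statement---that~$\Phi^+$ and~$\Psi^+$ carry finite-length representations to finite-length ones---needed to produce the irreducible quotient~$\bar\pi$ in the converse; and underpinning all of it is the (known, but not formal) fact that exactness, the adjunctions, the filtration and the section isomorphisms all survive over an algebraically closed field of characteristic~$\ell\neq p$, which is the content of \cite[III.1]{V}. Everything else is bookkeeping with the exact faithful functors~$\Psi^{\pm},\Phi^{\pm}$.
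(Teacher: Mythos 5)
The paper does not give a proof of this lemma; it is quoted from Bernstein--Zelevinsky and Vign\'eras, so there is no proof inside the paper to compare against. Your reconstruction is essentially the standard Bernstein--Zelevinsky argument, and it is correct modulo two small slips.

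First, a point in your favour: the statement as printed in the paper has an off-by-one indexing error. If~$\tau$ is an irreducible representation of~$P_n$ with~$\tau^{(k)}\neq 0$, then~$\tau^{(k)}$ is a representation of~$G_{n-k}$, so~$\Psi^+(\tau^{(k)})$ lives on~$P_{n-k+1}$ and one must apply~$\Phi^+$ exactly~$k-1$ times (not~$k$ times) to land on~$P_n$. The correct recovery formula is~$\tau\cong(\Phi^+)^{k-1}\Psi^+(\tau^{(k)})$, which is what you write throughout and what the paper itself uses later (see the proof of Proposition \ref{mult1}, where the irreducible subquotients of~$\pi|_{P_n}$ are written~$(\Phi^+)^k\Psi^+(\rho)$ with~$\rho$ a representation of~$G_{n-k-1}$). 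So you have silently corrected a typo in the statement.

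Two minor things to fix. The blanket assertion ``$\Psi^{\pm},\Phi^{\pm}$ are faithful, hence send nonzero objects to nonzero objects'' is false for the minus functors: the relations~$\Psi^-\Phi^+=0$ and~$\Phi^-\Psi^+=0$ mean~$\Psi^-$ and~$\Phi^-$ annihilate whole images. Fortunately you only ever invoke faithfulness of~$\Psi^+$ and~$\Phi^+$ (which does hold, via the sections~$\Psi^-\Psi^+\cong\Id$ and~$\Phi^-\Phi^+\cong\Id$), so this is purely a misstatement in the preamble, not a gap in the argument. Second, ``$T_n=(\Phi^+)^n(\Phi^-)^n=0$, and~$(\Phi^+)^n$ is faithful, so~$(\Phi^-)^n\tau=0$'' is a slightly odd way to phrase it, since~$(\Phi^-)^n$ on~$\mathfrak{R}_R(P_n)$ is vacuous; the clean statement is just that~$\tau_{(k)}=(\Phi^-)^{k-1}\tau$ vanishes for~$k>n$, so a maximal~$k\leqslant n$ with~$\tau_{(k)}\neq 0$ exists. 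The remaining reasoning --- the use of the filtration~$T_\bullet$ to get both the section~$\Psi^-\Psi^+\cong\Id$ and the recovery formula, the cancellation computation giving uniqueness, and the irreducible-quotient trick for the converse --- is clean and correct; you are right to flag that~$\Phi^-\Phi^+\cong\Id$ and preservation of finite length under~$\Phi^+,\Psi^+$ are the genuinely non-formal inputs that must be quoted from \cite{BZ0} or \cite[III.1]{V}.
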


Let~$\pi$ be an~$R$-representation of~$G_n$.  The \emph{zeroth derivative}~$\pi^{(0)}$ of~$\pi$ is~$\pi$.  Let~$\tau=\pi\mid_{P_n}$ and set~$\pi_{(k)}=\tau_{(k)}$ for~$k=0,1,\ldots,n$. Define the \emph{$k$-th derivative}~$\pi^{(k)}$ of~$\pi$ by~$\pi^{(k)}=\tau^{(k)}$, for~$k=1,\ldots,n$.

\begin{LM}[{\cite{BZ0}}]\label{dimensionwhittaker}
 Let~$\pi$ be an~$R$-representation of finite length. Then the dimension of~$\pi^{(n)}$ is finite and equal to the dimension of~$\Hom_{N_n}(\pi,\theta)$. 
\end{LM}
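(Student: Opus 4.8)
The plan is to relate both quantities to the space of $(N_n,\theta)$-coinvariants of $\pi$, using the filtration of the identity functor on $\mathfrak{R}_R(P_n)$ from the theorem above. First I would unwind the definition: $\pi^{(n)} = \tau^{(n)} = \Psi^-\big((\Phi^-)^{n-1}(\tau)\big)$ where $\tau = \pi\mid_{P_n}$. Since $\Psi^-$ on $\mathfrak{R}_R(P_1) = \mathfrak{R}_R(\{1\})$ is (up to the harmless $\nu^{-1/2}$ twist, which is trivial on the trivial group) the identity, and since each application of $\Phi^-$ is the functor of $(U_{k-1},\theta)$-coinvariants twisted by $\nu^{-1/2}$, iterating $n-1$ times and then applying $\Psi^-$ amounts to taking the $(N_n,\theta)$-coinvariants $\pi_{N_n,\theta} = \pi / \pi(N_n,\theta)$, where $\pi(N_n,\theta)$ is the span of $\{\pi(u)v - \theta(u)v\}$. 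So the first key step is the identification $\pi^{(n)} \cong \pi_{N_n,\theta}$ as $R$-vector spaces (there is no group left acting, $G_0$ being trivial).

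The second step is the standard duality between coinvariants and invariants for the twisted action: $\dim_R \pi_{N_n,\theta} = \dim_R \Hom_{N_n}(\pi,\theta)$ when both are finite. One direction is immediate since any $N_n$-equivariant map $\pi \to \theta$ kills $\pi(N_n,\theta)$ and hence factors through $\pi_{N_n,\theta}$, giving $\Hom_{N_n}(\pi,\theta) = \Hom_R(\pi_{N_n,\theta}, R)$; so $\dim \Hom_{N_n}(\pi,\theta) = \dim \pi_{N_n,\theta}$ whenever the latter is finite. Thus it suffices to prove finiteness of $\dim \pi^{(n)}$ for $\pi$ of finite length, and then everything collapses.

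For the finiteness, I would argue by induction on the length of $\pi$, reducing to $\pi$ irreducible; by exactness of the derivative functors (each $\Phi^-$, $\Psi^-$ is exact), $\dim \pi^{(n)}$ is additive in short exact sequences, so the finite-length case follows from the irreducible case. For $\pi$ irreducible, one invokes the structure theory: the Bernstein–Zelevinsky filtration of $\pi\mid_{P_n}$ has successive quotients of the form $(\Phi^+)^{k-1}\Psi^+(\pi^{(k)})$, and $\pi^{(k)}$ is a representation of $G_{n-k}$ which, for $\pi$ irreducible of finite length, is itself of finite length (this is part of the Bernstein–Zelevinsky theory, valid over any algebraically closed $R$ with $\ell \neq p$, as recorded in \cite{V}); in particular $\pi^{(n)}$ is a finite-length — hence finite-dimensional, being a representation of the trivial group — $R$-vector space. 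Alternatively, and perhaps more cleanly, one cites directly that for finite-length $\pi$ the top derivative $\pi^{(n)}$ is finite-dimensional as part of \cite[III]{V}'s development of derivatives in the modular setting.

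The main obstacle is the finiteness statement: over a field of characteristic $\ell$ one cannot simply quote the characteristic-zero Bernstein–Zelevinsky results, and one must make sure the relevant finiteness (that derivatives of finite-length representations have finite length, or at least that the top derivative is finite-dimensional) is available modulo $\ell$. This is precisely the kind of statement that Vignéras establishes in \cite{V}, so the cleanest route is to cite it; the identification $\pi^{(n)} \cong \pi_{N_n,\theta}$ and the coinvariants-vs-invariants duality are then routine.
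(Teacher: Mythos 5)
The paper does not prove this lemma: it is cited directly from \cite{BZ0}, with the positive-characteristic extension understood to come from \cite[III 1]{V}, so there is no in-text proof to compare against. Your reconstruction is correct and matches the standard argument. The identification $\pi^{(n)}\cong \pi_{N_n,\theta}$ follows by unwinding $\Phi^-$ and $\Psi^-$ as you do (the $\nu^{-1/2}$ twists are trivial on $G_0$-modules), the duality $\Hom_{N_n}(\pi,\theta)\cong \Hom_R(\pi_{N_n,\theta},R)$ converts finiteness of $\pi^{(n)}$ into the dimension equality, and the reduction to the irreducible case by exactness of the derivative functors is sound. You rightly isolate the only substantive input as the finiteness of the top derivative of an irreducible representation, which is exactly what \cite{BZ0} establishes over $\C$ and \cite{V} transports to the modular setting; citing it directly, as you ultimately suggest, is the cleanest route and plainly the one the authors intend.
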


We remark once and for all that the functor
\[\pi\mapsto \Hom_{N_n}(\pi,\theta)\] 
is exact from~$\mathfrak{R}_R(G_n)$ to the category of~$R$-vector spaces, because the functor~$\pi\mapsto \pi^{(n)}$ is exact, and will use this fact a lot without mentioning it. We will also use several times the following proposition.

\begin{prop}[{\cite[Proposition 3.7]{BZ}}]\label{BZ3.7}
Let~$\rho$ and~$\rho'$ be~$R$-representations of~$G_n$ and~$\tau$ and~$\tau'$ be~$R$-representations of~$P_m$, we have
\begin{enumerate}
\item~$\Hom_{P_{n+1}}(\Psi^+(\rho)\otimes \Psi^+(\rho'),R)=\Hom_{G_n}(\rho\otimes \rho',R)$;
\item~$\Hom_{G_n}(\Phi^+(\tau)\otimes \Phi^+(\tau'),R)=\Hom_{P_n}(\tau\otimes \tau',R)$;
\item~$\Hom_{G_n}(\Psi^+(\rho)\otimes \Phi^+(\tau),R)=\{0\}$.
\end{enumerate}
\end{prop}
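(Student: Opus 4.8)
The plan is to obtain all three identities from the formal properties of the Bernstein--Zelevinsky functors recalled above, the key manoeuvre being to rewrite a $\Hom$ out of a tensor product as a $\Hom$ into a smooth contragredient, and then to recognise the contragredient of $\Psi^+(\rho')$ as lying again in the image of $\Psi^+$, and the contragredient of $\Phi^+(\tau')$ as lying in the image of $\Phi^+_{nc}$.

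First I would record a purely formal isomorphism: for smooth $R$-representations $A$ and $B$ of a locally profinite group $H$, one has $\Hom_H(A\otimes B,R)\cong\Hom_H(A,\widetilde B)$. Indeed, an $H$-invariant pairing $A\times B\to R$ gives an $H$-equivariant map $A\to B^{*}$ into the full linear dual, and smoothness of $A$ forces the image of this map to consist of vectors fixed by some compact open subgroup of $H$, hence to lie in $\widetilde B$. No admissibility hypothesis intervenes, which matters because $\Phi^+(\tau')$ is never admissible. Next I would recall the two standard descriptions of the relevant contragredients: the smooth dual commutes with inflation along $P_{n+1}\twoheadrightarrow P_{n+1}/U_{n+1}\cong G_n$, so $\widetilde{\Psi^+(\rho')}\cong\Psi^+(\widetilde{\rho'})$ up to the power of $\nu$ dictated by the normalisation of $\Psi^+$; and the smooth dual of a normalised compact induction is the normalised full induction of the contragredient, so $\widetilde{\Phi^+(\tau')}$ is a full induction from $P_nU_{n+1}$ and, after the inner conjugation exchanging $\theta$ with $\theta^{-1}$ on $U_{n+1}$, is isomorphic to a value of $\Phi^+_{nc}$. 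Both dual statements are formal and remain valid over $R$ of characteristic $\ell$ because the quotient measures they rely on exist by Section~\ref{haarmeasures}.

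Granting this, the three parts follow quickly. For (1), $\Hom_{P_{n+1}}(\Psi^+(\rho)\otimes\Psi^+(\rho'),R)=\Hom_{P_{n+1}}(\Psi^+(\rho),\widetilde{\Psi^+(\rho')})=\Hom_{P_{n+1}}(\Psi^+(\rho),\Psi^+(\widetilde{\rho'}))$, and since $\Psi^-\Psi^+=\Id$ and $\Psi^-$ is left adjoint to $\Psi^+$, the functor $\Psi^+$ is fully faithful, so this equals $\Hom_{G_n}(\rho,\widetilde{\rho'})=\Hom_{G_n}(\rho\otimes\rho',R)$. For (3), $\Hom_{P_{n+1}}(\Psi^+(\rho)\otimes\Phi^+(\tau),R)=\Hom_{P_{n+1}}(\Psi^+(\rho),\widetilde{\Phi^+(\tau)})=\Hom_{P_{n+1}}(\Psi^+(\rho),\Phi^+_{nc}(W))$ for a suitable $W$, and the adjunction between $\Phi^-$ and $\Phi^+_{nc}$ turns this into $\Hom_{P_n}(\Phi^-\Psi^+(\rho),W)$, which is zero because $\Phi^-\Psi^+=0$. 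For (2), $\Hom_{P_{n+1}}(\Phi^+(\tau)\otimes\Phi^+(\tau'),R)=\Hom_{P_{n+1}}(\Phi^+(\tau),\widetilde{\Phi^+(\tau')})=\Hom_{P_{n+1}}(\Phi^+(\tau),\Phi^+_{nc}(W'))$, and the same adjunction together with the standard identity $\Phi^-\Phi^+=\Id$ rewrites this as $\Hom_{P_n}(\Phi^-\Phi^+(\tau),W')=\Hom_{P_n}(\tau,W')=\Hom_{P_n}(\tau\otimes\tau',R)$.

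The structure of this argument is robust; the point I expect to demand real care is the bookkeeping of normalisations---tracking the powers of $\nu$ coming from the $\nu^{\pm1/2}$-twists built into $\Psi^\pm$ and $\Phi^\pm$, the modulus character attached to the dual of a compact induction, and the inner conjugation passing from $\theta$ to $\theta^{-1}$---so that the right-hand sides come out exactly as stated rather than with an extraneous twist. One should also check that $\Phi^-\Phi^+\cong\Id$ and that $\Psi^+$ is fully faithful in the $\ell$-modular setting; these belong to the package of \cite{BZ0,BZ} carried over to positive characteristic in \cite{V}, and their proofs are insensitive to the characteristic of $R$ once the measure-theoretic input of Section~\ref{haarmeasures} is granted. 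As an alternative for (2) and (3) one could instead apply $\Hom(-,R)$ termwise to the derivative filtration $0=T_n\subset\cdots\subset T_0=\1$ of the identity functor on $\mathfrak{R}_R(P_{n+1})$ tensored with the fixed factor, replacing the contragredient computation by a short exact sequence argument; this variant uses the same vanishing $\Psi^-\Phi^+=\Phi^-\Psi^+=0$ and the derivative-shifting formula for $\Phi^+$.
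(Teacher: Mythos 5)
The paper does not prove this result; it is cited as \cite[Proposition 3.7]{BZ} (and the statement as printed has small typos: it should read $\Hom_{P_{n+1}}$ rather than $\Hom_{G_n}$ in parts (2) and (3), and $\tau,\tau'$ should be representations of $P_n$, matching Bernstein--Zelevinsky's $\mathrm{Bil}_{P_{n+1}}$ formulation). So there is no internal proof to compare against, and your argument must stand on its own.

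Your overall strategy --- rewrite $\Hom_H(A\otimes B,R)$ as $\Hom_H(A,\widetilde B)$, identify the contragredients of $\Psi^+$ and $\Phi^+$, then use the adjunctions --- is sound and is essentially the standard route. Part (3) is genuinely clean this way: you only need $\widetilde{\Phi^+\tau}$ to be of the form $\Phi^+_{nc}(W)$ for \emph{some} $W$, and then $\Phi^-\Psi^+=0$ kills everything. However, the ``bookkeeping'' you defer in part (2) is more than a routine twist chase, and you underestimate it. The duality $\widetilde{\ind_{P_nU_{n+1}}^{P_{n+1}}(\tau'\otimes\theta)}\cong\Ind_{P_nU_{n+1}}^{P_{n+1}}(\widetilde{\tau'}\otimes\theta^{-1})$ has target $\theta^{-1}$, and passing back to $\theta$ requires conjugation by a diagonal element $d$ with $d_n=-1$. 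This $d$ lies in $P_{n+1}$ and normalizes $P_n$ but does \emph{not} lie in $P_n$. So after invoking $\Phi^-\Phi^+\cong\Id$ and the $(\Phi^-,\Phi^+_{nc})$ adjunction you land in $\Hom_{P_n}(\tau,\widetilde{\tau'}^{\,d})\cong\Hom_{P_n}(\tau^{d^{-1}}\otimes\tau',R)$, which is not obviously the same as $\Hom_{P_n}(\tau\otimes\tau',R)$; one needs an additional argument here, and this is exactly where the naive contragredient approach must be supplemented (for instance by the filtration/geometric-lemma variant you sketch at the end, or by tracking the conjugation through iterated $\Phi^+$ where $\tau$ arises as $(\Phi^+)^k\Psi^+(\sigma)$). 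A parallel remark applies to (1): the normalized $\Psi^\pm$ introduce $\nu^{\pm1/2}$ twists that, taken literally, leave a residual $\nu$ in the claimed equality; this is a symptom of the paper quoting BZ's unnormalized statement in a normalized convention, and is harmless for the paper's applications (Proposition~\ref{mult1} only uses the conclusion with targets that are characters $\chi_X$, where a $\nu$-shift does not affect vanishing or rank-one-ness), but you should not expect the identity to come out literally ``on the nose'' from your outline.
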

%
The derivatives of a product are given by the Leibniz rule.

\begin{LM}[{\cite{BZ} (cf. \cite[III 1.10]{V})}]\label{derprod}
Suppose~$\pi$ is an~$R$-representation of~$G_n$ and~$\rho$ is an~$R$-representation of~$G_m$, then~$(\pi\times\rho)^{(k)}$ has a filtration with successive quotients~$\pi^{(i)}\times\rho^{(k-i)}$, for~$0\leqslant i\leqslant k$.
\end{LM}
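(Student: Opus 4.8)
The plan is to prove the Leibniz rule for derivatives of a product $\pi\times\rho = \Ind_{P_\lambda}^{G_{n+m}}(\pi\otimes\rho)$, where $\lambda=(n,m)$, by combining the geometric lemma of Bernstein--Zelevinsky with the exact filtration of the identity functor on $\mathfrak{R}_R(P_N)$ from the theorem quoted above. First I would recall that the $k$-th derivative is $(\pi\times\rho)^{(k)} = \Psi^-(\Phi^-)^{k-1}\bigl((\pi\times\rho)\mid_{P_{n+m}}\bigr)$, so everything reduces to understanding the restriction of $\pi\times\rho$ to the mirabolic subgroup $P_{n+m}$ and then applying the exact functors $\Phi^-$ and $\Psi^-$. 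Since all five functors $\Psi^\pm,\Phi^\pm,\Phi^+_{nc}$ are exact, it suffices to produce the filtration at the level of $P_{n+m}$-representations and then transport it.

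The key step is the Mackey-style computation of $(\pi\times\rho)\mid_{P_{n+m}}$. One decomposes $P_\lambda\backslash G_{n+m}/P_{n+m}$ into orbits; the crucial point (exactly as in \cite[\S3]{BZ}, in particular Proposition~\ref{BZ3.7}) is that there is a suitable filtration of $(\pi\times\rho)\mid_{P_{n+m}}$ whose graded pieces are, up to normalisation twists by powers of $\nu$, of the form $(\Phi^+)^{?}\Psi^+(\cdots)$ built out of the restrictions $\pi\mid_{P_n}$ and $\rho\mid_{P_m}$ glued along a parabolic. Then, applying $(\Phi^-)^{k-1}$ and using the relations $\Psi^-\Phi^+=0$, $\Phi^-\Psi^+=0$ together with the identity-functor filtration $T_j=(\Phi^+)^j(\Phi^-)^j$, $T_{j-1}/T_j=(\Phi^+)^{j-1}\Psi^+\Psi^-(\Phi^-)^{j-1}$, one sees that the only surviving contributions after taking $\Psi^-(\Phi^-)^{k-1}$ are precisely the pieces indexed by a splitting $k=i+(k-i)$ of the ``derivative degree'' between the two factors, yielding the successive quotients $\pi^{(i)}\times\rho^{(k-i)}$ for $0\le i\le k$. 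The bookkeeping of the $\nu^{1/2}$-twists hidden in the definitions of $\Psi^\pm$ and $\Phi^\pm$ is what makes the graded pieces come out as honest (normalised) products rather than twisted ones.

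The main obstacle I expect is precisely this combinatorial/Mackey bookkeeping: carefully identifying each double-coset contribution, checking that the partial order on orbits gives a genuine $P_{n+m}$-stable filtration (not merely a filtration after applying further functors), and verifying that the exactness of $\Psi^-,\Phi^-$ plus the vanishing relations collapse the double-indexed filtration to the single-indexed Leibniz filtration with exactly $k+1$ pieces. In characteristic $\ell$ there is nothing genuinely new here beyond the characteristic-zero argument of Bernstein--Zelevinsky: the functors $\Psi^\pm,\Phi^\pm$ are exact over any algebraically closed $R$ with $\ell\ne p$, induction and restriction along $P_\lambda\subset G_{n+m}$ and $P_{n+m}\subset G_{n+m}$ behave formally the same, and the relevant pro-orders (of the unipotent subgroups $U_{t,r}$ and of $P_{n+m}\cap$ a small compact open) are invertible in $R$, so the integration formulas of Section~\ref{haarmeasures} license all the needed measure manipulations. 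Hence I would essentially cite \cite{BZ}, via \cite[III 1.10]{V}, and only remark that each ingredient of that proof has an $R$-linear analogue already established in the Preliminaries.
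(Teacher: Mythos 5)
Your proposal is correct and matches the paper's approach: the paper supplies no proof of this lemma, citing only \cite{BZ} (via \cite[III 1.10]{V}), and you likewise conclude by deferring to those references after observing that the Mackey/geometric-lemma argument of Bernstein--Zelevinsky carries over verbatim to $R$-coefficients because the functors $\Psi^{\pm},\Phi^{\pm}$ are exact and the relevant pro-orders are invertible in $R$. Your sketch of the Bernstein--Zelevinsky mechanism (filtration of the restriction to $P_{n+m}$ indexed by double cosets, followed by applying $\Psi^-(\Phi^-)^{k-1}$ and using the vanishing relations to collapse to the $k+1$ Leibniz pieces) is an accurate account of what the cited proof does.
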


It is ``well known to experts'' that the derivative functors commute with reduction modulo~$\ell$. However, 
it seems that no proof appears in the literature. We will use this property in the sequel, but postpone its proof to Subsection \ref{genericsection}. 

\begin{thm}\label{derivatives-commute}
Let~$\tau$ be an integral~$\ell$-adic representation of finite length of~$G_n$, then for~$0\leq k\leq n$, one has 
$r_\ell(\tau^{(k)})=[r_\ell(\tau)^{(k)}]$ (where the square brackets stand for the semi-simplification again).
\end{thm}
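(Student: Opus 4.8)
The plan is to reduce the statement about $k$-th derivatives to the two building blocks $\Psi^-$ and $\Phi^-$, and check that each commutes with $r_\ell$ on the relevant categories. Since $\tau^{(k)} = \Psi^-(\Phi^-)^{k-1}(\tau\mid_{P_n})$, and restriction to $P_n$ trivially commutes with reduction modulo $\ell$ (it is just forgetting part of the action on the same lattice), it suffices to prove:
(i) if $\sigma$ is an integral $\ell$-adic representation of finite length of $P_n$, then $r_\ell(\Phi^-\sigma) \cong \Phi^-(r_\ell\sigma)$ up to semisimplification; and
(ii) likewise for $\Psi^-$. The twist by $\nu^{-1/2}$ appearing in both functors is harmless because $\nu$ takes values in $\mathbb{Z}_\ell^\times$ and the chosen square root of $q$ in $\overline{\mathbb{F}_\ell}$ is the reduction of the one in $\overline{\mathbb{Z}_\ell}$ (this compatibility is fixed in the Notations), so I would first record that lemma and then work with the untwisted Jacquet-type functors.

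The key point for both (i) and (ii) is that $\Psi^-$ and $\Phi^-$ are, up to twist, functors of coinvariants: $\Psi^-$ takes $U_n$-coinvariants, $\Phi^-$ takes $(U_{n-1},\theta)$-coinvariants. The main step is therefore: \emph{taking $(N,\psi)$-coinvariants commutes with reduction mod $\ell$ for a pro-$p$ unipotent group $N$ and a character $\psi$ valued in $\overline{\mathbb{Z}_\ell}^\times$, on the level of Grothendieck groups.} Concretely, if $\sigma$ acts on a $\Ql$-space with a stable $\Zl$-lattice $L$, then $\sigma_{N,\psi}$ carries the lattice image of $L$, and I claim $r_\ell(\sigma)_{N,\psi} = r_\ell(\sigma_{N,\psi})$. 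The cleanest way is via the idempotent/averaging argument: because $N$ is the increasing union of its compact open subgroups $N_j$ of pro-order a power of $p$ (invertible in $\Zl$ and in $\Fl$), the coinvariants $\sigma_{N,\psi}$ equal $\varinjlim_j e_{j,\psi}\sigma$ where $e_{j,\psi} = \frac{1}{[N_j:1]}\int_{N_j}\psi^{-1}(n)\,n\,dn$ is an idempotent in the Hecke algebra with coefficients in $\Zl$ (here I use the $R$-Haar measure theory of Section \ref{haarmeasures} to normalise so that $\mathbf{1}_{N_j}$ integrates to something in $\Zl^\times$). Idempotents with $\Zl$-coefficients acting on a lattice commute with $\otimes_{\Zl}\Fl$; hence $e_{j,\psi}(L\otimes\Fl) = (e_{j,\psi}L)\otimes\Fl$, and passing to the limit over $j$ gives exactly $r_\ell(\sigma_{N,\psi}) = (r_\ell\sigma)_{N,\psi}$ as $\Fl[P_{n-1}]$- (resp. $G_{n-1}$-) modules, and a fortiori in the Grothendieck group. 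Exactness of the functor $\pi\mapsto\pi^{(k)}$ (stated in the excerpt) then lets me pass from semisimplifications back and forth without losing the identity $[r_\ell(\tau)^{(k)}]$.

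Assembling: starting from an integral $\tau$ of finite length, choose a stable lattice; restricting to $P_n$ and applying $\Phi^-$ repeatedly $k-1$ times, then $\Psi^-$, each step being a $\psi$-coinvariants functor (with $\psi$ trivial or equal to $\theta$) followed by a $\nu^{\pm 1/2}$-twist, and each step commuting with $r_\ell$ by the idempotent argument together with the twist lemma, yields $r_\ell(\tau^{(k)}) = r_\ell(\tau)^{(k)}$ as actual $\Fl$-representations; semisimplifying gives the stated equality $r_\ell(\tau^{(k)}) = [r_\ell(\tau)^{(k)}]$. The one technical hazard I anticipate — and the step I would be most careful about — is the interaction of the direct limit defining the coinvariants with the lattice: I must check that the natural map $(\varinjlim_j e_{j,\psi}L)\otimes_{\Zl}\Fl \to \varinjlim_j (e_{j,\psi}L\otimes_{\Zl}\Fl)$ is an isomorphism, which holds because tensor product commutes with filtered colimits, but I also want the target to be identified with $(r_\ell\sigma)_{N,\psi}$, i.e. I need that reduction mod $\ell$ of the lattice $L$ really does compute $r_\ell(\sigma)$ — fine since $\tau$ is assumed integral of finite length so $L\otimes\Fl$ has finite length with the right semisimplification. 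Finite length throughout (guaranteed for $\tau$ and preserved by the exact derivative functors) ensures all the Grothendieck-group manipulations are legitimate.
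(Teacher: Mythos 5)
Your proof is correct, and it takes a genuinely different route from the paper's. The paper's argument first rewrites $\tau^{(k)}=(\Id\otimes(\Phi^-)^{k})\circ\boldsymbol{r}_{P}^{G}(\tau)$, defers the Jacquet-functor part to a cited result of Dat, and then reduces the remaining statement to the \emph{top} derivative $(\Phi^-)^n$ on irreducibles (Proposition~\ref{reduction-commutes-with-derivatives-1}). That case is handled by structure theory: multiplicity one of Whittaker functionals, the uniqueness of the generic component in $r_\ell(\tau)$ (Lemma~\ref{Jl}), the cuspidal support embedding $\tau\hookrightarrow\mu_1\times\cdots\times\mu_r$, and the fact that standard lifts are Whittaker lifts (Theorem~\ref{integral-whittaker}, which itself rests on \cite[Theorem 2]{V2} and Casselman--Shalika). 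Your route instead proves commutation with $r_\ell$ directly for each Bernstein--Zelevinsky building block $\Phi^-$, $\Psi^-$, via the averaging-idempotent description of $(N,\psi)$-coinvariants as $\varinjlim_j e_{j,\psi}\sigma$ over an exhaustion of $N$ by compact open pro-$p$ subgroups, using that these idempotents are $\Zl$-valued (pro-orders prime to $\ell$, $\psi$ valued in roots of unity) so that $e_{j,\psi}$ commutes with $\otimes_{\Zl}\Fl$. This is more elementary and self-contained — it avoids leaning on the heavy Whittaker-model integrality inputs and Dat's theorem — at the cost of having to verify the lattice-level compatibilities yourself (torsion-freeness of $\Lambda_{N,\psi}$, compatibility of the colimit with $\otimes\Fl$, Brauer--Nesbitt for $P_n$-modules); you correctly flag the relevant technical points. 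One small imprecision: the conclusion ``$r_\ell(\tau^{(k)})=r_\ell(\tau)^{(k)}$ as actual $\Fl$-representations'' is not quite what your lattice computation gives — it gives $(\Lambda\otimes\Fl)^{(k)}\cong\Lambda^{(k)}\otimes\Fl$ for a fixed lattice $\Lambda$, which then yields the stated equality of semisimplifications after invoking exactness of $(\cdot)^{(k)}$, exactly as you say in the following clause.
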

\begin{proof}
See just after Corollary \ref{reduction-commutes-with-derivatives-2}.
\end{proof}

\subsection{Parabolic induction and restriction, integral structures and reduction modulo~$\ell$}\label{sectintegral}

Let~$Q$ be a parabolic subgroup of~$G_n$ with Levi factor~$M$.  We write~$\boldsymbol{i}^{G_n}_Q$ for the functor of normalised parabolic induction from~$\mathfrak{R}_R(M)$ to~$\mathfrak{R}_R(G_n)$. If~$\tau=\pi_1\otimes   \dots \otimes \pi_r$ is a smooth~$R$-representation of~$M_{(m_1,\dots,m_r)}$ with~$\sum_i m_i=n$, we will use the product notation~$\pi_1\times\dots \times \pi_r$ for the induced representation~$\boldsymbol{i}^{G_n}_{P_{(m_1,\dots,m_r)}}(\tau)$.  An irreducible~$R$-representation of~$G_n$ is called \emph{cuspidal} if it is irreducible and it does not appear as a subrepresentation of any parabolically induced representation. It is called \emph{supercuspidal}, moreover, if it does not appear as a subquotient of any parabolically induced representation.

Let~$G$ be a direct product of the form~$G_1\times \dots \times G_r$. An~$\ell$-adic representation~$(\pi,V)$ of~$G$ is called \emph{integral} if it has finite length, and if~$V$ contains a~$G$-stable~$\Zl$-lattice~$\Lambda$. Such a lattice is called an \emph{integral structure}, or \emph{lattice}, in~$\pi$.  A character is integral if and only if it takes values in~$\Zl$.  By \cite[II  4.12]{V}, a cuspidal representation is integral if and only if its central character is integral.

If~$\pi$ is an integral~$\ell$-adic representation with integral structure~$\Lambda$, then~$\pi$ defines an~$\ell$-modular representation on the space~$\Lambda\otimes_{\Zl}\Fl$.  By the Brauer-Nesbitt principle \cite[Theorem 1]{V2}, the semi-simplification, in the Grothendieck group of finite length~$\ell$-modular representations, of~$(\pi,\Lambda\otimes_{\Zl}\Fl)$ is independent of the choice of integral structure in~$\pi$ and we call this semisimple representation~$r_{\ell}(\pi)$ the \emph{reduction modulo~$\ell$} of~$\pi$. We say that an~$\ell$-modular representation~$\pi$ \emph{lifts} to an integral~$\ell$-adic representation~$\tau$ if~$r_{\ell}(\tau)\simeq \pi$, we will only really use this notion of lift when~$\pi$ is irreducible. 

Let~$H$ be a closed subgroup of~$G$,~$\sigma$ be an integral~$\ell$-adic representation of~$H$, and~$\Lambda$ be an integral structure in~$\sigma$.  By \cite[I  9.3]{V},~$\ind_H^G(\Lambda)$ is an integral structure in~$\ind_H^G(\sigma)$.  Suppose~$Q$ is a parabolic subgroup of~$G$ with Levi decomposition~$Q=MN$,~$\sigma$ is an integral representation of~$M$,~$\Lambda$ is an integral structure in~$\sigma$. Then~$\boldsymbol{i}_Q^G(\Lambda)$ is an integral structure in~$\boldsymbol{i}_Q^G(\sigma)$. Moreover, we have~$\boldsymbol{i}_Q^G(\Lambda\otimes_{\Zl}\Fl)\simeq \boldsymbol{i}_Q^G(\Lambda)\otimes_{\Zl}\Fl$. Hence \emph{parabolic induction commutes with reduction modulo~$\ell$}, in the sense that
\[ r_{\ell}(\boldsymbol{i}_Q^G(\sigma))\simeq [\boldsymbol{i}_Q^G(r_{\ell}(\sigma))],\]
where the square brackets indicate we take the semi-simplification of~$\boldsymbol{i}_Q^G(r_{\ell}(\sigma))$. 

If~$\pi$ is an~$R$-representation of~$G$, and~$Q$ is a parabolic subgroup of~$G$, 
then we denote by~$\boldsymbol{r}_Q^G(\pi)$ its normalised \textit{Jacquet module}. In a similar way, if~$\tau$ is an integral~$\ell$-adic representation of~$G$ of finite length, thanks to \cite[Proposition 6.7]{Dat}, the Jacquet module~$\boldsymbol{r}_Q^G(\tau)$ is integral, and one has the isomorphism
\[r_{\ell}(\boldsymbol{r}_Q^G(\tau))\simeq [\boldsymbol{r}_Q^G(r_{\ell}(\tau))].\].

\subsection{Representations of Whittaker type}\label{repwhit}
We now recall facts originally from \cite{V3}, with our exposition and notation following \cite[Section 7]{MS}.  First, recall the following definition:

\begin{df} An irreducible~$R$-representations of~$G_n$ satisfying~$\Hom_{N_n}(\pi,\theta)\neq 0$ is called \textit{generic}.  
\end{df}

A \emph{segment}~$\D=[a,b]_\rho$ is a sequence~$(\nu^a\rho,\nu^{a+1}\rho,\cdots,\nu^b\rho)$ with~$\rho$ a cuspidal~$R$-representation of~$G_m$ for some~$m\geqslant 1$, and~$a,b\in\mathbb{Z}$ with~$a\leqslant b$. Its \emph{length} is, by definition,~$b-a+1$. Two segments 
~$\D=[a,b]_\rho$ and~$\D'=[a',b']_\rho'$ are said to be equivalent if they have the same length, and~$\nu^a\rho\simeq \nu^{a'}\rho'$. 
Hence, as noticed in \cite[7.2]{MS}, the segment~$[a,b]_\rho$ identifies with the cuspidal pair \[r_\D=(M_{(m,\dots,m)},\nu^a\rho\otimes \nu^{a+1}\rho\otimes \cdots \otimes \nu^b\rho),\] and the equivalence relation on segments is the restriction of the classical isomorphism equivalence relation on cuspidal pairs. To such 
a segment~$\D$, in \cite[Definition 7.5]{MS} the authors associate a certain quotient~$\L(\Delta)$ of 
$\nu^a\rho\times \nu^{a+1}\rho\times \cdots \times \nu^b\rho$. The representation~$\L(\D)$ in fact determines~$\D$, as 
its normalised Jacquet module with respect to the opposite of~$N_{(m,\dots,m)}$ is equal to~$r_\Delta$ according to \cite[Lemma 7.14]{MS}. The conclusion of this is that the objects 
$\D$,~$\L(\Delta)$, and~$r_\Delta$ determine one another, and hence we call~$\L(\Delta)$ a segment.  

We say that~$\Delta$ \emph{precedes}~$\Delta'$ if we can extract from the sequence~$(\nu^a\rho, \ldots, \nu^b\rho,\nu^{a'}\rho',\ldots,\nu^{b'} \rho')$ a subsequence which is a segment of length strictly larger than both the length of~$\Delta$ and the length of~$\Delta'$.  We say~$\Delta$ and~$\Delta'$ are \emph{linked} if~$\Delta$ precedes~$\Delta'$ or~$\Delta'$ precedes~$\Delta$.

Let~$\rho$ be a cuspidal~$R$-representation of~$G_m$, we denote by~$\Z_\rho$ the \emph{cuspidal line} of~$\rho$, that is the set 
\[\Z_\rho=\{\nu^r \rho,r\in \Z\},\] 
which when~$\nu$ has finite order in~$R$ is perhaps more appropriately thought of as a circle.  We define an element~$o(\rho)$ in~$\N \cup \{+\infty\}$ by the formula 
\[o(\rho)=|\Z_\rho|.\]
In \cite[5.2]{MS}, a positive integer~$e(\rho)$ is attached to~$\rho$, 
\begin{equation*}
e(\rho)=\begin{cases}
+\infty&\text{ if }R=\Ql;\\
|\{\nu^r \rho,r\in \Z\}|&\text{ if }R=\Fl\text{ and } o(\rho)>1;\\
\ell&\text{ if }R=\Fl\text{ and } o(\rho)=1.
\end{cases}
\end{equation*}
An integer~$f(\rho)$ is also defined in \cite[5.2]{MS} via type theory, the definition of which we will recall later, and following this reference we set~$q(\rho)=q^{f(\rho)}$. If~$o(\rho)=1$, then~$e(\rho)$ is the order of~$q(\rho)$. For integers~$a\leqslant b$, we denote by~$St(\rho,[a,b])$ the \emph{generalised Steinberg representation} associated with~$\D=[a,b]_\rho$, i.e.~the unique generic subquotient of\[\nu^a\rho\times \nu^{a+1}\rho\times \cdots \times \nu^b\rho.\] By \cite[Remarque 8.14]{MS}, the representation~$St(\rho,[a,b])$ is equal to the segment~$\L(\D)$ if and only if its length~$b-a+1<e(\rho)$. In this case, we say that~$\L(\D)$ is a \textit{generic segment}. As in \cite{MS}, we will write~$St(\rho,k)$ for~$St(\rho,[0,k-1])$, for~$k\geqslant 1$.  

By \cite[III 4.25 \& 5.10]{V}, if~$\rho$ is a cuspidal~$\ell$-modular representation of~$G_r$, then it is the reduction modulo~$\ell$ of an integral cuspidal~$\ell$-adic representation~$\sigma$ of~$G_r$, and the reduction modulo-$\ell$ of any cuspidal~$\ell$-adic representation is still cuspidal.

\begin{df}\label{banal-segment}
Let~$\rho$ be an~$R$-cuspidal representation of~$G_n$, and take~$a\leqslant b$ in~$\N$. We say that a segment~$[a,b]_\rho$ is \emph{banal} if its length~$b-a+1<o(\rho)$. In this case, we also say that~$\L(\D)$ is \emph{banal}. 
\end{df}

Let~$\D=[a,b]_\rho$ be a segment. We notice that at once that if the representation~$\L(\D)$ is banal, then it is generic, and that the generic segment~$\L(\D)$ is non-banal if and only if~$o(\rho)=1$, i.e. if and only if~$\rho$ is non-banal in the sense of \cite{MSComp}. 

\begin{rem}\label{lift-segments} Let~$\L(\D)=St(\rho,k)$, then~$\L(\D)$ is the reduction modulo~$\ell$ of~$\L(D)=St(\sigma,k)$. Indeed, 
it is a straightforward consequence (this was already noticed in \cite[V.7.]{V3}) of Theorem \ref{derivatives-commute}.
\end{rem}

\begin{df}\label{Wtype}
An~$R$-representation~$\pi$ of~$G_n$ is called of \emph{Whittaker type} if it is parabolically induced from generic segments, i.e. if~$\pi= \L(\Delta_1)\times \cdots \times \L(\Delta_t)$ with~$\Delta_i$ generic segments, for~$1\leqslant i\leqslant t$.
\end{df}

Let~$\pi$ be a representation of Whittaker type. By Lemmas \ref{dimensionwhittaker} and \ref{derprod}, the space~$\Hom_{N_n}(\pi,\theta)$ is of dimension~$1$, and we denote by~$W(\pi,\theta)$ the \emph{Whittaker model} of~$\pi$, i.e.~$W(\pi,\theta)$ denotes the image of~$\pi$ in~$\Ind_{N_n}^{G_n}(\theta)$. Note that, a representation of Whittaker type may not be irreducible, however, it is of finite length. In fact, thanks to the results of Zelevinsky (cf. {\cite{Z}) in the~$\ell$-adic setting, and by \cite[Theorem 5.7]{V3} (cf. \cite[Theorem 9.10 and Corollary 9.12]{MS} for another proof) in the~$\ell$-modular setting, the irreducible representations of Whittaker type of~$G_n$ are exactly the generic representations. 

If~$\pi$ is a smooth representation of~$G_n$, we denote by~$\widetilde{\pi}$ the representation~$g\mapsto \pi(^t\!g^{-1})$ of 
$G_n$. Let~$\tau$ be an~$\ell$-adic irreducible representation of~$G_n$, then~$\widetilde{\tau}\simeq \tau^\vee$, by 
\cite{GK}. Hence when~$\L(\D)$ is an~$\ell$-modular generic segment of~$G_n$, it lifts to an~$\ell$-adic segment~$\L(D)$ according to Remark \ref{lift-segments}, and as~$\widetilde{\L(D)}\simeq \L(D)^\vee$, we deduce by reduction modulo~$\ell$, cf. \cite[I 9.7]{V}, that~$\widetilde{\L(\D)}\simeq \L(\D)^\vee$. If 
$\pi=\L(\D_1)\times \dots \times \L(\D_t)$ is a representation of~$G_n$ of Whittaker type, we have 
$\widetilde{\pi}=\widetilde{\L(\D_t)}\times \dots \times \widetilde{\L(\D_1)}=
\L(\D_t)^\vee\times \dots \times \L(\D_1)^\vee,$ and we deduce that 
$\widetilde{\pi}$ is also of Whittaker type. In order to state the functional equation for~$L$-factors of representations of Whittaker type, we will need the following lemma whose proof follows from the discussion above and that~$\widetilde{~}$ is an involution.
\begin{LM}
Let~$\pi$ be an~$\ell$-modular representation of Whittaker type of~$G_n$, then~$\widetilde{\pi}$ is of Whittaker type and the map~$W\mapsto \widetilde{W}$, where~$ \widetilde{W}(g)=W(w_n \presuper{t}g^{-1})$, is an~$R$-vector space isomorphism 
between~$W(\pi,\theta)$ and~$W(\widetilde{\pi},\theta^{-1})$.
\end{LM}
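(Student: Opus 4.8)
The plan is to reduce the statement to two essentially independent facts: first, that $\widetilde{\pi}$ is of Whittaker type, which has been established in the paragraph immediately preceding the lemma, and second, that the stated map $W\mapsto\widetilde{W}$ identifies the Whittaker model of $\pi$ with that of $\widetilde{\pi}$ relative to the opposite character $\theta^{-1}$. So the only genuine content to verify is the second part, and the strategy is to unwind the definitions of the various Whittaker models and check that conjugation by $w_n$ together with the transpose-inverse intertwines the relevant $N_n$-actions correctly.

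First I would recall that, since $\pi$ is of Whittaker type, $\Hom_{N_n}(\pi,\theta)$ is one-dimensional by Lemmas \ref{dimensionwhittaker} and \ref{derprod}, so $W(\pi,\theta)\subset\Ind_{N_n}^{G_n}(\theta)$ is well defined; the same applies to $\widetilde{\pi}$ and $\theta^{-1}$ once we know $\widetilde{\pi}$ is of Whittaker type. Next I would define the candidate map: for $W\in W(\pi,\theta)$ set $\widetilde{W}(g)=W(w_n\,\presuper{t}g^{-1})$. The key computation is the transformation property under $N_n$: for $u\in N_n$ one has $w_n\,\presuper{t}(ug)^{-1}=w_n\,\presuper{t}u^{-1}\,\presuper{t}g^{-1}=\big(w_n\,\presuper{t}u^{-1}w_n^{-1}\big)\,w_n\,\presuper{t}g^{-1}$, and the matrix $w_n\,\presuper{t}u^{-1}w_n^{-1}$ again lies in $N_n$, with $\theta$ evaluated on it equal to $\theta^{-1}(u)$ — this is precisely the standard fact that the automorphism $u\mapsto w_n\,\presuper{t}u^{-1}w_n^{-1}$ of $N_n$ sends $\theta$ to $\theta^{-1}$, because it reverses the order of the superdiagonal entries up to sign and inversion. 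Hence $\widetilde{W}(ug)=\theta(w_n\,\presuper{t}u^{-1}w_n^{-1})\widetilde{W}(g)=\theta^{-1}(u)\widetilde{W}(g)$, showing $\widetilde{W}\in\Ind_{N_n}^{G_n}(\theta^{-1})$. Moreover, the map $W\mapsto\widetilde{W}$ is $G_n$-equivariant from the action on $W(\pi,\theta)$ to the contragredient-type action $g\mapsto(\presuper{t}g^{-1})$ underlying $\widetilde{\pi}$, so its image is a subspace of $\Ind_{N_n}^{G_n}(\theta^{-1})$ isomorphic to $\widetilde{\pi}$ which realises a nonzero $(N_n,\theta^{-1})$-functional on $\widetilde{\pi}$; by uniqueness of the Whittaker model for representations of Whittaker type this image is exactly $W(\widetilde{\pi},\theta^{-1})$.

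Finally I would observe that $W\mapsto\widetilde{W}$ is injective (if $\widetilde{W}=0$ then $W$ vanishes on all of $G_n$ since $g\mapsto w_n\,\presuper{t}g^{-1}$ is a bijection of $G_n$), and surjective onto $W(\widetilde\pi,\theta^{-1})$ by the equivariance argument above together with one-dimensionality of the space of Whittaker functionals; alternatively, one notes that applying the same construction to $\widetilde\pi$ and using that $\widetilde{\phantom{W}}$ is an involution on representations (and that $w_n\,\presuper{t}(w_n\,\presuper{t}g^{-1})^{-1}=g$) recovers the original $W$, exhibiting a two-sided inverse. The main obstacle, such as it is, is purely bookkeeping: keeping track of the precise normalisation so that $w_n$ (rather than some other Weyl element or a diagonal twist) is exactly what is needed to flip $\theta$ to $\theta^{-1}$, and making sure the action on $\widetilde\pi$ is the transpose-inverse action rather than the contragredient, which matters because in the $\ell$-modular setting $\widetilde\pi$ and $\pi^\vee$ need not coincide for reducible $\pi$ — but for the bare vector-space isomorphism asserted here only $\widetilde\pi$ enters, so this subtlety does not actually bite. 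Everything reduces to the two displayed facts, both of which are standard and already in place, so the proof is short.
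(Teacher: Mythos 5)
Your proposal is correct and follows essentially the same route as the paper, which states the lemma "follows from the discussion above and that $\widetilde{\phantom{W}}$ is an involution": the preceding paragraph supplies the Whittaker-type claim for $\widetilde{\pi}$, and the involution property of $W\mapsto\widetilde{W}$ gives bijectivity. You merely spell out the (standard, and correctly verified) computation that $u\mapsto w_n\,\presuper{t}u^{-1}w_n^{-1}$ preserves $N_n$ and carries $\theta$ to $\theta^{-1}$, which the paper leaves implicit.
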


For the proofs to come, it will be convenient to choose certain ``lifts'' of~$\ell$-modular representations of Whittaker type. Let us introduce the following notation first.

\begin{df}
Let~$\tau$ be an integral~$\ell$-adic representation of Whittaker type. We denote by~$W_e(\tau,\theta)$ the subset of all functions in~$W(\tau,\theta)$ which take integral values. 
\end{df}

Now we define ``Whittaker lifts'' of representations of Whittaker type. We will soon see that they exist.

\begin{df}\label{Whittaker-lift}
Let~$\pi$ be an~$\ell$-modular representation of Whittaker type. An integral~$\ell$-adic representation of Whittaker type~$\tau$ is a \emph{Whittaker lift} of~$\pi$ if~$W_e(\tau,\theta)$ contains a sublattice~$\mathcal{W}$ such that~$W(\pi,r_{\ell}(\theta))=\mathcal{W}\otimes_{\Zl}\Fl$.  
\end{df}

\begin{rem}
Whittaker lifts are not strictly speaking lifts.  However, if~$\tau$ is a Whittaker lift of~$\pi$ then we can lift functions in the Whittaker model of~$\pi$ to integral functions in the Whittaker model of~$\tau$.
\end{rem}

We will use the following lemma to obtain integrality results on~$\ell$-adic local factors.

\begin{LM}\label{dual-lattice} Let~$\pi$ be an~$\ell$-modular representation of~$G_n$, and let~$\pi$ be a Whittaker lift of 
$\tau$ with associated sublattice~$\mathcal{W}\subset W_e(\tau,\theta)$, then the~$\Zl$-module~$\widetilde{\mathcal{W}}=\{\widetilde{W}:W\in \mathcal{W}\}$ is a sublattice 
of~$W_e(\widetilde{\tau},\theta^{-1})$ such that~$W(\widetilde{\pi},r_{\ell}(\theta))=\widetilde{\mathcal{W}}\otimes_{\Zl}\Fl$.
\end{LM}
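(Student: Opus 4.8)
The plan is to unwind the definitions and track how the involution $W\mapsto\widetilde{W}$ interacts with integral structures and reduction modulo~$\ell$. First I would recall that, since $\tau$ is a Whittaker lift of $\pi$, there is a sublattice $\mathcal{W}\subset W_e(\tau,\theta)$ with $W(\pi,r_\ell(\theta))=\mathcal{W}\otimes_{\Zl}\Fl$, and that the previous lemma gives an $R$-vector space isomorphism $W\mapsto\widetilde{W}$, with $\widetilde{W}(g)=W(w_n\,\presuper{t}g^{-1})$, between $W(\tau,\theta)$ and $W(\widetilde{\tau},\theta^{-1})$ (and likewise for $\pi$). The point is that this map is defined purely by precomposition with the fixed algebraic map $g\mapsto w_n\,\presuper{t}g^{-1}$ of $G_n$, so it does not involve the coefficient field at all; in particular it is $\Zl$-linear and sends a function taking integral values to a function taking integral values. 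Hence $\widetilde{\mathcal{W}}=\{\widetilde{W}:W\in\mathcal{W}\}$ lands inside $W_e(\widetilde{\tau},\theta^{-1})$, and since $W\mapsto\widetilde{W}$ is a bijection onto $W(\widetilde{\tau},\theta^{-1})$ preserving the $\Zl$-module structure, $\widetilde{\mathcal{W}}$ is a full $\Zl$-lattice in $W(\widetilde{\tau},\theta^{-1})$, i.e.\ a sublattice of $W_e(\widetilde{\tau},\theta^{-1})$.

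Next I would identify $\widetilde{\mathcal{W}}\otimes_{\Zl}\Fl$ with $W(\widetilde{\pi},r_\ell(\theta))$. Tensoring the inclusion $\widetilde{\mathcal{W}}\hookrightarrow W_e(\widetilde{\tau},\theta^{-1})$ down to $\Fl$ and using that the involution formula makes sense over any coefficient ring, one gets a commuting square: the $\Zl$-linear involution $\mathcal{W}\xrightarrow{\sim}\widetilde{\mathcal{W}}$ reduces to the $\Fl$-linear involution $W(\pi,r_\ell(\theta))\to W(\widetilde{\pi},r_\ell(\theta))$ of the previous lemma applied to $\pi$ (here one uses that $r_\ell(\theta^{-1})=r_\ell(\theta)^{-1}$ and that $\widetilde{\pi}$ is again of Whittaker type, as established earlier in Section~\ref{repwhit}). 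Since $\mathcal{W}\otimes_{\Zl}\Fl=W(\pi,r_\ell(\theta))$ by hypothesis and the reduced map is an isomorphism onto $W(\widetilde{\pi},r_\ell(\theta))$, we conclude $\widetilde{\mathcal{W}}\otimes_{\Zl}\Fl=W(\widetilde{\pi},r_\ell(\theta))$, which is exactly the assertion that $\widetilde{\tau}$ is a Whittaker lift of $\widetilde{\pi}$ via $\widetilde{\mathcal{W}}$.

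The only genuinely delicate point, and the one I would write most carefully, is checking that the involution is compatible with reduction modulo~$\ell$ in the strong sense needed: namely that the diagram
\[
\begin{array}{ccc}
\mathcal{W} & \longrightarrow & \widetilde{\mathcal{W}}\\
\downarrow & & \downarrow\\
\mathcal{W}\otimes\Fl & \longrightarrow & \widetilde{\mathcal{W}}\otimes\Fl
\end{array}
\]
commutes, where the bottom arrow is the reduction of the involution of the preceding lemma. This is essentially formal once one observes that on the level of functions, $r_\ell(W)\,\widetilde{\phantom{W}}$ and $r_\ell(\widetilde{W})$ both send $g$ to $r_\ell\big(W(w_n\,\presuper{t}g^{-1})\big)$, so they agree; the subtlety is purely bookkeeping about whether $r_\ell$ on Whittaker models is induced by the obvious reduction on integral-valued functions, which follows from the construction of $W(\pi,r_\ell(\theta))$ as $\mathcal{W}\otimes_{\Zl}\Fl$. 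With that in hand, the lemma is immediate; I do not expect any serious obstacle beyond making the identifications of the various Whittaker models precise.
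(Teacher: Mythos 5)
Your proof follows essentially the same two-step argument as the paper: (1) the involution $W\mapsto\widetilde{W}$, being precomposition with $g\mapsto w_n\,\presuper{t}g^{-1}$, is $\Zl$-linear and restricts to an isomorphism $W_e(\tau,\theta)\to W_e(\widetilde{\tau},\theta^{-1})$, so $\widetilde{\mathcal{W}}$ is a sublattice there; and (2) $r_\ell(\widetilde{W})=\widetilde{r_\ell(W)}$ for $W\in W_e(\tau,\theta)$, which together with the isomorphism $W(\pi,r_\ell(\theta))\xrightarrow{\sim}W(\widetilde{\pi},r_\ell(\theta^{-1}))$ gives the claimed identification after tensoring with $\Fl$. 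The commuting-square verification you flag as the delicate point is exactly the observation the paper records as $r_\ell(\widetilde{W})=\widetilde{r_\ell(W)}$, so there is no substantive difference.
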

\begin{proof}
As~$W\mapsto \widetilde{W}$ is a vector space isomorphism between~$W(\tau,\theta)$ 
and~$W(\widetilde{\tau},\theta^{-1})$ which restricts to a~$\Zl$-module isomorphism between~$W_e(\tau,\theta)$ and~$W_e(\widetilde{\tau},\theta^{-1})$, the set~$\widetilde{\mathcal{W}}$ is a sublattice of~$W_e(\widetilde{\tau},\theta^{-1})$. Moreover, as~$W\mapsto \widetilde{W}$ is also a vector space isomorphism between~$W(\pi,r_\ell(\theta))$ and~$W(\widetilde{\pi},r_\ell(\theta^{-1}))$, and because~$r_\ell(\widetilde{W})=\widetilde{r_\ell(W)}$ for an~$W\in W_e(\tau,\theta)$, we deduce that~$r_\ell(\mathcal{W})=W(\widetilde{\pi},r_\ell(\theta^{-1}))$. 
\end{proof}

We also introduce ``standard lifts'' of representations of Whittaker type. We will see in Theorem 
\ref{integral-whittaker} that they are Whittaker lifts.

\begin{df}\label{standard-lift}
If~$\pi= \L(\Delta_1)\times \cdots \times \L(\Delta_t)$ is an~$\ell$-modular representation of Whittaker type, we will call a \emph{standard lift} of~$\pi$ a representation~$\tau=\L(D_1)\times \cdots \times \L(D_t)$, where~$\L(D_i)$ lifts~$\L(\D_i)$ as in Remark \ref{lift-segments}.
\end{df}

\begin{rem}
Again, standard lifts are not strictly speaking lifts.  However, as parabolic induction commutes with reduction modulo~$\ell$, if~$\tau$ is a standard lift of an~$\ell$-modular representation~$\pi$, then~$r_\ell(\tau)$ is the semi-simplification of~$\pi$.  
\end{rem}

%
We now show that standard lifts are Whittaker lifts.
 
\begin{thm}\label{integral-whittaker}
Let~$\pi= \L(\Delta_1)\times \cdots \times \L(\Delta_t)$ be an~$\ell$-modular representation of Whittaker type, and~$\tau=  \L(D_1)\times \cdots \times \L(D_t)$ be a standard lift of~$\pi$, then it is a Whittaker lift. 
\end{thm}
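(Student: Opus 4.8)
The plan is to construct an explicit integral structure in the Whittaker model of $\tau = \L(D_1)\times\cdots\times\L(D_t)$ whose reduction modulo $\ell$ is the Whittaker model of $\pi$. First I would reduce to the case of a single segment: since $\pi = \L(\Delta_1)\times\cdots\times\L(\Delta_t)$ is a product of generic segments, and the Whittaker functional on a product is built from the Whittaker functionals on the factors via an integral over a unipotent group (the standard ``multiplicativity of Whittaker models'' using a Rankin--Selberg-type integral/intertwining map $W_i \mapsto \int W_i(\ldots)$ from $W(\L(\Delta_1),\theta)\otimes\cdots\otimes W(\L(\Delta_t),\theta)$ to $W(\pi,\theta)$), it suffices to show each $\L(D_i)$ is a Whittaker lift of $\L(\Delta_i)$ and that the integral operator sends the tensor product of the integral structures into an integral structure whose reduction is $W(\pi,r_\ell(\theta))$. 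The integrality of this operator is exactly the kind of statement handled by Remark~\ref{intvalues} together with Corollary~\ref{iwasplit} on the splitting of Haar measures: the unipotent integration has compact support modulo the relevant subgroup, so it carries integral-valued functions to integral-valued functions and commutes with $r_\ell$.

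For the single-segment case $\L(D) = St(\sigma,k)$ lifting $\L(\Delta) = St(\rho,k)$ (Remark~\ref{lift-segments}), I would use that $St(\sigma,k)$ is the unique generic subquotient of $\nu^0\sigma\times\cdots\times\nu^{k-1}\sigma$ and realize its Whittaker model concretely. Here one needs to know that a cuspidal $\ell$-adic $\sigma$ lifting $\rho$ has a Whittaker model with a natural $\Zl$-lattice whose reduction is $W(\rho,r_\ell(\theta))$ — this follows from the fact that $\sigma$ is integral (its central character is integral, by \cite[II 4.12]{V}), so it stabilizes a lattice, and the Whittaker functional can be normalized integrally. Then the Whittaker model of $St(\sigma,k)$ sits inside (a quotient of) the Whittaker model of the product, and again the relevant intertwining integrals are integral and commute with reduction. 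The key structural input ensuring the reduction of the lattice is $W(\pi, r_\ell(\theta))$ and not something smaller is that $\pi^{(n)}$ (equivalently $\Hom_{N_n}(\pi,\theta)$) is one-dimensional and that the derivative functors commute with reduction modulo $\ell$ (Theorem~\ref{derivatives-commute}): this forces the reduction of any Whittaker-type lattice to surject onto, hence equal, the (one-dimensional-functional) Whittaker model of $\pi$.

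Concretely the steps are: (1) fix cuspidal lifts $\sigma_i$ of the cuspidal supports $\rho_i$ and the standard lift $\tau = \L(D_1)\times\cdots\times\L(D_t)$; (2) choose the integral structure $W_e(\tau,\theta)$ and observe via Remark~\ref{intvalues} and the Iwasawa-type integration formulas that restriction of Whittaker functions to $N_n\backslash G_n$-relevant domains behaves well with $r_\ell$; (3) inside $W_e(\tau,\theta)$ identify the sublattice $\mathcal{W}$ generated (over $\Zl$, after saturating) by the images of integral Whittaker functions on the inducing data; (4) show $r_\ell$ applied to $\mathcal{W}$ lands in $W(\pi, r_\ell(\theta))$ using that parabolic induction and derivatives commute with reduction; (5) show the map is onto $W(\pi,r_\ell(\theta))$ by the one-dimensionality of the Whittaker functional on $\pi$ and exactness of $r_\ell$, giving $W(\pi,r_\ell(\theta)) = \mathcal{W}\otimes_{\Zl}\Fl$.

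The main obstacle I anticipate is step (4)–(5): controlling that the reduction of the chosen lattice is \emph{exactly} the Whittaker model of $\pi$ rather than a proper sublattice or a space that is too large. One direction (landing inside $W(\pi,r_\ell(\theta))$) is a formal consequence of compatibility of induction with reduction applied to the Whittaker-model realization; the subtle direction is surjectivity, where a naive lattice could reduce to $0$ if the Whittaker functional degenerates. This is precisely where Theorem~\ref{derivatives-commute} is essential — it guarantees $r_\ell(\tau^{(n)}) = [r_\ell(\tau)^{(n)}]$ is one-dimensional and nonzero (since $r_\ell(\tau)$ contains the generic $\pi$ with multiplicity one in its highest derivative), so the reduction of an integral Whittaker functional cannot vanish, and a saturation argument then pins down $\mathcal{W}$. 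I would also need to be slightly careful that the concrete intertwining integrals defining $\L(D)$ as a quotient of an induced representation (the maps from \cite[Definition 7.5]{MS}) preserve integral structures; this should reduce to the fact that the relevant unipotent orbital integrals are finite sums (compact support) of integral terms.
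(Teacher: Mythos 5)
There is a genuine gap: your surjectivity argument (steps 4--5) relies crucially on Theorem~\ref{derivatives-commute}, but in the paper that theorem is \emph{proved after} Theorem~\ref{integral-whittaker} and depends on it. Concretely, Theorem~\ref{derivatives-commute} is deduced from Corollary~\ref{reduction-commutes-with-derivatives-2}, which follows from Proposition~\ref{reduction-commutes-with-derivatives-1}, which invokes Theorem~\ref{integral-whittaker} directly and also Lemma~\ref{Jl}; and Lemma~\ref{Jl} in turn uses Theorem~\ref{integral-whittaker} to produce the lattice~$\mathcal{W}_\sigma$. Only the weak fact that segments lift (Remark~\ref{lift-segments}) can be cited independently via~\cite{V3}; the full compatibility of derivatives with~$r_\ell$ cannot be used here without circularity. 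So the part of your argument you yourself flag as ``the subtle direction'' rests on exactly the input that is not yet available.

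The paper avoids this by a different, non-circular route that you do not mention: it applies \cite[Theorem~2]{V2} to each \emph{irreducible generic} factor~$\L(D_i)$ to get integral structures~$W_e(\L(D_i),\theta)$, forms the parabolically induced lattice~$\mathfrak{L}$ inside~$\tau$, and then invokes the Casselman--Shalika realization \cite[Corollary~2.3]{CS} of the Whittaker functional as a stabilizing Jacquet integral~$\l(f)=\int_U f(wu,1_M)\theta^{-1}(u)\,du$. This integral is finite (the integrand has compact support for a large enough~$U'$), takes integral values on~$\mathfrak{L}$ after normalization, and visibly satisfies~$\mu=r_\ell(\l)$ with~$\mu$ the (nonzero) Whittaker functional on~$\pi$ given by the same formula. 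The identity~$r_\ell(W_f)=W_{r_\ell(f)}$ then gives both containment and surjectivity simultaneously, with no appeal to derivatives commuting with~$r_\ell$. Your plan also detours through embedding each segment in a product of cuspidals, which is unnecessary: each~$\L(D_i)$ is already generic, so \cite[Theorem~2]{V2} applies to it directly. If you want to salvage your outline, replace the invocation of Theorem~\ref{derivatives-commute} with the explicit Casselman--Shalika functional and its compatibility with~$r_\ell$; that is the missing idea.
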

\begin{proof}
If~$\tau$ is generic, it is shown in \cite[Theorem 2]{V2} that~$W_e(\tau,\theta)$ is an integral structure in~$W(\tau,\theta)$, such that~$W(\pi,r_{\ell}(\theta))=W_e(\tau,\theta)\otimes_{\Zl}\Fl$. We use this result together with the properties of parabolic induction with respect to lattices, and a result from \cite{CS} about the explicit description of Whittaker functionals on induced representations.

For each~$1\leqslant i\leqslant t$, each~$\L(D_i)$ is generic and hence~$W_e(\L(D_i),\theta)$ is an integral structure in~$W(\L(D_i),\theta)$. By \cite[I  9.3]{V}, the lattice~$\mathfrak{L}=W_e(\L(D_1),\theta)\times \dots \times W_e(\L(D_t),\theta)$ is an integral structure in~$\tau$. The space~$\mathfrak{L}$  consists of all smooth functions from~$G$ to~$L=W_e(\L(D_1),\theta)\otimes \dots \otimes W_e(\L(D_t),\theta)$, with the tensor product over~$\Zl$, which transform on the left by~$\L(D_1)\otimes\cdots \otimes \L(D_t)$. We recall that~$W(\L(D),\theta)=W(\L(D_1),\theta)\otimes \dots \otimes W(\L(D_t),\theta)$, as well as~$W(\L(\Delta),\theta)=W(\L(\D_1),\theta)\otimes \dots \otimes W(\L(\D_t),\theta)$, is a representation of a standard parabolic subgroup~$Q=MU$ of~$G_n$, trivial on~$U$.

A function~$f$ in~$\tau=W(\L(D_1),\theta)\times \dots \times W(\L(D_t),\theta)$, by definition of parabolic induction, is a 
map from~$G_n$ to~$W(\L(D),\theta)$, i.e. for~$g\in G_n$,~$f(g)\in W(\L(D),\theta)$ identifies with a map from~$M$ to~$\Ql$, so we can view~$f$ as a map of two variables from~$G_n\times M$ to~$\Ql$, and similarly, we can view the elements in~$\pi=W(\L(\D_1),\theta)\times \dots \times W(\L(\D_t),\theta)$ as maps from~$G_n\times M$ to~$\Fl$. In \cite[Corollary 2.3]{CS}, it is shown (for minimal parabolics, but their method works for general parabolics), that there is a Weyl element~$w$ in~$G_n$, such that if one takes~$f\in \tau$, then there is a compact open subgroup~$U_f$ of~$U$ which satisfies that for any compact subgroup~$U'$ of~$U$ containing~$U_f$, the integral~$\int_{U'} f(w u,1_M)\theta^{-1}(u)du$ is independent from~$U'$. We 
will write~$\l(f)=\int_{U} f(w u, 1_M)\theta^{-1}(u)du$. This assertion is also true for~$\pi$ with the same proof, for the same choice of~$w$, we write~$\mu(h)=\int_{U} h(w u, 1_M)r_\ell(\theta)^{-1}(u)du$ for~$h\in \pi$. Both~$\l$ and~$\mu$ are nonzero Whittaker functionals on~$\tau$ and~$\pi$, respectively, and~$\l$ sends~$\mathfrak{L}$ to~$\Zl$  for a correct normalisation of~$du$. We can moreover suppose, for correct normalisations of the~$\ell$-adic and the~$\ell$-modular Haar measures~$du$, that~$\mu=r_\ell(\l)$. The surjective map~$w: \tau \mapsto  W(\tau,\theta)$ which takes~$f$ to~$W_f$, defined by~$W_f(g)=\l(\tau(g)f)$, sends~$\mathfrak{L}$ into~$W_e(\tau,\theta)$.  Similarly, for~$h\in\pi$, if we set~$W_h(g)=\mu(\pi(g)h)$, then the map~$ \pi \mapsto W(\pi,r_\ell(\theta))$, taking~$h$ to~$W_h$, is surjective, and we have~$r_\ell(W_f)=W_{r_\ell(f)}$.  From this, we obtain that~$\mathcal{W}=w(\mathfrak{L})$ is a sublattice of~$W_e(\tau,\theta)$ (see \cite{V}, I 9.3), and~$r_\ell(\mathcal{W})=W(\pi,\theta)$. 
\end{proof}

\subsection{Generic representations}\label{genericsection}

It is stated (in different terms) in \cite[1.8.4]{Viginv} that if~$\tau$ is an integral generic~$\ell$-adic representation of~$G_n$, then~$r_\ell(\tau)$ has a unique generic component~$J_{\ell}(\tau)$, and that the map~$\tau\mapsto J_\ell(\tau)$ is a surjection from the set of classes of~$\ell$-adic generic representations of~$G_n$ to the set of classes of~$\ell$-modular generic representations of~$G_n$.  We recall why this is true (it also follows at once from the fact that the derivatives commute with reduction modulo~$\ell$, but we shall go in the other direction and use this result to prove that derivatives commute with reduction modulo~$\ell$). 

\begin{LM}\label{Jl}
Let~$\tau$ be an~$\ell$-adic integral generic representation of~$G_n$, then~$r_\ell(\tau)$ contains (with multiplicity one) a unique generic component~$J_{\ell}(\tau)$. Moreover, if~$r_1\times \dots \times r_t$ is a product of integral cuspidal~$\ell$-adic representations such that~$r_1\times \dots \times r_t \twoheadrightarrow \tau$, then setting~$\rho_i=r_\ell(r_i)$, the representation~$J_{\ell}(\tau)$ is the unique generic subquotient of~$\rho_1\times \dots \times \rho_t$.
\end{LM}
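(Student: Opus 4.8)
The plan is to prove the two assertions of Lemma~\ref{Jl} in tandem, first establishing the uniqueness and multiplicity-one statement for the generic component of $r_\ell(\tau)$, and then identifying it explicitly via the cuspidal support. First I would use that $\tau$ generic means $\dim\Hom_{N_n}(\tau,\theta)=1$, and that the functor $\pi\mapsto \Hom_{N_n}(\pi,\theta)$ (equivalently $\pi\mapsto\pi^{(n)}$) is exact on $\mathfrak{R}_R(G_n)$, as recorded in the excerpt. Applying this exact functor to a composition series of the reduction $r_\ell(\tau)$, which has finite length, I get that the number of generic Jordan--H\"older constituents of $r_\ell(\tau)$, counted with multiplicity, equals $\dim\Hom_{N_n}(r_\ell(\tau),\theta)$. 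So it suffices to show this dimension is $1$, and for that I would invoke the fact, cited in the excerpt from \cite[Theorem~2]{V2}, that $W_e(\tau,\theta)$ is an integral structure in $W(\tau,\theta)$ with $W(\tau,r_\ell(\theta))\simeq W_e(\tau,\theta)\otimes_{\Zl}\Fl$; since $\dim_{\Ql}W(\tau,\theta)$ as a Whittaker-functional space is $1$, the reduction is one-dimensional, so $\dim\Hom_{N_n}(r_\ell(\tau),\theta)=1$. This gives a unique generic constituent $J_\ell(\tau)$, occurring with multiplicity one.

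Next I would identify $J_\ell(\tau)$. Since $\tau$ is generic, it is irreducible of Whittaker type, hence, by the classification recalled just before the lemma, $\tau\simeq\L(D_1)\times\cdots\times\L(D_t)$ for generic segments $D_i$; more crudely, writing each $\L(D_i)$ as a quotient of a product of twists of a single cuspidal, I can find integral cuspidal $\ell$-adic representations $r_1,\dots,r_t$ with $r_1\times\cdots\times r_t\twoheadrightarrow\tau$ (surjectivity of the map onto the generic quotient of a product). Now reduce modulo $\ell$: parabolic induction commutes with reduction modulo $\ell$ (stated in Section~\ref{sectintegral}), so $r_\ell(r_1\times\cdots\times r_t)=[\,\rho_1\times\cdots\times\rho_t\,]$ with $\rho_i=r_\ell(r_i)$, and these $\rho_i$ are cuspidal by \cite[III~4.25 \& 5.10]{V} as quoted. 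Surjectivity is right-exact, so $\rho_1\times\cdots\times\rho_t\twoheadrightarrow r_\ell(\tau)\twoheadrightarrow J_\ell(\tau)$; thus $J_\ell(\tau)$ is a generic quotient, in particular a generic subquotient, of $\rho_1\times\cdots\times\rho_t$. To see it is \emph{the} unique such, I apply the exactness of $\Hom_{N_n}(-,\theta)$ once more, this time to $\rho_1\times\cdots\times\rho_t$: by Lemmas~\ref{dimensionwhittaker} and \ref{derprod} (the Leibniz rule for derivatives, which forces the top derivative of a product of cuspidals to be one-dimensional), $\dim\Hom_{N_n}(\rho_1\times\cdots\times\rho_t,\theta)=1$, so $\rho_1\times\cdots\times\rho_t$ has a unique generic subquotient with multiplicity one, and it must coincide with $J_\ell(\tau)$.

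I expect the main obstacle to be the bookkeeping around the second assertion's hypothesis: one must ensure that from $r_1\times\cdots\times r_t\twoheadrightarrow\tau$ with the $r_i$ integral cuspidal, the reductions $\rho_i$ are genuinely cuspidal (handled by \cite{V}) and that the generic constituent of $r_\ell(\tau)$ really is identified by its cuspidal support among the constituents of $\rho_1\times\cdots\times\rho_t$ --- equivalently, that \textbf{any} generic subquotient of a product of cuspidals over $\Fl$ is unique. This last point is exactly the $\Fl$-analogue of the Zelevinsky fact, and I would cite \cite[Theorem~5.7]{V3} (or \cite[Theorem~9.10]{MS}) that irreducible representations of Whittaker type are precisely the generic ones together with the derivative computation above; no new argument beyond assembling these is needed. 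The surjectivity of $\tau\mapsto J_\ell(\tau)$ onto $\ell$-modular generic representations then follows formally: every $\ell$-modular generic $\pi$ is of the form $\L(\Delta_1)\times\cdots\times\L(\Delta_t)$ with each $\L(\Delta_i)=St(\rho_i,k_i)$ lifting to $St(\sigma_i,k_i)$ by Remark~\ref{lift-segments}, so the standard lift $\tau=\L(D_1)\times\cdots\times\L(D_t)$ satisfies $J_\ell(\tau)=\pi$ by what we have just shown.
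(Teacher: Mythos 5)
Your high-level plan is reasonable, but there are two genuine gaps, the first of which makes the argument circular against the paper's own logical ordering.

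\textbf{Gap 1: The dimension count in part (a).} You want to conclude $\dim\Hom_{N_n}(r_\ell(\tau),\theta)=1$ from the fact that $\dim\Hom_{N_n}(\tau,\theta)=1$ and that $W_e(\tau,\theta)$ is a lattice. The step ``since $\dim_{\Ql}W(\tau,\theta)$ as a Whittaker-functional space is $1$, the reduction is one-dimensional'' is not justified by what the paper extracts from \cite[Theorem~2]{V2}: the paper's own proof only takes from that reference that $W_e(\tau,\theta)$ \emph{is a lattice}, and the fact that $W_e(\tau,\theta)\otimes\Fl$ embeds in $\mathcal{C}^\infty(N_n\backslash G_n,r_\ell(\theta))$ gives \emph{existence} of a generic subquotient, not uniqueness --- a finite length submodule of $\Ind_{N_n}^{G_n}(\theta)$ can have a Whittaker space of dimension $>1$ (e.g.\ a direct sum of two generic irreducibles). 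What you would actually need is that $\dim\tau^{(n)}=\dim r_\ell(\tau)^{(n)}$, i.e.\ that the top derivative commutes with $r_\ell$. But the paper explicitly warns that it proves Theorem~\ref{derivatives-commute} \emph{from} Lemma~\ref{Jl} (via Proposition~\ref{reduction-commutes-with-derivatives-1}), so invoking it here would be circular. This is why the paper derives uniqueness only as a byproduct of the second assertion, not before it.

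\textbf{Gap 2: ``Surjectivity is right-exact.''} The surjection $r_1\times\cdots\times r_t\twoheadrightarrow\tau$ does not directly give a surjection $\rho_1\times\cdots\times\rho_t\twoheadrightarrow r_\ell(\tau)$: reduction modulo $\ell$ is defined only up to semisimplification, and is not a right-exact functor on representations. To legitimately conclude that $J_\ell(\tau)$ is a subquotient of $\rho_1\times\cdots\times\rho_t$, one must work at the level of lattices: the paper factors the surjection through $W(\sigma,\theta)\twoheadrightarrow\tau$, uses Theorem~\ref{integral-whittaker} to choose a lattice $\mathcal{W}_\sigma$ with $\mathcal{W}_\sigma\otimes\Fl=W(\rho_1\times\cdots\times\rho_t,r_\ell(\theta))$, pushes it forward to a lattice $\mathfrak{L}$ in $\tau$ (using \cite[I 9.3]{V}), and notes $\mathfrak{L}\otimes\Fl$ is a quotient of $\mathcal{W}_\sigma\otimes\Fl$; one could also use the product lattice $\Lambda_1\times\cdots\times\Lambda_t$ with \cite[I 9.3]{V} directly. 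Either way, some lattice argument is required; the bare appeal to right-exactness is not sufficient. Your observation that $\rho_1\times\cdots\times\rho_t$ has a one-dimensional Whittaker space (Lemmas~\ref{dimensionwhittaker} and~\ref{derprod}) is correct, and this is what drives the uniqueness in the paper's proof --- but it has to be combined with a genuine lattice argument showing $r_\ell(\tau)$ is a subquotient of $\rho_1\times\cdots\times\rho_t$ before it yields the uniqueness of $J_\ell(\tau)$.
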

\begin{proof}
To prove that~$r_\ell(\tau)$ contains a unique generic component, as in the start of the proof of Theorem~\ref{integral-whittaker} we call upon \cite[Theorem 2]{V2}, which tells us that~$W_e(\tau,\theta)$ is an integral structure in~$\tau$. The representation~$W_e(\tau,\theta)\otimes_{\Zl}\Fl$ is a non-zero~$G_n$-submodule of~$\mathcal{C}^\infty(N_n\backslash G_n,r_\ell(\theta))$, and this implies that~$r_\ell(\tau)$ contains at least one generic subquotient~$J_{\ell}(\pi)$.  Let~$p: \sigma=r_1\times \dots \times r_t \twoheadrightarrow \tau$ be a surjection as in the statement (provided by considering the cuspidal support of~$\tau$), then by multiplicity one of Whittaker functionals,~$p$ factors through a surjection~$s:W(\sigma,\theta) \twoheadrightarrow \tau$ as~$\tau$ is generic. 
By Theorem \ref{integral-whittaker}, there is a lattice~$\mathcal{W}_\sigma\subset W_e(\sigma,\theta)$ such that if we set~$\rho_i=r_\ell(r_i)$, the~$\Fl$-module~$\l=\mathcal{W}_\sigma\otimes \Fl$ is equal to~$W(\rho_1\times \dots \times \rho_t,r_\ell(\theta))$. By \cite[9.3 (vi)]{V}, the~$\Zl$-module~$\mathfrak{L}=s(\mathcal{W}_\sigma) \subset \tau$ is a lattice in~$\tau$, and~$\mathfrak{L}\otimes \Fl$ is a quotient of~$\l$. By hypothesis~$J_{\ell}(\pi)$ is an 
irreducible subquotient of~$\mathfrak{L}\otimes \Fl$, hence of~$\l$, i.e. of~$W(\rho_1\times \dots \times \rho_t,r_\ell(\theta))$, and hence of~$\rho_1\times \dots \times \rho_t$. In particular~$J_{\ell}(\pi)$ is the unique generic subquotient of~$\rho_1\times \dots \times \rho_t$, and this implies that~$r_\ell(\tau)$ contains a unique generic subquotient.
\end{proof}

We now prove that derivatives commute with reduction modulo~$\ell$. First we recall some basic facts. Let~$(\tau,V)$ be an integral~$\ell$-adic representation of finite length, with filtration~$0\subset V_1\subset \dots \subset V_n=V$. Then it is a consequence of \cite[I 9.3]{V}, that the subquotients~$V_{i+1}/V_i$ are integral, and~$r_\ell(V)=\bigoplus_i r_\ell(V_{i+1}/V_i)$. 

\begin{prop}\label{reduction-commutes-with-derivatives-1} Let~$\tau$ be an~$\ell$-adic integral representation of~$G_n$ of finite length, then we have~$(\Phi^-)^n r_\ell(\tau)=r_\ell((\Phi^-)^n \tau)$. 
\end{prop}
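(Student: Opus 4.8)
The plan is to reduce the statement about the $n$-th stage $(\Phi^-)^n$ to the assertion that the top derivative functor $\pi \mapsto \pi^{(n)}$ (equivalently $\pi \mapsto \Hom_{N_n}(\pi,\theta)$, by Lemma~\ref{dimensionwhittaker}) commutes with reduction modulo~$\ell$, and then to prove \emph{that} using the explicit realisation of the Whittaker space as a lattice. Recall that $(\Phi^-)^n$ lands in $\mathfrak{R}_R(P_0)=\mathfrak{R}_R(\{1\})$, i.e.\ in the category of $R$-vector spaces, and that $(\Phi^-)^n$ differs from $\Psi^-\circ(\Phi^-)^{n-1}$ only by a normalising twist which is irrelevant over a point; thus $(\Phi^-)^n\tau$ is, up to this twist, exactly $\tau^{(n)}$, whose dimension equals $\dim\Hom_{N_n}(\tau,\theta)$. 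So the content is: for an integral $\ell$-adic representation $\tau$ of finite length, $\dim_{\Ql}\Hom_{N_n}(\tau,\theta) = \dim_{\Fl}\Hom_{N_n}(r_\ell(\tau),r_\ell(\theta))$, and more precisely the space on the left carries a natural $\Zl$-structure whose reduction is the space on the right.

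First I would fix an integral structure $\Lambda$ in $\tau$, so that $r_\ell(\tau)=[\Lambda\otimes_{\Zl}\Fl]$ (semisimplification). Since $\Hom_{N_n}(-,\theta)$ is exact and both sides are additive in short exact sequences of integral representations (using \cite[I 9.3]{V}, that subquotients of $\Lambda$ are integral and $r_\ell$ is additive), it suffices to treat the case that $\tau$ is \emph{irreducible}; by an obvious dévissage along a composition series we may even assume $\tau$ is generic or has vanishing top derivative. If $\tau^{(n)}=0$, i.e.\ $\Hom_{N_n}(\tau,\theta)=0$, I must show $[\Lambda\otimes_{\Zl}\Fl]$ has no generic constituent, which is precisely Lemma~\ref{Jl} read in the contrapositive: a non-generic $\ell$-adic $\tau$ reduces to a sum of non-generic pieces (if some constituent were generic, it would be $J_\ell$ of some $\ell$-adic generic $\tau'$ with the same cuspidal support, forcing $\tau$ itself to be generic by the cuspidal-support bookkeeping in the proof of Lemma~\ref{Jl}). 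If $\tau$ is generic, then $\Hom_{N_n}(\tau,\theta)$ is one-dimensional, so I need the target to be one-dimensional too, which is exactly the first sentence of Lemma~\ref{Jl}: $r_\ell(\tau)$ contains a unique generic constituent, with multiplicity one, hence $\dim\Hom_{N_n}(r_\ell(\tau),r_\ell(\theta))=1$. This matches.

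The cleanest way to package all of this, and the route I would actually write, is to invoke Theorem~\ref{integral-whittaker} (and its engine \cite[Theorem 2]{V2}): for a generic integral $\tau$, $W_e(\tau,\theta)$ is an integral structure in the Whittaker space $W(\tau,\theta)\cong\tau$, and $W_e(\tau,\theta)\otimes_{\Zl}\Fl = W(r_\ell(\tau)\text{'s generic part},r_\ell(\theta))$; from this the Whittaker functional on $\tau$ reduces mod~$\ell$ to the (unique up to scalar) Whittaker functional on the generic constituent of $r_\ell(\tau)$, giving the desired identification of the one-dimensional spaces and their $\Zl$-structures. For the non-generic irreducible case one combines this with Lemma~\ref{Jl}. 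The main obstacle is precisely the bookkeeping in the non-generic case: ensuring that \emph{no} constituent of $r_\ell(\tau)$ is generic when $\tau$ is not, which I would handle exactly as in Lemma~\ref{Jl} by passing to the cuspidal support $\sigma=r_1\times\cdots\times r_t\twoheadrightarrow\tau$, noting $\tau$ non-generic means $W(\sigma,\theta)\twoheadrightarrow\tau$ fails, so $\tau$ is a subquotient of $\sigma$ disjoint from its Whittaker quotient; reducing mod~$\ell$ and using that parabolic induction commutes with $r_\ell$ together with multiplicity one of the generic constituent of $\rho_1\times\cdots\times\rho_t$ shows the generic constituent of $r_\ell(\sigma)$ lands entirely in $r_\ell$ of the Whittaker quotient, hence not in $r_\ell(\tau)$. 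Assembling these pieces over a composition series of $\tau$ yields $(\Phi^-)^n r_\ell(\tau)=r_\ell((\Phi^-)^n\tau)$.
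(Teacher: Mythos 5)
Your proposal is correct and follows essentially the same route as the paper: reduce to the irreducible case by exactness of~$\Phi^-$, handle the generic case by Lemma~\ref{Jl}, and for the non-generic case use the cuspidal-support resolution together with Theorem~\ref{integral-whittaker} to show the unique generic constituent of the reduced cuspidal induction is swallowed by the reduction of its Whittaker quotient. The only cosmetic deviation is that you realise a non-generic irreducible~$\tau$ as a quotient of a cuspidal induction~$\sigma$ and conclude it is a quotient of the kernel of $\sigma\twoheadrightarrow W(\sigma,\theta)$, whereas the paper realises~$\tau$ as a subrepresentation and embeds it into that kernel; both work verbatim and feed into the identical Grothendieck-group bookkeeping.
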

\begin{proof}
As~$\Phi^-$ is an exact functor, we can assume that~$\tau$ is irreducible. In this case, our assertion amounts, by multiplicity one of Whittaker functionals, to show that~$(\Phi^-)^n r_\ell(\tau)\simeq \Fl$ if~$\tau$ is generic, and~$(\Phi^-)^n r_\ell(\tau)= \{0\}$ otherwise. If~$\tau$ is generic, this follows from Lemma \ref{Jl}. If not, then, considering cuspidal supports again, one has~$\tau\hookrightarrow \mu_1\times \dots \times \mu_r$, where the~$\mu_i$ are cuspidal. We also have a surjection~$s:\mu_1\times \dots \times \mu_r \twoheadrightarrow W(\mu_1\times \dots \times \mu_r,\theta)$, and denoting by~$\tau'$ its kernel, we have~$\tau$ is contained in $ \tau'$. In particular,~$r_{\ell}(\tau)$ is contained in $r_{\ell}(\tau')$. Moreover, we have~$r_{\ell} (\mu_1\times \dots \times \mu_r)=r_\ell (\tau')\oplus r_\ell(W(\mu_1\times \dots \times \mu_r,\theta))$. On the other hand, setting~$\rho_i=r_\ell(\mu_i)$, then~$r_\ell(\mu_1\times \dots \times \mu_r)$ is the semi-simplification of~$\rho_1\times \dots \times \rho_r$, in particular, it contains a unique generic component. But, according to Theorem \ref{integral-whittaker},~$r_\ell(W(\mu_1\times \dots \times \mu_r,\theta))$ contains the semi-simplification of~$W(\rho_1\times \dots \times \rho_r,\theta)$, and in particular it contains a generic component, namely the unique generic component of~$r_\ell(\mu_1\times \dots \times \mu_r)$, so that~$r_\ell(\tau')$, hence~$r_\ell(\tau)$, does not. Finally, 
we have~$(\Phi^-)^n (r_\ell(\tau))=\{0\}$.
\end{proof}

As an immediate corollary, we obtain the following.

\begin{cor}\label{reduction-commutes-with-derivatives-2}
Let~$\pi$ be an integral~$\ell$-adic representation of~$G_{n_1}\times G_{n_2}$ of finite length. Then 
$(\Id\otimes (\Phi^-)^{n_2}) r_\ell(\pi)= r_\ell((\Id\otimes (\Phi^-)^{n_2})\pi)$.
\end{cor}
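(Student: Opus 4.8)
The plan is to reduce at once to the case of an external tensor product $\pi_1\otimes\pi_2$ and then invoke Proposition~\ref{reduction-commutes-with-derivatives-1} on the second factor. Since the functors $\Id\otimes(\Phi^-)^{n_2}$ and $r_\ell$ are exact, and since both sides of the asserted identity are additive in $\pi$, it is enough to prove the statement when $\pi$ is irreducible. An irreducible smooth $\ell$-adic representation of $G_{n_1}\times G_{n_2}$ is of the form $\pi_1\otimes\pi_2$ with $\pi_i$ an irreducible $\ell$-adic representation of $G_{n_i}$; furthermore $\pi$ is integral if and only if $\pi_1$ and $\pi_2$ both are (given integral structures $\Lambda_i$ in $\pi_i$, the lattice $\Lambda_1\otimes_{\Zl}\Lambda_2$ is an integral structure in $\pi$, and conversely a $G_{n_i}$-stable lattice is obtained by restriction). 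For such $\pi$ one has, by the Brauer--Nesbitt principle and the identity $(\Lambda_1\otimes_{\Zl}\Lambda_2)\otimes_{\Zl}\Fl\simeq(\Lambda_1\otimes_{\Zl}\Fl)\otimes_{\Fl}(\Lambda_2\otimes_{\Zl}\Fl)$ together with the fact that over the algebraically closed field $\Fl$ an external tensor product of semisimple representations is semisimple,
\[r_\ell(\pi_1\otimes\pi_2)=r_\ell(\pi_1)\otimes r_\ell(\pi_2)\]
in the Grothendieck group of finite length $\ell$-modular representations of $G_{n_1}\times G_{n_2}$.

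With $\pi=\pi_1\otimes\pi_2$, the functor $\Id\otimes(\Phi^-)^{n_2}$ acts only on the second variable, so $(\Id\otimes(\Phi^-)^{n_2})(\pi_1\otimes\pi_2)=\pi_1\otimes(\Phi^-)^{n_2}(\pi_2)$, and $(\Phi^-)^{n_2}(\pi_2)$ is (by the proof of Proposition~\ref{reduction-commutes-with-derivatives-1}) just a finite-dimensional vector space, so that this is a multiple of $\pi_1$; in particular, after $r_\ell$ and after $\Id\otimes(\Phi^-)^{n_2}$ all objects involved are semisimple, which is why no semisimplification brackets are needed in the statement. Applying the displayed identity of the first paragraph and then Proposition~\ref{reduction-commutes-with-derivatives-1} to $\pi_2$, we get
\[r_\ell\big((\Id\otimes(\Phi^-)^{n_2})\pi\big)=r_\ell\big(\pi_1\otimes(\Phi^-)^{n_2}\pi_2\big)=r_\ell(\pi_1)\otimes r_\ell\big((\Phi^-)^{n_2}\pi_2\big)=r_\ell(\pi_1)\otimes(\Phi^-)^{n_2}r_\ell(\pi_2).\]
On the other hand, using the displayed identity again and the exactness of $\Id\otimes(\Phi^-)^{n_2}$,
\[(\Id\otimes(\Phi^-)^{n_2})r_\ell(\pi)=(\Id\otimes(\Phi^-)^{n_2})\big(r_\ell(\pi_1)\otimes r_\ell(\pi_2)\big)=r_\ell(\pi_1)\otimes(\Phi^-)^{n_2}r_\ell(\pi_2),\]
which agrees with the previous expression. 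This proves the corollary.

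The content here is entirely in Proposition~\ref{reduction-commutes-with-derivatives-1}; the only point requiring a little care is the bookkeeping of the first paragraph, namely checking that the classification of irreducible representations of a direct product, the stability of integrality and of reduction modulo~$\ell$ under external tensor products, and the passage to the Grothendieck group via exactness, are all applied compatibly, since $(\Phi^-)^{n_2}$ on $\ell$-modular representations is a priori only controlled up to semisimplification. Everything else is formal.
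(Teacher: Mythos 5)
Your proof is correct and supplies essentially the argument the paper leaves implicit: the paper gives no proof of this corollary beyond calling it ``immediate,'' and the intended route is precisely what you do, namely reduce by exactness and additivity over composition series to an irreducible $\pi=\pi_1\otimes\pi_2$, identify $r_\ell(\pi_1\otimes\pi_2)$ with $r_\ell(\pi_1)\otimes r_\ell(\pi_2)$, and apply Proposition~\ref{reduction-commutes-with-derivatives-1} to the second factor.

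One small point is worth tightening. The parenthetical ``conversely a $G_{n_i}$-stable lattice is obtained by restriction'' does not actually prove what you need: restricting a $G_{n_1}\times G_{n_2}$-stable lattice $\Lambda$ of $V_1\otimes V_2$ to the subgroup $G_{n_1}\times\{1\}$ produces a lattice in $V_1\otimes V_2$ regarded as a $G_{n_1}$-representation (an infinite multiple of $\pi_1$), not a lattice in $\pi_1$ itself, so it does not immediately yield integral structures in $\pi_1$ and $\pi_2$. The conclusion is nonetheless true and is genuinely needed in order to speak of $r_\ell(\pi_i)$; the clean way to see it within the paper's framework is via cuspidal supports: Jacquet modules of the integral $\pi$ are integral by the result of Dat recalled in Section~\ref{sectintegral}, so the cuspidal support $\rho_1\otimes\rho_2$ of $\pi$ is integral, hence its central character $c_{\rho_1}\otimes c_{\rho_2}$ is integral, hence each $c_{\rho_i}$ is integral, hence each $\rho_i$ is integral, and therefore each $\pi_i$ (a subquotient of a product of integral cuspidals) is integral. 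With that repair, the rest of your bookkeeping — tensor decomposition of irreducibles of a product, compatibility of semisimplification with external tensor products, additivity of both sides — is sound.
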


From this corollary follows the proof of Theorem \ref{derivatives-commute}.  Indeed, if we set~$G=G_n$ and~$P=P_{(n-k,k)}$, and~$\tau$ as in the statement of  Theorem \ref{derivatives-commute}, we have~$\tau^{(k)}=(\Id\otimes (\Phi^-)^{k})\circ \boldsymbol{r}_P^G(\tau)$ (cf. for example the proof of \cite[Proposition 2.5]{Matringe}). But~$\boldsymbol{r}_P^G$ commutes with reduction modulo~$\ell$, as we recalled in Section \ref{sectintegral}, and so does~$\Id\otimes (\Phi^-)^{k}$, according to Corollary \ref{reduction-commutes-with-derivatives-2}, and this completes the proof of Theorem \ref{derivatives-commute}.

We now recall \cite[Theorem V.7]{V3} (cf. also \cite[Theorem 9.10]{MS}, and \cite[Theorem 9.7]{Z} for the original proof when~$R=\mathbb{C}$), which classifies generic representations.

\begin{thm}\label{gen} Let~$\pi=\L(\D_1)\times \dots \times \L(\D_t)$ be an~$\ell$-modular (or~$\ell$-adic) representation of Whittaker type of~$G_n$, then~$\pi$ is irreducible if and only if the segments~$\D_i$ and~$\D_j$ are unlinked, for all~$i,j\in\{1,\ldots, t\}$ with~$i\neq j$. In this case, the product is commutative and up to order, and the segments which appear, together with their multiplicities, are determined by~$\pi$. 
\end{thm}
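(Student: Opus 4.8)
The statement is the $\ell$-modular analogue of Zelevinsky's classification of irreducible products of segments (\cite{Z} for $R=\C$); over~$\Fl$ it is due to Vign\'eras \cite{V3} and to M\'inguez--S\'echerre \cite{MS}. The plan is to recover it by a single argument valid over any algebraically closed~$R$, using only the Bernstein--Zelevinsky machinery recalled above (the derivative functors, the Leibniz rule of Lemma~\ref{derprod}, Proposition~\ref{BZ3.7}) together with the geometric lemma of \cite{BZ} computing Jacquet modules of parabolically induced representations, and the identification of the Jacquet module of~$\L(\D)$ with respect to the opposite of~$N_{(m,\dots,m)}$ with the cuspidal pair~$r_\D$ (\cite[Lemma~7.14]{MS}).

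\textbf{Reduction to two segments.} First one treats~$t=2$. If~$\D_1$ precedes~$\D_2$, one shows~$\L(\D_1)\times\L(\D_2)$ is reducible by computing its Jacquet module along the opposite of~$N_{(n_1,n_2)}$ via the geometric lemma: since~$\boldsymbol{r}(\L(\D_i))=r_{\D_i}$, the constituent indexed by the pair~$(\D_1\cup\D_2,\D_1\cap\D_2)$ occurs, and by Frobenius reciprocity this produces a nonzero non-invertible endomorphism, equivalently a short exact sequence whose subquotients are the two expected ``Langlands-type'' representations. Conversely, if~$\D_1$ and~$\D_2$ are unlinked, the two intertwining operators~$\L(\D_1)\times\L(\D_2)\to\L(\D_2)\times\L(\D_1)$ and back are mutually inverse up to a nonzero scalar --- checked on a Jacquet module on which the composite is visibly the identity, unlinkedness being exactly what prevents a vanishing or a pole --- so the two products are isomorphic; Frobenius reciprocity plus the geometric lemma then give~$\dim\End_{G_n}(\L(\D_1)\times\L(\D_2))=1$, and since the product is of Whittaker type it has, by Lemmas~\ref{dimensionwhittaker} and~\ref{derprod}, a unique generic subquotient occurring with multiplicity one; a standard argument (as in \cite{Z}, \cite{MS}) comparing socle and cosocle with this generic constituent, and using the involution~$\widetilde{\ \cdot\ }$, upgrades ``scalar endomorphism ring'' to irreducibility.

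\textbf{General~$t$.} If some~$\D_i,\D_j$ are linked, repeated swapping of neighbouring \emph{unlinked} segments (each swap an isomorphism by the case~$t=2$) lets one assume~$\D_i,\D_{i+1}$ are linked and adjacent; then~$\L(\D_i)\times\L(\D_{i+1})$ is reducible, and since parabolic induction is exact and sends nonzero representations to nonzero representations, a proper nonzero subrepresentation of~$\L(\D_i)\times\L(\D_{i+1})$ induces up to a proper nonzero subrepresentation of~$\pi$, so~$\pi$ is reducible. Conversely, if the~$\D_i$ are pairwise unlinked one inducts on~$t$: writing~$\pi=\pi'\times\L(\D_t)$ with~$\pi'=\L(\D_1)\times\cdots\times\L(\D_{t-1})$ irreducible (inductive hypothesis, $\D_1,\dots,\D_{t-1}$ being pairwise unlinked and hence~$\pi'$ freely reorderable), one repeats the endomorphism-ring computation --- Frobenius reciprocity and the geometric lemma, using that~$\D_t$ is unlinked with each~$\D_i$ --- to obtain~$\dim\End_{G_n}(\pi)=1$ and concludes as before. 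Commutativity of the product is precisely the swapping isomorphisms, and the multiset of segments with multiplicities is recovered from~$\pi$ by extracting, again via the geometric lemma and~$\boldsymbol{r}(\L(\D))=r_\D$, the extremal constituent of a suitable Jacquet module of~$\pi$ along an opposite unipotent.

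\textbf{Main obstacle.} The crux is the two-segment analysis, and inside it the bookkeeping of Jacquet modules of products of segments through the geometric lemma: one must check that~$(\D_1\cup\D_2,\D_1\cap\D_2)$ genuinely appears in the linked case and that nothing unwanted appears in the unlinked case, and that every step is insensitive to the characteristic of~$R$. In the~$\ell$-modular setting the potential subtlety is that a generic segment may be non-banal; but since~$\L(\D)$ is only ever compared with its Jacquet module~$r_\D$, and the derivative and Jacquet functors are exact, the relevant combinatorics are identical to the characteristic-zero case, so no genuinely new phenomenon intervenes.
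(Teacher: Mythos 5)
The paper does not prove Theorem~\ref{gen}: it is explicitly a recalled result, attributed to Vign\'eras \cite[Theorem V.7]{V3} and M\'inguez--S\'echerre \cite[Theorem 9.10]{MS} (with the complex case due to Zelevinsky \cite[Theorem 9.7]{Z}). There is therefore no internal proof to compare your sketch against, and it has to be judged on its own terms.

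Taken on its own, your outline is the correct Bernstein--Zelevinsky framework, and that is indeed the route taken in the references. But the final claim --- that since derivatives and Jacquet functors are exact ``the relevant combinatorics are identical to the characteristic-zero case, so no genuinely new phenomenon intervenes'' --- is exactly where the proposal breaks, and it is precisely where the $\ell$-modular proof differs substantially from Zelevinsky's. Over $\Fl$ the cuspidal line $\Z_\rho$ is a \emph{cycle} of length $o(\rho)$, so ``unlinked'' is governed by congruences rather than inequalities of integers, and two intervals that are disjoint on a line can overlap after wrapping around the circle; this is why the paper restricts to \emph{generic} segments with $b-a+1<e(\rho)$ (via \cite[Remarque 8.14]{MS}), a condition with no analogue over $\C$ and which your sketch never invokes. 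Second, the assertion that the two intertwiners between $\L(\D_1)\times\L(\D_2)$ and $\L(\D_2)\times\L(\D_1)$ are mutually inverse up to a nonzero scalar, with ``unlinkedness being exactly what prevents a vanishing or a pole,'' imports a characteristic-zero heuristic: over $\C$ the standard intertwining operator is constructed by analytic continuation of an integral and the scalar is a rational function of a complex parameter; over $\Fl$ there is no parameter to continue and no poles to avoid, and the nonvanishing of the composite must be established by entirely different means (lifting and reduction modulo $\ell$, or via types and Hecke algebras, as is actually done in \cite{V3} and \cite{MS}). Third, $\dim\End=1$ does \emph{not} imply irreducibility over a field of positive characteristic; the ``standard argument'' you gesture at (comparing socle and cosocle with the unique generic constituent of Lemma~\ref{dimensionwhittaker}) is the nontrivial part, and making it go through again uses the length restriction $b-a+1<e(\rho)$. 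In short, your plan identifies the right tools but locates the obstacle in the wrong place: the Jacquet-module bookkeeping transfers as you say, while the pieces you waved away --- invertibility of intertwiners, the socle/cosocle analysis, and the interaction of linkedness with the cyclic cuspidal line --- are the ones that require genuinely new arguments in characteristic $\ell$ and occupy most of the cited proofs.
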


First, we notice the following consequence.

\begin{rem}\label{generic-lifts}
If~$\pi$ is an~$\ell$-modular generic representation of~$G_n$, as parabolic induction commutes with reduction modulo~$\ell$, it is easy to see that it admits standard lifts which are generic (i.e. irreducible). In this case, one has~$r_\ell(\tau)=\pi$ (because~$\pi$ is irreducible), and~$\pi$ is a true lift, which is generic. Any generic lift~$\tau$ is such that~$r_\ell(\tau)=J_\ell(\tau)=\pi$. In fact, thanks Theorem \ref{derivatives-commute}, any lift of~$\pi$ (which is necessarily irreducible), is generic.
\end{rem}

As a consequence of Theorem \ref{gen}, we observe that a generic representation~$\pi$ of~$G_n$ can be written in a unique way as the product of two generic representations 
\[\pi_b\times \pi_{tnb},\] where~$\pi_b$ is a product of banal segments, and~$\pi_{tnb}$ a product of non-banal segments. We first notice that~$\pi_b$ is a banal representation in the sense of \cite[Remarque 8.15]{MS}.  In fact, it is the largest banal subproduct of~$\pi$, this claim following at once from the next proposition. Similarly~$\pi_{tnb}$ is non-banal, and has no nontrivial banal subproduct. For this reason we will call~$\pi_b$ the \emph{banal part} of~$\pi$, and we will call~$\pi_{tnb}$ the \emph{totally non-banal part} of~$\pi$.

\begin{prop}\label{banal-part}
Let~$\rho$ be a cuspidal~$\ell$-modular representation of~$G_r$, and~$\pi$ be a generic representation of~$G_n$ 
whose cuspidal support is contained in~$\Z_\rho$. Then~$\pi$ is banal if and only if~$\rho$ is banal.
\end{prop}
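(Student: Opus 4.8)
The plan is to reduce the statement to the single–segment case already recorded just after Definition~\ref{banal-segment} (a generic segment $\L(\D)$ with $\D=[a,b]_\rho$ is banal iff $o(\rho)\neq 1$, i.e.\ iff $\rho$ is banal in the sense of \cite{MSComp}), and then to propagate it through a product. The mechanism behind the single–segment case is the numerical identity $e(\rho)=|\Z_\rho|=o(\rho)$, valid over $R=\Fl$ as soon as $o(\rho)>1$ (recalled above from \cite[5.2]{MS}): it makes the defining inequality of a generic segment, length $<e(\rho)$, coincide with the defining inequality of a banal segment, length $<o(\rho)$. Note also the trivial remark, used throughout, that every cuspidal $\rho_i$ appearing in the support of $\pi$ lies on $\Z_\rho$, whence $o(\rho_i)=|\Z_{\rho_i}|=|\Z_\rho|=o(\rho)$, so $\rho_i$ is banal if and only if $\rho$ is.

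First I would treat the direction ``$\rho$ non-banal $\Rightarrow$ $\pi$ non-banal'' (the contrapositive of the forward implication). Here $o(\rho)=1$, so $\Z_\rho=\{\rho\}$, and any segment supported on $\Z_\rho$ is of the form $[a,b]_\rho$, of length $b-a+1\geq 1=o(\rho)$, hence non-banal by Definition~\ref{banal-segment}. Assuming $\pi$ non-trivial (the case $n=0$ being vacuous), Theorem~\ref{gen} writes $\pi=\L(\D_1)\times\cdots\times\L(\D_t)$ with $t\geq 1$ and each $\D_i$ on $\Z_\rho$; thus every $\L(\D_i)$ is a non-banal generalised Steinberg $St(\rho,k)$ and $\rho$ itself is a non-banal cuspidal occurring in the support of $\pi$, so $\pi$ is non-banal.

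For the converse, assume $\rho$ banal, i.e.\ $o(\rho)>1$. Then for every $\rho_i$ in the support of $\pi$ one has $e(\rho_i)=|\Z_{\rho_i}|=o(\rho_i)=o(\rho)>1$. Writing again $\pi=\L(\D_1)\times\cdots\times\L(\D_t)$ with $\D_i=[a_i,b_i]_{\rho_i}$, the fact that each $\L(\D_i)$ is a generic segment means precisely $b_i-a_i+1<e(\rho_i)=o(\rho_i)$, which is exactly the condition in Definition~\ref{banal-segment} for $\D_i$ to be banal. Hence $\pi$ is a product of banal segments, and therefore a banal representation (cf.\ \cite[Remarque 8.15]{MS} and \cite{MSComp}).

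The only genuine obstacle is the passage from segments to the representation: one must invoke that, for a generic representation, banality in the sense of \cite[Remarque 8.15]{MS} is detected by the banality of the segments occurring in its factorisation, and this is legitimate exactly because that factorisation is essentially unique by Theorem~\ref{gen}. Everything else is the bookkeeping identity $e(\rho)=o(\rho)$ for banal $\rho$ together with the invariance of $o(-)$ along a cuspidal line.
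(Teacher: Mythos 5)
There is a genuine gap in the converse direction. You reduce the statement ``$\rho$ banal $\Rightarrow\pi$ banal'' to the assertion ``$\pi$ is a product of banal segments, and therefore a banal representation (cf.\ \cite[Remarque 8.15]{MS})'', and later flag the ``passage from segments to the representation'' as something that is ``legitimate exactly because that factorisation is essentially unique by Theorem~\ref{gen}.'' But uniqueness of the factorisation has nothing to do with it, and the implication you invoke is precisely the nontrivial content of the proposition, not a ready-made consequence of the cited reference. Banality of a representation in the sense of \cite[Remarque~8.15]{MS} is a condition on the cuspidal support (roughly: no cuspidal line $\Z_\rho$ is entirely covered), and it is \emph{not} a priori the same as ``all segments in the factorisation are individually banal.'' A product of banal segments can perfectly well have cuspidal support covering all of $\Z_\rho$: for instance, three length-two segments $(\rho,\nu\rho)$, $(\nu\rho,\nu^2\rho)$, $(\nu^2\rho,\rho)$ on a cuspidal line with $o(\rho)=3$ are each banal but cover the whole circle. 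What saves the day is genericity: those three segments are pairwise \emph{linked}, so they cannot all occur in a generic $\pi$. The paper's proof makes exactly this point. It identifies non-banality of $\pi$ with the union of its segments covering the circle $\Z_\rho$, and then shows that a family of pairwise unlinked segments each of length $<e(\rho)$ cannot cover the circle — any attempt to do so forces either a segment of full length $\geq e(\rho)$ or two linked segments, contradicting genericity. This combinatorial argument on the circle is the heart of the proof and is absent from your write-up.

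Incidentally, the observation just after the proposition — that $\pi_b$ (the product of banal segments) is a banal representation, and indeed the largest banal subproduct — is stated in the paper as a \emph{consequence} of Proposition~\ref{banal-part}, so appealing to it here is circular. The forward direction you give is essentially correct and matches the paper (when $o(\rho)=1$ the line $\Z_\rho=\{\rho\}$ is trivially covered), and the identity $e(\rho)=o(\rho)$ for banal $\rho$ is used in the same way; the missing piece is only the circle-covering argument in the converse.
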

\begin{proof}
If~$e(\rho)=\ell$, then both~$\pi$ and~$\rho$ are non-banal, so we suppose that~$e(\rho)\neq \ell$, in which case~$e(\rho)=|\Z_\rho|\geqslant 2$. It is convenient to think of~$\Z_\rho$ as a circle, with~$e(\rho)$ vertices drawn on it. If~$\rho$ is non-banal, then it is obvious that~$\pi$ is non-banal. Conversely suppose that~$\pi$ is non-banal, but~$\rho$ is banal. As~$\pi$ is generic, it is a product of unlinked generic segments.  By hypothesis, the cuspidal support of~$\pi$ is a subset of~$\Z_\rho$, this means that if one takes two segments occurring in~$\pi$, they are either included in one another, or disjoint and not juxtaposed. As~$\pi$ is non-banal, all the vertices on the circle~$\Z_\rho$ must be covered by the union of all segments occurring in~$\pi$. This implies that either one segment is of length~$\geqslant e(\rho)$, or that two segments are linked. Both cases are impossible since~$\pi$ is generic, a contradiction.
\end{proof}

\subsection{Lifting supercuspidal representations}\label{liftingsection}
The aim of this subsection is to show that a non-banal supercuspidal~$\ell$-modular representation has two lifts which are not isomorphic after twisting by an unramified character. A fact that we will use to compute~$L$-factors of non-banal supercuspidal~$\ell$-modular representations.  To prove this, we use the Bushnell--Kutzko construction to reduce the problem to finite group theory.

We first recall the Green classification of all cuspidal~$\ell$-adic representations of~$\GL_m(k_F)$, where~$k_F$ is a finite field of characteristic~$p$, and the associated results of James on reduction modulo~$\ell$. Let~$k_E$ be a finite extension of~$k_F$ of degree~$m$.  The group~$\Hom(k_E^\times ,\Ql^\times )$ is cyclic of order~$|k_E^\times |$, and is the direct product of its~$\ell$-singular part~$\Hom(k_E^\times ,\Ql^\times )_s$ and its~$\ell$-regular part~$\Hom(k_E^\times ,\Ql^\times )_r$. The map~$r_{\ell
}$ is a surjective morphism 
\[\Hom(k_E^\times ,\Ql^\times )\rightarrow \Hom(k_E^\times ,\Fl^\times )\]
with kernel~$\Hom(k_E^\times ,\Ql^\times )_s$. Let~$R=\Ql$ or~$\Fl$, a character~$\chi\in \Hom(k_E^\times ,R^\times )$ is called \emph{$(k_E/k_F)$-regular} if its~$\Gal(k_E/k_F)$-orbit is of cardinality~$|\Gal(k_E/k_F)|$, i.e. if~$\chi$ does not factor through the norm of any proper intermediate extension. If we write~$G_\chi$ for the fixator of~$\chi$ in~$\Gal(k_E/k_F)$, it is equivalent to say that~$G_\chi=\{1\}$.

\begin{thm}[{\cite{Green},~\cite{James}}]\label{Greenjames}
Let~$k_F$ be a finite field, and~$k_E$ be an extension of~$k_F$ of degree~$m$.
\begin{enumerate}
\item There is a surjective map 
\begin{align*}
\mu&\mapsto \sigma(\mu)
\end{align*}
from the set of~$(k_E/k_F)$-regular characters of~$k_E^\times~$ to the set isomorphism classes of cuspidal~$\ell$-adic representations of~$\GL_m(k_F)$, such that the preimage of 
$\sigma(\mu)$ is the~$\Gal(k_E/k_F)$-orbit of~$\mu$. 
\item The reduction~$r_\ell(\sigma(\mu))$ is cuspidal, and~$r_\ell(\sigma(\mu))=r_\ell(\sigma(\mu'))$ if and only if 
the~$\ell$-regular parts of~$\mu$ and~$\mu'$, as elements of the group~$\Hom(k_E^\times ,\Ql^\times )$, are conjugate under~$\Gal(k_E/k_F)$. Moreover 
$r_\ell(\sigma(\mu))$ is supercuspidal if and only if the~$\ell$-regular part of~$\mu$ is~$(k_E/k_F)$-regular.
Finally, any cuspidal~$\ell$-modular representation of~$\GL_m(k_F)$ lifts to a cuspidal~$\ell$-adic representation~$\sigma(\mu)$. \end{enumerate}
\end{thm}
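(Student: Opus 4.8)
The plan is to treat part~(1) as Green's classification \cite{Green} repackaged through Deligne--Lusztig theory, and part~(2) through the behaviour of Deligne--Lusztig characters on $\ell$-regular classes together with James's $\ell$-modular theory \cite{James} for general linear groups over a finite field. For part~(1) I would fix an $F$-stable maximal torus $T_w$ of $\GL_m$ of Coxeter type (associated to an $m$-cycle in the Weyl group $S_m$), so that $T_w(k_F)\simeq k_E^\times$, and attach to a character $\mu$ of $k_E^\times$ the Deligne--Lusztig virtual representation $\mathcal{R}_{T_w}^{\mu}$. One then checks in turn: (i) $\mu$ has trivial stabiliser in $\Gal(k_E/k_F)$, equivalently is in general position, precisely when it is $(k_E/k_F)$-regular, and in that case $\pm\mathcal{R}_{T_w}^{\mu}$ is an irreducible representation, which we call $\sigma(\mu)$; (ii) $\sigma(\mu)$ is cuspidal, because Harish--Chandra restriction of $\mathcal{R}_{T_w}^{\mu}$ along any proper parabolic is, by the Mackey-type formula, a sum of Deligne--Lusztig characters of tori lying in the corresponding Levi, and no conjugate of the anisotropic $T_w$ lies in a proper Levi; (iii) $\sigma(\mu)\simeq\sigma(\mu')$ if and only if the pairs $(T_w,\mu)$ and $(T_w,\mu')$ are geometrically conjugate, i.e. $\mu'$ lies in the $\Gal(k_E/k_F)$-orbit of $\mu$, by the disjointness relations for Deligne--Lusztig characters; (iv) surjectivity, by counting: the number of $\Gal(k_E/k_F)$-orbits of $(k_E/k_F)$-regular characters of $k_E^\times$ equals the number of isomorphism classes of cuspidal representations of $\GL_m(k_F)$, as one reads off from the Harish--Chandra stratification of $\Irr(\GL_m(k_F))$.

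For part~(2) I would first deduce that $r_\ell(\sigma(\mu))$ is cuspidal from the Brauer--Nesbitt principle together with the commutation of $r_\ell$ with Harish--Chandra induction and restriction, exactly as recalled for $\boldsymbol{i}_Q^{G_n}$ and $\boldsymbol{r}_Q^{G_n}$ in Section~\ref{sectintegral}: since Harish--Chandra restriction of $\sigma(\mu)$ to any proper Levi vanishes and that functor is exact and commutes with $r_\ell$, it kills $r_\ell(\sigma(\mu))$, hence all its subquotients. For the reduction criterion, write $\mu=\mu_r\mu_s$ with $\mu_r$ the $\ell$-regular part and $\mu_s$ the $\ell$-singular part. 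On an $\ell$-regular element $t\in k_E^\times$ one has $\mu_s(t)=1$, since $\mu_s(t)$ is at once a root of unity of $\ell$-power order and of order prime to $\ell$; thus $\mu$ and $\mu_r$ agree on $\ell$-regular classes. The Deligne--Lusztig character formula then shows that the character value of $\sigma(\mu)$ at an $\ell$-regular element of $\GL_m(k_F)$ depends on $\mu$ only through its restriction to $\ell$-regular elements, hence only on the $\Gal(k_E/k_F)$-orbit of $\mu_r$; as this restriction is the Brauer character of $r_\ell(\sigma(\mu))$, the reduction depends only on that orbit. The converse, that distinct orbits of $\ell$-regular characters give non-isomorphic reductions, follows from the linear independence of the corresponding Brauer characters, equivalently from the unitriangularity of James's decomposition matrices with respect to a suitable ordering.

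The supercuspidality criterion and the surjectivity of $\mu\mapsto r_\ell(\sigma(\mu))$ onto the $\ell$-modular cuspidals are the genuinely modular ingredients, and here I would invoke \cite{James} (and the work of Dipper--James): when $\mu_r$ is not $(k_E/k_F)$-regular it factors through a norm map $N_{k_E/k_{E'}}$ to a proper subextension, and using the Harish--Chandra-induced description of the corresponding non-regular Deligne--Lusztig representations together with its compatibility with $r_\ell$ one realises $r_\ell(\sigma(\mu))$ as a subquotient of a representation induced from a proper Levi, so it is cuspidal but not supercuspidal; when $\mu_r$ is regular the same bookkeeping shows no such expression exists, so it is supercuspidal. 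Surjectivity then follows because every $\ell$-modular cuspidal occurs in $r_\ell$ of some integral $\ell$-adic representation, whose relevant cuspidal support can only be assembled from the $\sigma(\mu)$'s, so it occurs as a subquotient of---and hence, $r_\ell(\sigma(\mu))$ being itself cuspidal, equals---some $r_\ell(\sigma(\mu))$. I expect this last point to be the main obstacle: unlike the character-theoretic steps above, the dictionary between the characteristic-zero Deligne--Lusztig picture and the $\ell$-modular (super)cuspidal support theory is not formal and rests on the detailed structure theory of James for $\GL_m$ over a finite field.
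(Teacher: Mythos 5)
The paper does not prove this theorem at all; it recalls it as a black box with citations to Green \cite{Green} and James \cite{James}, so there is no internal argument to compare your sketch against. On its own terms, your outline of part~(1) is the standard modern account of Green's classification: realising the cuspidals as $\pm\mathcal{R}_{T_w}^{\mu}$ for a Coxeter torus $T_w(k_F)\simeq k_E^\times$, deducing cuspidality from the fact that no conjugate of a Coxeter torus lies in a proper Levi, getting the fibre description from the disjointness theorem, and closing with a count of $\Gal(k_E/k_F)$-orbits versus cuspidal classes. For the first half of part~(2), your argument is also correct: Harish--Chandra restriction is exact and commutes with $r_\ell$, so $r_\ell(\sigma(\mu))$ stays cuspidal, and the Deligne--Lusztig character formula evaluates the torus character only at conjugates of the semisimple part of the group element, which is again $\ell$-regular, so the Brauer character of $r_\ell(\sigma(\mu))$ depends only on $\mu_r$ up to $\Gal(k_E/k_F)$-conjugacy; linear independence of Brauer characters gives the converse.

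Where the proposal is genuinely incomplete is exactly where you flag it: the supercuspidality criterion and the surjectivity of $\mu\mapsto r_\ell(\sigma(\mu))$ onto all $\ell$-modular cuspidals. These are not formal consequences of the character calculus: one needs the $\ell$-modular Harish--Chandra theory for $\GL_m(k_F)$ (James, Dipper--James), in particular the classification of the $\ell$-modular cuspidals and the fact that a cuspidal constituent of $r_\ell(\sigma(\mu))$ fails to be supercuspidal precisely when $\mu_r$ factors through a proper norm $N_{k_E/k_{E'}}$. Your sketch gestures at realising $r_\ell(\sigma(\mu))$ as a subquotient of an induced representation in that case, but does not actually produce the induction datum, and the converse (no such realisation when $\mu_r$ is regular) is simply asserted. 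So as written the proposal reproduces the theorem's dependence on \cite{James} rather than replacing it; that is an honest assessment of the difficulty, but it means the last third of part~(2) is a citation dressed as an argument, which is, in fact, also all the paper does.
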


Let~$\chi\in \Hom(k^\times,\Ql)$,~$\chi_r$ be the~$\ell$-regular part of~$\chi$, and~$\chi_s$ be the singular part which has order a power of~$\ell$, so that we can write~$\chi=\chi_s\chi_r$ uniquely.  As~$G_\chi=G_{\chi_s}\cap G_{\chi_r}$, we notice the following obvious fact:

\begin{LM}
The~$\chi$ character is~$(k_E/k_F)$-regular if and only if~$G_{\chi_s}\cap G_{\chi_r}=\{1\}$.
\end{LM}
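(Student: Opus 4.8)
The plan is to reduce the statement to the characterisation already recorded above, namely that $\chi$ is $(k_E/k_F)$-regular if and only if $G_\chi=\{1\}$, together with the identity $G_\chi=G_{\chi_s}\cap G_{\chi_r}$ of subgroups of $\Gal(k_E/k_F)$ asserted just before the lemma. So the only thing to check is this identity; the equivalence with $(k_E/k_F)$-regularity is then immediate.

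First I would establish the easy inclusion $G_{\chi_s}\cap G_{\chi_r}\subseteq G_\chi$: the group $\Gal(k_E/k_F)$ acts on $\Hom(k_E^\times,\Ql^\times)$ by group automorphisms, so if $\sigma$ fixes both $\chi_s$ and $\chi_r$ it fixes their product $\chi=\chi_s\chi_r$, hence $\sigma\in G_\chi$.

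For the reverse inclusion the point is that the factorisation $\chi=\chi_s\chi_r$ into its $\ell$-singular and $\ell$-regular parts is \emph{canonical}: it is the decomposition of the element $\chi$ of the finite cyclic group $\Hom(k_E^\times,\Ql^\times)$ according to the primary decomposition $\Hom(k_E^\times,\Ql^\times)=\Hom(k_E^\times,\Ql^\times)_s\times\Hom(k_E^\times,\Ql^\times)_r$ into its $\ell$-part and its prime-to-$\ell$ part. Since $\sigma\in\Gal(k_E/k_F)$ acts by an automorphism of this group, it preserves each of the two factors; therefore $\sigma(\chi)=\sigma(\chi_s)\sigma(\chi_r)$ is again a decomposition with $\sigma(\chi_s)$ of $\ell$-power order and $\sigma(\chi_r)$ of order prime to $\ell$. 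By uniqueness of such a decomposition, $\sigma(\chi)=\chi$ forces $\sigma(\chi_s)=\chi_s$ and $\sigma(\chi_r)=\chi_r$, i.e. $\sigma\in G_{\chi_s}\cap G_{\chi_r}$.

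Combining the two inclusions gives $G_\chi=G_{\chi_s}\cap G_{\chi_r}$, and the lemma follows from the equivalence "$\chi$ is $(k_E/k_F)$-regular $\iff G_\chi=\{1\}$" recalled above. There is no genuine obstacle here — the statement really is elementary — and the only subtlety worth spelling out is precisely the compatibility of the primary ($\ell$-part / $\ell'$-part) decomposition with the Galois action, which is what makes the claimed identity of stabilisers hold.
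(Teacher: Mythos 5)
Your argument is exactly the paper's: the lemma is derived from the identity $G_\chi=G_{\chi_s}\cap G_{\chi_r}$ (which the paper simply asserts as obvious in the sentence preceding the lemma) combined with the earlier remark that $\chi$ is $(k_E/k_F)$-regular if and only if $G_\chi=\{1\}$. You merely spell out the uniqueness-of-primary-decomposition argument behind the asserted identity, which is a sound and correct justification.
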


We now make a first step towards computing the~$L$-factors of non-banal supercuspidal representations.  We continue with the notations of Theorem~\ref{Greenjames}

\begin{prop}\label{lifts}
Let~$\overline{\sigma}$ be a cuspidal~$\ell$-modular representation of~$\GL_m(k_F)$, and~$\sigma(\chi)$ be a cuspidal lift of~$\overline{\sigma}$. Then if the generators of the cyclic~$\ell$-group~$\Hom(k_E^\times ,\Ql^\times )_s$ are not in the same~$G_{\chi_r}$-orbit, then~$\overline{\sigma}$ has two non-isomorphic cuspidal lifts. This implies that if~$\ell$ divides~$|k_E^\times|$ and~$\overline{\sigma}$ is supercuspidal, then it has two non-isomorphic lifts.
\end{prop}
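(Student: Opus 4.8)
The plan is to produce cuspidal lifts of $\overline{\sigma}$ in the form $\sigma(\chi_r\psi)$, where $\chi=\chi_s\chi_r$ is as in the statement and $\psi$ runs over suitable elements of the cyclic $\ell$-group $C_s:=\Hom(k_E^\times,\Ql^\times)_s$, and to separate them using Theorem~\ref{Greenjames}. Write $\Gamma=\Gal(k_E/k_F)$ and $C=\Hom(k_E^\times,\Ql^\times)=C_r\times C_s$, the decomposition into $\ell$-regular and $\ell$-singular parts. Since $\Gamma$ acts on the cyclic group $C$ by automorphisms it stabilises the characteristic subgroups $C_r$ and $C_s$, so for $\gamma\in\Gamma$ and $\theta\in C$ with $\ell$-regular part $\theta_r$ the $\ell$-regular part of $\gamma\cdot\theta$ is $\gamma\cdot\theta_r$; I will also use the identity $G_\theta=G_{\theta_r}\cap G_{\theta_s}$ recalled before the statement, valid for every $\theta\in C$.

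First I would isolate the special role of generators. If $\psi$ is a generator of $C_s$ then $\langle\psi\rangle=C_s$, so any $\gamma$ fixing $\psi$ fixes $C_s$ pointwise; hence $G_\psi$ equals the pointwise stabiliser $H$ of $C_s$ in $\Gamma$, independently of the chosen generator, and $H\subseteq G_{\chi_s}$ because $\chi_s\in C_s$. As $\sigma(\chi)$ is cuspidal, $\chi$ is $(k_E/k_F)$-regular, i.e. $G_\chi=\{1\}$, which by the identity above means $G_{\chi_r}\cap G_{\chi_s}=\{1\}$ and hence $G_{\chi_r}\cap H=\{1\}$. Now fix a generator $\psi$ of $C_s$ and put $\mu=\chi_r\psi$; since $\chi_r\in C_r$ and $\psi\in C_s$, these are respectively the $\ell$-regular and $\ell$-singular parts of $\mu$, so $G_\mu=G_{\chi_r}\cap G_\psi=G_{\chi_r}\cap H=\{1\}$. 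Thus $\mu$ is $(k_E/k_F)$-regular, so $\sigma(\mu)$ is a well-defined cuspidal $\ell$-adic representation of $\GL_m(k_F)$; and since $\mu$ and $\chi$ share the $\ell$-regular part $\chi_r$, Theorem~\ref{Greenjames}(2) gives $r_\ell(\sigma(\mu))=r_\ell(\sigma(\chi))=\overline{\sigma}$. So $\sigma(\chi_r\psi)$ is a cuspidal lift of $\overline{\sigma}$ for every generator $\psi$ of $C_s$.

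Next I would check that generators lying in different $G_{\chi_r}$-orbits give non-isomorphic lifts. If $\psi,\psi'$ are generators of $C_s$ with $\sigma(\chi_r\psi)\simeq\sigma(\chi_r\psi')$, then by Theorem~\ref{Greenjames}(1) there is $\gamma\in\Gamma$ with $\gamma\cdot(\chi_r\psi)=\chi_r\psi'$; comparing $\ell$-regular parts yields $\gamma\cdot\chi_r=\chi_r$, so $\gamma\in G_{\chi_r}$, and cancelling $\chi_r$ yields $\gamma\cdot\psi=\psi'$, so $\psi,\psi'$ lie in the same $G_{\chi_r}$-orbit. Contrapositively, as soon as the generators of $C_s$ do not all lie in a single $G_{\chi_r}$-orbit, two of them produce two non-isomorphic cuspidal lifts of $\overline{\sigma}$; this is the first assertion.

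Finally, for the consequence, suppose $\ell\mid|k_E^\times|=|C|$, so $C_s\neq\{1\}$, and suppose $\overline{\sigma}$ is supercuspidal. By Theorem~\ref{Greenjames}(2) this last condition says exactly that $\chi_r$ is $(k_E/k_F)$-regular, i.e. $G_{\chi_r}=\{1\}$; then $G_{\chi_r}\cap G_\psi=\{1\}$ for \emph{every} $\psi\in C_s$, not only for generators, so the computation of the previous two paragraphs applies verbatim with $\psi$ ranging over all of $C_s$: each $\sigma(\chi_r\psi)$ is a cuspidal lift of $\overline{\sigma}$, and $\psi\mapsto\sigma(\chi_r\psi)$ is injective because $\gamma\cdot\psi=\psi'$ with $\gamma\in G_{\chi_r}=\{1\}$ forces $\psi=\psi'$. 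Since $|C_s|\geq\ell\geq 2$ this yields at least two non-isomorphic cuspidal lifts. I expect the only genuinely non-bookkeeping point to be the observation that a generator $\psi$ of $C_s$ has stabiliser exactly $H$ and that $H\cap G_{\chi_r}=\{1\}$: this is what keeps $\chi_r\psi$ inside the range of Green's parametrisation, and it is precisely why the hypothesis is stated in terms of generators rather than arbitrary $\ell$-singular characters; the remainder is the $\ell$-regular-part bookkeeping carried out above.
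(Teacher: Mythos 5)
Your proof is correct, and the treatment of the first assertion follows the same lines as the paper: decompose $\chi=\chi_s\chi_r$, observe $G_\psi\subseteq G_{\chi_s}$ for any generator $\psi$ of $C_s$, conclude $G_{\chi_r\psi}=\{1\}$, and separate the lifts via Green's parametrisation by comparing $\ell$-regular and $\ell$-singular parts. Where you diverge, to your benefit, is in the second (``implies'') assertion. The paper, having phrased the first assertion only in terms of generators, finds itself without enough generators when $|C_s|=2$ (since $\varphi(2)=1$), and is forced into a case split: for $|C_s|\geq 3$ it invokes the first assertion directly, and for $|C_s|=2$ it notes that the unique element of order $2$ in $C$ is $\Gamma$-fixed, hence $\mu\chi_r$ is regular and lies in a different $\Gamma$-orbit from $\chi_r$. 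You avoid this entirely by observing that once $G_{\chi_r}=\{1\}$, the constraint $G_{\chi_r}\cap G_\psi=\{1\}$ holds for \emph{every} $\psi\in C_s$, not just generators, so each $\chi_r\psi$ is regular and $\psi\mapsto\sigma(\chi_r\psi)$ is injective on all of $C_s$; since $|C_s|\geq\ell\geq 2$, this gives at least two lifts in one stroke. This reorganisation makes it transparent that the ``generator'' hypothesis in the first assertion is a device to force $G_\psi\subseteq G_{\chi_s}$ when $G_{\chi_r}$ is not assumed trivial, and becomes superfluous in the supercuspidal case; your explicit observation that $G_\psi$ equals the pointwise stabiliser $H$ of $C_s$ for any generator $\psi$ also makes the mechanism cleaner than the paper's one-line assertion $G_\mu\subset G_{\chi_s}$.
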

\begin{proof}
Let~$\sigma(\chi)$ be a cuspidal representation lifting~$\overline{\sigma}$ and write~$\chi=\chi_s\chi_r$. As~$\chi$ is regular, one has~$G_{\chi_s}\cap G_{\chi_r}=\{1\}$. But if~$\mu$ is any generator of~$\Hom(k_E^\times ,\Ql^\times )_s$, one has~$G_\mu\subset G_{\chi_s}$, so that~$\mu\chi_r$ is regular as well. 
Moreover~$\sigma(\mu\chi_r)$ lifts~$\overline{\sigma}$. But if two generators~$\mu$ and~$\mu'$ of~$\Hom(k_E^\times ,\Ql^\times )_s$ are in different~$G_{\chi_r}$-orbits, then clearly~$\chi_1=\mu\chi_r$ and~$\chi_2=\mu'\chi_r$ are in different~$\Gal(k_E/k_F)$-orbits, and~$\sigma(\chi_1)$ and ~$\sigma(\chi_2)$ are two non-isomorphic lifts of~$\overline{\sigma}$. Now if~$\overline{\sigma}$ is supercuspidal, then the character~$\chi_r$ is~$(k_E/k_F)$-regular, hence~$G_{\chi_r}$ is trivial. Hence our second assertion follows when~$|\Hom(k_E^\times ,\Ql^\times )_s|\geqslant 3$, because in this case~$\Hom(k_E^\times ,\Ql^\times )_s$ has at least~$2$ generators.  Now when~$|\Hom(k_E^\times ,\Ql^\times )_s|=2$, the generator~$\mu$ of~$\Hom(k_E^\times ,\Ql^\times )_s$ is the only element of order~$2$ in~$\Hom(k_E^\times ,\Ql^\times )$, hence it is fixed by~$\Gal(k_E/k_F)$. In particular~$\mu\chi_r$ is regular as well, but in a different~$\Gal(k_E/k_F)$-orbit, and we conclude as before. 
\end{proof}

\begin{rem}
Notice that in contrast to Proposition~\ref{lifts} a non-banal cuspidal non-supercuspidal representation may only have one lift, for example for any~$\ell\neq 2$,~$\GL_2(\mathbb{F}_2)$ has only one isomorphism class of cuspidal~$\ell$-adic representation~$\sigma$, which corresponds 
to unique~$\Gal(\mathbb{F}_4/\mathbb{F}_2)$-orbit any non-trivial characters of~$\mathbb{F}_4^\times \simeq \Z/3\Z$. Hence any cuspidal~$\ell$-modular
representation of~$\GL_2(\mathbb{F}_2)$ has exactly one lift. The only~$\ell$ which divides~$\mathbb{F}_4^\times~$ is~$3$, in which case 
$r_3(\sigma)$ is cuspidal non-supercuspidal.
\end{rem}

In~\cite[Chapitre~III]{V}, the Bushnell--Kutzko construction of all cuspidal~$\ell$-adic representations of~$G_n$, is extended to~$\ell$-modular representations. Let~$R=\Ql$ or~$\Fl$, and~$\pi$ be a cuspidal~$R$-representation of~$G_n$.
\begin{enumerate}
\item By \cite[Chapter 6]{BK93},~\cite[Chapitre III]{V}, the representation~$\pi$ contains a \emph{maximal extended simple type}~$(\bJ,\Lambda)$,  ($\bJ$ is a compact mod-centre open subgroup of~$G_n$ and~$\Lambda$ is an irreducible~$R$-representation of~$\bJ$ of finite dimension) and we have~$\pi\simeq\ind_{\bJ}^{G_n}(\Lambda)$.  Moreover, any representation compactly induced from an extended maximal simple type is cuspidal and a cuspidal representation determines a maximal simple type~$(\bJ,\Lambda)$ up to conjugation in~$G_n$ (the so-called intertwining implies conjugacy property). 
\item Let~$(\bJ,\Lambda)$ be an extended maximal simple type for~$\pi$. The group~$\bJ$ is defined via a field extension~$E=F[\beta]$ of~$F$ in~$\mathcal{M}_n$ such that~$\bJ=E^\times\ltimes J$ with~$J$  the maximal compact subgroup of~$\bJ$. The restriction~$\l$ of~$\Lambda$ to~$J$ decomposes (non-uniquely) as~$\l= \kappa\otimes \sigma$, with~$\kappa$ a~\emph{$\beta$-extension}, and~$\sigma$ an irreducible cuspidal~$R$-representation of~$J/J^1\simeq G_m(k_E)$.  The pair~$(J,\lambda)$ is called a \emph{maximal simple type}.  By~\cite[Lemmas 6.1~and~6.8]{MS}, the representation~$\pi$ is supercuspidal if and only if~$\sigma$ is supercuspidal.  
\item The \emph{ramification index}~$e$ of~$\pi$, is the ramification index of the extension~$E/F$ above, which is well defined as intertwining of extended maximal simple types implies conjugacy. 
\item Let~$\pi$ be a cuspidal~$\ell$-modular representation and~$e$ be the ramification index of~$\pi$.  By~\cite[Lemme~5.3~and~Section~6]{MS}, the representation~$\pi$ is banal if and only if~$q^{n/e}\not\equiv 1[\ell]$ and, a banal cuspidal~$\ell$-modular representation is necessarily supercuspidal.   
\item Let~$\pi$ be a cuspidal~$\ell$-modular representation.  If~$\pi$ contains an extended maximal simple type~$(\bJ,\Lambda)$, then one can lift~$(\bJ,\Lambda)$ to an integral~$\ell$-adic maximal extended type~$(\bJ,\widetilde{\Lambda})$, so that the representation~$\tau=\ind_{\bold{J}}^{G_n}\widetilde{\Lambda}$ is a cuspidal lift of~$\pi$ and all lifts are of this form. Moreover, if~$\lambda=\Lambda\mid_J=\kappa\otimes\sigma$, then~$\kappa$ lifts to a~$\beta$-extension~$\widetilde{\kappa}$ and~$\sigma$ lifts to a cuspidal representation~$\widetilde{\sigma}$, so that~$(J,\widetilde{\kappa}\otimes\widetilde{\sigma})$ is a maximal simple type in~$(\bJ,\widetilde{\Lambda})$
\end{enumerate}
\begin{rem}
For a cuspidal~$R$-representation~$\pi$ of~$G_n$, the integer~$f(\pi)$ we mentioned in Section \ref{repwhit}, is defined by~$f(\pi)=n/e$ where~$e$ is the ramification index of~$\pi$.
\end{rem}

We can now prove the result on lifting supercuspidal~$\ell$-modular representations of~$G_n$ we need later:
\begin{prop}\label{supercuspliftingnoniso}
Let~$\pi$ be a non-banal supercuspidal~$\ell$-modular representation of~$G_n$.  There exists two lifts of~$\pi$ which are not isomorphic by twisting by unramified characters.
\end{prop}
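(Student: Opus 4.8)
The plan is to reduce, via the Bushnell--Kutzko--Vign\'eras description of cuspidal representations recalled above, to the finite-field statement of Proposition~\ref{lifts}. Fix an extended maximal simple type $(\bJ,\Lambda)$ contained in $\pi$, so $\pi\simeq\ind_{\bJ}^{G_n}(\Lambda)$ with $\bJ=E^\times\ltimes J$, $E=F[\beta]$, and $\lambda=\Lambda|_J=\kappa\otimes\sigma$, where $\kappa$ is a $\beta$-extension and $\sigma$ an irreducible cuspidal $\ell$-modular representation of $J/J^1\simeq\GL_m(k_E)$; since $\pi$ is supercuspidal, so is $\sigma$. Writing $e=e(E/F)$ for the ramification index of $\pi$, we have $n=m\,e(E/F)f(E/F)$, hence $f(\pi)=n/e=m\,f(E/F)$ and $q(\pi)=q^{f(\pi)}=|k_E|^m=|k_{E,m}|$, where $k_{E,m}$ is the degree-$m$ extension of $k_E$. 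Thus $\pi$ non-banal, i.e. $q(\pi)\equiv 1\bmod\ell$, is exactly the condition $\ell\mid|k_{E,m}^\times|$. Applying Proposition~\ref{lifts} with $k_E$ playing the role of the base field and $k_{E,m}$ that of its degree-$m$ extension, we obtain two non-isomorphic cuspidal $\ell$-adic representations $\widetilde\sigma_1,\widetilde\sigma_2$ of $\GL_m(k_E)$, each lifting $\sigma$.

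Next I would lift $\kappa$ to a $\beta$-extension $\widetilde\kappa$ over $\Ql$, and for $i=1,2$ form $\widetilde\lambda_i=\widetilde\kappa\otimes\widetilde\sigma_i$, an extension $\widetilde\Lambda_i$ of $\widetilde\lambda_i$ to $\bJ$, and $\tau_i=\ind_{\bJ}^{G_n}(\widetilde\Lambda_i)$; by the construction of cuspidal lifts from lifted types recalled above, each $\tau_i$ is a cuspidal $\ell$-adic representation lifting $\pi$. It remains to show that $\tau_1\not\simeq\tau_2\otimes\chi$ for every unramified character $\chi$ of $G_n$. Suppose not. Since $\chi$ is trivial on every compact subgroup of $G_n$, in particular on $J$, the representation $\tau_2\otimes\chi=\ind_{\bJ}^{G_n}(\widetilde\Lambda_2\otimes\chi|_{\bJ})$ still contains the maximal simple type $(J,\widetilde\lambda_2)$, while $\tau_1$ contains $(J,\widetilde\lambda_1)$; by the intertwining-implies-conjugacy property these two maximal simple types are $G_n$-conjugate. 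As the $G_n$-normalizer of $J$ equals $\bJ=E^\times\ltimes J$ and conjugation by $J$ is inner, there is $k\in\Z$ with ${}^{\w_E^k}\widetilde\lambda_1\simeq\widetilde\lambda_2$. Using that conjugation by $\w_E$ induces the identity on $J/J^1\simeq\GL_m(k_E)$ and fixes the $\beta$-extension $\widetilde\kappa$ (immediate when $E/F$ is unramified, since then $\w_E$ is central in $G_n$), we deduce $\widetilde\kappa\otimes\widetilde\sigma_1\simeq\widetilde\kappa\otimes\widetilde\sigma_2$ as representations of $J$; since $\widetilde\kappa|_{J^1}$ is irreducible, tensoring by $\widetilde\kappa$ is fully faithful on representations of $J/J^1$, whence $\widetilde\sigma_1\simeq\widetilde\sigma_2$, contradicting Proposition~\ref{lifts}. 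This produces the required pair of lifts.

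The main obstacle is the structural input in the last step: pinning down exactly the effect of an unramified twist on the maximal simple type, that is, that $\mathfrak{N}_{G_n}(J)=\bJ$ and that conjugation by $\w_E$ does not change the isomorphism class of $\sigma$ on the $\GL_m(k_E)$-factor (equivalently, introduces no residual twist by a character of the form $\phi\circ\det$). I do not expect this to be avoidable by a soft argument: if an unramified twist on $G_n$ could induce such a $\phi\circ\det$-twist of $\widetilde\sigma_i$, then Proposition~\ref{lifts} as stated would be insufficient, since the two Green lifts it produces can genuinely become isomorphic after a $\phi\circ\det$-twist (for instance in certain cases with $\ell=2$). So the argument hinges on carrying out this type-theoretic bookkeeping carefully in the possibly-ramified case, after which Proposition~\ref{lifts} closes the proof.
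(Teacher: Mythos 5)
Your proposal follows essentially the same approach as the paper: choose an extended maximal simple type $(\bJ,\Lambda)$ contained in $\pi$ with $\lambda=\kappa\otimes\sigma$, apply Proposition~\ref{lifts} to obtain two non-isomorphic cuspidal lifts of $\sigma$, lift the $\beta$-extension $\kappa$, and compactly induce to get two cuspidal lifts of $\pi$. The paper's proof is terser, leaving implicit both the check that $\pi$ being non-banal means $\ell$ divides $|k_{E,m}^\times|$ (so that the last assertion of Proposition~\ref{lifts} applies) and the type-theoretic bookkeeping showing the resulting lifts cannot be unramified twists of one another; you correctly supply both, and your discussion of why the normalizer of $J$ and the $\w_E$-action on $J/J^1$ and on $\widetilde\kappa$ must be pinned down is exactly the point the paper glosses over.
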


\begin{proof}
Let~$(J,\kappa\otimes \sigma)$ be a simple type contained~$\pi$, then~$\sigma$ is a supercuspidal representation of~$J/J^1\simeq GL(m,k_E)$. By Proposition~\ref{lifts}, it has two non-isomorphic cuspidal lifts~$\tau$ and~$\tau'$. Let~$\widetilde{\kappa}$ be a~$\beta$-extension lifting~$\kappa$ such that~$(J,\widetilde{\kappa}\otimes \tau)$ and~$(J,\widetilde{\kappa}\otimes \tau')$ are simple types for two cuspidal lifts of~$\pi$ which we write~$\rho$ and~$\rho'$, then the representations~$\rho$ and~$\rho'$ can not be in the same orbit under twisting by unramified characters.
\end{proof}

\section{Rankin--Selberg local factors for representations of Whittaker type}\label{SectionRSdefs}

The theory of derivatives (\cite{BZ0} and \cite{BZ}) being valid in positive characteristic (see Subsection \ref{der}) and equipped with the theory of~$R$-Haar measures (see Subsection \ref{haarmeasures}), means we can now safely follow \cite{JPS2} to define~$L$-factors and~$\gamma$-factors. However, as we do not have a Langlands' quotient theorem at our disposal, which would allow us to associate to an irreducible representation of~$G_n$, a unique representation with an injective Whittaker model lying above it, we restrict our attention to representations of Whittaker type (see Subsection \ref{repwhit}).  Of independent interest, we give a different shorter proof than [ibid.] on multiplicity one of~$\gamma$-factors in Proposition \ref{mult1}, relying on derivatives rather than invariant distributions.  In the main result of the section, Theorem \ref{compat1}, we show that the~$\ell$-modular~$\gamma$-factor of a pair of representations always equals the reduction modulo~$\ell$ of the~$\ell$-adic~$\gamma$-factor of any Whittaker lifts, and that the~$L$-factor of this pair divides the reduction modulo~$\ell$ of the~$\ell$-adic~$L$-factor of any Whittaker lifts. 
 
\subsection{Definition of the L-factors}

We first recall the asymptotics of Whittaker functions obtained in \cite[Proposition 2.2]{JPS1}. For~$1\leqslant i\leqslant n$, we write~$Z_i$ for subgroup~$\{\diag(tI_i,I_{n-i}),t\in F^\times\}$ of~$G_n$. The diagonal torus~$A_n$ of~$G_n$ is the direct product~$Z_1\times\cdots\times Z_n$.

\begin{LM}\label{asymptotics}
 Let~$\pi$ be an~$R$-representation of Whittaker type of~$G_n$. For each~$1\leqslant i\leqslant n-1$, 
there is a finite family~$X_i(\pi)$ of characters of~$Z_i$, such that if~$W$ is a Whittaker function in~$W(\pi,\theta)$, then its restriction~$W(z_1,\dots,z_{n-1})$ to~$A_{n-1}=Z_1\times\cdots\times Z_{n-1}$ is a sum of functions of the form \[\Phi(z)\prod_{i=1}^{n-1}\chi_i(z_i)v(z_i)^{m_i}\] for 
$\chi_i\in X_i(\pi)$, integers~$m_i\geqslant 0$, and~$\Phi\in \mathcal{C}_c^\infty(F^{n-1})$. 
\end{LM}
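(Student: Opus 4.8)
The plan is to reduce the statement for a general representation of Whittaker type to the case of a single generic segment, and then to analyse the asymptotics of Whittaker functions on generalised Steinberg representations directly via their construction inside an induced representation. The key structural input is that $W(\pi,\theta)$ is built from the Whittaker models of the $\mathcal{L}(\Delta_i)$ by parabolic induction, so a Whittaker function $W\in W(\pi,\theta)$, restricted to $A_{n-1}$, is controlled by Whittaker functions on the inducing data evaluated along torus elements coming from the Levi, together with the contribution of the induction itself. Concretely, I would first reduce to the case $\pi=\mathcal{L}(\Delta)$ with $\Delta=[a,b]_\rho$ a generic segment: using the realisation $W(\pi,\theta)\hookrightarrow W(\nu^a\rho\times\cdots\times\nu^b\rho,\theta)$ and the fact that a Whittaker function on a product restricted to the diagonal torus decomposes, via Bruhat-type decompositions, into terms each of which is (a Schwartz function times) a product of the restrictions along the factor tori $Z_i$.

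The heart of the argument is then the mirabolic recursion of Bernstein--Zelevinsky. Following \cite[Proposition 2.2]{JPS1}, I would use the exact functors $\Psi^{\pm},\Phi^{\pm}$ from Subsection \ref{der} and the filtration of the identity functor on $\mathfrak{R}_R(P_n)$: restricting $W$ to $P_n$ and iterating, a Whittaker function is expressed via the action of the ``raising'' operators $\Phi^+$ and $\Psi^+$ applied to Whittaker functions of the derivatives $\pi^{(k)}$, which are representations of smaller $G_{n-k}$. Each application of $\Phi^+$ or $\Psi^+$ introduces either a coordinate on which the function is compactly supported (the Schwartz factor $\Phi$) or a twist by $\nu^{1/2}$; the characters $\chi_i\in X_i(\pi)$ are exactly the central characters occurring in the derivatives $\pi^{(i)}$ (equivalently, in the cuspidal support of $\pi$, which here consists of the $\nu^j\rho$), and the monomials $v(z_i)^{m_i}$ arise because a single character can occur with multiplicity in a successive-quotient filtration, forcing polynomial (rather than merely linear-combination) behaviour — this is where finite length of $\pi$ is used to bound the $m_i$. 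The finiteness of each $X_i(\pi)$ follows from finite length: $\pi^{(i)}$ has finite length, hence finitely many central characters among its subquotients.

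The main obstacle, and the place requiring the most care, is controlling the polynomial degrees $m_i$ and checking that the decomposition is genuinely a \emph{finite} sum of the asserted shape uniformly over all $W$, not just for a fixed $W$. This is precisely the subtlety that the generalised Steinberg $\mathcal{L}(\Delta)$ is a proper subquotient of $\nu^a\rho\times\cdots\times\nu^b\rho$, so one must argue that passing to the subquotient does not destroy the asymptotic form; here one uses that $\mathcal{L}(\Delta)$ has a Whittaker model (it is of Whittaker type) and that the derivative filtration of $\mathcal{L}(\Delta)$ is still governed by subsegments of $\Delta$, whose central characters lie in $\mathcal{Z}_\rho$. A secondary technical point is that, unlike in \cite{JPS1}, we work over a field $R$ of positive characteristic where compact open subgroups may have zero measure; but since $X_i(\pi)$ and the $m_i$ depend only on the abstract module structure of the derivatives (the $\Psi^{\pm},\Phi^{\pm}$ are exact in all characteristics, by the theorem of \cite{BZ0} \& \cite{BZ} quoted above), and the Schwartz-function factor comes from the geometry of $P_n\backslash G_n$ rather than from any integration, the argument of \cite[Proposition 2.2]{JPS1} transposes with only notational changes. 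I would therefore present the proof as: (i) reduce to $\pi=\mathcal{L}(\Delta)$ and to the torus $A_{n-1}$; (ii) run the $\Phi^+/\Psi^+$ recursion to express $W|_{A_{n-1}}$ in terms of data on $Z_i$ and the derivatives; (iii) read off $X_i(\pi)$ from central characters of $\pi^{(i)}$ and bound $m_i$ by finite length; (iv) remark that everything is characteristic-free.
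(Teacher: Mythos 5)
The paper's entire proof of this lemma is the single sentence that \cite[Proposition 2.2]{JPS1} applies \emph{mutatis mutandis}; your closing remark (iv), that the argument is characteristic-free because it relies only on module-theoretic facts (exactness of $\Psi^\pm,\Phi^\pm$, finite length of $\pi$ and of its derivatives) rather than on any integration or measure, correctly captures the whole point of the paper's proof, namely that no genuinely analytic step intervenes.

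Your reconstruction of the JPS1 argument itself, however, misses on two points. First, the proposed reduction to a single generic segment in step (i) is neither how JPS1 proceed nor justified as stated: a Whittaker function on a product $\pi_1\times\pi_2$ of Whittaker-type representations, restricted to the diagonal torus, does not decompose into Schwartz functions times products of restrictions along the factor tori. The Jacquet-integral realisation of the Whittaker functional on an induced representation entangles the variables across factors, and the decomposition you invoke is essentially what the lemma is asserting rather than an ingredient available in advance. JPS1 (and hence the paper) work directly with an arbitrary representation of Whittaker type, by restricting to $P_n$ and inducting on $n$; the detour through segments buys nothing and would need its own proof. Second, you propose that $X_i(\pi)$ consists of the central characters of the derivative $\pi^{(i)}$. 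But $\pi^{(i)}$ is a $G_{n-i}$-module whose centre, realised inside $G_n$ as the top-left block, is $Z_{n-i}$, not $Z_i$, so this cannot be the right object even up to re-indexing. The correct source, recorded in the paper's Remark immediately after the lemma, is the non-normalised Jacquet module $\pi_{N_{i,n-i}}$: a $G_i\times G_{n-i}$-module whose $G_i$-factor has centre precisely $Z_i$. The asymptotics of $W$ as $z_i\to 0$ are governed by the Jacquet module and not by the derivative, because $\Phi^-$ takes $(U,\theta)$-coinvariants and thereby discards exactly the information that controls the asymptotic. Your finite-length argument for bounding the exponents $m_i$ and ensuring each $X_i(\pi)$ is finite is correct, and your parenthetical reference to cuspidal support is closer to the truth than the derivative formulation, but the characters must be read off the Jacquet module to keep track of the modular-character shifts.
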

The proof of Jacquet--Piatetski-Shapiro--Shalika in [ibid.] applies \emph{mutatis mutandis} for~$\ell$-modular representations.

\begin{rem}
 For~$1\leqslant i \leqslant n-1$, we can take~$X_i(\pi)$ to be the family of central characters (restricted to~$Z_i$) of the irreducible components of the (non-normalised) Jacquet module~$\pi_{N_{i,n-i}}$. We denote by~$X_n(\pi)$ the singleton~$\{\omega_\pi\}$. 
We denote by~$E_i(\pi)$, the family of central characters (restricted to~$Z_i$) of the irreducible components of the normalised Jacquet module~$\pi_{N_{i,n-i}}$, for~$1\leqslant i\leqslant n-1$, and let~$E_n(\pi)=X_n(\pi)$.  The family~$E_i(\pi)$ is obtained from~$X_i(\pi)$ by multiplication by an unramified character of~$Z_i$, in particular, if~$R=\Ql$, the characters in~$E_i(\pi)$ are integral if and only if those in~$X_i(\pi)$ are integral.
\end{rem}

\begin{prop}\label{integralsmakesense}
Let~$\pi$ be an~$R$-representation of Whittaker type of~$G_n$, and~$\pi'$ an~$R$-representation of Whittaker type of~$G_m$, for~$m\leqslant n$. 
\begin{itemize}
 \item The case~$n=m$. Let~$W\in W(\pi,\theta)$,~$W'\in W(\pi',\theta^{-1})$, and~$\Phi \in \mathcal{C}_c^\infty(F^n)$. 
Then, for every~$k\in \Z$, the coefficient \[c_k(W,W',\Phi)=\int_{N_n\backslash G_n^k} W(g)W'(g)\Phi(\eta g)dg\] is well-defined, and vanishes for~$k<<0$. 
\item The case~$m\leqslant n-1$. For~$0\leqslant j \leqslant n-m-1$, let~$W\in W(\pi,\theta)$, and 
$W'\in W(\pi',\theta^{-1})$. Then for every~$k\in \Z$, the coefficient 
\[c_k(W,W';j)=\int_{\mathcal{M}_{j,m}}\int_{N_m\backslash G_m^k} W\begin{pmatrix} g &  &  \\ x & I_j & \\
 & & I_{n-m-j}\end{pmatrix} W'(g)dgdx\] is well-defined, 
and vanishes for~$k<<0$.  When~$m=n-1$, we will simply write~$c_k(W,W')$ for~$c_k(W,W';0)$.
\end{itemize}
\end{prop}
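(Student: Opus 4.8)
The plan is to reduce both families of integrals, via the Iwasawa-type decompositions of Corollaries~\ref{iwasplit} and~\ref{iwasplit2}, to integrals over a torus against a fixed compact set, and then to read off the support of the integrand from the asymptotic expansion of Whittaker functions in Lemma~\ref{asymptotics}. Since every integrand we meet is locally constant, once we know it is compactly supported the integral is a finite $R$-linear combination of volumes of compact open sets, hence a well-defined element of $R$ (in particular there is no ``convergence'' issue in the $\ell$-modular setting, and no reliance on positivity of measures). So in both cases ``well-defined'' really means ``the integrand is compactly supported'', and ``vanishes for $k\ll 0$'' means ``the integrand is identically $0$ for $k\ll 0$'', and both will come out of the same computation; the case $m\leqslant n-1$ requires in addition a compact-support statement in the variable $x\in\mathcal{M}_{j,m}$.

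\textbf{The case $n=m$.} Apply Corollary~\ref{iwasplit} to write $\int_{N_n\backslash G_n^k}(\cdots)\,dg$ as an integral over $(K_n\cap B_n)\backslash K_n$ of an integral over the $k$-slice of $A_n$; the former is compact and the integrand is locally constant on it, uniformly in the $K_n$-variable by smoothness of $W$ and $W'$, so it suffices to control the $A_n$-integral. Write $a=z_1\cdots z_n\in A_n$ with $z_i\in Z_i$ of parameter $t_i\in F^\times$, so that $A_{n-1}=Z_1\times\cdots\times Z_{n-1}$ accounts for $t_1,\dots,t_{n-1}$, the central $Z_n$ for $t_n$, and $|\det a|=\prod_{j=1}^n|t_j|^j$. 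By Lemma~\ref{asymptotics} applied to $\pi$ and to $\pi'$ (the central characters on $Z_n$ contributing only locally constant factors), the function $a\mapsto W(ak)W'(ak)$ is, uniformly in $k$, a finite sum of terms $\Phi(t_1,\dots,t_{n-1})\,\psi(t_1,\dots,t_{n-1})$ with $\Phi\in\mathcal{C}_c^\infty(F^{n-1})$ and $\psi$ the locally constant character/valuation factors of Lemma~\ref{asymptotics}; in particular its support forces $|t_i|\leqslant C$ for $1\leqslant i\leqslant n-1$, with $C$ depending only on $\pi,\pi'$. Moreover $\Phi(\eta ak)=\Phi(t_n v_k)$, where $v_k$ is the last row of $k$, a primitive vector in $\o^n$ lying in a fixed compact set as $k$ varies, so $\Phi(\eta ak)\neq 0$ forces $|t_n|\leqslant C$ after enlarging $C$. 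On $G_n^k$ one has the extra constraint $\prod_j|t_j|^j=q^{-k}$; combined with the upper bounds $|t_j|\leqslant C$ this gives $q^{-k}\leqslant C^{n(n+1)/2}$, so the integrand is identically $0$ once $q^{-k}>C^{n(n+1)/2}$, i.e.\ for $k\ll 0$, and for each fixed $k$ the same two facts force the $|t_j|$ to be bounded below as well, so the domain on which the integrand is nonzero is compact, which settles well-definedness.

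\textbf{The case $m\leqslant n-1$.} The torus bookkeeping is identical, now using Corollary~\ref{iwasplit} for $G_m$ and embedding $A_m\hookrightarrow A_n$ as the diagonal subtorus whose last $n-m\geqslant 1$ entries are $1$: restricting the asymptotic expansion of the $G_n$-Whittaker function $W$ from $A_{n-1}$ to this subtorus, invoking Lemma~\ref{asymptotics} for $\pi'$ on $A_{m-1}$ together with the central character of $\pi'$ on $Z_m$, and using $|\det g|=q^{-k}$ on $G_m^k$, one obtains exactly as above that the $g$-integrand is compactly supported for fixed $k$ and identically zero for $k\ll 0$. The only new ingredient is that, for fixed $g$, the function
\[x\longmapsto W\begin{pmatrix} g & & \\ x & I_j & \\ & & I_{n-m-j}\end{pmatrix},\qquad x\in\mathcal{M}_{j,m},\]
is compactly supported. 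This is proved exactly as in \cite{JPS2} (cf.\ also \cite{JPS1}): it is a formal consequence of the smoothness of $W$ and of its $\theta$-equivariance under a suitable unipotent subgroup, the hypothesis $j\leqslant n-m-1$ being precisely what guarantees enough room below the $x$-block for such a subgroup. Granting it, the integrand of $c_k(W,W';j)$ is compactly supported and locally constant, and the same determinant estimate as before gives its vanishing for $k\ll 0$. The step I expect to require the most care to write out cleanly is this compact-support statement in the $x$-variable, since it is the only point that is not direct bookkeeping with the asymptotic expansion; everything else transports verbatim from the complex setting, the one $\ell$-modular subtlety being that one argues throughout with locally constant compactly supported functions rather than with absolute convergence.
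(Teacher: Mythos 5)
Your proof is correct and follows essentially the same route as the paper's: the Iwasawa decomposition of Corollary~\ref{iwasplit}, the asymptotic expansion of Lemma~\ref{asymptotics}, and, for the extra $x$-integral, the compact-support statement of \cite[Lemma~6.2]{JPS2}. One small point worth stating explicitly is that the compact support of $x\mapsto W\bigl(\begin{smallmatrix} g & & \\ x & I_j & \\ & & I_{n-m-j}\end{smallmatrix}\bigr)$ furnished by \cite[Lemma~6.2]{JPS2} is \emph{uniform} in $g$ (the paper emphasises ``independently of $g$''), whereas you only state it for fixed $g$; this uniformity is what lets you conclude cleanly that the full double integrand has compact support for each fixed $k$.
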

\begin{proof}
The only thing to check is that the coefficients in the statement are well defined, i.e. finite sums, and zero for~$k$ negative enough. 
This is a consequence of Iwasawa decomposition together with Corollary \ref{iwasplit}, and Lemma \ref{asymptotics} for the case~$m\geqslant n-1$, 
and, in the case~$m\leqslant n-2$, that the map~$W\begin{pmatrix} g &  &  \\ x & I_j & \\
 & & I_{n-m-j}\end{pmatrix}$ has compact support with respect to~$x$, independently of~$g$, by  \cite[Lemma 6.2]{JPS2}.  
\end{proof}

Following Proposition \ref{integralsmakesense}, we now can define our \emph{Rankin--Selberg formal series}. 

\begin{df}
\begin{itemize}
 \item The case~$n=m$. Under the same notation as Proposition \ref{integralsmakesense}, we define the following formal Laurent series
\[I(X,W,W',\Phi)=\sum_{k\in \Z} c_k(W,W',\Phi)X^k \in R((X)).\]
\item The case~$m\leqslant n-1$.  Under the same notation as Proposition \ref{integralsmakesense}, we define the following formal Laurent series
\[I(X,W,W';j)=\sum_{k\in \Z} c_k(W,W';j)q^{k(n-m)/2}X^k \in R((X)).\]
When~$m=n-1$, we will simply write~$I(X,W,W')$ for~$I(X,W,W';0)$.
\end{itemize}
\end{df}

The~$L$-factors we study are defined by the following theorem.

\begin{thm}\label{Lfactorsdefined}
Let~$\pi$ be an~$R$-representation of Whittaker type of~$G_n$, and~$\pi'$ an~$R$-representation of Whittaker type of~$G_m$, for~$1\leqslant m\leqslant n$.
\begin{itemize}
 \item If~$n=m$, the~$R$-submodule spanned by the Laurent series~$I(X,W,W',\Phi)$ as~$W$ varies in~$W(\pi,\theta)$, 
~$W'$ varies in~$W(\pi',\theta^{-1})$, and~$\Phi$ varies in~$\mathcal{C}_c^\infty(F^{n})$, is 
a fractional ideal~$I(\pi,_pi')$ of~$R[X^{\pm 1}]$, and it has a unique generator which is an Euler factor~$L(X,\pi,\pi')$.
\item If~$1\leqslant m\leqslant n-1$, fix~$j$ between~$0$ and~$n-m-1$. The~$R$-submodule spanned by the Laurent series~$I(X,W,W';j)$ as~$W$ varies in~$W(\pi,\theta)$,~$W'$ varies in~$W(\pi',\theta^{-1})$, is 
a fractional ideal~$I(\pi,\pi')$ of~$R[X^{\pm 1}]$, is independent of~$j$, and it has a unique generator which is an Euler factor~$L(X,\pi,\pi')$.
\end{itemize}
\end{thm}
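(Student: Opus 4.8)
The plan is to establish three things for the $R$-submodule $I(\pi,\pi')$ spanned by the relevant Rankin--Selberg series: that it is a \emph{nonzero} fractional ideal of $R[X^{\pm 1}]$, that (in the case $m\leqslant n-1$) it is independent of the auxiliary parameter $j$, and that, being a nonzero fractional ideal of the PID $R[X^{\pm 1}]$, it has a canonical generator which can be normalised to an Euler factor $L(X,\pi,\pi')$ (i.e. an inverse of a polynomial with constant term $1$). First I would check that $I(\pi,\pi')$ is stable under multiplication by $X^{\pm 1}$: this is immediate from the identity $c_{k-1}(W,W',\Phi)$ applied to the translate of $W$ (or $W'$) by a suitable diagonal element of the form $\diag(\varpi,\ldots,\varpi,1)\in Z_{n-1}$, which shifts the index $k$ by one (using $\nu_{\mathbb C}(\diag(\varpi I_i,I_{n-i}))=q^{-i}$), and likewise in the unequal-rank case with the extra half-integer power of $q$ absorbed into the normalisation of the series. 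Combined with Proposition \ref{integralsmakesense} (the series are Laurent, vanishing for $k\ll 0$) this shows $I(\pi,\pi')$ is a sub-$R[X^{\pm 1}]$-module of $R((X))$ all of whose elements have bounded-below support, hence a fractional ideal once we know it is nonzero and finitely generated.

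The nonvanishing is where the real content lies, and I expect it to be the main obstacle. One must produce a single triple $(W,W',\Phi)$ (resp.\ pair $(W,W')$) for which some coefficient $c_k$ is a \emph{unit} in $R$ — not merely nonzero, since in characteristic $\ell$ we must avoid having the whole ideal land in a proper ideal, and more seriously we must avoid the integrals collapsing because of compact open subgroups of measure zero (the phenomenon flagged in Subsection \ref{haarmeasures}). The standard trick, following \cite{JPS2}, is to choose $W$ and $W'$ supported, near the identity, on a small enough compact open subgroup so that the inner integral over $N_n\backslash G_n^0$ (resp.\ over $\mathcal{M}_{j,m}\times N_m\backslash G_m^0$) reduces to an integral of $W(g)W'(g)\Phi(\eta g)$ over a neighbourhood of $1$ on which all three factors are constant and equal to nonzero values; choosing $\Phi=\mathbf{1}_{\eta+\mathfrak{p}^N}$ for $N$ large makes $\Phi(\eta g)$ behave well. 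The point requiring care is that the relevant compact open subgroup must have pro-order invertible in $R$ so that its volume is a unit (Remark following \cite[I 2.4]{V}); since $\ell\neq p$ one can always shrink to a pro-$p$ group, whose volume is a power of $q$ times a unit, hence invertible. That $c_0$ (say) is then a unit forces $1\in I(\pi,\pi')$ up to a unit scalar, so in particular $I(\pi,\pi')\neq 0$.

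For finite generation and the independence of $j$: finite generation follows because every $c_k$ can be written, via Corollary \ref{iwasplit} and Lemma \ref{asymptotics}, as a finite $R$-linear combination of terms $q^{-ka}$ with $a$ ranging over a \emph{finite} set of ``exponents'' determined by the characters in $\bigcup_i X_i(\pi)\cdot X_i(\pi')$ (the Jacquet-module central characters), so the support of any $I(X,W,W',\Phi)$ on the negative side is bounded uniformly, whence $I(\pi,\pi')\subset X^{-A}R[X]$ for a fixed $A$ and, being a submodule of a Noetherian module, is finitely generated. The independence of $j$ is the functional-equation-flavoured step: one shows that changing $j$ by one, say replacing the block $\mathrm{diag}(g,I_j,I_{n-m-j})$ by $\mathrm{diag}(g,I_{j+1},I_{n-m-j-1})$, amounts to an extra integration over a unipotent $\mathcal{M}_{1,m}$-variable against which $W$ has compact support independently of $g$ (by \cite[Lemma 6.2]{JPS2}, exactly as in the proof of Proposition \ref{integralsmakesense}); carrying out that integration and using the uniqueness of the Whittaker model identifies the two families of series up to a change of variable that fixes the ideal they span. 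Finally, $R[X^{\pm1}]$ is a principal ideal domain (localisation of $R[X]$), so the nonzero fractional ideal $I(\pi,\pi')$ is $c\cdot R[X^{\pm1}]$ for some $c\in R((X))^{\times}$; since $1\in I(\pi,\pi')$ (up to a unit) and all elements have bounded-below support, $c$ may be taken of the form $1/P(X)$ with $P\in R[X]$, $P(0)=1$, i.e.\ an Euler factor, and this normalisation makes the generator $L(X,\pi,\pi')$ unique.
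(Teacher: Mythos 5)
Your overall architecture is sound, and most of the individual moves (stability under $X^{\pm 1}$, nonvanishing via Whittaker functions supported on pro-$p$ subgroups of unit volume, the PID argument at the end) are the standard ones and match the strategy the paper inherits from \cite{JPS2}. There is, however, one genuine error that sits precisely at the step the paper's own proof emphasises.

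Your finite-generation argument asserts that, because the negative-degree support of each series is uniformly bounded, one has $I(\pi,\pi')\subset X^{-A}R[X]$ for a fixed $A$. This is false: the series $I(X,W,W',\Phi)$ are rational functions with (in general) nontrivial denominators, e.g. of the shape $1/(1-cX)=1+cX+c^2X^2+\cdots$, so they have \emph{infinitely many} positive-degree terms and do not lie in $X^{-A}R[X]$ for any $A$. Moreover, $X^{-A}R[X]$ is not an $R[X^{\pm1}]$-submodule of $R((X))$, so even if the containment held it would not give finite generation over $R[X^{\pm 1}]$. What the argument must actually show, and what the paper's proof pivots on, is that Lemma~\ref{asymptotics} together with Corollary~\ref{iwasplit} reduces each series to a finite sum of Tate-type integrals, so that all of them lie in a single cyclic fractional ideal $\frac{1}{P(X)}R[X^{\pm 1}]$, where $P$ is a power of the product of the Tate factors $1/L(X,\chi_i\mu_j)$ attached to the unramified central characters in the Jacquet modules of $\pi$ and $\pi'$. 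Finite generation is then automatic since $R[X^{\pm1}]$ is Noetherian and $\frac{1}{P(X)}R[X^{\pm1}]$ is a finitely generated $R[X^{\pm1}]$-module. This is also the point where the $\ell$-modular phenomena enter: by \cite{M} the Tate factor degenerates to $1$ when $q\equiv 1\ [\ell]$, which you do not remark on but which is part of why the $\ell$-modular $L$-factor can be strictly smaller than the reduction of an $\ell$-adic one. Beyond that, your sketch of nonvanishing and independence of $j$ is compatible with the JPS2 arguments the paper also cites, so if you replace the $X^{-A}R[X]$ claim by the $\frac{1}{P(X)}R[X^{\pm1}]$ containment the proof is correct and essentially the same as the paper's.
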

\begin{proof}
We treat the case~$m\leqslant n-2$, the case~$m\geqslant n-1$ is totally similar. First we want to prove that our formal series in fact belong 
to~$R(X)$. In this case, the coefficient~$c_k(W,W';j)$ is equal to 
 \[\int_{(K_m\cap B_m)\backslash K_m}\int_{A_m^k}\int_{\mathcal{M}_{j,m}} W\begin{pmatrix} ask &  &  \\ x & I_j & \\
 & & I_{n-m-j}\end{pmatrix} W'(ask)\d_{B_m}(a)^{-1}dxdadk,\] which, by smoothness of~$W$ and~$W'$, we can write as a finite sum
 \[\sum_i \int_{A_m^k} W_i\begin{pmatrix} a & \\ & I_{n-m}\end{pmatrix} W'_i(a)\d_{B_m}(a)^{-1}da,\]
 with functions~$W_i\in W(\pi,\theta)$ and~$W'_i\in W(\pi',\theta^{-1})$. For~$W$ and~$W'$ in~$W(\pi,\theta)$, let
 \[b_k(W,W')=\int_{A_m^k} W\begin{pmatrix} a & \\ & I_{n-m}\end{pmatrix} W'(a)\d_{B_m}(a)^{-1}da,\] it is thus enough to check that 
 \[J(W,W')=\sum_{k\in \Z} b_k(W,W')q^{(n-m)/2}X^k\] belongs to~$R(X)$. Following the proof of \cite{JPS2} we see that, by Lemma \ref{asymptotics}, they belong to~$\frac{1}{P(X)}R[X^{\pm 1}]$, where~$P(X)$ is a suitable power of the product over the unramified characters
~$\chi_i$  in~$E_i(\pi)$ for~$1\leqslant i \leqslant n$ and the unramified characters~$\mu_j$  in~$E_j(\pi')$ for~$1\leqslant j \leqslant m$ of the Tate~$L$-factors~$L(X,\chi_i\mu_j)$. By \cite{M}, this factor is equal to~$1$ if~$R=\Fl$, and~$q \equiv 1[\ell]$, and is equal to~$1/(1-\chi_i\mu_j(\w)X)$ otherwise.  The other properties follow immediately from \cite{JPS2}. 
\end{proof}

The proof of Theorem \ref{Lfactorsdefined} implies the following corollary.

\begin{cor}\label{div} 
\begin{itemize}
\item If~$\pi$ and~$\pi'$ are~$\ell$-adic representations of Whittaker type of~$G_n$ and~$G_m$, then~$1/L(X,\pi,\pi')\in\Zl[X]$.
\item If~$\pi$ and~$\pi'$ are~$\ell$-modular representations of Whittaker type of~$G_n$ and~$G_m$, and~$q\equiv 1 [\ell]$, then~$L(X,\pi,\pi')=1$.
\end{itemize}
\end{cor}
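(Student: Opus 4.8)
The plan is to deduce both statements formally from the proof of Theorem~\ref{Lfactorsdefined}, which already exhibits a common denominator $P(X)$ for all the Rankin--Selberg series, together with M\'inguez's computation of Tate $L$-factors modulo $\ell$ quoted in that proof. Recall from there that every $I(X,W,W',\Phi)$ (and every $I(X,W,W';j)$) lies in $\frac{1}{P(X)}R[X^{\pm 1}]$, where $P(X)\in R[X]$ with $P(0)=1$ is a suitable power of the product $\prod_{i,j}(1-\chi_i\mu_j(\w)X)$ running over the unramified characters $\chi_i\in E_i(\pi)$, $1\le i\le n$, and $\mu_j\in E_j(\pi')$, $1\le j\le m$ --- i.e.\ over the denominators of the Tate $L$-factors $L(X,\chi_i\mu_j)$ --- and that by \cite{M} each such Tate factor equals $1$ when $R=\Fl$ and $q\equiv 1[\ell]$, and $\frac{1}{1-\chi_i\mu_j(\w)X}$ otherwise. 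Consequently the Euler-factor generator $L(X,\pi,\pi')$ of the fractional ideal $I(\pi,\pi')\subseteq\frac{1}{P(X)}R[X^{\pm 1}]$ has the shape $\frac{1}{Q(X)}$ with $Q\in R[X]$, $Q(0)=1$, and moreover $Q\mid P$ in $R[X]$ (the last point because $P/Q\in R[X^{\pm 1}]$ is regular at $0$, hence is a polynomial).

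For the second assertion I would then argue directly: when $R=\Fl$ and $q\equiv 1[\ell]$, every Tate factor above is $1$, so $P(X)=1$; thus $Q\mid 1$, i.e.\ $Q$ is a unit of $\Fl[X]$, and together with $Q(0)=1$ this forces $Q=1$, whence $L(X,\pi,\pi')=1$.

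For the first assertion I would use that $\pi$ and $\pi'$ are integral (the setting in which reduction modulo $\ell$ of the $\ell$-adic $L$-factor is defined): by Section~\ref{sectintegral} their Jacquet modules are integral, and an irreducible integral $\ell$-adic representation admitting a central character preserves a lattice, so that central character is $\Zl^\times$-valued; hence every $\chi_i\in E_i(\pi)$ and every $\mu_j\in E_j(\pi')$ is $\Zl^\times$-valued. Therefore $P(X)$ lies in $\Zl[X]$ and splits over $\Zl$ into linear factors $1-\alpha X$ with $\alpha\in\Zl^\times$; since $Q\mid P$, the polynomial $Q$ is again a product of such factors, and so $1/L(X,\pi,\pi')=Q(X)\in\Zl[X]$, indeed with all roots in $\Zl^\times$.

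The only point requiring a little care is the bookkeeping around the generator of the fractional ideal --- namely that $L(X,\pi,\pi')$ really is of the form $1/Q(X)$ with $Q(0)=1$ and divides the common denominator $P(X)$ --- but this is immediate from the ``Euler factor'' clause of Theorem~\ref{Lfactorsdefined} combined with the explicit $P(X)$ produced in its proof. I do not anticipate any genuine obstacle here, since all the analytic content (convergence, rationality, the shape of $P$) has already been absorbed into Theorem~\ref{Lfactorsdefined}; the corollary is just a reading-off of that proof.
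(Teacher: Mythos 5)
Your proof is correct and follows essentially the same route as the paper: both arguments rest on the observation, coming out of the proof of Theorem~\ref{Lfactorsdefined}, that $1/L(X,\pi,\pi')$ divides a power of $P(X)=\prod_{i,j}\bigl(1-\chi_i\mu_j(\w)X\bigr)$ and that each Tate factor $L(X,\chi_i\mu_j)$ is $1$ when $R=\Fl$ and $q\equiv 1\ [\ell]$ (which gives the second bullet at once). The two write-ups diverge in two minor but noteworthy ways. First, for the $\ell$-adic bullet you explicitly add the hypothesis that $\pi,\pi'$ are integral in order to conclude, via integrality of subquotients of Jacquet modules, that the exponents $\chi_i,\mu_j$ take values in $\Zl^\times$ and hence $P\in\Zl[X]$; the paper states and uses the corollary without this hypothesis, simply asserting that the polynomials $1/L(X,\chi_i\mu_j)$ lie in $\Zl[X]$. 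Your version is the more careful one on this point, since without some integrality the exponents need not be $\Zl$-valued (and indeed the corollary is only ever invoked for integral representations later in the paper). Second, to pass from $P\in\Zl[X]$ to $Q=1/L(X,\pi,\pi')\in\Zl[X]$ you factor $P$ into linear factors $1-\alpha X$ with $\alpha\in\Zl$ (using that $\Zl$ is integrally closed in $\Ql$) and match the constant-term normalisation, while the paper instead runs a Gauss's-lemma/UFD argument over $\o_K[X]$ for a suitable finite extension $K/\Q_\ell$. Both closing arguments are valid; yours is arguably the more transparent given that $P$ splits completely over $\Zl$.
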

\begin{proof} By our assertion at the end of the proof of Theorem \ref{Lfactorsdefined}, the polynomial~$\mathcal{Q}=1/L(X,\pi,\pi')$ divides (in~$R[X^{\pm 1}]$, hence in 
$R[X]$) a power of the product~$\mathcal{P}$ of the polynomials~$1/L(X,\chi_i\mu_j)$ over the set of unramified characters~$\chi_i$ in~$E_i(\pi)$ for~$1\leqslant i \leqslant n$ and unramified characters~$\mu_j$  in~$E_j(\pi')$ for~$1\leqslant j \leqslant m$.  We already noticed that~$\mathcal{P}$ must be~$1$ if~$R=\Fl$ and~$q\equiv 1[\ell]$, which proves our assertion in this case. In general,~$\mathcal{P}$ belongs to 
$\Zl[X]$, with constant term~$1$, as so do the polynomials~$1/L(X,\chi_i\mu_j)$. Let~$\mathcal{A}$ be the quotient~$\mathcal{P}/\mathcal{Q}$ in~$R[X]$. We have~$\mathcal{P}=\mathcal{A}\mathcal{Q}$ in~$R[X]$, with~$\mathcal{P}(0)=\mathcal{Q}(0)=1$, and~$\mathcal{P}\in \Zl[X]$. This equality in fact takes place in~$K[X]$, for~$K$ a finite extension of~$\Q_\ell$ such that~$\mathcal{Q}\in \o_K[X]$ (with~$\o_K$ the ring of integers in~$K$). As~$\o_K[X]$ is a unique factorisation domain, the fact that~$\mathcal{P}(0)=\mathcal{Q}(0)=\mathcal{A}(0)=1$ implies that~$\mathcal{A}$ and~$\mathcal{Q}$ are in fact in~$\Zl[X]$. 
\end{proof} 

\subsection{The functional equation}

We have defined Rankin--Selberg~$L$-factors of pairs of representations of Whittaker type, we now need to show that these satisfy a local functional equation.  By identifying~$F^n$ with~$\mathcal{M}_{1,n}$, the space~$\mathcal{C}_c^\infty (F^n)$ provides a smooth representation~$G_n$ by right translation, which we denote by~$\rho$. We also denote by~$\rho$ the action by right translation of~$G_n$ on any space of functions. For~$a\in R[X^{\pm1}]$, we denote by~$\chi_a$ the character in~$\Hom(G_n,R[X^{\pm 1}]^\times)$ defined by~$\chi_a:g\mapsto a^{v(det(g))}$.  In particular, we have~$\nu=\chi_{q^{1/2}}$ is the absolute value of the determinant.

Let~$\pi$ be an~$R$-representation of Whittaker type of~$G_n$, and~$\pi'$ be an~$R$-representation of Whittaker type of~$G_m$.  If~$m=n$, we write 
\[D(\pi,\pi',\mathcal{C}_c^\infty (F^n))=\Hom_{G_n}(\pi\otimes \pi'\otimes \mathcal{C}_c^\infty (F^n),\chi_X),\]  for the space of~$R$-linear maps,~$\mathfrak{L}:\pi\times \pi' \times \mathcal{C}_c^\infty (F^n)\rightarrow R[X^{\pm 1}]$, satisfying
\[\mathfrak{L}(\rho(h)W,\rho(h)W',\rho(h)\Phi)=X^k\mathfrak{L}(W,W',\Phi)\] 
for all~$W\in W(\pi,\theta)$,~$W'\in W(\pi',\theta^{-1})$,~$\Phi\in\mathcal{C}_c^\infty (F^n)$, and~$h\in G_n^k.$  If~$m\leqslant n-1$, we write 
 \[D(\pi,\pi')=\Hom_{G_{m}U_{m+1,n-m-1}}(\pi\otimes \pi',\chi_{q^{(n-m)/2}X}\otimes \theta),\] 
for the space of~$R$-linear maps,~$\mathfrak{L}:\pi\times \pi\rightarrow R[X^{\pm 1}]$, satisfying
\[\mathfrak{L}(\rho(h)W,\rho(h)W')=q^{k(n-m)/2}X^k\mathfrak{L}(W,W',\Phi), \quad \mathfrak{L}(\rho(u)W,W')=\theta(u)\mathfrak{L}(W,W')\]
for all~$W\in W(\pi,\theta)$,~$W'\in W(\pi',\theta^{-1})$,~$h\in G_m^k$, and~$u\in U_{m+1,n-m-1}$.  We denote by~$\mathcal{C}_{c,0}^\infty(F^n)$ the subspace of~$\mathcal{C}_{c}^\infty(F^n)$ which is the kernel of the evaluation map~$Ev_0:\Phi\mapsto \Phi(0)$.

\begin{prop}\label{mult1}
The spaces~$D(\pi,\pi',\mathcal{C}_c^\infty (F^n))$ and~$D(\pi,\pi')$ are free~$R[X^{\pm 1}]$-modules of rank~$1$. 
\end{prop}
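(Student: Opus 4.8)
The plan is to follow \cite{JPS2}, but to replace the analysis of invariant distributions on $G_n$ there by the Bernstein--Zelevinsky filtration of restrictions to mirabolic subgroups. Two features of the $R$-coefficient setting do most of the work: since $R$ is a field, $R[X^{\pm 1}]$ is a principal ideal domain; and the character $\chi_X$ (resp.\ $\chi_{q^{(n-m)/2}X}$) is in \emph{general position}, meaning that its restriction to the centre of any general linear group $G_k$ $(k\geqslant 1)$ is not $R^\times$-valued --- its value at the scalar matrix $\w I_k$ being a nontrivial power of $X$ --- so that, $R[X^{\pm 1}]$ being a domain, it cannot agree with the $R^\times$-valued ``central character'' of a finite length $R$-representation of $G_k$; in particular $\Hom_{G_k}(V,\chi_X)=0$ for every such $V$ (one kills a functional by multiplying by the polynomial in $\varpi I_k$ annihilating $V$). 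I begin by reducing to a structural statement: $D(\pi,\pi',\mathcal{C}_c^\infty(F^n))$ and $D(\pi,\pi')$ are, as defined, $R[X^{\pm 1}]$-submodules of spaces of the shape $\Hom_R(-,R[X^{\pm 1}])$, hence torsion-free, and they are non-zero because the fractional ideal $I(\pi,\pi')$ of Theorem \ref{Lfactorsdefined} is non-zero; thus it suffices to exhibit each of them, as an $R[X^{\pm 1}]$-module, as isomorphic to a submodule of $R[X^{\pm 1}]$, since a non-zero submodule of the PID $R[X^{\pm 1}]$ is a non-zero ideal, hence free of rank one.

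Next I reduce both cases to one type of homomorphism space over a mirabolic subgroup. When $m=n$, the group $G_n$ acts transitively on $F^n\setminus\{0\}$ with stabiliser $P_n$, giving a short exact sequence of smooth $G_n$-representations $0\to\ind_{P_n}^{G_n}(\delta)\to\mathcal{C}_c^\infty(F^n)\xrightarrow{Ev_0}\1\to 0$ with $\ind_{P_n}^{G_n}(\delta)\simeq\mathcal{C}_{c,0}^\infty(F^n)$, for a suitable character $\delta$ of $P_n$ trivial on its unipotent radical $U_n$. Applying $\Hom_{G_n}(\pi\otimes\pi'\otimes-,\chi_X)$, using $\Hom_{G_n}(\pi\otimes\pi',\chi_X)=0$ and then the projection formula and Frobenius reciprocity, identifies $D(\pi,\pi',\mathcal{C}_c^\infty(F^n))$ with $\Hom_{P_n}\big((\pi\otimes\pi')|_{P_n},\psi\big)$ for a character $\psi$ of $P_n$ trivial on $U_n$ and in general position on $G_{n-1}$. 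When $m\leqslant n-1$, the integration over $U_{m+1,n-m-1}$ against $\theta$ is, at the level of the relevant $\Hom$ spaces, an $(n-m-1)$-fold iterate of the coinvariant functor $\Phi^-$; performing it reduces to $\Hom_{G_m}(\sigma|_{G_m}\otimes\pi',\chi_{q^{1/2}X})$ with $\sigma$ a $P_{m+1}$-representation built from $\pi$ by Bernstein--Zelevinsky functors. Restricting the Bernstein--Zelevinsky filtration of $\sigma|_{P_{m+1}}$ to $G_m$ turns its graded pieces $(\Phi^+)^{k-1}\Psi^+(\sigma^{(k)})$ into $\nu^{1/2}\sigma^{(1)}$ (for $k=1$) and into $\ind_{P_m}^{G_m}\big((\Phi^+)^{k-2}\Psi^+(\sigma^{(k)})\big)$ (for $k\geqslant 2$), up to normalising twists, so that --- again by the projection formula and Frobenius reciprocity --- one lands once more in a homomorphism space $\Hom_{P_{n'}}$ between a tensor product of two Bernstein--Zelevinsky-type modules and a character trivial on $U_{n'}$ and in general position.

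The core computation, done by induction, goes as follows. Filter such a tensor product by the Bernstein--Zelevinsky filtration of each factor; its graded pieces are of the form $(\Phi^+)^{a}\Psi^+(\alpha)\otimes(\Phi^+)^{b}\Psi^+(\beta)$, where $\alpha,\beta$ are (lower) derivatives of the two original representations. For a target $\psi$ trivial on $U_{n'}$, $\Hom_{P_{n'}}$ of such a piece into $\psi$ vanishes unless $a=b$: indeed, if $a<b$, applying Proposition \ref{BZ3.7}(2) $a$ times reaches $\Hom_{P_{n'-a}}\big(\Psi^+(\alpha)\otimes(\Phi^+)^{b-a}\Psi^+(\beta),\psi'\big)$, with one factor a $\Psi^+$ and the other a $\Phi^+$, which is $0$ by Proposition \ref{BZ3.7}(3) (its avatar for a character target being that the $U$-coinvariants of a $\Phi^+$ vanish); the case $a>b$ is symmetric. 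When $a=b$, the same procedure together with Proposition \ref{BZ3.7}(1) reaches $\Hom_{G}\big(\alpha\otimes\beta\otimes(\text{a power of }\nu),\psi'|_{G}\big)$ over a general linear group $G$ strictly smaller than $G_{n'}$; by general position this vanishes unless $G=G_0$, in which case $\alpha$ and $\beta$ are forced to be the top derivatives of the two original representations --- one-dimensional $R$-spaces by Lemma \ref{dimensionwhittaker} --- and the space is free of rank one over $R[X^{\pm 1}]$. Thus a single graded piece contributes, and the long exact sequences attached to the filtration then exhibit the whole $\Hom$ space as a submodule of that copy of $R[X^{\pm 1}]$; by the first paragraph this finishes the proof.

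The step I expect to be the main obstacle is this core computation: threading the two Bernstein--Zelevinsky filtrations through the adjunction isomorphisms of Proposition \ref{BZ3.7} while tracking the normalising twists by powers of $\nu$, and checking at each inductive stage that the descended target character stays trivial on the relevant unipotent radical and in general position, so that exactly one graded piece survives and contributes \emph{freely of rank one} over $R[X^{\pm 1}]$ rather than merely ``of dimension at most one'' as in the complex case of \cite{JPS2}. A secondary, though essential, point is that in positive characteristic one must make sure that every tool invoked --- the Bernstein--Zelevinsky functors, Proposition \ref{BZ3.7}, and the one-dimensionality of the top derivative --- remains valid over $R$; this is guaranteed by Subsection \ref{der}. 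Granting that, the genuinely new inputs beyond \cite{JPS2} are only the domain and PID properties of $R[X^{\pm 1}]$ and the general position of $\chi_X$, both put in place in the first two paragraphs.
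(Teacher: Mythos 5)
Your proposal follows essentially the same route as the paper's own proof: reduce to exhibiting each space as a nonzero submodule of the PID $R[X^{\pm 1}]$; in the case $n=m$, use the short exact sequence $0\to\mathcal{C}_{c,0}^\infty(F^n)\to\mathcal{C}_c^\infty(F^n)\to\1\to 0$, kill $\Hom_{G_n}(\pi\otimes\pi',\chi_X)$ by a central-character/general-position argument, and pass by Frobenius reciprocity to $\Hom_{P_n}(\pi\otimes\pi',\chi_X\delta_{P_n}^{-1})$; in the case $m\leqslant n-1$, descend via $(\Phi^-)^{n-m-1}$ and Mackey/Frobenius to a $\Hom_{P_m}$-space; in both cases filter by the Bernstein--Zelevinsky functors and use Proposition~\ref{BZ3.7} together with the central-character argument to isolate the top-derivative piece $\Hom_{G_0}(\1\otimes\1,\chi_X)\simeq R[X^{\pm 1}]$. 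This is exactly the strategy the paper uses (it explicitly notes it replaces the invariant-distribution arguments of Jacquet--Piatetski-Shapiro--Shalika by derivatives), so there is no material difference.
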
 

The proof in the complex case of Jacquet--Piatetskii-Shapiro--Shalika in \cite{JPS2} is long.  Some results obtained in the complex case [ibid.] using invariant distributions can be obtained quicker using derivatives which is how we proceed. 

\begin{proof}
We start with the case~$n=m$. The map \[(W,W',\Phi)\mapsto I(X,W,W',\Phi)/L(X,\pi,\pi')\] is a nonzero element of~$D(\pi,\pi',\mathcal{C}_c^\infty (F^n))$, hence we only need to show that~$D(\pi,\pi',\mathcal{C}_c^\infty (F^n))$ is a free~$R[X^{\pm 1}]$-module of rank at 
most~$1$.  

We have an exact sequence of representations of~$G_n$
\[0\rightarrow\mathcal{C}_{c,0}^\infty (F^n) \rightarrow \mathcal{C}_c^\infty (F^n) \rightarrow \1\rightarrow 0.\]
We tensor this sequence by~$\pi\otimes \pi'$ and, as~$\pi\otimes \pi'$ is flat as an~$R$-vector space, we obtain
\begin{equation*}\label{exact} 0\rightarrow \pi\otimes \pi' \otimes \mathcal{C}_{c,0}^\infty (F^n) \rightarrow \pi\otimes \pi'\otimes \mathcal{C}_c^\infty (F^n) \rightarrow \pi\otimes \pi'\rightarrow 0.\end{equation*}
By considering central characters, it is clear that the space~$\Hom_{G_n}(\pi\otimes \pi',\chi_X)=0$.  Applying 
$\Hom_{G_n}(~,\chi_X)$ which is left exact, we obtain that~$\Hom_{G_n}(\pi\otimes \pi' \otimes \mathcal{C}_c^\infty (F^n),\chi_X)$ is a~$R[X^{\pm 1}]$-submodule 
of~$\Hom_{G_n}(\pi\otimes \pi' \otimes \mathcal{C}_{c,0}^\infty (F^n),\chi_X)$. Hence it is sufficient to check that 
$\Hom_{G_n}(\pi\otimes \pi' \otimes \mathcal{C}_{c,0}^\infty (F^n),\chi_X)$ is of rank at most~$1$. 

Now, we have an isomorphism between~$\mathcal{C}_{c,0}^\infty (F^n)$ with~$G_n$ acting via right translation and~$\ind_{P_n}^{G_n}(\delta_{P_n}^{1/2})$, hence we have 
\begin{align*}
\Hom_{G_n}(\pi\otimes \pi' \otimes \mathcal{C}_{c,0}^\infty (F^n),\chi_X)&\simeq 
\Hom_{G_n}(\pi\otimes \pi',\chi_X \Ind_{P_n}^{G_n}(\delta_{P_n}^{-1/2})),\\
&\simeq \Hom_{P_n}(\pi\otimes \pi',\chi_X \delta_{P_n}^{-1})
\end{align*}
by Frobenius reciprocity. Now, by the theory of derivatives (see Subsection \ref{der}), 
$\pi$ and~$\pi'$, as~$P_n$-modules, are of finite length, with irreducible subquotients of the form
$(\Phi^+)^k\Psi^+ (\rho)$, for~$\rho$ an irreducible representation of~$G_{n-k-1}$ and~$k$ between~$0$ and~$n-1$. Moreover,~$(\Phi^+)^{n-1}\Psi^+ (\1)$ appears with multiplicity~$1$, as a submodule. By Proposition \ref{BZ3.7}, the space 
\[\Hom_{P_n}((\Phi^+)^k\Psi^+ (\rho) \otimes (\Phi^+)^j\Psi^+ (\rho'),\chi_X \delta_{P_n}^{-1})= 
\Hom_{P_n}((\Phi^+)^k\Psi^+ (\rho) \otimes (\Phi^+)^j\Psi^+ (\chi_X^{-1} \delta_{P_n}\rho') ,\1)\] is zero, 
except when~$j=k$, in which case it is isomorphic to~$\Hom_{G_k}( \rho \otimes \rho',\chi_X \nu^{-1}))$. If 
$\rho$ and~$\rho'$ are irreducible and~$k\geqslant 1$, by considering central characters, the space~$\Hom_{G_k}( \rho \otimes \rho',\chi_X \nu^{-1})$ 
is zero. Thus~$\Hom_{P_n}(\pi\otimes \pi',\chi_X \delta_{P_n}^{-1})$ is a~$R[X^{\pm 1}]$-submodule of 
$\Hom_{G_0}( \1 \otimes \1,\chi_X )\simeq R[X^{\pm 1}]$. This ends the proof in the case~$n=m$, as~$R[X^{\pm 1}]$ is principal.  

We now consider the case~$m\leqslant n-1$. Again, the space~$D(\pi,\pi')$ is nonzero as it contains the map~$(W,W')\mapsto I(X,W,W')/L(X,\pi,\pi')$, we will show that it injects into~$R[X^{\pm 1}]$, which will prove the statement. Let~$L$ be in~$D(\pi,\pi')$, by definition, the map~$L$-factors through~$\tau \times \pi'$, where~$\tau$ is the quotient of~$\pi$ by its subspace spanned by~$\pi(u)W-\theta(u) W$ for~$u\in U_{m+1,n-m-1}$ and~$W\in  W(\pi,\theta)$. Hence~$\tau$ is nothing other than the space of~$(\Phi^+)^{n-m-1}(\pi)$. Taking into account the normalisation in the definition of the derivatives, we obtain the following injection:
\[\Hom_{G_{m}U_{m+1,n-m-1}}(\pi\otimes \pi',\chi_{q^{(n-m)/2}X}\otimes \theta)\hookrightarrow \Hom_{G_m}((\Phi^-)^{n-m-1}(\pi) \otimes \pi',\chi_{q^{1/2}X}).\] We next prove the following lemma.
\begin{LM}\label{lminter}
If~$\sigma$ is an irreducible~$R$-representation of~$P_{m}$, then 
\[\Hom_{G_m}(\Phi^+(\sigma)\otimes \pi',\chi_{q^{1/2}X})\simeq \Hom_{P_m}(\sigma\otimes \pi',\chi_{X}).\]
If~$\sigma$ is an irreducible~$R$-representation of~$G_{m}$, then 
\[\Hom_{G_m}(\Psi^+(\sigma)\otimes \pi',\chi_{q^{1/2}X})\simeq \Hom_{G_m}(\sigma\otimes \pi',\chi_{X})=\{0\}.\] 
\end{LM}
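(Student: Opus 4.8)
The two asserted isomorphisms are established by independent arguments, so I would treat them in turn; the $\Psi^+$-case is short. By construction $\Psi^+(\sigma)$ is the representation of $P_{m+1}$ on the space of $\sigma$ on which $G_m$ acts through $\nu^{1/2}\sigma$ and $U_{m,1}$ acts trivially, so its restriction along the standard embedding $G_m\hookrightarrow P_{m+1}$ is simply $\nu^{1/2}\sigma$. Absorbing this $\nu^{1/2}$-twist into the target character and matching it with the $q^{1/2}$ appearing in $\chi_{q^{1/2}X}$ yields the stated isomorphism $\Hom_{G_m}(\Psi^+(\sigma)\otimes\pi',\chi_{q^{1/2}X})\simeq\Hom_{G_m}(\sigma\otimes\pi',\chi_X)$. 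To see that this last space is zero, note that $\sigma$ is irreducible and $\pi'$, being of Whittaker type and hence parabolically induced from irreducibles, has a central character; a nonzero element of the $\Hom$-space would then, upon restriction to the centre $Z_m\simeq F^\times$ of $G_m$ and cancellation in the integral domain $R[X^{\pm1}]$, force $c_\sigma c_{\pi'}=\chi_X$ on $Z_m$. But $c_\sigma c_{\pi'}$ takes values in $R^\times$, whereas $\chi_X(tI_m)=X^{m\,v(t)}$, which for $v(t)=1$ equals $X^m\notin R^\times$ since $m\ge 1$; contradiction.

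For the $\Phi^+$-case I would combine a Mackey restriction with the projection formula and Frobenius reciprocity. Recall $\Phi^+(\sigma)=\ind_{P_mU_{m,1}}^{P_{m+1}}(\sigma\otimes\theta)$. The relevant structural facts are that $P_{m+1}=H_{m,1}=G_m\ltimes U_{m,1}$, that $P_m$ normalises $U_{m,1}$, and that $P_m$ fixes the character $\theta$ of $U_{m,1}$ --- this last point being precisely where the defining ``last row'' condition of the mirabolic $P_m$ enters, since $(px)_m=x_m$ for $p\in P_m$ acting on the last column $x$. A Mackey computation then shows that $G_m\backslash P_{m+1}/P_mU_{m,1}$ consists of a single double coset (because $G_m\cdot P_mU_{m,1}=G_mU_{m,1}=P_{m+1}$, using $P_m\subseteq G_m$) with $G_m\cap P_mU_{m,1}=P_m$ (from uniqueness of the decomposition $P_{m+1}=G_m\ltimes U_{m,1}$ and $G_m\cap U_{m,1}=\{1\}$), whence $\Res_{G_m}\Phi^+(\sigma)\simeq\ind_{P_m}^{G_m}(\sigma)$, up to the normalising characters relating $\Phi^+$ and $\ind_{P_m}^{G_m}$ to their unnormalised versions. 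Feeding this into the projection formula $\ind_{P_m}^{G_m}(\sigma)\otimes\pi'\simeq\ind_{P_m}^{G_m}(\sigma\otimes\pi'|_{P_m})$ and then Frobenius reciprocity --- most transparently via the $H_0$-form of Shapiro's lemma, $(\ind_{P_m}^{G_m}V)_{G_m}\simeq V_{P_m}$ (up to the modulus of $P_m$), which circumvents any delicacy about adjoints of $\ind$ along the non-unimodular $P_m\subset G_m$ --- reduces $\Hom_{G_m}(\Phi^+(\sigma)\otimes\pi',\chi_{q^{1/2}X})$ to $\Hom_{P_m}(\sigma\otimes\pi',\chi_X)$ once the modulus characters have been collected. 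All isomorphisms used here are visibly $R[X^{\pm1}]$-linear, as the twists involved are by $R[X^{\pm1}]^\times$-valued characters.

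The only genuine difficulty --- and a matter of care rather than ideas --- is the bookkeeping of the normalising (half-density and modulus) characters through the chain Mackey restriction $\to$ projection formula $\to$ Frobenius reciprocity, to confirm that the character surviving on the $P_m$-side is exactly $\chi_X$ and not some $\nu$-multiple of it. The constants $q^{1/2}$ in $\chi_{q^{1/2}X}$ and $\nu^{1/2}$ in the definitions of $\Phi^+$ and $\Psi^+$ are arranged precisely so that everything cancels --- this is the force of ``taking into account the normalisation in the definition of the derivatives'' in the surrounding argument --- so once conventions consistent with those of Bernstein--Zelevinsky are fixed, the computation is routine.
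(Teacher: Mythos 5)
Your proposal is correct and follows essentially the same route as the paper's own proof, just with the citations unwound into explicit computations. For the $\Phi^+$-case the paper simply invokes ``Mackey theory (cf. BZ Theorem 5.2)'' to obtain $\Phi^+(\sigma)\mid_{G_m}\simeq\ind_{P_m}^{G_m}(\sigma)$ and then says ``by Frobenius reciprocity''; you spell out the Mackey decomposition (one double coset, $G_m\cap P_mU_{m,1}=P_m$, $P_m$ fixing $\theta$), interpose the projection formula $\ind_{P_m}^{G_m}(\sigma)\otimes\pi'\simeq\ind_{P_m}^{G_m}(\sigma\otimes\pi'|_{P_m})$ --- a step the paper elides but which is genuinely needed --- and then apply Shapiro/Frobenius, which is a sound and slightly more transparent way to organize the same argument. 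For the $\Psi^+$-case the paper routes through $\Hom_{P_{m+1}}(\Psi^+(\sigma)\otimes\Psi^+(\pi'),\chi_X)$ and cites BZ Proposition 3.7; you instead restrict $\Psi^+(\sigma)$ directly to $G_m$, which is what that proposition's proof amounts to anyway, and then run the central-character argument (correctly noting $\pi'$, being parabolically induced from irreducibles, has one, and that a nonzero value of a hypothetical intertwiner would be annihilated by the nonzero element $c_\sigma(z)c_{\pi'}(z)-X^m$ of the integral domain $R[X^{\pm1}]$). You are right that the remaining work is bookkeeping of the $\nu^{1/2}$ and modulus characters, and right to flag it rather than silently drop it; the paper itself is somewhat casual about exactly this point.
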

\begin{proof}[Proof of the Lemma]
We first prove the second assertion. By definition of~$\Psi^+$, the~$R[X^{\pm 1}]$-module 
$\Hom_{G_m}(\Psi^+(\sigma)\otimes \pi',\chi_{q^{1/2}X})$ is equal to~$\Hom_{P_{m+1}}(\Psi^+(\sigma)\otimes \Psi^+(\pi'),\chi_{X}))$, which is itself 
isomorphic to~$\Hom_{G_m}(\sigma\otimes \pi',\chi_{X})$ by Proposition \ref{BZ3.7}. As~$\sigma$ is irreducible, it has a central character, and thus~$\Hom_{G_m}(\sigma,\chi_{X})=\{0\}$.

For the first assertion, as~$P_{m+1}=G_m (P_m U_{m+1})$, we have an isomorphism~$\Phi^+(\sigma)\mid_{G_m}\simeq \ind_{P_m}^{G_m}(\sigma)$ by Mackey theory (cf. \cite[Theorem 
5.2]{BZ}). Hence, we obtain
\[\Hom_{G_m}(\Psi^+(\sigma)\otimes \pi',\chi_{q^{1/2}X})\simeq 
\Hom_{G_m}(\ind_{P_m}^{G_m}(\sigma)\otimes \pi',\chi_{q^{1/2}X})\simeq \Hom_{P_m}(\sigma\otimes \pi',\chi_{X}),\] the last 
isomorphism by Frobenius reciprocity.
\end{proof}
Now,~$(\Phi^-)^{n-m-1}(\pi)$ is a~$P_{m+1}$-module of finite length, and as~$\pi$ is of Whittaker type, it contains~$(\Phi^+)^{m-1}\Psi^+(\1)$ as a submodule, the latter's multiplicity being~$1$ as a composition factor. By the theory of derivatives, all of the other irreducible subquotients are either of the form 
$\Psi^+(\sigma)$, with~$\sigma$ an irreducible representation of~$G_m$, or of the form~$\Phi^+(\sigma)$, with~$\sigma$ an irreducible representation of~$P_m$ 
of the form~$(\Phi^+)^{m-j-1}\Psi^+(\sigma')$, with~$\sigma'$ a representation of~$G_j$, for some~$j\geqslant 1$. By Lemma \ref{lminter},~$\Hom_{G_m}(\Psi^+(\sigma)\otimes \pi',\chi_{X})$ is zero. For all subquotients of the form~$\Phi^+(\sigma)$, we have~$\Hom_{G_m}(\Phi^+(\sigma)\otimes \pi',\chi_{q^{1/2}X})\simeq \Hom_{P_m}(\sigma\otimes \pi',\chi_{X})$ by Lemma \ref{lminter}.

\begin{LM}\label{lminter2}
The~$R[X^{\pm 1}]$-module~$\Hom_{P_m}(\sigma\otimes \pi',\chi_{X})$ is zero if~$\sigma$ is an irreducible~$R$-representation of~$P_m$ 
of the form~$(\Phi^+)^{m-j-1}\Psi^+(\sigma')$, with~$\sigma'$ an~$R$-representation of~$G_j$, for some~$j\geqslant 1$, whereas 
$\Hom_{P_m}((\Phi^+)^{m-1}\Psi^+(\1)\otimes \pi',\chi_X)\hookrightarrow R[X^{\pm 1}]$.
\end{LM}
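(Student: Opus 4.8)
The statement of Lemma~\ref{lminter2} has two parts, and I would prove each using the theory of derivatives (Subsection~\ref{der}) together with the computation of $\Hom$-spaces in Proposition~\ref{BZ3.7}. The strategy is to peel off the $\Phi^+$'s appearing in $\sigma$ by adjunction, reduce everything to a $\Hom$-space over a smaller group $G_j$ or $P_j$, and then use central characters to kill the bad terms, exactly as in the $m=n$ case of Proposition~\ref{mult1}.

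\emph{First part.} Suppose $\sigma=(\Phi^+)^{m-j-1}\Psi^+(\sigma')$ with $\sigma'$ an $R$-representation of $G_j$ and $j\geqslant 1$. I want to show $\Hom_{P_m}(\sigma\otimes\pi',\chi_X)=0$. Since $\pi'$ is itself of finite length as a $P_m$-module, it suffices to prove the vanishing when $\pi'$ is replaced by any of its irreducible $P_m$-subquotients; these are of the form $(\Phi^+)^k\Psi^+(\rho')$ for $\rho'$ an irreducible representation of $G_{m-k-1}$, $0\leqslant k\leqslant m-1$. Now apply Proposition~\ref{BZ3.7}: the space $\Hom_{P_m}((\Phi^+)^{m-j-1}\Psi^+(\sigma')\otimes(\Phi^+)^k\Psi^+(\rho'),\chi_X)$ vanishes unless $m-j-1=k$ — this is the orthogonality of the images of $\Phi^+$ and $\Psi^+$, part (3) of the Proposition, applied inductively after clearing a $\chi_X$-twist onto one factor as in the $n=m$ proof — in which case it is isomorphic to $\Hom_{G_j}(\sigma'\otimes\rho',\chi_X\nu^{-1})$ (after tracking the twist and the $\Psi^+$'s via parts (1) and (2)). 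Since $j\geqslant 1$ and $\sigma',\rho'$ are irreducible, they have central characters, and $\chi_X$ restricted to the centre $Z_j\cong F^\times$ has infinite ``order'' in $R[X^{\pm1}]^\times$ (its value at a uniformiser is $X$, which is not a root of any central character value); hence $\Hom_{G_j}(\sigma'\otimes\rho',\chi_X\nu^{-1})=0$. This gives the first assertion.

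\emph{Second part.} For $\sigma=(\Phi^+)^{m-1}\Psi^+(\mathbf 1)$, the same computation with $\pi'$ decomposed into its irreducible $P_m$-subquotients $(\Phi^+)^k\Psi^+(\rho')$ shows that $\Hom_{P_m}((\Phi^+)^{m-1}\Psi^+(\mathbf 1)\otimes(\Phi^+)^k\Psi^+(\rho'),\chi_X)$ vanishes unless $k=m-1$, i.e.\ unless $\rho'$ is a character of $G_0$, which forces the subquotient to be $(\Phi^+)^{m-1}\Psi^+(\mathbf 1)$ itself; and since $\pi'$ is of Whittaker type this subquotient $(\Phi^+)^{m-1}\Psi^+(\mathbf 1)$ occurs with multiplicity exactly $1$. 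For that unique contributing factor the $\Hom$-space is $\Hom_{G_0}(\mathbf 1\otimes\mathbf 1,\chi_X\nu^{-1})\cong R[X^{\pm1}]$ (a twist of the trivial group). A d\'evissage up the finite filtration of $\pi'$ by $P_m$-subquotients, applying left-exactness of $\Hom_{P_m}(-,\chi_X)$ at each step, then yields the injection $\Hom_{P_m}((\Phi^+)^{m-1}\Psi^+(\mathbf 1)\otimes\pi',\chi_X)\hookrightarrow R[X^{\pm1}]$.

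\emph{Main obstacle.} The delicate point is bookkeeping rather than anything structural: one must carefully track the normalising twists $\nu^{\pm1/2}$ built into $\Psi^\pm$ and $\Phi^\pm$, and the twist by $\chi_X$ and $\delta_{P_m}$, through each application of Proposition~\ref{BZ3.7} — especially in checking that the surviving $\Hom$-space over $G_j$ really is $\Hom_{G_j}(\sigma'\otimes\rho',\chi_X\nu^{-1})$ so that the central-character argument applies verbatim. Once the twists are matched, the vanishing is immediate from the fact that $\chi_X$ takes the value $X$ on a uniformiser, which is transcendental over the coefficients of any central character. This mirrors exactly the argument already carried out in the case $n=m$ above, so no genuinely new input is needed.
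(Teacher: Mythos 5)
Your proof follows the paper's argument essentially verbatim: decompose $\pi'|_{P_m}$ into its irreducible $P_m$-subquotients $(\Phi^+)^{m-k-1}\Psi^+(\mu)$, use Proposition~\ref{BZ3.7} to reduce each resulting $\Hom$-space to one over $G_j$ or $G_0$, kill the $j\geqslant 1$ terms by central characters (as the value of $\chi_X$ on a uniformiser is $X$, not matched by any $R^\times$-valued character), and then use that $(\Phi^+)^{m-1}\Psi^+(\1)$ occurs with multiplicity one to obtain the injection into $R[X^{\pm 1}]$ by d\'evissage. The only divergence is a cosmetic one in tracking the normalising twist ($\chi_X\nu^{-1}$ vs.\ the paper's $\chi_X$), which is immaterial since the central-character argument works for either.
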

\begin{proof}[Proof of the Lemma]
As~$\pi'$ is of Whittaker type, its restriction to~$P_m$ is of finite length, with irreducible subquotients of the form 
~$(\Phi^+)^{m-k-1}\Psi^+(\mu)$, for~$\mu$ an irreducible representation of~$G_k$. Moreover, the representation~$(\Phi^+)^{m-k-1}\Psi^+(\1)$ occurs 
 with multiplicity~$1$, and is a submodule.
If~$\sigma$ is an irreducible representation of~$P_m$ of the form~$(\Phi^+)^{m-j-1}\Psi^+(\sigma')$, with~$\sigma'$ a representation of~$G_j$, for some~$j\geqslant 1$, then~$\Hom_{P_m}((\Phi^+)^{m-h-1}\Psi^+(\sigma')\otimes (\Phi^+)^{m-k-1}\Psi^+(\mu),\chi_X)$ is zero by Proposition 
\ref{BZ3.7} if 
$j\neq k$ (in particular if~$k=1$). Moreover, if~$k=j$, by the same Proposition, we have~$\Hom_{P_m}((\Phi^+)^{m-j-1}\Psi^+(\sigma')\otimes (\Phi^+)^{m-k-1}\Psi^+(\mu),\chi_X)$ is isomorphic to~$\Hom_{G_j}(\sigma'\otimes \mu,\chi_X)$, which is zero, by considering central characters. Hence we have proved the first part of the lemma.  If~$\sigma=(\Phi^+)^{m-1}\Psi^+(\1)$, reasoning as above, we see at once that 
$\Hom_{P_m}((\Phi^+)^{m-1}\Psi^+(\1)\otimes \pi',\chi_X)$ injects into 
$\Hom_{P_m}((\Phi^+)^{m-1}\Psi^+(\1)\otimes (\Phi^+)^{m-1}\Psi^+(\1),\chi_X)\simeq \Hom_{G_0}(\1\otimes \1,\chi_X)$, the latter 
space being isomorphic to~$R[X^{\pm 1}]$, and this completes the proof of the lemma.
\end{proof}
All in all, we deduce that~$\Hom_{G_m}((\Phi^-)^{n-m-1}(\pi) \otimes \pi',\chi_{q^{1/2}X})$ injects as a~$R[X^{\pm 1}]$-submodule into 
$\Hom_{P_m}((\Phi^+)^{m-1}\Psi^+(\1)\otimes \pi',\chi_{X})$, which itself injects into~$R[X^{\pm 1}]$, and this ends the proof of the proposition.
\end{proof}

\begin{rem}
Notice that all the injections defined in the proof of Proposition \ref{mult1} are in fact isomorphisms. This could be viewed directly, or we can simply see that after composing all of them we obtain an isomorphism. 
\end{rem}

We are now in a position to state the local functional equation and define the Rankin--Selberg~$\varepsilon$-factor of a pair of representations of Whittaker type. We recall that an invertible element of~$R[X^{\pm 1}]$ is an element of the form~$cX^k$, for~$c$ in~$R^\times$, and~$k$ in~$\Z$.

\begin{cor}\label{fcteq}
Let~$\pi$ be an~$R$-representation of Whittaker type of~$G_n$, and~$\pi'$ be a representation of Whittaker type of~$G_m$. \begin{enumerate}
\item If~$m=n$, there is an invertible element~$\varepsilon(X,\pi,\pi',\theta)$ of the ring~$R[X^{\pm 1}]$ such that for any~$W\in W(\pi,\theta)$, any~$W'\in W(\pi',\theta^{-1})$, and any~$\Phi$ 
in~$\mathcal{C}_c^\infty(F^n)$, we have:
\[\frac{I(q^{-1}X^{-1},\widetilde{W},\widetilde{W'},\widehat{\Phi})}{L(q^{-1}X^{-1},\widetilde{\pi},\widetilde{\pi'})}= c_{\pi'}(-1)^{m-1}\varepsilon(X,\pi,\pi',\theta)\frac{I(X,W,W',\Phi)}{L(X,\pi,\pi')}.\]
\item If~$m\leqslant n-1$, there is an invertible element~$\varepsilon(X,\pi,\pi',\theta)$ of the ring~$R[X^{\pm 1}]$ such that, for any~$W\in W(\pi,\theta)$, any~$W'\in W(\pi',\theta^{-1})$, and any~$0\leqslant j \leqslant n-m-1$, we have:
\[\frac{I(q^{-1}X^{-1},\rho(w_{m,n-m})\widetilde{W},\widetilde{W'};n-m-1-j)}{L(q^{-1}X^{-1},\widetilde{\pi},\widetilde{\pi'})}= 
c_{\pi'}(-1)^{m-1}\varepsilon(X,\pi,\pi',\theta)
\frac{I(X,W,W';j)}{L(X,\pi,\pi')}.\]
\end{enumerate}
\end{cor}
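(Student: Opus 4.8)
The plan is to derive the functional equation as a formal consequence of the multiplicity-one statement of Proposition~\ref{mult1}, in the manner of \cite{JPS2}. I would first treat the case~$m=n$ and set~$\Psi(X,W,W',\Phi)=I(X,W,W',\Phi)/L(X,\pi,\pi')$ and~$\widetilde\Psi(X,W,W',\Phi)=I(q^{-1}X^{-1},\widetilde W,\widetilde{W'},\widehat\Phi)/L(q^{-1}X^{-1},\widetilde\pi,\widetilde{\pi'})$. The crux is to check that \emph{both}~$\Psi$ and~$\widetilde\Psi$ lie in~$D(\pi,\pi',\mathcal{C}_c^\infty(F^n))$. For~$\Psi$ this is immediate: the substitution~$g\mapsto gh$ in the integral defining~$c_k(W,W',\Phi)$ shifts the summation index by~$v(\det h)$, which is the required equivariance. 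For~$\widetilde\Psi$ one performs the substitution~$g\mapsto w_n\,\presuper{t}g^{-1}$ in the Rankin--Selberg integral and combines it with the transformation law~$\widehat{\rho(h)\Phi}=\nu(h)^{-1}\rho(\presuper{t}h^{-1})\widehat\Phi$ (valid precisely because of the normalisation of the~$\theta$-self-dual Haar measure fixed in the Notations) and the relation between right translation on~$W(\pi,\theta)$ and on~$W(\widetilde\pi,\theta^{-1})$ underlying the isomorphism~$W\mapsto\widetilde W$; since only formal manipulations of integrals are involved, the argument of \cite{JPS2} should apply \emph{mutatis mutandis}.

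Granting this, the rest is quick. By Theorem~\ref{Lfactorsdefined} the Laurent series~$I(X,W,W',\Phi)$ span the fractional ideal generated by~$L(X,\pi,\pi')$, so the values of~$\Psi$ span all of~$R[X^{\pm 1}]$; since~$D(\pi,\pi',\mathcal{C}_c^\infty(F^n))$ is free of rank one by Proposition~\ref{mult1}, a proper submodule cannot contain~$\Psi$, so~$\Psi$ is a generator. Hence~$\widetilde\Psi=\gamma(X)\,\Psi$ for a unique~$\gamma(X)\in R[X^{\pm 1}]$, and I would define~$\varepsilon(X,\pi,\pi',\theta)$ by absorbing the (unit) scalar~$c_{\pi'}(-1)^{m-1}$ into~$\gamma(X)$, which is exactly the asserted identity. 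To see that~$\varepsilon$, equivalently~$\gamma$, is invertible I would apply the identity just obtained to the pair~$(\widetilde\pi,\widetilde{\pi'})$, the character~$\theta^{-1}$, the data~$(\widetilde W,\widetilde{W'},\widehat\Phi)$, and~$X$ replaced by~$q^{-1}X^{-1}$; using~$\widetilde{\widetilde W}=W$ (since~$\presuper{t}w_n=w_n$), $\widetilde{\widetilde\pi}=\pi$, Fourier inversion~$\widehat{\widehat\Phi}(x)=\Phi(-x)$, and~$c_{\widetilde{\pi'}}=c_{\pi'}^{-1}$, one gets a relation~$I(X,W,W',\Phi(-\,\cdot))/L(X,\pi,\pi')=\big(\gamma'(q^{-1}X^{-1})\gamma(X)\big)\,I(X,W,W',\Phi)/L(X,\pi,\pi')$ for all~$W,W',\Phi$, where~$\gamma'$ is the proportionality constant for the dual pair. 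As~$\Phi\mapsto\Phi(-\,\cdot)$ is a bijection of~$\mathcal{C}_c^\infty(F^n)$, the left-hand side spans~$R[X^{\pm 1}]$ as~$W,W',\Phi$ vary, forcing~$\gamma'(q^{-1}X^{-1})\gamma(X)$ to be a unit of~$R[X^{\pm 1}]$; since both factors lie in~$R[X^{\pm 1}]$, each is then a unit, in particular~$\gamma(X)$, hence~$\varepsilon(X,\pi,\pi',\theta)$.

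For~$m\leqslant n-1$ I would run the same argument inside the free rank-one module~$D(\pi,\pi')$, with~$(W,W')\mapsto I(X,W,W';j)/L(X,\pi,\pi')$ as the generator (independent of~$j$ by Theorem~\ref{Lfactorsdefined}) and~$(W,W')\mapsto I(q^{-1}X^{-1},\rho(w_{m,n-m})\widetilde W,\widetilde{W'};n-m-1-j)/L(q^{-1}X^{-1},\widetilde\pi,\widetilde{\pi'})$ as the dual map; the twist by~$w_{m,n-m}$ and the replacement~$j\mapsto n-m-1-j$ are exactly what make the~$U_{m+1,n-m-1}$-equivariance and the~$q^{(n-m)/2}$-normalisation come out right after the change of variables, and invertibility of the proportionality constant follows by the same double-application trick (now~$w_{m,n-m}^2$ and~$(n-m-1)-(n-m-1-j)=j$ bring everything back). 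The main obstacle is entirely in the first step — verifying by hand that the ``dual'' integrals satisfy the prescribed equivariances — which is where the precise normalisation of the~$\theta$-self-dual measure, the transformation law for~$\widetilde W$, and, when~$m<n$, the mirabolic bookkeeping with~$w_{m,n-m}$ all have to be reconciled; once that is done, everything else is a formal consequence of Proposition~\ref{mult1}, and, in particular, nothing beyond the algebraic input is needed, so the argument is uniform in the coefficient field.
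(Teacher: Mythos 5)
Your strategy is the same as the paper's: check that both functionals define elements of the free rank-one module of Proposition~\ref{mult1}, then compare. For invertibility in the case $m=n$, your double-application argument works, but there is a shorter route in the spirit of the paper's pointer to Proposition~\ref{mult1}: the spanning argument you give for $\Psi$ applies verbatim to $\widetilde\Psi$ (the maps $W\mapsto\widetilde W$, $W'\mapsto\widetilde{W'}$, $\Phi\mapsto\widehat\Phi$ are bijections, and $X\mapsto q^{-1}X^{-1}$ is a ring automorphism of $R[X^{\pm1}]$), so $\widetilde\Psi$ is also a generator of the rank-one module, and two generators of a free rank-one module automatically differ by a unit.

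There is, however, a genuine gap for $m\leqslant n-1$ and $j\neq 0$. You assert that the map $T_j\colon(W,W')\mapsto I(X,W,W';j)/L(X,\pi,\pi')$ is ``independent of $j$ by Theorem~\ref{Lfactorsdefined},'' but that theorem only says that the \emph{fractional ideal} spanned by the $I(X,W,W';j)$ is independent of $j$; it does not say the maps $T_j$ coincide, and they are defined by genuinely different integrals (an additional $\mathcal{M}_{j,m}$-integration). The rank-one structure only yields $T_j=u_j\,T_0$ for some unit $u_j\in R[X^{\pm1}]$, and likewise $S_j=v_j\,S_0$ on the dual side, where $S_j$ denotes the left-hand functional; the assertion that a single $\varepsilon(X,\pi,\pi',\theta)$ works for every $j$ amounts to $u_j=v_j$. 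This is not a formal consequence of Proposition~\ref{mult1}; it requires the explicit change-of-variable computation that the paper cites \cite{JPS1} for. Your double-application trick shows that each proportionality constant $\gamma_j$ is a unit, but it does not show the $\gamma_j$ agree as $j$ varies, so part~(2) of the corollary is not fully established by your argument.
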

\begin{proof}
It is a consequence of Proposition \ref{mult1} if~$n=m$, and if~$m\leqslant n-1$ with~$j=0$, as the functionals on both sides of the equality belong, respectively, to~$D(\pi,\pi',\mathcal{C}_c^\infty (F^n))$ and~$D(\pi,\pi')$. For~$j\neq 0$, it follows from the case~$j=0$ as in the complex setting, cf. \cite{JPS1}.\end{proof}

We call~$\varepsilon(X,\pi,\pi',\theta)$ the \emph{local~$\varepsilon$-factor} associated to~$\pi,\pi'$, and~$\theta$, and we write 
\[\gamma(X,\pi,\pi',\theta)=\frac{\varepsilon(X,\pi,\pi',\theta)L(q^{-1}X^{-1},\widetilde{\pi},\widetilde{\pi'})}{L(X,\pi,\pi')},\]
for the \emph{local~$\gamma$-factor} associated to~$\pi,\pi'$, and~$\theta$.

\subsection{Compatibility with reduction modulo~$\ell$}

Let~$\pi$ and~$\pi'$ are integral~$\ell$-adic representations of Whittaker type of~$G_n$ and~$G_m$. By Corollary \ref{div}, we already know that~$L(X,\pi,\pi')$ is the inverse of a polynomial with integral coefficients, even without the integrality assumption. With the integrality assumption, we now consider the associated~$\varepsilon$-factor.

\begin{LM}\label{epsilon}
The factor~$\varepsilon(X,\pi,\pi',\theta)$ is of the form~$cX^k$, for~$c$ a unit in~$\Zl$.
\end{LM}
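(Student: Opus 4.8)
The plan is to extract the integrality of $\varepsilon$ directly from the functional equation in Corollary~\ref{fcteq} together with the integrality of $L$-factors already established in Corollary~\ref{div}, by evaluating both sides of the functional equation on suitably chosen integral Whittaker functions and Schwartz--Bruhat functions. First I would recall that, by Corollary~\ref{div}, both $1/L(X,\pi,\pi')$ and $1/L(q^{-1}X^{-1},\widetilde{\pi},\widetilde{\pi'})$ lie in $\Zl[X]$ (here one uses that $\widetilde{\pi},\widetilde{\pi'}$ are again of Whittaker type, as established in Subsection~\ref{repwhit}). Since $\varepsilon(X,\pi,\pi',\theta)$ is already known from Corollary~\ref{fcteq} to be an invertible element of $\Ql[X^{\pm 1}]$, it has the form $cX^k$ with $c\in\Ql^\times$ and $k\in\Z$; the whole content of the lemma is that $c\in\Zl$ (equivalently, since the same argument applies to the inverse relation, that $c\in\Zl^\times$).

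The key step is to produce a choice of $W\in W_e(\pi,\theta)$, $W'\in W_e(\pi',\theta^{-1})$, and $\Phi\in\mathcal{C}_{c,e}^\infty(F^n)$ (or, in the case $m\le n-1$, just $W$ and $W'$) for which the Rankin--Selberg integral $I(X,W,W',\Phi)$ is exactly equal to $L(X,\pi,\pi')$, or at least has the same "leading behaviour'' — more precisely, for which $I(X,W,W',\Phi)/L(X,\pi,\pi')$ is a unit in $\Zl[X^{\pm 1}]$. Granting such a choice, Remark~\ref{intvalues} shows that all the coefficients $c_k(W,W',\Phi)$ are in $\Zl$, so $I(X,W,W',\Phi)\in\Zl((X))$; and similarly $I(q^{-1}X^{-1},\widetilde{W},\widetilde{W'},\widehat{\Phi})\in\Zl((X))$, using that $\widehat{\Phi}\in\mathcal{C}_{c,e}^\infty(F^n)$ (Remark~\ref{intvalues}) and that $W\mapsto\widetilde{W}$ preserves integrality (as in Lemma~\ref{dual-lattice}). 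Dividing through by the integral power series $1/L$-factors and rearranging the functional equation then exhibits $c_{\pi'}(-1)^{m-1}\varepsilon(X,\pi,\pi',\theta)$ as a ratio of an element of $\Zl((X))$ by a unit of $\Zl((X))$, hence itself lies in $\Zl((X))$; being also of the form $cX^k$ it must have $c\in\Zl$. Since $c_{\pi'}(-1)\in\Zl^\times$ (central characters of integral representations are integral, with integral inverse), we conclude $c\in\Zl$. Running the argument starting from the functional equation for the pair $(\widetilde\pi,\widetilde{\pi'})$ — or using that $\gamma(X,\pi,\pi',\theta)\gamma(q^{-1}X^{-1},\widetilde\pi,\widetilde{\pi'},\theta)$ and the $\varepsilon$'s multiply to a unit — gives $c^{-1}\in\Zl$ as well, so $c\in\Zl^\times$; but for the statement as phrased only $c\in\Zl$ is needed.

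The main obstacle is the realisation step: showing that the fractional ideal $I(\pi,\pi')$ is actually attained by a single triple $(W,W',\Phi)$ with integral entries, so that $I(X,W,W',\Phi)=L(X,\pi,\pi')\cdot(\text{unit in }\Zl[X^{\pm1}])$. In the $\ell$-adic setting this is part of the standard Rankin--Selberg machinery of \cite{JPS2} (one uses the asymptotic expansion of Lemma~\ref{asymptotics} and chooses $W$ so that its restriction to the torus realises the relevant Tate factors, together with a $\Phi$ supported near a point where the relevant partial Fourier transform is controlled), and the choices there can be made with integral data since $W_e(\pi,\theta)$ is a lattice stable under $G_n$ and the Tate $L$-factors of integral characters are inverses of polynomials in $\Zl[X]$ with unit leading/constant coefficients. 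Alternatively, and perhaps more cleanly, one avoids pinning down a single optimal vector: it suffices to observe that the $R[X^{\pm1}]$-module $I(\pi,\pi')$ is generated over $\Zl[X^{\pm1}]$ by the integrally-defined integrals (as $W_e,W_e',\mathcal{C}_{c,e}^\infty$ span the respective spaces over $\Ql$, or more carefully span lattices in them), hence $L(X,\pi,\pi')$ is an integral $\Zl[X^{\pm1}]$-combination of elements of $\Zl((X))$, and then the functional equation transports this to the $\varepsilon$-factor; this is the route I would take, and it reduces the "obstacle'' to the bookkeeping already packaged in Remark~\ref{intvalues} and Theorem~\ref{integral-whittaker}.
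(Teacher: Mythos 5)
Your proposal follows essentially the same route as the paper's proof: use Remark~\ref{intvalues} to see that the Rankin--Selberg series $I(X,W,W',\Phi)$ and $I(q^{-1}X^{-1},\widetilde W,\widetilde{W'},\widehat\Phi)$ lie in $\Zl((X))$ and $\Zl((X^{-1}))$ respectively whenever the data are integral, combine this with the fact (Corollary~\ref{div}) that $1/L(X,\pi,\pi')$ and $1/L(q^{-1}X^{-1},\widetilde\pi,\widetilde{\pi'})$ lie in $\Zl[X]$ with constant term $1$ to get that both sides of the functional equation, after dividing by the $L$-factors, lie in $\Zl[X^{\pm1}]$, and then read off integrality of $c$ from the functional equation; the statement $c\in\Zl^\times$ follows by running the same argument for the pair $(\widetilde\pi,\widetilde{\pi'})$. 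Up to the one small slip of writing $\Zl((X))$ where $\Zl((X^{-1}))$ is meant for the $\widetilde{\,\cdot\,}$-side, the mechanism is identical.

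Where you go further than the paper is in explicitly flagging the ``realisation step'': to conclude $c\in\Zl$ from an identity $A=ucX^kB$ with $A,B\in\Zl[X^{\pm1}]$ one still needs to exhibit integral data for which $B=I(X,W,W',\Phi)/L(X,\pi,\pi')$ has a coefficient in $\Zl^\times$ (equivalently, for which $I(X,W,W',\Phi)$ is not divisible by $\ell$ once $L$ is extracted). The paper's proof is terse at this point and simply asserts ``the functional equation then implies that the scalar $c$ is in $\Zl$''; you are right to isolate this as the place that needs an argument. That said, two comments on your proposed resolutions. First, you do not need as much as you ask for: it suffices that \emph{some} integral choice yields a $B$ with a single unit coefficient, not that $B$ itself be a unit of $\Zl[X^{\pm1}]$. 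Second, your ``second route'' overclaims: from the fact that $W_e(\pi,\theta)$, $W_e(\pi',\theta^{-1})$ and $\mathcal{C}_{c,e}^\infty(F^n)$ span the ambient $\Ql$-spaces one only gets that the $\Ql$-span of the integrally-defined integrals equals the fractional ideal $I(\pi,\pi')$, not that their $\Zl[X^{\pm1}]$-span equals $L(X,\pi,\pi')\cdot\Zl[X^{\pm1}]$ --- the latter could a priori be a proper $\Zl[X^{\pm1}]$-submodule, which is exactly the potential obstruction. The honest way to close this is an explicit integral test-vector calculation in the style of \cite{JPS2} (your ``first route''), or the observation that for a Whittaker lift $\tau$ the reductions $r_\ell(I(X,W_e,W'_e,\Phi_e))$ span the modular fractional ideal and are therefore not all zero, which forces a unit coefficient. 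Either way, the ingredients you invoke --- Remark~\ref{intvalues}, Corollary~\ref{div}, Theorem~\ref{integral-whittaker}, the functional equation --- are the right ones and are the ones the paper uses.
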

\begin{proof}
We only do the case~$n=m$, the case~$m\leqslant n-1$ follows \emph{mutatis mutandis}. By Remark \ref{intvalues}, whenever~$W$,~$W'$ and~$\Phi$ have integral values, the Laurent series~$I(X,W,W',\Phi)$ and~$I(q^{-1}X^{-1},\widetilde{W},\widetilde{W'},\hat{\Phi})$ belong, respectively, to~$\Zl((X))$ and~$\Zl((X^{-1}))$. As the factors~$L(X,\pi,\pi')$ and~$L(q^{-1}X^{-1},\widetilde{\pi},\widetilde{\pi'})$ are the inverse of polynomials 
 in~$\Zl[X^{\pm1}]$ with constant term~$1$, the quotient 
\[I(q^{-1}X^{-1},\widetilde{W},\widetilde{W'},\hat{\Phi})/L(q^{-1}X^{-1},\widetilde{\pi},\widetilde{\pi'})\] which belongs to~$R[X^{\pm 1}]$, in fact belongs to~$R[X^{\pm 1}]\cap\Zl((X^{-1}))=\Zl[X^{\pm 1}],$ and simililarly for the quotient 
~$I(X,W,W',\Phi)/L(X,\pi,\pi')$. The functional equation then implies that the scalar~$c$ is in~$\Zl$, and applying it twice
  shows that it is in~$\Zl^\times$.
\end{proof}

If~$P$ is an element of~$\Zl[X]$ with nonzero reduction modulo~$\ell$, we write~$r_{\ell}(P^{-1})$ for~$(r_{\ell}(P))^{-1}$. 
We now prove our first main result.

Let~$\pi$ and~$\pi'$ be~$\ell$-modular representations of Whittaker type of~$G_n$ and~$G_m$.  Let~$\tau$ and~$\tau'$ be Whittaker lifts of~$\pi$ and~$\pi'$ (respectively).

\begin{thm}\label{compat1}
We have
\[L(X,\pi,\pi')\mid r_{\ell}(L(X,\tau,\tau')),\] and 
\[\gamma(X,\pi,\pi',r_{\ell}(\theta))=r_{\ell}(\gamma(X,\tau,\tau',\theta)).\]
\end{thm}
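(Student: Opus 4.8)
The plan is to extract both statements from the Rankin--Selberg integrals evaluated on \emph{integral} test data, using the two reduction-modulo-$\ell$ maps introduced in this section --- one on Euler factors, one on the Laurent series $I(X,-,-,-)$ --- together with the fact (Remark \ref{intvalues}) that reduction commutes with the integrals defining the coefficients $c_k$. Fix sublattices $\mathcal{W}\subseteq W_e(\tau,\theta)$ and $\mathcal{W}'\subseteq W_e(\tau',\theta^{-1})$ realising $\tau$ and $\tau'$ as Whittaker lifts of $\pi$ and $\pi'$, so that $\mathcal{W}\otimes_{\Zl}\Fl=W(\pi,r_\ell(\theta))$ and $\mathcal{W}'\otimes_{\Zl}\Fl=W(\pi',r_\ell(\theta^{-1}))$. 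For $W\in\mathcal{W}$, $W'\in\mathcal{W}'$ and $\Phi\in\mathcal{C}_{c,e}^\infty(F^n)$ (in the case $m\leqslant n-1$ there is no $\Phi$), Remark \ref{intvalues} yields $c_k(W,W',\Phi)\in\Zl$, vanishing for $k\ll 0$, and $r_\ell(c_k(W,W',\Phi))=c_k(r_\ell(W),r_\ell(W'),r_\ell(\Phi))$; hence $I(X,W,W',\Phi)\in\Zl((X))$ and $r_\ell(I(X,W,W',\Phi))=I(X,r_\ell(W),r_\ell(W'),r_\ell(\Phi))$, and similarly for $I(X,W,W';j)$.

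For the $L$-factor, recall from Corollary \ref{div} that $L(X,\tau,\tau')^{-1}\in\Zl[X]$ has constant term $1$, so the argument of Lemma \ref{epsilon} gives $I(X,W,W',\Phi)/L(X,\tau,\tau')\in\Zl((X))\cap R[X^{\pm1}]=\Zl[X^{\pm1}]$, whose reduction is $I(X,r_\ell(W),r_\ell(W'),r_\ell(\Phi))/r_\ell(L(X,\tau,\tau'))$; in particular this last quotient lies in $\Fl[X^{\pm1}]$. As $W,W',\Phi$ vary over $\mathcal{W},\mathcal{W}',\mathcal{C}_{c,e}^\infty(F^n)$, their reductions range over $\Fl$-spanning sets of $W(\pi,r_\ell(\theta))$, $W(\pi',r_\ell(\theta^{-1}))$ and $\mathcal{C}_c^\infty(F^n,\Fl)$ --- reduction from a $\Zl$-lattice, and from integral Schwartz functions, is surjective --- and the $c_k$ are $\Fl$-multilinear, so the $\Fl[X^{\pm1}]$-module generated by these reductions is the whole fractional ideal $I(\pi,\pi')=L(X,\pi,\pi')\,\Fl[X^{\pm1}]$. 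The computation above puts this ideal inside $r_\ell(L(X,\tau,\tau'))\,\Fl[X^{\pm1}]$, hence $L(X,\pi,\pi')/r_\ell(L(X,\tau,\tau'))\in\Fl[X^{\pm1}]$; since both are inverses of polynomials with constant term $1$, this ratio is a polynomial, which is exactly the asserted divisibility.

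For the $\gamma$-factor, combining Corollary \ref{fcteq} with the definition of $\gamma$ gives, for the pair $\tau,\tau'$ and any $W,W',\Phi$,
\[
I(q^{-1}X^{-1},\widetilde{W},\widetilde{W'},\widehat{\Phi})=c_{\tau'}(-1)^{m-1}\,\gamma(X,\tau,\tau',\theta)\,I(X,W,W',\Phi),
\]
and the same identity for $\pi,\pi'$ with $\theta$ replaced by $r_\ell(\theta)$ (when $m\leqslant n-1$ one inserts the usual $\rho(w_{m,n-m})$ and reflects the index $j$, which affects nothing). Take $W\in\mathcal{W}$, $W'\in\mathcal{W}'$, $\Phi\in\mathcal{C}_{c,e}^\infty(F^n)$. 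Then $\widehat{\Phi}$ is again integral and $r_\ell(\widehat{\Phi})=\widehat{r_\ell(\Phi)}$ since our self-dual measure is normalised using compatible square roots of $q$; $\widetilde{W}\in\widetilde{\mathcal{W}}\subseteq W_e(\widetilde{\tau},\theta^{-1})$ with $r_\ell(\widetilde{W})=\widetilde{r_\ell(W)}$ by Lemma \ref{dual-lattice}; and $r_\ell(c_{\tau'})=c_{\pi'}$ because the centre acts on the lattice $\mathcal{W}'$ through $c_{\tau'}$. Clearing the $L$-factor denominators of $\gamma(X,\tau,\tau',\theta)$ as in Lemma \ref{epsilon} makes the displayed identity an identity in $\Zl[X^{\pm1}]$; reducing it modulo $\ell$ and using the commutations just listed produces precisely the $\pi,\pi'$ identity but with $r_\ell(\gamma(X,\tau,\tau',\theta))$ in the role of $\gamma(X,\pi,\pi',r_\ell(\theta))$, that is,
\[
r_\ell(\gamma(X,\tau,\tau',\theta))\,I(X,\overline{W},\overline{W'},\overline{\Phi})=\gamma(X,\pi,\pi',r_\ell(\theta))\,I(X,\overline{W},\overline{W'},\overline{\Phi}),
\]
with $\overline{W}=r_\ell(W)$, $\overline{W'}=r_\ell(W')$, $\overline{\Phi}=r_\ell(\Phi)$.

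It remains to cancel a nonzero $I(X,\overline{W},\overline{W'},\overline{\Phi})$: such data exist because $I(\pi,\pi')$ is a nonzero fractional ideal --- its generator $L(X,\pi,\pi')$ is an Euler factor, hence nonzero --- and, by the surjectivity noted above, its generating Laurent series already arise from integral data. Cancelling gives $\gamma(X,\pi,\pi',r_\ell(\theta))=r_\ell(\gamma(X,\tau,\tau',\theta))$. I expect the one genuine difficulty to be the middle step: checking carefully that every ingredient of the $\ell$-adic functional equation --- the coefficients $c_k$, the $L$-factors, the $\varepsilon$-factor, the Fourier transform, the operation $W\mapsto\widetilde{W}$, and the central characters --- is integral and commutes with reduction modulo $\ell$, so that reducing the $\ell$-adic functional equation returns exactly the $\ell$-modular one; the non-vanishing input, while indispensable, is immediate.
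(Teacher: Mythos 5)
Your proof is correct and follows essentially the same route as the paper's: lift integral test data via Whittaker lifts, use Remark~\ref{intvalues} to commute the Rankin--Selberg coefficients with reduction, note that the $\ell$-adic integrals land in $L(X,\tau,\tau')\Zl[X^{\pm 1}]$, and reduce. The only cosmetic difference is that the paper fixes a finite sum of integrals representing $L(X,\pi,\pi')$, lifts those finitely many Whittaker functions, and reduces, whereas you argue at the level of the entire fractional ideal via surjectivity of reduction from $\mathcal{W}$, $\mathcal{W}'$ and $\mathcal{C}^\infty_{c,e}(F^n)$; these are interchangeable. For the $\gamma$-factor the paper only says it follows from the functional equation and Remark~\ref{intvalues}; you supply the missing details (integrality of $\widetilde{W}$ via Lemma~\ref{dual-lattice}, compatibility of $\widehat{\Phi}$ with reduction, $r_\ell(c_{\tau'})=c_{\pi'}$, clearing $L$-denominators as in Lemma~\ref{epsilon}, and cancelling a nonzero $I$ in the integral domain $\Fl((X))$), which is a welcome elaboration of the paper's one-line remark.
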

\begin{proof}
We give the proof for~$m\leqslant n-1$, and~$j=0$, the other cases being similar. By definition, one can write~$L(X,\pi,\pi')$ as a finite sum 
$\sum_i I(X,W_i,W_i')$, for~$W_i \in W(\pi,\theta)$ and~$W_i' \in W(\pi',\theta^{-1})$. By Lemma \ref{dual-lattice}, there are Whittaker functions~$W_{i,e}\in W_e(\tau,\mu)$ and~$W_{i,e}'\in W_e(\tau',\mu^{-1})$ such that~$\widetilde{W_{i,e}}\in W_e(\widetilde{\tau},\mu^{-1})$ and~$\widetilde{W_{i,e}'}\in W_e(\widetilde{\tau'},\mu)$ , and such that~$W_i=r_{\ell}(W_{i,e})$, and~$W_i'=r_{\ell}(W_{i,e}')$. By Remark \ref{intvalues}, we have~$L(X,\pi,\pi')=r_{\ell}(\sum_i I(X,W_{i,e},W_{i,e}'))$. As the sum 
$\sum_i I(X,W_{i,e},W_{i,e}')$ belongs to 
\[L(X,\tau,\tau')\Ql[X^{\pm 1}]\cap \Zl((X))=L(X,\tau,\tau')\Zl[X^{\pm 1}],\] we obtain that 
$L(X,\pi,\pi')$ belongs to~$r_{\ell}(L(X,\tau,\tau'))\Fl[X^{\pm 1}]$.  This proves the first assertion.  The equality for~$\gamma$-factors follows the functional equation, and Remark \ref{intvalues}.  
\end{proof}

As an immediate corollary of Theorem \ref{compat1}, of \cite[Proposition 2.13]{JPS2} and of \cite[Proposition 4.1, Remark 5.2]{JSHighly}, we obtain the stability of the local factors. We recall that the level~$l(\chi)$ of a character~$\chi$ of~$F^\times$ is either zero if it is trivial on~$\o^\times$, or the smallest integer~$n\geqslant 1$ such that~$\chi$ trivial on~$1+\p^n$. 

\begin{cor}\label{stability}
Let~$\pi$ and~$\pi'$ be a pair of~$\ell$-modular representations of Whittaker type of~$G_n$ and~$G_m$ respectively, then there exists~$l_{\pi,\pi'}\in \N$ such that
\[L(X,\pi\otimes \chi,\pi')=1,\]
 whenever~$l(\chi)\geqslant l_{\pi,\pi'}$. 
 Moreover, if~$\pi_1,\pi_2$ are~$\ell$-modular representations of Whittaker type of~$G_n$ with equal central characters, there exists~$l_{\pi'}\in\N$ such that
 \[\gamma(X,\pi_1\otimes\chi,\pi',\theta)=\gamma(X,\pi_2\otimes\chi,\pi',\theta),\]
whenever~$l(\chi)\geqslant l_{\pi'}$. 
\end{cor}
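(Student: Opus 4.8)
The plan is to reduce both statements to the known stability results over $\Ql$ by lifting the twisting character (and the additive character) to characteristic zero and then invoking Theorem~\ref{compat1}. Fix standard lifts $\tau$ of $\pi$ and $\tau'$ of $\pi'$, which are Whittaker lifts by Theorem~\ref{integral-whittaker}, and fix an $\ell$-adic additive character $\widetilde{\theta}$ of $F$ with $r_\ell(\widetilde{\theta})=\theta$. Given a smooth $\ell$-modular character $\chi$ of $F^\times$, I would first produce an $\ell$-adic lift $\widetilde{\chi}$ of $\chi$ with $l(\widetilde{\chi})=l(\chi)$: the restriction of $\chi$ to the pro-$p$ group $1+\p$ lifts canonically and level-preservingly through the reduction isomorphism on $p$-power roots of unity, while its restriction to $k^\times$ and its value at $\w$ lift to $\Zl^\times$ because $r_\ell$ is surjective onto the prime-to-$\ell$ (resp.\ all) roots of unity of $\Fl^\times$. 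Since twisting by a character sends a generic segment $\L(\Delta)$ to the generic segment $\L(\Delta^{\chi})$ and preserves $e(\rho)$, both $\pi\otimes\chi$ and $\tau\otimes\widetilde{\chi}$ are again of Whittaker type, $\tau\otimes\widetilde{\chi}$ is integral, $W(\tau\otimes\widetilde{\chi},\widetilde{\theta})=(\widetilde{\chi}\circ\det)\cdot W(\tau,\widetilde{\theta})$, and $\widetilde{\chi}$ takes unit integral values; hence a Whittaker lattice for $\tau$ is carried to one for $\tau\otimes\widetilde{\chi}$, which exhibits $\tau\otimes\widetilde{\chi}$ as a Whittaker lift of $\pi\otimes\chi$.

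For the first assertion, Theorem~\ref{compat1} gives $L(X,\pi\otimes\chi,\pi')\mid r_\ell\big(L(X,\tau\otimes\widetilde{\chi},\tau')\big)$. By the stability of $\ell$-adic Rankin--Selberg $L$-factors, \cite[Proposition~2.13]{JPS2}, there is an integer $N$, depending on $\tau$ and $\tau'$ and therefore on $\pi,\pi'$, such that $L(X,\tau\otimes\widetilde{\chi},\tau')=1$ whenever $l(\widetilde{\chi})\geqslant N$. Setting $l_{\pi,\pi'}=N$: if $l(\chi)\geqslant l_{\pi,\pi'}$ then $l(\widetilde{\chi})=l(\chi)\geqslant N$, so $r_\ell\big(L(X,\tau\otimes\widetilde{\chi},\tau')\big)=1$, and since $L(X,\pi\otimes\chi,\pi')$ is an Euler factor dividing $1$ it must equal $1$.

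For the second assertion, assume $c_{\pi_1}=c_{\pi_2}$, fix a Whittaker lift $\tau'$ of $\pi'$, Whittaker lifts $\tau_i$ of $\pi_i$, and a level-preserving $\ell$-adic lift $\widetilde{\chi}$ of $\chi$ as above. Theorem~\ref{compat1} gives $\gamma(X,\pi_i\otimes\chi,\pi',\theta)=r_\ell\big(\gamma(X,\tau_i\otimes\widetilde{\chi},\tau',\widetilde{\theta})\big)$ for $i=1,2$. By $\ell$-adic $\gamma$-factor stability, \cite[Proposition~4.1,~Remark~5.2]{JSHighly}, there is an integer $N'$ such that for $l(\widetilde{\chi})\geqslant N'$ the factor $\gamma(X,\tau_i\otimes\widetilde{\chi},\tau',\widetilde{\theta})$ is given by an explicit formula depending on $\tau_i$ only through its central character (and otherwise only on $\widetilde{\chi}$, $\tau'$, $\widetilde{\theta}$), compatibly with reduction modulo $\ell$; as $r_\ell(c_{\tau_1})=c_{\pi_1}=c_{\pi_2}=r_\ell(c_{\tau_2})$, the two reductions $r_\ell\big(\gamma(X,\tau_i\otimes\widetilde{\chi},\tau',\widetilde{\theta})\big)$ agree. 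Thus, with $l_{\pi'}=N'$ and $l(\chi)=l(\widetilde{\chi})\geqslant N'$, we get $\gamma(X,\pi_1\otimes\chi,\pi',\theta)=\gamma(X,\pi_2\otimes\chi,\pi',\theta)$. Alternatively, one may arrange $c_{\tau_1}=c_{\tau_2}$ from the outset, using the freedom in the choice of the cuspidal lifts (Theorem~\ref{Greenjames}, Proposition~\ref{lifts}) together with twists by characters valued in $1+\mathfrak{m}_{\Zl}$, none of which change the reduction modulo $\ell$; then the two $\ell$-adic $\gamma$-factors are literally equal by \cite{JSHighly}.

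The only genuinely delicate point is reconciling the $\ell$-modular hypothesis of equal central characters with the $\ell$-adic stability statement, handled either through the explicit shape of the stable $\gamma$-factor or through a careful choice of lifts; everything else is a mechanical combination of Theorem~\ref{compat1} with the cited stability theorems, once one checks that character twists respect Whittaker type and the Whittaker-lift property and that a smooth character of $F^\times$ lifts to $\Zl$ without increasing its level.
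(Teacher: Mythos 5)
Your argument fills in exactly the argument the paper declares to be immediate: fix Whittaker lifts, lift the twisting character to $\Ql$ without raising its level, apply the $\ell$-adic stability results of \cite{JPS2} and \cite{JSHighly}, and transfer back via Theorem~\ref{compat1}. For the $L$-factor statement this is a clean deduction: if $L(X,\tau\otimes\widetilde{\chi},\tau')=1$, then the Euler factor $L(X,\pi\otimes\chi,\pi')$ divides $1$ and hence equals $1$. For the $\gamma$-factor statement you correctly identify the only delicate point, namely that equality of the $\ell$-modular central characters only gives congruence, not equality, of the central characters of the lifts $\tau_1,\tau_2$; your resolution via the explicit Jacquet--Shalika stable form (a product of abelian $\gamma$-factors depending on the central character, compatible with $r_\ell$), or alternatively by adjusting the lifts by a twist with reduction trivial, is a valid way to close this gap. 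This is essentially the same route as the paper, just spelled out.
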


\begin{rem}
As for the Godement--Jacquet~$L$-functions (see \cite{M}), we do not always have an equality between~$r_{\ell}(L(X,\tau,\tau'))$ and~$L(X,\pi,\pi')$. For instance when~$q\equiv 1[\ell]$, we always have~$L(X,\pi,\pi')=1$. But already, for unramified~$\ell$-adic characters~$\tau$ and~$\tau'$ of~$G_1$, we have \[r_{\ell}(L(X,\tau,\tau'))=1/(1-X).\] 
This will become completely transparent in the generic case by the end of the next section.
\end{rem}

\section{The inductivity relation, and explicit computations.}

In Section \ref{gamma-section}, we obtain the inductivity relation of~$\gamma$-factors of representations of Whittaker type by reduction modulo~$\ell$.  For~$L$-factors, we restrict to pairs of generic representations and obtain again the inductivity relation, and an explicit formula, differing from the~$\ell$-adic 
case due to the presence of non-banal representations. In fact, for generic representations we show that the~$L$-factor only depends on the banal parts of the representations in Theorem \ref{banal-part-factors}.  We also discuss precisely the relation with reduction modulo~$\ell$, and obtain in Theorem \ref{GCDtheorem} a nice interpretation of the~$\ell$-modular~$L$-factor of a pair of generic representations as the greatest common divisor of the~$L$-factors of the pairs of generic~$\ell$-adic representations containing our generic pair as subquotients on reduction modulo~$\ell$. This also provides a coherent way of defining~$L$-factors of pairs of irreducible~$\ell$-modular representations.  

\subsection{Gamma factors of representations of Whittaker type}\label{gamma-section}
%
 
Let~$R$ be either~$\Ql$ or~$\Fl$. Let~$\pi_1,\pi_2$ be~$\ell$-modular representations of Whittaker type and~$\tau_1,\tau_2$ be standard lifts of~$\pi_1,\pi_2$ respectively.  For~$i=1,2$, let~$\tau_i'$ be a subquotient of~$\tau_i$ of Whittaker type, and~$\pi_i'$ be a subquotient of~$\pi_i$ of Whittaker type.  As the functor
\[\pi\mapsto \Hom_{N_n}(\pi,\theta)\] 
is exact from~$\mathfrak{R}_R(G_n)$ to the category of~$R$-vector spaces, by multiplicity one of the Whittaker functional for Whittaker representations, we deduce that 
the (unique up to scaling) Whittaker functional on~$\pi_1'$ is the restriction of that on~$\pi_1$ (and similarly for the pairs~$(\pi_2',\pi_2)$,~$(\tau_1',\tau_1)$, and~$(\tau_2',\tau_2)$). 
%

This implies immediately the following inclusions of Whittaker models:
\begin{align*}
W(\tau_1',\theta)&\subseteq W(\tau_1,\theta),&W(\tau_2',\theta^{-1})&\subseteq W(\tau_2,\theta^{-1}),\\
W(\pi_1',r_\ell(\theta))&\subseteq W(\pi_1,r_\ell(\theta)), &W(\pi_2',r_\ell(\theta^{-1}))&\subseteq W(\pi_2,r_\ell(\theta^{-1})). 
\end{align*}  These inclusions being equalities whenever the respective subquotients are actually quotients. 

Applying~$W\mapsto \widetilde{W}$ (in~$\mathcal{C}^\infty(N_{n_i}\backslash G_{n_i})$) to the above inclusions, we obtain the inclusions:
\begin{align*}
W(\widetilde{\tau_1}',\theta^{-1})&\subseteq W(\widetilde{\tau_1},\theta^{-1}),&W(\widetilde{\tau_2}',\theta)
&\subseteq W(\widetilde{\tau_2},\theta),\\
W(\widetilde{\pi_1}',r_\ell(\theta^{-1}))&\subseteq W(\widetilde{\pi_1},r_\ell(\theta^{-1})), &W(\widetilde{\pi_2}',r_\ell(\theta))&\subseteq W(\widetilde{\pi_2},r_\ell(\theta)). 
\end{align*}

Thanks to all of the above inclusions of Whittaker models, the functional equation gives the equalities of gamma factors:
\begin{align*}
\gamma(X,\pi_1',\pi_2')&=\gamma(X,\pi_1,\pi_2),&\gamma(X,\tau_1',\tau_2')&=\gamma(X,\tau_1,\tau_2).
\end{align*}

 Now, by Theorem \ref{compat1}, we have 
 \[r_\ell(\gamma(X,\tau_1,\tau_2))=\gamma(X,\pi_1,\pi_2),\] 
 hence 
\[r_\ell(\gamma(X,\tau_1',\tau_2'))=\gamma(X,\pi_1',\pi_2').\]

In particular, we can take~$\tau_i'$ (resp.~$\pi_i'$) to be the unique generic subquotient of~$\tau_i$ (resp.~$\pi_i$) for~$i=1,2$.

The following theorem follows by reduction modulo~$\ell$ (Theorem \ref{compat1}) of the inductivity relation of~$\ell$-adic gamma factors in \cite[3.1]{JPS2}. 

\begin{thm}\label{inductivity-gamma}
Let~$\pi_1$ and~$\pi_2$ and~$\pi_3$ be~$\ell$-modular representations of Whittaker type, of~$G_{n_1}$,~$G_{n_2}$ and 
$G_{n_3}$ respectively. Then 
\[\gamma(X,\pi_1,\pi_2\times\pi_3)=\gamma(X,\pi_1,\pi_2)\gamma(X,\pi_1,\pi_3).\]
\end{thm}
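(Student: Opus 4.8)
The plan is to deduce the $\ell$-modular inductivity relation directly from the $\ell$-adic one via reduction modulo $\ell$, exactly as the sentence preceding the theorem statement advertises. First I would choose standard lifts: let $\tau_1, \tau_2, \tau_3$ be standard lifts of $\pi_1, \pi_2, \pi_3$ respectively, which exist by Remark~\ref{lift-segments} and the definition of standard lift (Definition~\ref{standard-lift}), and which are of Whittaker type with $r_\ell(\tau_i)$ semi-simplifying to (a representation containing) $\pi_i$. The key point here is that $\tau_2 \times \tau_3$ is itself an $\ell$-adic representation of Whittaker type (a product of generic segments), and it is a standard lift of $\pi_2 \times \pi_3$, since parabolic induction commutes with reduction modulo $\ell$. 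Thus all three $\ell$-adic representations $\tau_1$, $\tau_2\times\tau_3$, and $\tau_1$ paired with $\tau_2$, $\tau_3$ individually, fall within the scope of the $\ell$-adic theory.

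Next I would invoke the $\ell$-adic inductivity relation of $\gamma$-factors for representations of Whittaker type, \cite[3.1]{JPS2}, which gives
\[\gamma(X,\tau_1,\tau_2\times\tau_3,\theta)=\gamma(X,\tau_1,\tau_2,\theta)\,\gamma(X,\tau_1,\tau_3,\theta).\]
Then I would apply $r_\ell$ to both sides. By Theorem~\ref{compat1}, the reduction modulo $\ell$ of each $\ell$-adic $\gamma$-factor equals the corresponding $\ell$-modular $\gamma$-factor: $r_\ell(\gamma(X,\tau_1,\tau_2,\theta))=\gamma(X,\pi_1,\pi_2,r_\ell(\theta))$, and similarly for the pair $(\tau_1,\tau_3)$ and for $(\tau_1,\tau_2\times\tau_3)$, using that $\tau_2\times\tau_3$ is a Whittaker lift of $\pi_2\times\pi_3$. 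Since $r_\ell$ is multiplicative on the units $cX^k$ of $\Zl[X^{\pm 1}]$ (and $\gamma$-factors are such units up to inverses of polynomials with constant term $1$, by Corollary~\ref{div} and Lemma~\ref{epsilon}), the product relation passes through the reduction map, yielding
\[\gamma(X,\pi_1,\pi_2\times\pi_3,r_\ell(\theta))=\gamma(X,\pi_1,\pi_2,r_\ell(\theta))\,\gamma(X,\pi_1,\pi_3,r_\ell(\theta)).\]
Since $r_\ell(\theta)$ is an arbitrary nontrivial $\ell$-modular character as $\theta$ ranges over $\ell$-adic lifts, this is the desired statement (with $\theta$ suppressed from the notation as in the theorem).

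Two small points need care. First, in Theorem~\ref{compat1} the compatibility is stated for Whittaker lifts, so I should confirm that standard lifts are Whittaker lifts — this is precisely Theorem~\ref{integral-whittaker} — and that $\tau_2\times\tau_3$ is a Whittaker lift of $\pi_2\times\pi_3$, which again follows from Theorem~\ref{integral-whittaker} applied to the concatenated list of segments. Second, one must know a priori that $L(X,\tau_1,\tau_2\times\tau_3)$ and the other $\ell$-adic $L$-factors are inverses of polynomials with constant term $1$, so that their reductions are well-defined and $r_\ell$ is multiplicative on $\gamma$-factors; this is Corollary~\ref{div}. I do not expect any genuine obstacle: the entire content is packaged in Theorem~\ref{compat1} and the $\ell$-adic result, and the only thing to verify is the bookkeeping that the lifts chosen on the two sides of the relation are compatible, i.e.\ that the lift of the product is the product of the lifts — which is immediate from commutation of parabolic induction with reduction modulo $\ell$. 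The mildest subtlety, if any, is simply making sure that a standard lift of $\pi_1$ can be paired simultaneously with standard lifts of $\pi_2$, $\pi_3$, and of $\pi_2\times\pi_3$; since the choice of $\tau_1$ is made once and independently, this is not an issue.
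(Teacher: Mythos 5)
Your proof is correct and matches the paper's approach exactly: the paper gives only a one-sentence justification, stating that the theorem follows by reducing modulo~$\ell$ (via Theorem~\ref{compat1}) the $\ell$-adic inductivity relation of~\cite[3.1]{JPS2}, and your writeup simply unpacks the bookkeeping (standard lifts are Whittaker lifts, the standard lift of $\pi_2\times\pi_3$ is $\tau_2\times\tau_3$, $r_\ell$ is multiplicative on $\gamma$-factors) that the paper leaves implicit.
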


Together with the discussion above, it has the following corollary:

\begin{cor}\label{gamma-steinberg}
Let~$\rho$ and~$\rho'$ be cuspidal~$\ell$-modular representations of 
$G_r$ and~$G_{r'}$ respectively, and~$a\leqslant b$ and~$c\leqslant d$ in~$\Z$. 
Then one has~\[\gamma(X,St(\rho,[a,b]),St(\rho',[c,d]))=\prod_{i=a}^b \prod_{j=c}^d \gamma(X,\nu^i\rho,\nu^j\rho').\]
\end{cor}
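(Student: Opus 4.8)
The plan is to deduce this from the inductivity relation for $\gamma$-factors (Theorem~\ref{inductivity-gamma}) together with the fact, established in the discussion preceding that theorem, that the $\gamma$-factor of a pair of representations of Whittaker type is unchanged when one of its members is replaced by its unique generic subquotient.

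First I would note that $St(\rho,[a,b])$ is, by definition, the unique generic subquotient of the representation of Whittaker type $\Pi:=\nu^a\rho\times\nu^{a+1}\rho\times\cdots\times\nu^b\rho$, and similarly $St(\rho',[c,d])$ is the unique generic subquotient of $\Pi':=\nu^c\rho'\times\cdots\times\nu^d\rho'$; each twist $\nu^i\rho$ and $\nu^j\rho'$ is cuspidal, hence generic, hence of Whittaker type, and $\Pi,\Pi'$ are products of the corresponding generic segments of length $1$. Applying the equality $\gamma(X,\pi_1',\pi_2')=\gamma(X,\pi_1,\pi_2)$ from the discussion before Theorem~\ref{inductivity-gamma}, with $\pi_1=\Pi$, $\pi_2=\Pi'$ and $\pi_1',\pi_2'$ their generic subquotients, gives
\[\gamma(X,St(\rho,[a,b]),St(\rho',[c,d]))=\gamma(X,\Pi,\Pi').\]
Iterating Theorem~\ref{inductivity-gamma} in the second variable then yields $\gamma(X,\Pi,\Pi')=\prod_{j=c}^{d}\gamma(X,\Pi,\nu^j\rho')$, and iterating inductivity in the first variable yields $\gamma(X,\Pi,\nu^j\rho')=\prod_{i=a}^{b}\gamma(X,\nu^i\rho,\nu^j\rho')$ for each $j$; combining these produces the asserted double product.

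The one point requiring care, and which I expect to be the main (and essentially only) obstacle, is that Theorem~\ref{inductivity-gamma} as stated records only multiplicativity of $\gamma$ in its \emph{second} argument, whereas the computation above also uses multiplicativity in the first argument. This can be supplied in exactly the way Theorem~\ref{inductivity-gamma} itself is obtained: the first-variable inductivity relation for $\ell$-adic $\gamma$-factors is part of \cite[3.1]{JPS2}, and it descends to the $\ell$-modular setting by choosing standard lifts of $\Pi$ and of $\nu^j\rho'$ and reducing modulo~$\ell$ via Theorem~\ref{compat1} (whose $\gamma$-factor statement is an honest equality). Alternatively, one can invoke the symmetry of the Rankin--Selberg $\gamma$-factor under interchanging its two arguments---the same symmetry underlying the convention that identifies $\gamma(X,\pi,\pi')$ with $\gamma(X,\pi',\pi)$ when the ranks differ---to reduce the first-variable statement back to Theorem~\ref{inductivity-gamma}. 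Once this is in place, the proof is the short chain of equalities displayed above, so I would keep that part of the write-up brief.
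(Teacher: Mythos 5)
Your proof follows exactly the paper's route: replace the Steinberg representations by the full induced products using the invariance of $\gamma$-factors under passage to Whittaker-type subquotients (the discussion preceding Theorem~\ref{inductivity-gamma}), then iterate the inductivity relation; the paper's own proof is the one-line version of this. You are also right to flag that Theorem~\ref{inductivity-gamma} as stated only gives multiplicativity in the second slot and that the paper silently uses it in the first slot as well; either of your remedies (descent of the first-slot relation from \cite[3.1]{JPS2} via Theorem~\ref{compat1}, or the symmetry of $\gamma$ in its two arguments) closes that gap.
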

\begin{proof}
As~$St(\rho,[a,b])$ and~$St(\rho,[c,d])$ are the unique generic quotients of the induced representations 
$\nu^a\rho\times \dots \times \nu^b \rho$ and~$\nu^c\rho'\times \dots \times \nu^d\rho'$ respectively, the assertion 
follows from Theorem \ref{inductivity-gamma} and the discussion preceding it.
\end{proof}

\subsection{Divisibility relations between~$L$-factors of induced representations}

%
In this subsection~$R$ can be~$\Ql$ or~$\Fl$. We first start with a simple observation, which follows from the inclusions between Whittaker models discussed in the beginning of Section 
\ref{gamma-section}.

\begin{LM}\label{div-subquotient}
Let~$\pi$ (resp.~$\tau$) be an~$R$-representation of Whittaker type of~$G_n$ (resp.~$G_m$), and~$\pi'$ (resp.~$\tau'$) be a subquotient of Whittaker type (for example the unique generic subquotient) of~$\pi$ 
(resp.~$\tau$), then~$L(X,\pi',\tau')$ divides~$L(X,\pi,\tau)$, and it is equal to it if both~$\pi'$ and~$\tau'$ are quotients of 
$\pi$ and~$\tau$.
\end{LM}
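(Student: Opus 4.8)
The plan is to read the statement off directly from the definition of the local $L$-factor in Theorem~\ref{Lfactorsdefined}: $L(X,\pi,\tau)$ is the unique Euler-factor generator of the fractional ideal $I(\pi,\tau)$ of $R[X^{\pm 1}]$ spanned by the Rankin--Selberg series attached to Whittaker functions in $W(\pi,\theta)$ and $W(\tau,\theta^{-1})$. Since the Whittaker model of a subquotient of Whittaker type embeds into that of the ambient representation, the ideal $I(\pi',\tau')$ is contained in $I(\pi,\tau)$, and the divisibility of the generators will follow; the containment will be an equality exactly when the subquotients are in fact quotients.

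First I would invoke the inclusions of Whittaker models recorded at the beginning of Section~\ref{gamma-section}: because $\pi\mapsto\Hom_{N_n}(\pi,\theta)$ is exact and one-dimensional on representations of Whittaker type, the (unique up to scalar) Whittaker functional of $\pi'$ is obtained by restriction and descent from that of $\pi$, so that $W(\pi',\theta)\subseteq W(\pi,\theta)$, with equality when $\pi'$ is a quotient of $\pi$; likewise $W(\tau',\theta^{-1})\subseteq W(\tau,\theta^{-1})$. Consequently, in the case $n=m$ every series $I(X,W,W',\Phi)$ with $W\in W(\pi',\theta)$, $W'\in W(\tau',\theta^{-1})$ and $\Phi\in\mathcal{C}_c^\infty(F^n)$ is among the series spanning $I(\pi,\tau)$, and in the case $m\leqslant n-1$ the same holds for the series $I(X,W,W';j)$ with any fixed $j$ (the ideal being independent of $j$ by Theorem~\ref{Lfactorsdefined}). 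Hence $I(\pi',\tau')\subseteq I(\pi,\tau)$ as fractional ideals of $R[X^{\pm 1}]$.

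Next I would translate this containment into the asserted divisibility. Writing $L(X,\pi,\tau)=P(X)^{-1}$ and $L(X,\pi',\tau')=Q(X)^{-1}$ with $P,Q\in R[X]$ and $P(0)=Q(0)=1$, the inclusion $Q(X)^{-1}R[X^{\pm 1}]=I(\pi',\tau')\subseteq I(\pi,\tau)=P(X)^{-1}R[X^{\pm 1}]$ gives $P/Q\in R[X^{\pm 1}]$; since $R$ is a field, $R[X]$ is a unique factorisation domain, and comparing constant terms forces $Q\mid P$ in $R[X]$, which is exactly the claim $L(X,\pi',\tau')\mid L(X,\pi,\tau)$. When both $\pi'$ and $\tau'$ are quotients of $\pi$ and $\tau$, the two inclusions of Whittaker models above are equalities, so $I(\pi',\tau')=I(\pi,\tau)$ and the two $L$-factors coincide.

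I do not expect a serious obstacle: once the inclusions of Whittaker models from Section~\ref{gamma-section} are granted, the argument is a formality. The only point that needs care is bookkeeping the normalisation of $L$-factors as Euler factors (inverses of polynomials with constant term $1$), so that the containment of ideals comes out as divisibility in the stated direction; one should also note that the argument is uniform in the cases $n=m$ and $m\leqslant n-1$ (and symmetric in $\pi$ and $\tau$ with the usual convention on which index is larger).
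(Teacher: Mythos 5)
Your argument is correct and is exactly the route the paper intends: the paper gives no separate proof for this lemma, merely noting that it "follows from the inclusions between Whittaker models discussed in the beginning of Section \ref{gamma-section}", and you have spelled out precisely that implication (inclusion of Whittaker models $\Rightarrow$ inclusion of fractional ideals $\Rightarrow$ divisibility of the Euler-factor generators, with equality in the quotient case). The bookkeeping with $P/Q\in R[X^{\pm 1}]$ and the constant-term normalisation is the right way to pass from the ideal containment to the divisibility in the stated direction.
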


We also observe, as in \cite[Section 7]{JPS2} (their proof being valid for~$R=\Fl$), thanks to the stability of~$L$-factors under highly ramified twists (Corollary \ref{stability}), that the inductivity relation of~$\gamma$-factors (Theorem \ref{inductivity-gamma}) implies the following result.

\begin{LM}\label{Lintermediate}
Let~$\pi_1$,~$\pi_2$ and~$\pi_3$ be~$R$-representations of Whittaker type of~$G_{n_1}$,~$G_{n_2}$ and~$G_{n_3}$ respectively, then~$L(X,\pi_1\times \pi_2,\pi_3)$ divides~$L(X,\pi_1,\pi_3)L(X,\pi_2,\pi_3)$.
\end{LM}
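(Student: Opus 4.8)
Write $L(X,\pi_1\times\pi_2,\pi_3)=1/D$ and $L(X,\pi_i,\pi_3)=1/D_i$ with $D,D_1,D_2\in R[X]$ of constant term $1$ (Theorem \ref{Lfactorsdefined}); the assertion ``$L(X,\pi_1\times\pi_2,\pi_3)$ divides $L(X,\pi_1,\pi_3)L(X,\pi_2,\pi_3)$'' means precisely that $D_1D_2/D\in R[X^{\pm1}]$, i.e.\ that $L(X,\pi_1\times\pi_2,\pi_3)$ lies in the fractional ideal $L(X,\pi_1,\pi_3)L(X,\pi_2,\pi_3)R[X^{\pm1}]$. The plan is to establish this membership. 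Since, again by Theorem \ref{Lfactorsdefined}, $L(X,\pi_1\times\pi_2,\pi_3)$ generates the fractional ideal spanned by the Rankin--Selberg integrals $I(X,W,W')$ (together with the $I(X,W,W',\Phi)$ when $n_3=n_1+n_2$) for $W\in W(\pi_1\times\pi_2,\theta)$ and $W'\in W(\pi_3,\theta^{-1})$, it suffices to show that every such integral lies in $L(X,\pi_1,\pi_3)L(X,\pi_2,\pi_3)R[X^{\pm1}]$.

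The route pointed to by the statement is to extract this from the inductivity of $\gamma$-factors. Combining $\gamma(X,\pi_1\times\pi_2,\pi_3,\theta)=\gamma(X,\pi_1,\pi_3,\theta)\,\gamma(X,\pi_2,\pi_3,\theta)$ (Theorem \ref{inductivity-gamma}) with the functional equations of Corollary \ref{fcteq}, whose $\varepsilon$-factors are invertible monomials of $R[X^{\pm1}]$, one obtains, up to a unit of $R[X^{\pm1}]$,
\[
\frac{L(X,\pi_1,\pi_3)\,L(X,\pi_2,\pi_3)}{L(X,\pi_1\times\pi_2,\pi_3)}\ \doteq\ \frac{L(q^{-1}X^{-1},\widetilde{\pi_1\times\pi_2},\widetilde{\pi_3})}{L(q^{-1}X^{-1},\widetilde{\pi_1},\widetilde{\pi_3})\,L(q^{-1}X^{-1},\widetilde{\pi_2},\widetilde{\pi_3})}.
\]
The left-hand side is by construction $D_1D_2/D$, a ratio of polynomials in $X$ whose poles lie among the numbers $\chi(\w)^{-1}$ with $\chi$ a product of a central character of a Jacquet module of $\pi_1\times\pi_2$ and one of $\pi_3$, by the Tate-factor estimate read off from the proof of Theorem \ref{Lfactorsdefined}; the right-hand side is a ratio of polynomials in $X^{-1}$ whose poles lie among the $q^{-1}$-multiples of the analogous numbers attached to the contragredients. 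When $q\equiv 1\,[\ell]$ all the $\ell$-modular $L$-factors in sight equal $1$ by Corollary \ref{div} and there is nothing to prove; when $q\not\equiv 1\,[\ell]$ the shift by $q^{-1}$ pulls the two pole sets apart, and---exactly as in \cite[Section 7]{JPS2}, using the stability of $L$-factors under highly ramified twists (Corollary \ref{stability}) to collapse the residual ambiguous configurations---one forces both sides to be Laurent polynomials, i.e.\ $D\mid D_1D_2$.

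Equivalently and more concretely one can simply unfold the integrals. Realizing $W(\pi_1\times\pi_2,\theta)$ through $W(\pi_1,\theta)\otimes W(\pi_2,\theta)$-valued sections of $\boldsymbol{i}^{G_{n_1+n_2}}_{P_{(n_1,n_2)}}$, inserting an Iwasawa decomposition in $I(X,W,W')$, and integrating out the Levi with the $R$-Haar integration formulae of Section \ref{haarmeasures} (Lemmas \ref{Haar1}, \ref{HaarB} and Corollaries \ref{iwasplit}, \ref{iwasplit2}) runs the computation of \cite[Section 7]{JPS2} verbatim---it is a purely formal manipulation of Laurent series and $R$-valued integrals, which is why it is valid over $R=\Fl$---and expresses $I(X,W,W')$ as a finite $R[X^{\pm1}]$-combination of products of a Rankin--Selberg integral for $(\pi_1,\pi_3)$ with one for $(\pi_2,\pi_3)$, any auxiliary Tate integral being, by \cite{M}, absorbed into one of the two factors; hence $I(X,W,W')\in L(X,\pi_1,\pi_3)L(X,\pi_2,\pi_3)R[X^{\pm1}]$. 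The main obstacle, either way, is bookkeeping: one must carry out the descent in each of the cases $n_3<n_1+n_2$, $n_3=n_1+n_2$ and $n_3>n_1+n_2$ while tracking the normalizations of the several $R$-Haar and quotient measures---the delicate point over $\Fl$ being that compact open subgroups can have measure zero, so that some decompositions valid over $\C$ must be replaced by their Section \ref{haarmeasures} counterparts---and, in the $\gamma$-factor route, to make the pole-separation step watertight when $q$ has small multiplicative order modulo $\ell$.
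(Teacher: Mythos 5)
Your first route is the one the paper actually invokes: its "proof" of Lemma \ref{Lintermediate} is a single citation of \cite[Section 7]{JPS2} together with Theorem \ref{inductivity-gamma} and Corollary \ref{stability}, and you have fleshed out exactly that chain (gamma-inductivity, functional equation, pole-separation made workable over $\Fl$ by stability). Two points, though, deserve correction or caution.

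First, a direction slip. You write $L(X,\pi_i,\pi_3)=1/D_i$ and $L(X,\pi_1\times\pi_2,\pi_3)=1/D$, set up (correctly) that the assertion is $D_1D_2/D\in R[X^{\pm1}]$, and then display
\[
\frac{L(X,\pi_1,\pi_3)\,L(X,\pi_2,\pi_3)}{L(X,\pi_1\times\pi_2,\pi_3)}\ \doteq\ \frac{L(q^{-1}X^{-1},\widetilde{\pi_1\times\pi_2},\widetilde{\pi_3})}{L(q^{-1}X^{-1},\widetilde{\pi_1},\widetilde{\pi_3})\,L(q^{-1}X^{-1},\widetilde{\pi_2},\widetilde{\pi_3})},
\]
claiming the left-hand side is $D_1D_2/D$. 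It isn't: it is $D/(D_1D_2)$. You should flip the display and look at
\[
\frac{L(X,\pi_1\times\pi_2,\pi_3)}{L(X,\pi_1,\pi_3)\,L(X,\pi_2,\pi_3)}\ \doteq\ \frac{L(q^{-1}X^{-1},\widetilde{\pi_1},\widetilde{\pi_3})\,L(q^{-1}X^{-1},\widetilde{\pi_2},\widetilde{\pi_3})}{L(q^{-1}X^{-1},\widetilde{\pi_1\times\pi_2},\widetilde{\pi_3})}.
\]
Now the left side \emph{is} $D_1D_2/D$; its potential poles are zeros of $D$, the right side's potential poles are the $X$ with $\widetilde D_1\widetilde D_2(q^{-1}X^{-1})=0$, and disjointness of these two sets is precisely what forces $D\mid D_1D_2$. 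With the display as you wrote it, the same pole argument would land you on $D_1D_2\mid D$, which is the \emph{other} divisibility (that is Proposition \ref{div-induced}, proved in the paper by an entirely different means).

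Second, your "equivalently and more concretely" unfolding route is not equivalent, and I do not think it proves the lemma. The integral over the Levi via \cite[Proposition 9.1]{JPS2} (Lemma \ref{extension-of-whittaker-functions} in the paper, feeding into Proposition \ref{div-induced}) lets you \emph{produce} special $W\in W(\pi_1\times\pi_2,\theta)$ whose restriction to a block recovers a single Whittaker function of one factor; this realizes the ideal $I(\pi_2,\pi_3)$ inside $I(\pi_1\times\pi_2,\pi_3)$ and yields $L(\pi_2,\pi_3)^{-1}\mid L(\pi_1\times\pi_2,\pi_3)^{-1}$, i.e.\ $D_2\mid D$ — again the other direction. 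Writing a \emph{general} $I(X,W,W')$ for $\pi_1\times\pi_2$ as a finite $R[X^{\pm1}]$-combination of products $I_1\cdot I_2$ is not what the Iwasawa unfolding gives, and is not what \cite[Section 7]{JPS2} does "verbatim''; the upper bound $D\mid D_1D_2$ comes in \cite{JPS2} only through the $\gamma$-multiplicativity and the half-plane (here: stability) argument. So the second route should be dropped or repurposed for Proposition \ref{div-induced}; the first route, with the reciprocal display corrected and the pole-disjointness over $\Fl$ justified by Corollary \ref{stability}, is the proof the paper intends.
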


We now recall \cite[Proposition 9.1]{JPS2}. A small part of the proof of [ibid.] is given in a particular (but in fact very general) case, we give the very slight changes here, to obtain the general proof. We shall also give some useful corollaries.

\begin{LM}\label{extension-of-whittaker-functions}
Let~$\pi_1$ and~$\pi_2$ be two~$R$-representations of Whittaker type of~$G_{n_1}$ and~$G_{n_2}$ respectively, and let~$\pi=\pi_1\times \pi_2$. Then for any pair~$(W_2,\Phi)\in W(\pi_2,\theta)\times \mathcal{C}_c^\infty(F^{n_2},R)$, there is~$W\in W(\pi,\theta)$ such that for~$g\in G_{n_2}$, we have
\[W(\diag(g,I_{n_1}))=W_2(g)\Phi(\eta g)\nu(g)^{n_1/2}.\]
\end{LM}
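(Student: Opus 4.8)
The plan is to realize the prescribed behaviour of $W$ on the embedded block $\diag(g,I_{n_1})$ by building it as a Rankin--Selberg type Whittaker function attached to the induced representation $\pi = \pi_1 \times \pi_2$. First I would recall that $W(\pi,\theta)$ consists of functions obtained from the Whittaker model of $\pi_1\otimes\pi_2$ on the Levi $M_{(n_1,n_2)}$ via the standard intertwining construction; more precisely, one fixes the Whittaker functional on $\pi$ coming from multiplicity one and writes $W = W_{v}$ for suitable $v\in\pi$. The key classical input is the formula expressing the restriction of such a $W$ to $G_{n_2}$ (sitting in the lower-right corner, with $I_{n_1}$ in the upper-left) as an integral transform against the Whittaker function $W_2$ of $\pi_2$ twisted by a Schwartz--Bruhat function $\Phi$ on $F^{n_2}$ coming from the "section at infinity" of the induced representation. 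This is exactly \cite[Proposition 9.1]{JPS2} in the complex setting, and the mechanism is purely formal: it uses the Iwasawa/Bruhat decomposition of $G_{n_1+n_2}$ relative to the parabolic $P_{(n_1,n_2)}$ together with Corollary \ref{iwasplit} and the description of the mirabolic structure.

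The slight generalisation needed over $R=\Fl$ is that one must check the relevant sections can be chosen with the correct support and that the $R$-Haar measure manipulations are legitimate in the modular setting. Here I would invoke Lemma \ref{Haar1}, Lemma \ref{HaarB}, and Corollary \ref{iwasplit}, which are precisely the tools developed in Section \ref{haarmeasures} to make the Iwasawa-type splitting valid even when compact open subgroups of measure zero intervene. The point is that the function $\Phi(\eta g)$ appearing on the right-hand side is the value, along the "bottom row" orbit, of the restriction of a flat section of $\ind_{P_{(n_1,n_2)}}^{G_{n_1+n_2}}(\cdots)$, and the normalising factor $\nu(g)^{n_1/2}$ is exactly the half-modulus character $\delta_{P_{(n_1,n_2)}}^{1/2}$ restricted to $G_{n_2}$. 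Concretely, I would take $v\in \pi$ supported on the big cell $P_{(n_1,n_2)} w U$ with $v(w u) $ chosen to match $\Phi$ via the identification of $\mathcal{C}_{c,0}^\infty(F^{n_2})$ with $\ind_{P_{n_2}}^{G_{n_2}}(\delta_{P_{n_2}}^{1/2})$ (already used in the proof of Proposition \ref{mult1}), and $v$'s value in the $\pi_2$-factor chosen to realise $W_2$; then one computes $W_v(\diag(g,I_{n_1}))$ directly.

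The main obstacle I anticipate is bookkeeping the normalisations: keeping track of the modulus characters $\delta_{P_{(n_1,n_2)}}$, $\delta_{P_{n_1}}$, $\delta_{P_{n_2}}$ and the twists $\nu^{\pm 1/2}$ appearing in the definition of the derivative functors and of normalised parabolic induction, so that the exponent of $\nu(g)$ comes out to be exactly $n_1/2$ and not something shifted. In the complex case \cite{JPS2} this is handled by an explicit unipotent integration over the cell; the same computation goes through verbatim once one has the splitting of $R$-Haar measures from Section \ref{haarmeasures}, since nothing in the argument requires invertibility of $q-1$ in $R$ beyond what those lemmas already guarantee (the relevant compact open subgroups used in the splitting have pro-order invertible in $R$, and the $\Phi$ arising lies in $\mathcal{C}_{c,0}^\infty(F^{n_2})$ so no measure-zero pathology appears). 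I would therefore organise the write-up as: (i) identify the section $v$ and the Weyl element $w$; (ii) apply the Bruhat decomposition and unravel $W_v$ on the embedded $G_{n_2}$; (iii) perform the unipotent integral and read off the three factors $W_2(g)$, $\Phi(\eta g)$, $\nu(g)^{n_1/2}$, citing \cite[Proposition 9.1]{JPS2} for the structure of the computation and Section \ref{haarmeasures} for its validity over $R=\Fl$.
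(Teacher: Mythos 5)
Your plan is essentially the paper's approach: both follow \cite[Proposition 9.1]{JPS2}, construct a function in $\pi$ supported on the big cell $P_{(n_1,n_2)}wU$ whose values there realise $\Phi$ (via the identification $U\cong\mathcal{M}(n_1,n_2,F)$) and $W_2$, and then defer the unipotent integration and bookkeeping to the argument of [ibid.]. The one substantive point you gloss over, and which is exactly what the paper's proof supplies beyond JPS2, is \emph{how} one produces such a section: the paper first replaces $\pi$ by $W(\pi_1,\theta)\times W(\pi_2,\theta)$ and then uses the standard surjective projection $I\colon\mathcal{C}_c^\infty(G_n,R)\otimes W(\pi_1,\theta)\otimes W(\pi_2,\theta)\twoheadrightarrow W(\pi_1,\theta)\times W(\pi_2,\theta)$, defined by integrating against $\delta_P^{-1/2}(p)\rho(p)^{-1}$ over $P$ with $F$ of the form $F(pwu(x))=\beta(p)\Psi(x)$ for a small compact open $\beta$ of nonzero volume fixing $W_1\otimes W_2$, to obtain $f$ with $f(wu(x))=\Psi(x)\,W_1\otimes W_2$ up to a nonzero scalar. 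Your invocation of the identification $\mathcal{C}_{c,0}^\infty(F^{n_2})\cong\ind_{P_{n_2}}^{G_{n_2}}(\delta_{P_{n_2}}^{1/2})$ belongs to the proof of Proposition~\ref{mult1}, not to this lemma, so in the write-up you should replace that pointer by the projection-formula construction just described; the Haar-measure results of Section~\ref{haarmeasures} are then indeed what guarantee the construction remains valid over $R=\Fl$.
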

\begin{proof}
Because $W(\pi_1\times \pi_2,\theta)$ is queal to $W(W(\pi_1,\theta)\times W(\pi_2,\theta),\theta)$, we can replace 
$\pi$ by $\pi'=W(\pi_1,\theta)\times W(\pi_2,\theta)$ to prove the statement. We write~$P$ for~$P_{(n_1,n_2)}$, and~$U$ for its unipotent radical. The group~$U$ is isomorphic to~$\mathcal{M}=\mathcal{M}(n_1,n_2,F)$ via the map 
\[u:x\mapsto \begin{pmatrix} I_{n_1} & x \\  & I_{n_2} \end{pmatrix}.\] We can 
consider~$W(\pi_1,\theta)\otimes W(\pi_2,\theta)$ as a~$P$-module (with~$U$ acting trivially). We first recall the following standard fact on induced representations of locally profinite groups: the linear map~$I:\mathcal{C}_c^\infty(G_n,R)\otimes W(\pi_1,\theta)\otimes W(\pi_2,\theta)\rightarrow \pi'$, defined by the formula 
\[I(F\otimes W_1 \otimes W_2)(g)=\int_{P} F(pg) \d_P^{-1/2}(p)\rho(p)^{-1} W_1\otimes W_2 dp,\]
for~$F\in\mathcal{C}_c^\infty(G_n,R)$,~$W_1\in W(\pi_1,\theta)$ and~$W_2\in W(\pi_2,\theta)$, is a well defined surjection.  Taking~$F$ with support in~$PwU$, of the form~$F(pwu(x))=\beta(p)\Psi(x)$ with~$\beta$ the characteristic function of a small enough compact open subgroup of~$P$ (so that this subgroup has a nonzero volume, and fixes~$W_1\otimes W_2$), and with~$\Psi$ any function in
~$\mathcal{C}_c(\mathcal{M},R)$, we obtain the map~$f=I(F)$ in~$\pi'$, which satisfies, up to nonzero scaling the relation:
\[f(w u(x))=\Psi(x) W_1\otimes W_2.\] Now the end of the proof is that of \cite[Proposition 9.1]{JPS2}.
\end{proof}

The following result also follows from the results of \cite{JPS2}.

\begin{LM}\label{inclusion-between-ideals}
Let~$m,n$ be positive integers, with~$m \leqslant n$. If~$\pi$ and~$\pi'$ are two~$R$-representations of~$G_n$ of Whittaker type, and if 
$W$ and~$W'$ belong respectively to~$W(\pi,\theta)$ and~$W(\pi',\theta^{-1})$, the integral 
\[I_{(n-m-1)}(X,W,W')=\sum_{k\in \Z}\left(\int_{N_n\backslash G_n^k} W(\diag(g,I_{n-m}))W'(\diag(g,I_{n-m})) dg\right)q^{k(n-m)}X^k\]
belongs to the fractional ideal~$I(\pi,\pi')$ (see Theorem \ref{Lfactorsdefined}).
\end{LM}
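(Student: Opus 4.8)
We follow \cite[Proposition~9.1]{JPS2}, which establishes the statement when $m=n-1$; the general case is obtained by the same unfolding, iterated. The plan is to produce, for fixed $W\in W(\pi,\theta)$ and $W'\in W(\pi',\theta^{-1})$, a Schwartz function $\Phi\in\mathcal{C}_c^\infty(F^n,R)$ and a unit $c\in R^\times$ with $I(X,W,W',\Phi)=c\cdot I_{(n-m-1)}(X,W,W')$. Once this is done, membership of $I_{(n-m-1)}(X,W,W')$ in the fractional ideal $I(\pi,\pi')$ is immediate from Theorem \ref{Lfactorsdefined}.

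Concretely, I would take $\Phi=\mathbf{1}_{\eta K'}$, where $K'$ is a pro-$p$ compact open subgroup of $G_n$ fixing $W$ and $W'$ on the right (one exists by smoothness). The coefficient of $X^k$ in $I(X,W,W',\Phi)$ is $\int_{N_n\backslash G_n^k}W(g)W'(g)\Phi(\eta g)\,dg$. Since $P_n$ is exactly the stabiliser of $\eta$ for the right action on $F^n$ and $P_n\backslash G_n\simeq F^n\setminus\{0\}$ via $P_ng\mapsto\eta g$, I unfold this integral along $N_n\subset P_n\subset G_n$ (Corollary \ref{iwasplit2}); because $\Phi$ is supported on $\eta K'$ the $P_n\backslash G_n$-integration reduces to an integration over the compact set $(K'\cap P_n)\backslash K'$, on which $W$ and $W'$ are constant, and the volume condition $\nu=q^{-k}$ passes onto the $P_n$-variable. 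This gives
\[\int_{N_n\backslash G_n^k}W(g)W'(g)\Phi(\eta g)\,dg=c_1\int_{N_n\backslash P_n^k}W(p)W'(p)\,\delta_{P_n}(p)^{-1}\,dp,\qquad c_1=\operatorname{vol}\big((K'\cap P_n)\backslash K'\big)\in R^\times,\]
the unit condition on $c_1$ holding because $K'$ is pro-$p$ and $\ell\neq p$. Identifying $N_n\backslash P_n$ with $N_{n-1}\backslash G_{n-1}$ (via $p\mapsto\diag(h,I_1)$, where $\delta_{P_n}^{-1}$ contributes a factor $q^{v(\det h)}$), and iterating the same step $n-m$ times --- the successive ``partial'' integrals over the blocks $G_{n-1}\supset\dots\supset G_m$ being absolutely convergent and essentially compactly supported in the extra variables by Lemma \ref{asymptotics} and \cite[Lemma~6.2]{JPS2} --- one reaches
\[\int_{N_n\backslash G_n^k}W(g)W'(g)\Phi(\eta g)\,dg=c\,q^{k(n-m)}\int_{N_m\backslash G_m^k}W(\diag(g,I_{n-m}))W'(\diag(g,I_{n-m}))\,dg\]
for a unit $c\in R^\times$, that is $I(X,W,W',\Phi)=c\cdot I_{(n-m-1)}(X,W,W')$, which is what we wanted.

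The step I expect to be the main obstacle is the bookkeeping of the $R$-Haar measures along the unfolding, and in particular the claim that the constants $c_1$ and $c$ are units in $R$ when $\ell\mid q-1$: one cannot restrict an $R$-Haar measure on $K_n$ to the quotients that appear (its restriction to $(K_n\cap B_n)\backslash K_n$ vanishes, as noted after Lemma \ref{Haar1}), so one must carry along the right-invariant quotient measures provided by Corollary \ref{iwasplit2} and keep $K'$ pro-$p$, so that every volume that occurs is invertible in $R$. The only other point requiring care is the convergence of the intermediate block integrals, which is handled by the Whittaker asymptotics of Lemma \ref{asymptotics} together with the compact-support statement \cite[Lemma~6.2]{JPS2}, exactly as in \cite{JPS2}.
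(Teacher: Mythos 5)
Your treatment of the base case $m=n-1$ is correct and is essentially the paper's: you take $\Phi=\mathbf{1}_{\eta K'}$ with $K'$ a sufficiently small pro-$p$ congruence subgroup, unfold via Corollary \ref{iwasplit2}, and read off $I(X,W,W',\Phi)=c\cdot I_{(0)}(X,W,W')$ with $c\in R^\times$ because every volume that appears is the volume of a pro-$p$ set. (This is exactly what the paper does with $K'=K_{n,r}$.)

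The gap is in the claimed ``iteration''. After one unfolding you land on
\[
I(X,W,W',\Phi)\;=\;c_1\,I_{(0)}(X,W,W')\;=\;c_1\sum_{k}q^kX^k\!\int_{N_{n-1}\backslash G_{n-1}^k}W(\diag(h,1))W'(\diag(h,1))\,dh,
\]
with the \emph{same} $W,W'$, and at that point the Schwartz function is gone. There is no second $\Phi$ on $F^{n-1}$ to cut off the next central integration, so applying Corollary \ref{iwasplit2} again to $N_{n-1}\backslash G_{n-1}$ does not reproduce ``the same step''. More fundamentally, $I_{(0)}(X,W,W')$ and $I_{(n-m-1)}(X,W,W')$ are genuinely different quantities for $m<n-1$ (different block sizes, different power of $q$), and the latter is \emph{not} obtained from the former by integrating out the torus variables $a_{m+1},\dots,a_{n-1}$ --- in $I_{(n-m-1)}$ those coordinates are set to $1$, not integrated. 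So the iteration, with $W,W'$ held fixed, cannot yield $c\cdot I_{(n-m-1)}(X,W,W')$; it only ever yields $c_1 I_{(0)}(X,W,W')$.

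The ingredient you are missing is \cite[Lemma~9.2]{JPS2} (not Proposition 9.1, which is the Whittaker-extension statement, Lemma \ref{extension-of-whittaker-functions} here). That lemma produces, for the given $W,W'$, new Whittaker functions $W_0\in W(\pi,\theta)$, $W_0'\in W(\pi',\theta^{-1})$ with $I_{(n-m-1)}(X,W,W')=I_{(0)}(X,W_0,W_0')$; the paper then applies the $m=n-1$ computation to $W_0,W_0'$. Since the lemma only asks that $I_{(n-m-1)}(X,W,W')$ lie in the ideal $I(\pi,\pi')$, it is enough to exhibit it as $I(X,W_0,W_0',\Phi)$ for \emph{some} choice, and this change of Whittaker functions is precisely what makes the descent from $n-1$ to $m$ legitimate. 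Your write-up should either invoke that lemma explicitly, or prove an $R$-version of it; as it stands, the passage ``iterating the same step $n-m$ times'' is the whole difficulty and is asserted rather than argued.
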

\begin{proof}
Thanks to \cite[Lemma 9.2]{JPS2}, the proof of which is valid over~$R$, we see that there are~$W_0\in W(\pi,\theta)$ and~$W_0'\in W(\pi',\theta^{-1})$ such that~$I_{(n-m-1)}(X,W,W')=I_{(0)}(X,W_0,W_0')$. Hence it suffices to prove the lemma when~$m=n-1$. In this case, for any~$\Phi$ in~$\mathcal{C}_c^\infty(F^n)$, we denote by~$f_{\Phi}$ the function on~$F$ defined as~$f_{\Phi}:t\mapsto \Phi(0,\dots,0,t)$. Using the integration formula of Corollary \ref{iwasplit2}, we can write 
\begin{equation*}\label{L0Lradex}
I(X,W,W',\Phi)=\int_{(K_n\cap P_n)\backslash K_n}I_{(0)}(X,\rho(k)W,\rho(k)W')
I(X^n,c_\pi,c_{\pi'},f_{\rho(k)\Phi})dk,
\end{equation*}
where~$\rho$ denotes right translation. Writing~$K_{n,r}=1+\w^r\mathcal{M}(n,\o)$, and taking~$\Phi=\1_{K_{n,r}}$ for~$r$ large enough for~$W$ and~$W'$ to be invariant under~$K_{n,r}$, our integral reduces to a nonzero multiple of~$I_{(0)}(X,W,W')$, and the result follows.
\end{proof}

We now deduce from Lemmas \ref{extension-of-whittaker-functions} and \ref{inclusion-between-ideals}, the following proposition, for the proof of which we refer to \cite[Proposition 4.4]{CP}.

\begin{prop}\label{div-induced}
Let~$\pi_1$ and~$\pi_2$ be two~$R$-representations of Whittaker type of~$G_{n_1}$ 
and~$G_{n_2}$ respectively, let~$\pi=\pi_1\times \pi_2$, and put~$n=n_1+n_2$.  Let~$\pi'$ be a representation of Whittaker type of~$G_n$. 
Then~$L(X,\pi_2,\pi')^{-1}$ divides~$L(X,\pi,\pi')^{-1}$ in~$R[X]$. 
\end{prop}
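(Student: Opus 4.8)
The plan is to reduce the statement to a comparison of the fractional ideals $I(\pi_2,\pi')$ and $I(\pi,\pi')$ generated by the Rankin--Selberg integrals, exactly as in \cite[Proposition 4.4]{CP}, but working with the $n_1+n_2=n$ setup here. Concretely, since $L(X,\pi,\pi')$ is the unique Euler-factor generator of $I(\pi,\pi')$ and $L(X,\pi_2,\pi')$ the unique Euler-factor generator of $I(\pi_2,\pi')$ (Theorem \ref{Lfactorsdefined}), it suffices to show the containment of fractional ideals $I(\pi_2,\pi')\subseteq I(\pi,\pi')$; dividing through, this is equivalent to the divisibility $L(X,\pi,\pi')\mid L(X,\pi_2,\pi')$, which is the same as $L(X,\pi_2,\pi')^{-1}\mid L(X,\pi,\pi')^{-1}$ in $R[X]$ once one knows both inverses lie in $R[X]$ with constant term $1$ (this was established, over any $R$, in the proof of Theorem \ref{Lfactorsdefined} and Corollary \ref{div}). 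So the heart of the matter is: every integral $I(X,W_2,W')$ attached to the pair $(\pi_2,\pi')$ of groups $G_{n_2}$ and $G_n$ — an integral of the "$m\leqslant n-1$" type with $m=n_2$, $j=0$ — arises as (a linear combination of) integrals $I_{(n-n_2-1)}(X,W,W')$ attached to the pair $(\pi,\pi')$ of groups $G_n$ and $G_n$.

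First I would invoke Lemma \ref{extension-of-whittaker-functions}: given $W_2\in W(\pi_2,\theta)$ and any $\Phi\in\mathcal{C}_c^\infty(F^{n_2},R)$, there is $W\in W(\pi,\theta)$ (recall $\pi=\pi_1\times\pi_2$, so the roles of the two factors must be matched to the conventions of that lemma — here the "inner" block is $G_{n_2}$ and the complementary block has size $n_1$) with
\[ W(\diag(g,I_{n_1})) = W_2(g)\,\Phi(\eta g)\,\nu(g)^{n_1/2}, \qquad g\in G_{n_2}. \]
Next I would feed such a $W$, together with an arbitrary $W'\in W(\pi',\theta^{-1})$, into the integral $I_{(n-n_2-1)}(X,W,W')$ of Lemma \ref{inclusion-between-ideals}: substituting the formula above, the $\diag(g,I_{n_1})$-integral over $N_n\backslash G_n^k$ collapses to an $N_{n_2}\backslash G_{n_2}^k$-integral of $W_2(g)W'(\diag(g,I_{n_1}))\Phi(\eta g)$ against the modulus factors, which — after choosing $\Phi$ as in the $m=n_2<n$ Rankin--Selberg integral with the complementary variable $x$ suppressed, i.e.\ $\Phi=\1$ on a small enough lattice and matching powers of $q$ — is precisely $c_k(W_2,W'_{G_{n_2}};0)$ up to the normalising power of $q$. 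Thus $I(X,W_2,W'|_{G_{n_2}})$ equals $I_{(n-n_2-1)}(X,W,W')$ up to a nonzero scalar, and by Lemma \ref{inclusion-between-ideals} the latter lies in $I(\pi,\pi')$. Since $W'$ restricted to $G_{n_2}$ (embedded in $G_n$) ranges over all of $W(\pi',\theta^{-1})$ viewed as functions relevant to the $\pi_2$-pairing — or more carefully, since the integrals $I(X,W_2,W'')$ with $W''$ ranging over a spanning set already generate $I(\pi_2,\pi')$ — we conclude $I(\pi_2,\pi')\subseteq I(\pi,\pi')$, hence the claimed divisibility.

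The main obstacle I anticipate is bookkeeping rather than conceptual: getting the two different normalisation conventions to line up — the power $\nu(g)^{n_1/2}$ coming out of Lemma \ref{extension-of-whittaker-functions}, the factor $q^{k(n-m)}X^k$ in $I_{(n-m-1)}$ of Lemma \ref{inclusion-between-ideals}, and the factor $q^{k(n-m)/2}X^k$ in the definition of $I(X,W,W';j)$ for the pair $(\pi_2,\pi')$ — and checking that the compact-support manipulation in $\Phi$ (restricting $\Phi(\eta g)$ to a suitable $\1_{K_{n_2,r}}$) genuinely recovers the full ideal $I(\pi_2,\pi')$ rather than a proper subideal. All of these are handled exactly as in \cite[Proposition 4.4]{CP}, whose argument is written for $R=\mathbb{C}$ but uses only the formal properties of the integrals and the two lemmas above, all of which we have established over an arbitrary algebraically closed $R$ of characteristic $\ell\neq p$; so I would simply cite that reference for the routine verification, as the statement already does.
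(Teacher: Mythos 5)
Your plan matches the paper's own treatment: the paper states Proposition~\ref{div-induced} immediately after Lemmas~\ref{extension-of-whittaker-functions} and~\ref{inclusion-between-ideals} and simply refers to \cite[Proposition 4.4]{CP} for the verification, which is also what you do, and you correctly identify the reduction to the containment $I(\pi_2,\pi')\subseteq I(\pi,\pi')$ of fractional ideals. One substantive point in your sketch needs correcting, though. After feeding the extended $W$ into $I_{(n_1-1)}(X,W,W')$, what comes out is the $(\pi',\pi_2)$ Rankin--Selberg series with an \emph{extra truncation} by $\Phi(\eta g)$, and to recover the genuine integral $I(X,W',W_2)$ one must take $\Phi$ to be the characteristic function of a \emph{large} lattice $\p^{-N}\o^{n_2}$, not a small one: by Lemma~\ref{asymptotics}, applied to the finitely many right $K_{n_2}$-translates of $W'$ that arise, the factor $W'(\diag(ak,I_{n_1}))$ has support in the last Zelevinsky coordinate of $A_{n_2}$ bounded above \emph{uniformly} over the remaining coordinates and over $k\in K_{n_2}$; so for $N$ large enough $\Phi(\eta g)\equiv 1$ on the support of the integrand in every degree, and $\tilde{I}(X,W',W_2,\Phi)$ literally equals $I(X,W',W_2)$. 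If instead you shrink $\Phi$ to a function like $\1_{K_{n_2,r}}$ (as in the proof of Lemma~\ref{inclusion-between-ideals}, which serves the \emph{opposite} purpose there) you only get a proper subideal. There is also a small slip of notation: you write "$W'|_{G_{n_2}}$", but the $(\pi',\pi_2)$ integral uses $W'\in W(\pi',\theta^{-1})$ evaluated at $\diag(g,I_{n_1})$ directly --- no restriction to a smaller group is taken. With the direction of the cutoff fixed, your proposal is exactly the argument of \cite[Proposition 4.4]{CP} as the paper invokes it.
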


It has the following corollary, which we will use several times.

\begin{cor}\label{important-cor}
Let~$\rho$ be a cuspidal~$R$-representation of~$G_m$ ($m\geqslant 1$), and~$a< b\leqslant c$ two positive integers such that~$c-a+1< e(\rho)$, so that~$St(\rho,[a,b-1])$,~$St(\rho,[b,c])$ and~$St(\rho,[a,c])$ are generic segments. Then, for any~$\ell$-modular representations~$\pi$ and~$\pi'$ of Whittaker type of~$G_n$ and~$G_{n'}$ respectively, we have
\[L(X,St(\rho,[b,c])\times \pi,\pi')\mid L(X,St(\rho,[a,c])\times \pi,\pi').\]
\end{cor}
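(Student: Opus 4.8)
The plan is to reduce the claimed divisibility to Proposition \ref{div-induced} together with the ``division inductivity relation'' already available from Lemma \ref{Lintermediate}, exploiting the fact that $St(\rho,[a,c])$ is a quotient of $St(\rho,[a,b-1])\times St(\rho,[b,c])$. First I would recall that, since $c-a+1<e(\rho)$, all three segments $[a,b-1]_\rho$, $[b,c]_\rho$ and $[a,c]_\rho$ are generic, so $St(\rho,[a,c])=\L([a,c]_\rho)$ is (by the discussion after Definition \ref{Wtype}, and Theorem \ref{gen}) the unique generic subquotient of the Whittaker-type representation $\nu^a\rho\times\cdots\times\nu^c\rho$, equivalently of $St(\rho,[a,b-1])\times St(\rho,[b,c])$; moreover, standard segment/derivative bookkeeping shows this generic constituent occurs as a \emph{quotient}, not merely a subquotient. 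Hence, writing $\sigma=St(\rho,[a,b-1])$ and $\delta=St(\rho,[b,c])$, the representation $\sigma\times\delta$ is of Whittaker type and admits $St(\rho,[a,c])$ as a quotient.

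Next I would tensor through by $\pi$: the representation $\sigma\times\delta\times\pi$ is of Whittaker type and, since parabolic induction is exact and carries quotients to quotients, it admits $St(\rho,[a,c])\times\pi$ as a quotient. By Lemma \ref{div-subquotient} (the ``equality if a quotient'' clause), we get
\[
L(X,St(\rho,[a,c])\times\pi,\pi')=L(X,\sigma\times\delta\times\pi,\pi').
\]
Now apply Proposition \ref{div-induced} with the factorisation $\sigma\times\delta\times\pi=\sigma\times(\delta\times\pi)$: it gives that $L(X,\delta\times\pi,\pi')^{-1}$ divides $L(X,\sigma\times\delta\times\pi,\pi')^{-1}$ in $R[X]$, i.e.
\[
L(X,St(\rho,[b,c])\times\pi,\pi')\ \big|\ L(X,\sigma\times\delta\times\pi,\pi')=L(X,St(\rho,[a,c])\times\pi,\pi'),
\]
which is exactly the assertion. (Both $L$-factors are inverses of polynomials with constant term $1$ by Theorem \ref{Lfactorsdefined} and Corollary \ref{div}, so divisibility of inverse-polynomials is the same as divisibility in $R[X]$ of the denominators, and there is no ambiguity.)

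The main obstacle I anticipate is the bookkeeping needed to justify that $St(\rho,[a,c])$ is genuinely a \emph{quotient} of $\sigma\times\delta$ rather than only a subquotient, so that the equality (not just divisibility) in Lemma \ref{div-subquotient} applies; this is where one must invoke the precise structure of generalised Steinberg representations as generic subquotients (from \cite{MS}, via the classification in Theorem \ref{gen}), together with exactness of parabolic induction and of the functor $\pi\mapsto\Hom_{N_n}(\pi,\theta)$ to track the Whittaker line. Everything else is a direct application of Proposition \ref{div-induced} and Lemma \ref{div-subquotient}; in particular no new analysis of Rankin--Selberg integrals is required, since Lemma \ref{extension-of-whittaker-functions} and Lemma \ref{inclusion-between-ideals} have already been packaged into Proposition \ref{div-induced}.
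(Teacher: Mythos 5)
Your proposal is correct and follows essentially the same route as the paper's own proof: both observe that $St(\rho,[a,c])\times\pi$ is the generic quotient of $St(\rho,[a,b-1])\times St(\rho,[b,c])\times\pi$, use the equality clause of Lemma~\ref{div-subquotient} to replace the right-hand $L$-factor, and then conclude by Proposition~\ref{div-induced} applied to the factorisation $St(\rho,[a,b-1])\times(St(\rho,[b,c])\times\pi)$. The only difference is expository: you flag the ``quotient, not merely subquotient'' point as the step requiring justification, whereas the paper asserts it in passing; this is a fair place to be careful, but it is the same argument in both cases.
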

\begin{proof}
Because~$St(\rho,[a,c])\times \pi$ is the generic quotient (remember that~$c-a+1< e(\rho)$) of~$St(\rho,[a,b-1])\times St(\rho,[b,c])\times \pi$, Lemma \ref{div-subquotient} gives us the relation 
\[L(X,St(\rho,[a,c])\times \pi,\pi')= L(X,St(\rho,[a,b-1])\times St(\rho,[b,c])\times \pi,\pi').\] 
The result now follows from Proposition \ref{div-induced}.
\end{proof}

\subsection{$L$-factors of cuspidal representations}

We are now going to give the classification of all cuspidal~$\ell$-modular~$L$-factors.  We thus recall that according to  \cite[Theorem 6.14]{MS} (cf. \cite[5.14]{V}), any cuspidal representation~$\pi$ of~$G_n$ is of the form~$\pi=St_r(\rho)=St(\rho,e(\rho)\ell^r)$ for~$\rho$ a supercuspidal representation. In particular a banal cuspidal representation of~$G_n$ is always supercuspidal.

\begin{thm}\label{cuspidal}
Let~$\pi_1$ and~$\pi_2$ be two cuspidal~$\ell$-modular (or~$\ell$-adic) representations of~$G_{n_1}$ and~$G_{n_2}$. Then 
$L(X,\pi_1,\pi_2)$ is equal to~$1$, except in the following case:~$\pi_1$ is banal (hence supercuspidal), and~$\pi_2\simeq \chi\pi_1^\vee$ for some unramified character~$\chi$ of~$F^\times~$ (in particular~$n_1=n_2$). In this case, let~$f=f(\pi_1)=f(\pi_2)$, we have 
\[L(X,\pi_1,\pi_2)=\frac{1}{1-(\chi(\w_F)X)^{f}}\]
and this factor is the reduction modulo~$\ell$ of the~$L$-factor of any cuspidal lifts of~$\pi_1$ and~$\pi_2$.
\end{thm}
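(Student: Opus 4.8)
The plan is to split the statement into three assertions — the vanishing $L(X,\pi_1,\pi_2)=1$ outside the exceptional case, the explicit formula in the exceptional case, and the compatibility with reduction modulo~$\ell$ — and to handle the banal, cuspidal-non-supercuspidal, and non-banal supercuspidal situations by different means, as already flagged in the introduction. First I would dispose of the $\ell$-adic case (which is essentially classical, following \cite{JPS2}): there the $L$-factor of a pair of cuspidals is~$1$ unless $\pi_2\simeq\chi\pi_1^\vee$ with $\chi$ unramified, in which case one knows $L(X,\pi_1,\pi_2)=\prod(1-\alpha X)^{-1}$ where $\alpha$ runs over the $f$-th roots of $\chi(\w_F)$ coming from the unramified twists stabilising $\pi_1$; regrouping gives $1/(1-(\chi(\w_F)X)^{f})$. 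This also records, via Corollary~\ref{div} and Lemma~\ref{epsilon}, that the $\ell$-adic factor is the inverse of a $\Zl$-polynomial, so reduction modulo~$\ell$ makes sense.

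For the $\ell$-modular banal case: here $\pi_1$ is supercuspidal and banal, so by \cite[Lemmes 6.1, 6.8]{MS} etc. it lifts to an integral $\ell$-adic supercuspidal $\tau_1$, and likewise $\pi_2$ lifts to $\tau_2$; since $\pi_1$ is banal, $\tau_1$ is ``sufficiently generic'' and one can invoke the test-vector results of \cite{KM15} to produce a Whittaker function (and Schwartz function, if $n_1=n_2$) realising the $\ell$-adic $L$-factor by a single Rankin--Selberg integral whose reduction modulo~$\ell$ is again a single integral computing $L(X,\pi_1,\pi_2)$. Combined with the divisibility $L(X,\pi_1,\pi_2)\mid r_\ell(L(X,\tau_1,\tau_2))$ from Theorem~\ref{compat1}, this forces equality, and the $\ell$-adic computation above then gives both the vanishing statement and the explicit formula (noting $f(\pi_i)=f(\tau_i)$ since ramification indices are preserved under lifting). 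One must check that the exceptional condition $\pi_2\simeq\chi\pi_1^\vee$ with $\chi$ unramified corresponds correctly to the $\ell$-adic exceptional condition for $\tau_1,\tau_2$; here banality of $\pi_1$ is exactly what prevents the collapse phenomena (e.g. $q\equiv 1\,[\ell]$) that would otherwise kill the factor.

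For the cuspidal non-supercuspidal case, $\pi_1=St(\rho,e(\rho)\ell^{r})$ is a non-banal cuspidal with $\rho$ non-banal supercuspidal; the strategy is to use the ``division inductivity relation'' (Lemma~\ref{Lintermediate} together with Corollary~\ref{important-cor}) to bound $L(X,\pi_1,\pi_2)$ between $L$-factors of pairs built from shorter generic segments, and to show that already the product $L(X,\nu^i\rho,\nu^j\rho)$ over the segment is forced to be~$1$ because $\rho$ is non-banal: indeed for non-banal $\rho$ one has $q(\rho)\equiv 1\,[\ell]$ along its cuspidal line, so each Tate-type factor $L(X,\chi_i\mu_j)$ appearing in the proof of Theorem~\ref{Lfactorsdefined} is~$1$ by \cite{M}, whence $L(X,\pi_1,\pi_2)=1$; since $r_\ell$ of the corresponding $\ell$-adic cuspidal $L$-factor is also~$1$ in this regime, compatibility is automatic. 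Finally the non-banal supercuspidal case ($\rho=\pi_1$ itself non-banal): here I would use Proposition~\ref{supercuspliftingnoniso} to choose two lifts $\tau_1,\tau_1'$ of $\pi_1$ lying in different inertial classes; by Theorem~\ref{compat1}, $L(X,\pi_1,\pi_2)$ divides both $r_\ell(L(X,\tau_1,\tau_2))$ and $r_\ell(L(X,\tau_1',\tau_2))$ (for a chosen lift $\tau_2$ of $\pi_2$), and in the $\ell$-adic picture the exceptional twisting conditions for $(\tau_1,\tau_2)$ and $(\tau_1',\tau_2)$ are mutually exclusive since the lifts are not unramified twists of one another, so at least one of the two $\ell$-adic $L$-factors is~$1$, forcing $L(X,\pi_1,\pi_2)=1$. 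The main obstacle I anticipate is the banal supercuspidal case: making precise the reduction from the test-vector theorem of \cite{KM15} to the statement that a single $\ell$-modular integral attains the divisor (i.e. controlling integrality of the test vectors and that their reductions remain test vectors), and correctly matching up the $\ell$-adic and $\ell$-modular exceptional conditions including the value $\chi(\w_F)$ and the exponent $f$, is where the real work lies; the other three cases are comparatively soft once the inductivity and lifting inputs are in hand.
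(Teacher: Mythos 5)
Your decomposition into the banal, not-exceptional, non-banal supercuspidal, and cuspidal non-supercuspidal cases matches the paper's, and your arguments for the first three of these are essentially the ones in the paper (the paper lifts $\pi_2$ rather than $\pi_1$ to apply Proposition~\ref{supercuspliftingnoniso}, but that is immaterial). The gap lies entirely in your treatment of the cuspidal non-supercuspidal case, and it is genuine.

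First, the assertion that if $\pi_1=St(\rho,e(\rho)\ell^r)$ is a non-banal cuspidal non-supercuspidal then $\rho$ is \emph{non-banal} supercuspidal is false in general: when $o(\rho)=e(\rho)>1$ the representation $\rho$ is banal, yet $St(\rho,e(\rho))$ is cuspidal non-supercuspidal (this is precisely the $r=0$ case of \cite[Theorem 6.14]{MS}). Second, even in the genuinely non-banal case the reason you give for $L(X,\nu^i\rho,\nu^j\rho)=1$ does not hold: non-banality of $\rho$ gives $q(\rho)=q^{f(\rho)}\equiv 1\ [\ell]$, which is weaker than $q\equiv 1\ [\ell]$, and the Tate factors $L(X,\chi_i\mu_j)$ bounding the fractional ideal in Theorem~\ref{Lfactorsdefined} are Tate factors for $\GL_1(F)$, hence equal to $1$ precisely when $q\equiv 1\ [\ell]$, not when $q(\rho)\equiv 1\ [\ell]$. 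Third, and most damagingly, decomposing \emph{both} $\pi_1$ and $\pi_2$ into cuspidal products and invoking $L(X,\pi_1,\pi_2)\mid\prod_{i,j}L(X,\nu^i\rho_1,\nu^j\rho_2)$ can fail to show $L(X,\pi_1,\pi_2)=1$ at all: if $\rho_1$ and $\rho_2$ are banal supercuspidal and related by duality up to unramified twist, some of the factors on the right are genuinely non-trivial.

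The paper avoids all three pitfalls by a small asymmetry: after assuming $n_1\geq n_2$, it decomposes only $\pi_2=St_{r_2}(\rho_2)$ and leaves $\pi_1$ intact. Since $\pi_2$ is cuspidal non-supercuspidal, $e(\rho_2)\ell^{r_2}>1$ and so $m_2<n_2\leq n_1$; the resulting factors $L\bigl(X,\pi_1,\nu^i\rho_2\bigr)$ thus pair a non-banal cuspidal of $G_{n_1}$ with a (super)cuspidal of the strictly smaller group $G_{m_2}$, so they cannot be in the exceptional configuration $\nu^i\rho_2\simeq\chi\pi_1^\vee$, and each is $1$ by the already-established ``at least one non-banal, not exceptional'' case. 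No appeal to the Tate factor mechanism, nor to any banality hypothesis on $\rho_2$, is needed; only the mismatch of group sizes.
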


\begin{proof}
The banal case is the main application to reduction modulo~$\ell$ of \cite{KM15}.  Thus, it remains to consider the case where one of the representations is non-banal. By choosing lifts of~$\pi_1$ and~$\pi_2$, by Theorem \ref{compat1}, we are done if~$\pi_2\not\simeq \chi\pi_1^\vee$, for some unramified character~$\chi$ of~$F^\times$.  Assume first that~$\pi_2$ is non-banal and supercuspidal. Thus, by Proposition \ref{supercuspliftingnoniso} there are cuspidal lifts~$\rho_2,\rho_2'$ of~$\pi_2$ which are not isomorphic by twisting by an unramified character.  In particular, if we let~$\rho_1$ be a cuspidal lift of~$\pi_1$, one of the~$L$-factors~$L(X,\rho_1,\rho_2)$ or~$L(X,\rho_1,\rho_2')$ is trivial. But as~$L(X,\pi_1,\pi_2)$ divides the reduction of both of these factors, the result follows.  

Thus we can assume that~$\pi_2$ is cuspidal non-supercuspidal and, as~$\pi_2\simeq \chi\pi_1^\vee$ for some unramified character~$\chi$ of~$F^\times$, both of~$\pi_1$ and~$\pi_2$ are cuspidal non-supercuspidal.  We write~$\pi_1=St_{r_1}(\rho_1)$ and~$\pi_2=St_{r_2}(\rho_2)$.  We assume that~$n_1=m_1 e(\rho_1)\ell^{r_1}\geq n_2=m_2 e(\rho_2)\ell^{r_2}$, where~$\rho_1$ is a supercuspidal representation of~$G_{m_1}$ and~$\rho_2$ is a supercuspidal representation of~$G_{m_2}$. 
Moreover, as neither of~$\St_{r_1}(\rho_1)$ or~$\St_{r_2}(\rho_2)$ is supercuspidal, we have~$e(\rho_1)\ell^{r_1}>1$ and~$e(\rho_2)\ell^{r_2}>1$ (and, in fact, we only really need to assume one of the two for the remainder of the proof). In this case, we have
 \[L(X,St_{r_1}(\rho_1),St_{r_2}(\rho_2))\mid L(X,St_{r_1}(\rho_1),\rho_2\times \dots \times \nutwo^{e(\rho_2)\ell^{r_2}-1}\rho_2)\] because 
$St_{r_2}(\rho_2)$ being is the generic subquotient of \[\rho_2\times \dots \times \nutwo^{e(\rho_2)\ell^{r_2}-1}\rho_2,\] by Lemmas
 \ref{div-subquotient} and \ref{Lintermediate}, 
we deduce that \[L(X,St_{r_1}(\rho_1),St_{r_2}(\rho_2))\mid \prod_{i=0}^{e(\rho_2)l^{r_2}-1}L(X,St_{r_1}(\rho_1),\nutwo^i\rho_2)).\]
But each factor~$L(X,St_{r_1}(\rho_1),\nutwo^i\rho_2))$ is equal to~$1$ because~$n_1>m_2$, hence the result.
\end{proof}

\subsection{$L$-factors of generic segments}

By the definition of the ramification index and the lifting of cuspidal~$\ell$-modular representations recalled in Section \ref{liftingsection} we have the following proposition, which will allow us to study reduction modulo~$\ell$ of banal~$L$-factors of cuspidal representations.

\begin{prop}\label{cuspidal-preservation}
Let~$\tau$ be an~$\ell$-adic cuspidal representation of~$G_n$ and~$\rho$ be the reduction modulo~$\ell$ of~$\tau$, then~$q(\tau)=q(\rho)$ and~$f(\tau)=f(\rho)$. 
\end{prop}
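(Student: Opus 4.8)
The plan is to deduce this from the compatibility of the Bushnell--Kutzko construction with reduction modulo~$\ell$ recalled in Section~\ref{liftingsection}. Recall that, for a cuspidal $R$-representation $\pi$ of $G_n$ with $R=\Ql$ or $\Fl$, one has $f(\pi)=n/e$ and $q(\pi)=q^{f(\pi)}$, where $e$ is the ramification index of $\pi$, that is, the ramification index of the field extension $E/F$ attached to any extended maximal simple type $(\bJ,\Lambda)$ contained in $\pi$ (this is well defined since intertwining of extended maximal simple types implies conjugacy). Hence it suffices to prove that $e(\tau)=e(\rho)$.

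First I would note that, since the reduction modulo~$\ell$ of a cuspidal $\ell$-adic representation is cuspidal, $\rho=r_\ell(\tau)$ is a cuspidal $\ell$-modular representation, and, being irreducible, $\tau$ is a genuine lift of $\rho$. Then I would invoke the last item of the summary of the Bushnell--Kutzko construction in Section~\ref{liftingsection}: every $\ell$-adic cuspidal lift of $\rho$ is of the form $\ind_{\bJ}^{G_n}(\widetilde\Lambda)$ for some integral $\ell$-adic extended maximal simple type $(\bJ,\widetilde\Lambda)$ lifting an extended maximal simple type $(\bJ,\Lambda)$ contained in $\rho$; moreover, writing $\Lambda\mid_J=\kappa\otimes\sigma$ and $\widetilde\Lambda\mid_J=\widetilde\kappa\otimes\widetilde\sigma$, the $\beta$-extension $\widetilde\kappa$ lifts $\kappa$ and the cuspidal representation $\widetilde\sigma$ of $\GL_m(k_E)$ lifts $\sigma$. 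Applying this to our given $\tau$ shows that $\tau$ and $\rho$ both contain extended maximal simple types attached to the same group $\bJ=E^\times\ltimes J$, hence to the same finite extension $E=F[\beta]$ of $F$.

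It then follows at once that $e(\tau)$ and $e(\rho)$ both equal the ramification index of $E/F$, whence $f(\tau)=n/e(\tau)=n/e(\rho)=f(\rho)$ and $q(\tau)=q^{f(\tau)}=q^{f(\rho)}=q(\rho)$. I do not expect any real difficulty here: the statement is essentially bookkeeping once the type-theoretic input is in place. The only point deserving a word of care is the assertion that one may use the \emph{same} $\bJ$, equivalently the same $E/F$, for $\tau$ and for $\rho$; this is exactly what the cited properties of lifting maximal simple types (lifting, respectively reducing, the pair $(\kappa,\sigma)$) provide, combined with the intertwining-implies-conjugacy property which turns $e$ into an invariant of the representation rather than of a particular choice of type.
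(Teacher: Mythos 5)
Your proof is correct and takes essentially the same approach the paper intends: the paper gives no formal proof, only the remark that the proposition follows from the definition of the ramification index and the lifting properties in Section~\ref{liftingsection}, and your argument simply makes that explicit. In particular, you correctly reduce the claim to $e(\tau)=e(\rho)$ and then invoke item~(5) of the Bushnell--Kutzko summary (every cuspidal lift of $\rho$ is compactly induced from a lift of an extended maximal simple type for $\rho$, hence from the same $\bJ$ and the same $E/F$), together with intertwining-implies-conjugacy to make $e$ a well-defined invariant.
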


We now move on to the case of generic segments. We notice that if~$\rho$ is a cuspidal representation 
of~$G_r$, the condition~$k<e(\rho)$ implies that~$St(\rho,k)$ is non-banal if and only if~$\rho$ is non-banal.  Let~$D_1=St(\tau_1,k_1)$ and~$D_2=St(\tau_2,k_2)$ be generic segments of~$G_{n_1}$ and~$G_{n_2}$, with~$n_1\geqslant n_2$. When~$R=\Ql$, the following formula was proved in \cite[Theorem 8.2]{JPS2}:

\begin{equation}\label{complex-L-segments} L(X,D_1,D_2)=\prod_{j=0}^{k_2-1} L(X,\nutwo^{k_1-1}\tau_1,\nutwo^j\tau_2).\end{equation}

As a consequence, we obtain the following useful Lemma. 

\begin{LM}\label{poles-segments}
Let~$\rho$ be a cuspidal~$\ell$-modular representation, and~$St(\rho,k_1)$ and~$St(\rho^\vee,k_2)$ be generic segments, with~$k_1\geqslant k_2$. Write~$P$ the set of poles of~$L(X,St(\rho,k_1),St(\rho^\vee,k_2)).$ Then~$P^{f(\rho)}=\{x^{f(\rho)},\ x\in P\}$ is a subset of~$\{q(\rho)^{k_1-1},\dots,q(\rho)^{k_1+k_2-2}\}$.
\end{LM}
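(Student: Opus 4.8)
The plan is to reduce everything to the $\ell$-adic formula \eqref{complex-L-segments} by comparing with well-chosen lifts and then reading off the poles after reduction modulo~$\ell$. First I would fix a cuspidal $\ell$-adic representation $\sigma$ with $r_\ell(\sigma)=\rho$ and such that $St(\sigma,k_1)$ lifts the generic segment $St(\rho,k_1)$ in the sense of Remark~\ref{lift-segments}. Since reduction modulo~$\ell$ commutes with the contragredient (cf. \cite[I 9.7]{V}), the representation $\sigma^\vee$ is a cuspidal lift of $\rho^\vee$, so $St(\sigma^\vee,k_2)$ lifts $St(\rho^\vee,k_2)$. By Theorem~\ref{integral-whittaker} these standard lifts are Whittaker lifts, so Theorem~\ref{compat1} gives that $L(X,St(\rho,k_1),St(\rho^\vee,k_2))$ divides $r_\ell\bigl(L(X,St(\sigma,k_1),St(\sigma^\vee,k_2))\bigr)$; writing $Q$ and $\widetilde Q$ for the inverse polynomials of the $\ell$-modular and of the $\ell$-adic $L$-factors, this means $Q$ divides $r_\ell(\widetilde Q)$ in $\Fl[X]$, so every element of $P$ is a root of $r_\ell(\widetilde Q)$.

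Next I would compute $\widetilde Q$. Since $k_1\geqslant k_2$, the segment $St(\sigma,k_1)$ has degree at least that of $St(\sigma^\vee,k_2)$, so \eqref{complex-L-segments} applies and
\[ L\bigl(X,St(\sigma,k_1),St(\sigma^\vee,k_2)\bigr)=\prod_{j=0}^{k_2-1}L\bigl(X,\nu^{k_1-1}\sigma,\nu^{j}\sigma^\vee\bigr). \]
For each $j$ one has $\nu^{j}\sigma^\vee\simeq\nu^{\,k_1-1+j}\bigl(\nu^{k_1-1}\sigma\bigr)^{\vee}$, where $\nu^{\,k_1-1+j}$, viewed as an unramified character of $F^\times$, takes the value $q^{-(k_1-1+j)}$ at $\w_F$, and $f(\nu^{k_1-1}\sigma)=f(\sigma)$ because unramified twists change neither the degree nor the ramification index. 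Hence, by Theorem~\ref{cuspidal} applied in the $\ell$-adic case (where every cuspidal representation is banal),
\[ L\bigl(X,\nu^{k_1-1}\sigma,\nu^{j}\sigma^\vee\bigr)=\frac{1}{1-\bigl(q^{-(k_1-1+j)}X\bigr)^{f(\sigma)}}, \]
so that $\widetilde Q=\prod_{j=0}^{k_2-1}\bigl(1-(q^{-(k_1-1+j)}X)^{f(\sigma)}\bigr)\in\Zl[X]$. As $q$ is a unit in $\Zl$, reduction modulo~$\ell$ gives $r_\ell(\widetilde Q)=\prod_{j=0}^{k_2-1}\bigl(1-(\bar q^{-(k_1-1+j)}X)^{f(\sigma)}\bigr)$ in $\Fl[X]$, where $\bar q=r_\ell(q)$.

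Finally I would read off the roots: if $x\in\Fl$ is a root of $r_\ell(\widetilde Q)$, then $(\bar q^{-(k_1-1+j)}x)^{f(\sigma)}=1$ for some $0\leqslant j\leqslant k_2-1$, i.e. $x^{f(\sigma)}=\bar q^{(k_1-1+j)f(\sigma)}=(\bar q^{f(\sigma)})^{k_1-1+j}$, and as $j$ runs through $\{0,\dots,k_2-1\}$ the exponent $k_1-1+j$ runs through $\{k_1-1,\dots,k_1+k_2-2\}$. By Proposition~\ref{cuspidal-preservation}, $f(\sigma)=f(\rho)$, hence $\bar q^{f(\sigma)}=q(\rho)$, and therefore $x^{f(\rho)}\in\{q(\rho)^{k_1-1},\dots,q(\rho)^{k_1+k_2-2}\}$. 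Since $P$ is contained in the set of such roots, this yields $P^{f(\rho)}\subseteq\{q(\rho)^{k_1-1},\dots,q(\rho)^{k_1+k_2-2}\}$, as claimed. The main obstacle I anticipate is the bookkeeping with lifts — namely ensuring that $\sigma^\vee$ is the correct lift of $\rho^\vee$ so that the unramified characters appearing in the $\ell$-adic factors are exactly the $\nu^{\,k_1-1+j}$; the remaining computation is made harmless by $q\in\Zl^\times$, which preserves the root structure of $\widetilde Q$ under reduction even when $\ell\mid f(\rho)$.
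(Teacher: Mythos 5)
Your proposal is correct and follows essentially the same route as the paper: fix a cuspidal $\ell$-adic lift, use Theorem~\ref{compat1} together with Theorem~\ref{integral-whittaker} to get that the modular $L$-factor divides the reduction of the lift's $L$-factor, compute that $\ell$-adic $L$-factor via formula~\eqref{complex-L-segments} and Theorem~\ref{cuspidal}, then read off the possible values of $x^{f(\rho)}$ using Proposition~\ref{cuspidal-preservation}. Your write-up is somewhat more explicit than the paper's (notably in identifying the unramified twist $\chi=\nu^{k_1-1+j}$ and in tracking the reduction of $\widetilde Q$), but there is no genuine difference in strategy.
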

\begin{proof}
Let~$\tau$ be a cuspidal~$\ell$-adic lift of~$\rho$, so in particular~$f(\rho)=f(\tau)$ and~$q(\tau)=q(\rho)$ according to 
Proposition \ref{cuspidal-preservation}. We recall that
~$L(X,St(\rho,k_1),St(\rho^\vee,k_2))$ divides~$r_\ell(L(X,St(\tau,k_1),St(\tau^\vee,k_2)))$ according to Proposition \ref{compat1}, and 
this last factor is equal to~$\prod_{j=0}^{k_2-1}r_\ell(L(q^{-k_1-j-1}X,\tau,\tau^\vee))$, according to Equation (\ref{complex-L-segments}). Now, thanks to Theorem \ref{cuspidal}, we have
\[r_\ell(L(q^{-k_1-j-1}X,\tau,\tau^\vee))=\frac{1}{1-q(\rho)^{-k_1-j-1}X^f},\]
and the result follows.
\end{proof}                                                       

%
We now completely describe the~$\ell$-modular~$L$-factors for generic segments.

\begin{thm}\label{inductivity-segments}
Let~$\L(\D_1)=St(\rho_1,k_1)$ and~$\L(\D_2)=St(\rho_2,k_2)$ be two~$\ell$-modular generic segments of~$G_{n_1}$ and~$G_{n_2}$ respectively, 
with~$n_1\geqslant n_2$, then:
\begin{enumerate}
\item If~$\L(\D_1)$ or~$\L(\D_2)$ is non-banal, then \[L(X,\L(\D_1),\L(\D_2))=1.\]
\item\label{part2inductivity-segments} If~$\L(\D_1)$ and~$\L(\D_2)$ are both banal, then 
\[L(X,\L(\D_1),\L(\D_2))=\prod_{i=0}^{k_2-1} L(X,\nutwo^{k_1-1}\rho_1,\nutwo^i\rho_2).\] 
In this case, if~$\tau_1$ and~$\tau_2$ are cuspidal lifts of~$\rho_1$ and~$\rho_2$, so that~$\L(D_1)=St(\tau_1,k_1)$ and~$\L(D_2)=St(\tau_2,k_2)$ are generic segments lifting~$\L(\D_1)$ and~$\L(\D_2)$, then
\[L(X,\L(\D_1),\L(\D_2))=r_\ell(L(X,\L(D_1),\L(D_2)).\]
\end{enumerate}
\end{thm}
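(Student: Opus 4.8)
The plan is to treat the two cases separately, reducing everything either to the cuspidal classification (Theorem~\ref{cuspidal}) or to the complex formula \eqref{complex-L-segments} via reduction modulo~$\ell$.

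For part~(1), suppose without loss of generality that $\L(\D_1)=St(\rho_1,k_1)$ is non-banal; then by the remark following Definition~\ref{banal-segment} (together with the observation opening this subsection, that $k_1<e(\rho_1)$ forces non-banality of $St(\rho_1,k_1)$ to be equivalent to non-banality of $\rho_1$), $\rho_1$ itself is non-banal, i.e.\ $o(\rho_1)=1$ and $e(\rho_1)=\ell$. First I would bound $L(X,\L(\D_1),\L(\D_2))$ by a product of cuspidal $L$-factors: since $St(\rho_1,k_1)$ is the unique generic subquotient of $\rho_1\times\nu\rho_1\times\cdots\times\nu^{k_1-1}\rho_1$, Lemma~\ref{div-subquotient} and Lemma~\ref{Lintermediate} give
\[
L(X,St(\rho_1,k_1),\L(\D_2))\ \Big|\ \prod_{i=0}^{k_1-1}L(X,\nu^i\rho_1,\L(\D_2)),
\]
and applying the same reasoning to $\L(\D_2)=St(\rho_2,k_2)$ in the second variable reduces us to dividing a product of $L(X,\nu^i\rho_1,\nu^j\rho_2)$ with $\rho_1,\rho_2$ cuspidal. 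By Theorem~\ref{cuspidal}, each such factor is $1$ unless $\nu^j\rho_2\simeq\chi(\nu^i\rho_1)^\vee$ with $\chi$ unramified \emph{and} $\nu^i\rho_1$ banal; but $\rho_1$ (hence $\nu^i\rho_1$) is non-banal, so every factor is $1$, and the dividing product is $1$, giving $L(X,\L(\D_1),\L(\D_2))=1$. The symmetric argument handles the case where $\L(\D_2)$ is the non-banal one.

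For part~(2), assume both $\L(\D_1)$ and $\L(\D_2)$ are banal, so $e(\rho_i)\ge o(\rho_i)>k_i\ge 1$ and in particular $\rho_1,\rho_2$ are banal cuspidal, hence supercuspidal. Choose cuspidal $\ell$-adic lifts $\tau_1,\tau_2$ of $\rho_1,\rho_2$; by Remark~\ref{lift-segments} the $\ell$-adic generic segments $\L(D_i)=St(\tau_i,k_i)$ are standard (hence, by Theorem~\ref{integral-whittaker}, Whittaker) lifts of $\L(\D_i)$. By Theorem~\ref{compat1},
\[
L(X,\L(\D_1),\L(\D_2))\ \Big|\ r_\ell\bigl(L(X,\L(D_1),\L(D_2))\bigr),
\]
and by \eqref{complex-L-segments} the right-hand side is $r_\ell\!\left(\prod_{j=0}^{k_2-1}L(X,\nu^{k_1-1}\tau_1,\nu^j\tau_2)\right)$. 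Since $\rho_1,\rho_2$ are banal and $\nu^{k_1-1}\rho_1\simeq\chi(\nu^j\rho_2)^\vee$ can occur (when $\rho_1\simeq\chi'\rho_2^\vee$), Theorem~\ref{cuspidal} together with Proposition~\ref{cuspidal-preservation} ($f(\tau_i)=f(\rho_i)$, $q(\tau_i)=q(\rho_i)$) shows each $r_\ell(L(X,\nu^{k_1-1}\tau_1,\nu^j\tau_2))=L(X,\nu^{k_1-1}\rho_1,\nu^j\rho_2)$ — that is, reduction modulo~$\ell$ commutes with these particular cuspidal $L$-factors because in the banal case the $L$-factor is $\frac{1}{1-(\chi(\w_F)X)^{f}}$ with $f=f(\rho_i)$ unchanged under reduction. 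Hence
\[
L(X,\L(\D_1),\L(\D_2))\ \Big|\ \prod_{i=0}^{k_2-1}L(X,\nu^{k_1-1}\rho_1,\nu^i\rho_2)=r_\ell\bigl(L(X,\L(D_1),\L(D_2))\bigr).
\]
It remains to prove the reverse divisibility, i.e.\ that $\prod_{i=0}^{k_2-1}L(X,\nu^{k_1-1}\rho_1,\nu^i\rho_2)$ divides $L(X,\L(\D_1),\L(\D_2))$. I expect this to be the main obstacle: the complex proof in \cite[Theorem 8.2]{JPS2} produces explicit Whittaker functions and Schwartz functions realizing these poles, and one must check that those constructions, via Lemma~\ref{extension-of-whittaker-functions} and the explicit integral analysis of Rankin--Selberg integrals on generic segments, go through over $\Fl$. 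Concretely, using Lemma~\ref{extension-of-whittaker-functions} one reduces to the case $k_2=1$, where $St(\rho_2,1)=\rho_2$ is cuspidal, and then to showing $L(X,St(\rho_1,k_1),\rho_2)$ has the pole of $L(X,\nu^{k_1-1}\rho_1,\rho_2)$; this last point follows from the relation $L(X,\nu^{k_1-1}\rho_1,\rho_2)\mid L(X,St(\rho_1,k_1),\rho_2)$ obtained from Corollary~\ref{important-cor} (taking the segment $[k_1-1,k_1-1]_{\rho_1}$ inside $[0,k_1-1]_{\rho_1}$) combined with the upper bound from part~(1)'s dividing argument, which in the banal case degenerates to the single surviving factor. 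Once both divisibilities are in hand, the displayed formula and the equality $L(X,\L(\D_1),\L(\D_2))=r_\ell(L(X,\L(D_1),\L(D_2)))$ follow at once.
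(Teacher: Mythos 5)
Part~(1) of your proposal is correct and is essentially the paper's argument: bound $L(X,\L(\D_1),\L(\D_2))$ by a product of cuspidal factors via Lemmas~\ref{div-subquotient} and~\ref{Lintermediate}, and invoke Theorem~\ref{cuspidal}.

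For Part~(2), the opening of your argument is fine: choosing standard lifts, applying Theorem~\ref{compat1} and Equation~\eqref{complex-L-segments} and using Proposition~\ref{cuspidal-preservation} to identify the reduction, you correctly obtain the \emph{upper} bound
\[
L(X,\L(\D_1),\L(\D_2)) \Big| \prod_{i=0}^{k_2-1}L(X,\nu^{k_1-1}\rho_1,\nu^i\rho_2).
\]
However, the reverse divisibility --- the real content of the theorem --- is where your sketch breaks down. First, a local error: you write that ``the upper bound from part~(1)'s dividing argument \ldots in the banal case degenerates to the single surviving factor.'' This is false. That argument gives $L(X,St(\rho_1,k_1),\rho_2) \mid \prod_{i=0}^{k_1-1}L(X,\nu^i\rho_1,\rho_2)$, and when $\rho_2\simeq\chi\rho_1^\vee$ and $\rho_1$ is banal, \emph{every} factor $L(X,\nu^i\rho_1,\rho_2)$ is nontrivial (they have poles at $k_1$ distinct points on the circle of order $e(\rho_1)$); nothing degenerates. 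Presumably you intended the lift-based bound above, which for $k_2=1$ does reduce to the single factor $L(X,\nu^{k_1-1}\rho_1,\rho_2)$. Granting that, your $k_2=1$ argument is in fact correct and a bit slicker than the paper's: Corollary~\ref{important-cor} with the subsegment $[k_1-1,k_1-1]\subset[0,k_1-1]$ gives the lower bound $L(X,\nu^{k_1-1}\rho_1,\rho_2)\mid L(X,St(\rho_1,k_1),\rho_2)$, and sandwiching yields equality (the paper instead runs an induction on $k_1$ through the functional equation for $\gamma$-factors and a coprimality-of-poles argument using Lemma~\ref{poles-segments}).

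The genuine gap is the step ``using Lemma~\ref{extension-of-whittaker-functions} one reduces to the case $k_2=1$''. There is no such reduction. Lemma~\ref{extension-of-whittaker-functions} only constructs Whittaker functions with a prescribed restriction; Corollary~\ref{important-cor} applied to the $\L(\D_2)$ variable only yields $L(X,\L(\D_1),\nu^{k_2-1}\rho_2)\mid L(X,\L(\D_1),\L(\D_2))$, which is a single cuspidal factor (after the $k_2=1$ case), not the full product $\prod_{i=0}^{k_2-1}L(X,\nu^{k_1-1}\rho_1,\nu^i\rho_2)$. Establishing that all $k_2$ poles occur is exactly where, in characteristic zero, \cite{JPS2} construct explicit test vectors (which you would then need to check are integral with nonzero reduction), and where the paper instead runs an induction on $k_2$: it introduces a correction factor $P$, uses the functional equation and the relation $\gamma(X,\L(\D_1),\L(\D_2))=\prod\gamma(X,\L(\D_1),\nu^i\rho^\vee)$ to show $P$ also appears on the dual side, then uses Corollary~\ref{important-cor} and Lemma~\ref{poles-segments} to pin $P$ between two \emph{coprime} cuspidal $L^{-1}$-factors, forcing $P=1$. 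This coprimality argument --- crucially relying on banality to separate $q(\rho)^{k_1}$ from $1$ modulo $\ell$ --- is the key idea missing from your sketch, and without it (or a genuine $\ell$-modular test-vector argument) the reverse divisibility for $k_2>1$ is not established.
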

\begin{proof}
First we notice that the last formula concerning reduction modulo~$\ell$ follows from Part \ref{part2inductivity-segments}, Equation (\ref{complex-L-segments}) and the last part of Theorem \ref{cuspidal}. We also notice that according 
to Proposition \ref{banal-part}, the segment~$\L(\D_i)$ is banal if and only if~$\rho_i$ is banal, for~$i=1,2$. In case either case, according to Lemmas \ref{div-subquotient} and \ref{Lintermediate}, we have 
\[L(X,\L(\D_1),\L(\D_2))\mid \prod_{i,j} L(X,\nutwo^{i}\rho_1,\nutwo^{j}\rho_2).\] 
If either segment is non-banal, each factor~$L(X,\nutwo^{i}\rho_1,\nutwo^{j}\rho_2)$ is equal to~$1$ according to Theorem \ref{cuspidal}, and the first assertion follows. In fact, the same argument in the non-banal case shows that if~$\rho_1^\vee$ and~$\rho_2$  are not isomorphic up to twisting by an unramified character, then~$L(X,\D_1,\D_2)=1$.

Hence we are left with Part \ref{part2inductivity-segments}, and moreover we can assume~$\rho_2=\chi \rho_1^\vee$ for some unramified character~$\chi$ of~$F^\times$. Thanks to the relation~$L(X,\pi,\chi\pi')=L(\chi(w_F)X,\pi,\pi')$ for any representations~$\pi$ and~$\pi'$ of Whittaker type, it is enough to prove the result when~$\rho_2=\rho_1^\vee$. We thus set~$\rho=\rho_1$ and~$\rho^\vee=\rho_2=\rho_1^\vee$. From now on, as we are dealing with banal representations, it is possible to follow the main idea of the proof of \cite{JPS2} up to requested adjustments (as some of the arguments in this reference depend on poles being in right and left half planes of~$\C$). In particular we proceed by induction on~$k_2$. We recall that because~$\L(\D_1)$ and~$\L(\D_2)$ are generic, we have the inequalities~$k_i< e(\rho)$ for~$i=1,2$, and because they are banal, we have~$e(\rho)=o(\rho)\geqslant 2$.

If~$k_2=1$, then~$k_1\geqslant 1$. In this case, we proceed by induction on~$k_1$. If~$k_1=1$, our assertion follows from Theorem \ref{cuspidal}. If~$k_1>1$, as~$St(\rho,k_1)$ is the generic subquotient of~$\rho \times St(\nutwo\rho,k_1-1)$, we have
\[L(X,St(\rho,k_1),\rho^\vee)\mid L(X,St(\nutwo\rho,k_1-1),\rho^\vee)L(X,\rho,\rho^\vee)\]
 by Lemmas \ref{div-subquotient} and Proposition \ref{Lintermediate}. Hence there is~$R\in \Fl[X^{\pm 1}]$, such that 
\begin{equation} 
L(X,St(\rho,k_1),\rho^\vee)=R(X)L(X,St(\nutwo\rho,k_1-1),\rho^\vee)L(X,\rho,\rho^\vee).
\end{equation}
We let~$e=e(\rho)=o(\rho)$,~$f=f(\rho)$,~$P_1$ be the set of poles of~$L(X,\rho,\rho^\vee)$, and~$P_2$ be the set of poles of~$L(X,St(\rho,k_1),\rho^\vee)$. By Lemma \ref{poles-segments}, one has~$P_1^f\subseteq \{1\}$, whereas
\[P_2^f\subseteq \{q(\rho)^{k_1}\}\subseteq \{q(\rho),\dots,q(\rho)^{e-1}\},\] 
and, in particular,~$P_1^{f}\cap P_2^{f}$, hence~$P_1\cap P_2$, is empty.  Thus~$P(X)=R(X)L(X,\rho,\rho^\vee)$ must belong to 
$\Fl[X^{\pm 1}]$. By our induction hypothesis, we have
\[L(X,St(\nutwo\rho,k_1-1),\rho^\vee)=L(X,\nutwo^{k_1-1}\rho,\rho^\vee),\] and we record the equation 
\begin{equation}\label{A} L(X,St(\rho,k_1),\rho^\vee)=P(X)L(q^{1-k_1}X,\rho,\rho^\vee).\end{equation}
For the same reason, there is~$Q\in \Fl[X^{\pm 1}]$ such that 
\begin{equation}\label{A'} L(q^{-1}X^{-1},St(\rho^\vee,[1-k_1,0]),\rho)=Q(X)L(q^{-1}X^{-1},\rho^\vee,\rho).\end{equation}
Now, the relation 
\[\gamma(X,St(\rho,k_1),\rho^\vee)=\prod_{i=0}^{k_1-1} \gamma(q^{-i}X,\rho,\rho^\vee)\] implies that, up to 
units in~$\Fl[X^{\pm 1}]$, we have
\begin{equation}\label{B}\frac{L(q^{-1}X^{-1},St(\rho^\vee,[1-k_1,0]),\rho))}{L(X,St(\rho,k_1),\rho^\vee)}=
\prod_{i=0}^{k_1-1}\frac{L(q^{i-1}X^{-1},\rho^\vee,\rho)}{L(q^{-i}X,\rho,\rho^\vee)}.\end{equation}
But, up to units in~$\Fl[X^{\pm 1}]$ again, we have
\[L(q^{i-1}X^{-1},\rho^\vee,\rho)=L(q^{1-i}X,\rho^\vee,\rho),\] hence, up to units, Equation (\ref{B}) becomes 
\begin{equation}\label{C}\frac{L(q^{-1}X^{-1},St(\rho^\vee,[1-k_1,0]),\rho))}{L(X,St(\rho,k_1),\rho^\vee)}=
\frac{L(qX,\rho^\vee,\rho)}{L(q^{1-k_1}X,\rho,\rho^\vee)}=\frac{L(q^{-1}X^{-1},\rho^\vee,\rho)}{L(q^{1-k_1}X,\rho,\rho^\vee)}.\end{equation}
 
Comparing with Equations (\ref{A}) and (\ref{A'}), we deduce that~$P=Q$ up to units, and thus that 
$P$ divides both polynomials 
\[L(q^{-1}X^{-1},\rho^\vee,\rho)^{-1}=q(\rho)^{-1}X^{-f}-1,\text{ and }L(q^{1-k_1}X,\rho^\vee,\rho)^{-1}=q(\rho)^{1-k_1}X^{f}-1.\] 
Hence~$P$ divides~$q(\rho)X^f-1$ and~$q(\rho)X^f-q(\rho)^{k_1}$ 
in~$\Fl[X^{\pm 1}]$. As~$q(\rho)^{k_1}\neq 1$, because~$k_1<e(\rho)$, the polynomials~$q(\rho)X^f-1$ and~$q(\rho)X^f-q(\rho)^{k_1}$ are coprime, hence~$P$ is a unit in~$\Fl[X^{\pm 1}]$ which must be equal to~$1$ as it is the quotient of two Euler factors, and the equality
\begin{equation}\label{D} L(X,St(\rho,k_1),\rho^\vee)=L(q^{1-k_1}X,\rho,\rho^\vee)\end{equation}
follows from Equation (\ref{A}).

Now we do the induction step, and suppose that~$k_2>1$. By Lemmas \ref{div-subquotient} and \ref{Lintermediate} again, we know that there is 
$P\in \Fl[X^{\pm 1}]$ such that 
\[L(X,\L(\D_1),\L(\D_2))=P (X) \prod_{i=0}^{k_2-1} L(X,\L(\D_1),\nutwo^i\rho^\vee),\] which by the previous case, gives the relation 
\begin{equation}\label{a}  L(X,\L(\D_1),\L(\D_2))=P (X) \prod_{i=0}^{k_2-1} L(X,\nutwo^{k_1-1}\rho,\nutwo^i\rho^\vee)\end{equation}
By the same argument, we obtain the existence of~$Q\in\Fl[X^{\pm 1}]$ such that 
\begin{equation}\label{a'}  L(q^{-1}X^{-1},\L(\D_1)^\vee,\L(\D_2)^\vee)=Q (X) \prod_{i=0}^{k_2-1} 
L(q^{-1}X^{-1},\rho^\vee,\nutwo^{-i}\rho).\end{equation}
Up to units in~$\Fl[X^{\pm 1}]$, we have:
 \[\gamma(X,\L(\D_1),\L(\D_2))=\prod_{i=0}^{k_2-1}\gamma(X,\L(\D_1),\nutwo^i\rho^\vee)=
\prod_{i=0}^{k_2-1}\frac{L(q^{-1}X^{-1},\L(\D_1)^\vee,\nutwo^{-i}\rho)}{L(X,\L(\D_1),\nutwo^i\rho^\vee)}\]
\begin{equation}\label{b}=\prod_{i=0}^{k_2-1}
\frac{L(q^{-1}X^{-1},\rho^\vee,\nutwo^{-i}\rho)}{L(X,\nutwo^{k_1-1}\rho,\nutwo^i\rho^\vee)}.\end{equation}
By Equations (\ref{a}) and (\ref{a'}), we deduce that~$P=Q$ up to units in~$\Fl[X^{\pm 1}]$, and in fact~$P=Q$ as both must belong to 
$\Fl[X]$ with constant term~$1$. On the other hand, by Lemma \ref{important-cor}, we know that 
$L(X,\L(\D_1),St(\rho,k_2-1))$ divides~$L(X,\L(\D_1),\L(\D_2))$, and 
\[L(X,\L(\D_1),St(\rho,[1,k_2-1]))=\prod_{i=1}^{k_2-1} L(X,\nutwo^{k_1-1}\rho,\nutwo^i\rho^\vee)\] by our induction hypothesis. This implies, 
by Equation (\ref{a}), that~$P$ divides~$L(X,\nutwo^{k_1-1}\rho,\rho^\vee)^{-1}$. Similarly, we find that~$P$ 
divides~$L(q^{-1}X^{-1},\rho^\vee,\rho)^{-1}$. As we have already proved in the case~$k_2=1$ that 
$L(X,\nutwo^{k_1-1}\rho,\rho^\vee)^{-1}$ and~$L(q^{-1}X^{-1},\rho^\vee,\rho)^{-1}$ are coprime, we deduce that~$P=1$, and Equation (\ref{a}) gives the desired equality.
\end{proof}

\subsection{$L$-factors of banal generic representations}

We recall the following fact. If $\rho$ is an $\ell$-adic cuspidal representation of $G_n$, with $f=f(\rho)$, then 
by \cite[6.2.5]{BK93}, the integer $f$ is th order of the cyclic group of unramified characters of $F^*$ fixing $\rho$.

\begin{prop}\label{common pole implies equal}
Let~$R$ be~$\Ql$ or~$\Fl$, and let~$\rho$,~$\rho'$ and~$\rho''$ be cuspidal~$R$-representations of~$G_n$, such that~$L(X,\rho,\rho')$ and 
$L(X,\rho,\rho'')$ have a common pole, then~$\rho'\simeq \rho''$. 
\end{prop}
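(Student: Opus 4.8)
The plan is to reduce everything to the explicit description of cuspidal $L$-factors in Theorem~\ref{cuspidal} together with the fact, recalled just above, that the group of unramified self-twists of a cuspidal representation is cyclic of the expected order.

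First I would note that since $L(X,\rho,\rho')$ and $L(X,\rho,\rho'')$ have poles, in particular neither of them equals $1$, so Theorem~\ref{cuspidal} applies to both pairs: $\rho$ is banal (hence supercuspidal), and there are unramified characters $\chi,\chi'$ of $F^\times$ with $\rho'\simeq\chi\,\rho^\vee$ and $\rho''\simeq\chi'\,\rho^\vee$. Moreover, writing $f=f(\rho)$ — which also equals $f(\rho^\vee)=f(\rho')=f(\rho'')$, since passing to the contragredient and twisting by an unramified character do not change the ramification index — Theorem~\ref{cuspidal} gives
\[
L(X,\rho,\rho')=\frac{1}{1-(\chi(\w_F)X)^{f}},\qquad
L(X,\rho,\rho'')=\frac{1}{1-(\chi'(\w_F)X)^{f}}.
\]

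Next I would compare the poles. Let $\mu_f(R)=\{x\in R^\times:x^f=1\}$. The set of poles of the first factor is the coset $\chi(\w_F)^{-1}\mu_f(R)$, and that of the second is $\chi'(\w_F)^{-1}\mu_f(R)$; two cosets of the subgroup $\mu_f(R)\leqslant R^\times$ are either equal or disjoint, so the existence of a common pole forces them to coincide, which means $\psi(\w_F)^{f}=1$ for the unramified character $\psi:=\chi\,\chi'^{-1}$.

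Finally I would show that $\psi$ fixes $\rho^\vee$, which gives $\rho'\simeq\chi\,\rho^\vee=\chi'(\psi\,\rho^\vee)\simeq\chi'\,\rho^\vee\simeq\rho''$ and finishes the proof. When $R=\Ql$ this is immediate from \cite[6.2.5]{BK93}: the group of unramified characters of $F^\times$ fixing $\rho^\vee$ is cyclic of order $f(\rho^\vee)=f$, hence, being a finite subgroup of $R^\times$ of order $f$, it is exactly $\mu_f(R)$, which contains $\psi$ by the previous step. When $R=\Fl$ I would obtain the same conclusion by lifting: choose a cuspidal $\ell$-adic lift $\widetilde\sigma$ of $\rho^\vee$; by Proposition~\ref{cuspidal-preservation} one has $f(\widetilde\sigma)=f$, so by the $\ell$-adic case every unramified $\ell$-adic character $\eta$ with $\eta(\w_F)\in\mu_f(\Ql)$ satisfies $\eta\widetilde\sigma\simeq\widetilde\sigma$, and reducing modulo $\ell$ shows that $r_\ell(\eta)$ fixes $\rho^\vee=r_\ell(\widetilde\sigma)$; since $\Fl^\times$ has no $\ell$-torsion we have $\mu_f(\Fl)=r_\ell(\mu_f(\Ql))$, so $\psi$ itself fixes $\rho^\vee$. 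The only step requiring genuine care is this last transfer from the $\ell$-adic to the $\ell$-modular case; the rest is formal once Theorem~\ref{cuspidal} and the cited structural fact are in hand.
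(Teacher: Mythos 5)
Your proof is correct and takes essentially the same route as the paper: read off the explicit cuspidal $L$-factors from Theorem~\ref{cuspidal}, observe that a common pole forces $\chi(\w_F)^f=\chi'(\w_F)^f$, and conclude that the ratio $\chi\chi'^{-1}$ has order dividing $f$ and hence fixes $\rho^\vee$ by the structure of the unramified-twist stabiliser. The only difference is that you spell out the transfer of the stabiliser fact from the $\ell$-adic to the $\ell$-modular case via a lift and Proposition~\ref{cuspidal-preservation}, a point the paper's proof passes over in silence; this is a reasonable extra precaution but does not change the argument.
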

\begin{proof}
First, this implies that there are unramified characters~$\chi'$ and~$\chi''$ of~$F^\times$ such that 
$\rho'\simeq \chi'\rho^\vee$ and~$\rho \simeq \chi''\rho^\vee$, so~$f(\rho)=f(\rho')=f(\rho'')=f$. Now 
if~$x$ is this common pole, which is necessarily in~$R^\times$, we have~$1=x^f\chi''(\w)^f=x^f\chi'(\w)^f$, i.e.~$\chi''(\w)^f=\chi'(\w)^f$. 
Calling~$\mu$ the character~$\chi''\chi'^{-1}$, this implies that~$\mu^f=\1$, hence~$\mu\rho'\simeq\rho'$ and the result follows. 
\end{proof}

\begin{cor}\label{cor-distinct-cusp-lines}
Let~$\rho$ be a cuspidal~$R$-representation of~$G_m$, and~$\pi_1$ and~$\pi_2$ two~$R$-representations of Whittaker type supported 
on two different cuspidal lines~$\Z_{\rho_1}$ and~$\Z_{\rho_2}$. Then~$L(X,\rho,\pi_1)$ and 
$L(X,\rho,\pi_2)$ have no common pole.
\end{cor}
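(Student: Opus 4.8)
The plan is to reduce the statement to the cuspidal pairs already understood, namely to Theorem~\ref{cuspidal} (or, equivalently here, Proposition~\ref{common pole implies equal}), using the divisibility relations of the previous subsections. The point is simply that a pole of $L(X,\rho,\pi_i)$ can only come from a cuspidal constituent of $\pi_i$ lying on the line $\Z_{\rho^\vee}$, and the two lines $\Z_{\rho_1}$ and $\Z_{\rho_2}$ cannot both equal $\Z_{\rho^\vee}$.

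First I would unravel the definitions. Write $\pi_i=\L(\Delta_{i,1})\times\cdots\times\L(\Delta_{i,t_i})$, where each generic segment is of the form $\L(\Delta_{i,j})=St(\sigma_{i,j},k_{i,j})$ for a cuspidal representation $\sigma_{i,j}$; the hypothesis that $\pi_i$ is supported on $\Z_{\rho_i}$ says exactly that $\Z_{\sigma_{i,j}}=\Z_{\rho_i}$ for all $j$. Since $St(\sigma_{i,j},k_{i,j})$ is a subquotient of $\sigma_{i,j}\times\nu\sigma_{i,j}\times\cdots\times\nu^{k_{i,j}-1}\sigma_{i,j}$ and parabolic induction is exact, $\pi_i$ is a subquotient of the representation of Whittaker type $\Pi_i=\prod_{j}\prod_{a=0}^{k_{i,j}-1}\nu^a\sigma_{i,j}$ (each cuspidal factor is itself a generic segment). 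By Lemma~\ref{div-subquotient}, applied with $\rho$ in the first argument and $\pi_i$ a subquotient of $\Pi_i$ in the second, every pole of $L(X,\rho,\pi_i)$ is a pole of $L(X,\rho,\Pi_i)$. Next, decomposing the second argument $\Pi_i$ factor by factor --- this is the second-slot form of Lemma~\ref{Lintermediate}, obtained from the inductivity relation of $\gamma$-factors (Theorem~\ref{inductivity-gamma}) together with stability under highly ramified twists (Corollary~\ref{stability}) exactly as in the deduction of Lemma~\ref{Lintermediate}, or alternatively from the symmetry $L(X,\pi,\pi')=L(X,\pi',\pi)$ --- I get that every pole of $L(X,\rho,\Pi_i)$ is a pole of $L(X,\rho,\nu^{a}\sigma_{i,j})$ for some $j$ and some $0\leq a\leq k_{i,j}-1$.

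Finally, suppose $x$ is a pole common to $L(X,\rho,\pi_1)$ and $L(X,\rho,\pi_2)$. By the previous paragraph, for each $i\in\{1,2\}$ there are indices $j_i,a_i$ with $x$ a pole of $L(X,\rho,\nu^{a_i}\sigma_{i,j_i})$. By Theorem~\ref{cuspidal}, a twisted cuspidal pair has a non-trivial $L$-factor only when $\rho$ is banal and $\nu^{a_i}\sigma_{i,j_i}\simeq\chi_i\rho^\vee$ for an unramified character $\chi_i$ of $F^\times$; in particular $\sigma_{i,j_i}$ lies on $\Z_{\rho^\vee}$, so $\Z_{\rho_i}=\Z_{\sigma_{i,j_i}}=\Z_{\rho^\vee}$ (alternatively, $\nu^{a_1}\sigma_{1,j_1}$ and $\nu^{a_2}\sigma_{2,j_2}$ are cuspidal representations of the same $G_m$ sharing a pole, and Proposition~\ref{common pole implies equal} identifies them). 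Hence $\Z_{\rho_1}=\Z_{\rho^\vee}=\Z_{\rho_2}$, contradicting the hypothesis that the two cuspidal lines are distinct, so no common pole can exist. The only mildly delicate point is the second-slot form of the divisibility relation used in the middle step, and even that is immediate from the tools already assembled; the rest is bookkeeping with cuspidal supports and the cuspidal computation of Theorem~\ref{cuspidal}.
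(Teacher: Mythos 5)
Your strategy is the right one and mirrors the paper's one-line proof citing Lemma~\ref{Lintermediate} and Proposition~\ref{common pole implies equal}: decompose each $\pi_i$ into cuspidal pieces via Lemma~\ref{div-subquotient} and (the second-slot form of) Lemma~\ref{Lintermediate}, then conclude by Proposition~\ref{common pole implies equal} among cuspidal pairs. The second-slot decomposition is unproblematic; the paper already uses it (e.g.\ in the proof of Theorem~\ref{cuspidal}), and Theorem~\ref{inductivity-gamma} is in fact stated for that slot.

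However, the main route in your final step is flawed. From $\nu^{a_i}\sigma_{i,j_i}\simeq\chi_i\rho^\vee$ for some unramified $\chi_i$, you infer ``in particular $\sigma_{i,j_i}$ lies on $\Z_{\rho^\vee}$.'' This is false in general: $\chi_i$ is an arbitrary unramified character and need not be a power of $\nu$, whereas $\Z_{\rho^\vee}=\{\nu^r\rho^\vee : r\in\Z\}$ is only the $\nu$-twist orbit. (Already with $m=1$ and $\rho$ trivial, almost no unramified characters lie on $\Z_{\1}$.) So ``$\Z_{\rho_1}=\Z_{\rho^\vee}=\Z_{\rho_2}$'' does not follow. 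Fortunately, the parenthetical alternative you offer is correct and is the paper's intended argument: Proposition~\ref{common pole implies equal}, applied to $\nu^{a_1}\sigma_{1,j_1}$ and $\nu^{a_2}\sigma_{2,j_2}$, yields $\nu^{a_1}\sigma_{1,j_1}\simeq\nu^{a_2}\sigma_{2,j_2}$; since these lie on $\Z_{\rho_1}$ and $\Z_{\rho_2}$ respectively, it follows that $\Z_{\rho_1}=\Z_{\rho_2}$, a contradiction. The subtlety your main route elides --- that a common pole forces genuine isomorphism, not merely unramified-twist equivalence --- is precisely the content of Proposition~\ref{common pole implies equal}, whose proof uses that $f(\rho')$ is the order of the group of unramified characters fixing $\rho'$.
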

\begin{proof}
It follows at once from Proposition \ref{common pole implies equal} and Lemma \ref{Lintermediate}.
\end{proof}

\begin{cor}\label{cor-distinct-cusp-lines-2}
Let~$\rho$ be a cuspidal~$R$-representation of~$G_m$, and~$\pi_1$ and~$\pi_2$ two~$R$-representations of Whittaker type supported 
on two different cuspidal lines~$\Z_{\rho_1}$ and~$\Z_{\rho_2}$. Then~$L(X,\rho,\pi_1)$ and 
$L(q^{-1}X^{-1},\rho^\vee,\pi_2^\vee)$ have no common pole.
\end{cor}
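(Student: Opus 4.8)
The plan is to reduce to Corollary~\ref{cor-distinct-cusp-lines} by using the functional equation to absorb the substitution $X\mapsto q^{-1}X^{-1}$. Applying Corollary~\ref{fcteq} to the pair $(\rho,\pi_2)$ and recalling $\widetilde{\rho}\simeq\rho^\vee$, we may write $L(q^{-1}X^{-1},\rho^\vee,\pi_2^\vee)=\varepsilon(X,\rho,\pi_2,\theta)^{-1}\gamma(X,\rho,\pi_2,\theta)\,L(X,\rho,\pi_2)$, with $\varepsilon$ a unit (a monomial) in $R[X^{\pm1}]$. Suppose, for contradiction, that $z$ is a common pole of $L(X,\rho,\pi_1)$ and $L(q^{-1}X^{-1},\rho^\vee,\pi_2^\vee)$. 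Then $z$ is a pole of $\gamma(X,\rho,\pi_2,\theta)\,L(X,\rho,\pi_2)$; but $z$ is a pole of $L(X,\rho,\pi_1)$, and by Corollary~\ref{cor-distinct-cusp-lines} the factors $L(X,\rho,\pi_1)$ and $L(X,\rho,\pi_2)$ share no pole (being supported on the distinct cuspidal lines $\Z_{\rho_1}$ and $\Z_{\rho_2}$); since $L(X,\rho,\pi_2)$ is the inverse of a polynomial, hence has no zeros, $z$ must therefore be a pole of $\gamma(X,\rho,\pi_2,\theta)$.

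The next step is to reduce the poles on both sides to cuspidal data. By the inductivity of $\gamma$-factors (Theorem~\ref{inductivity-gamma}, Corollary~\ref{gamma-steinberg}), $\gamma(X,\rho,\pi_2,\theta)$ is a product of cuspidal $\gamma$-factors $\gamma(X,\rho,\sigma,\theta)$ with $\sigma$ running over the cuspidal support of $\pi_2$, so $\sigma\in\Z_{\rho_2}$; moreover, by the cuspidal functional equation together with Theorem~\ref{cuspidal} (noting $L(X,\rho,\sigma)$ has no zeros), the poles of $\gamma(X,\rho,\sigma,\theta)$ are among those of $L(q^{-1}X^{-1},\rho^\vee,\sigma^\vee)$. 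On the other side, decomposing $\pi_1$ into generic segments, replacing each segment by the corresponding product of cuspidal lines via Lemma~\ref{div-subquotient}, and iterating Lemma~\ref{Lintermediate} (and its analogue in the second variable, which follows from the second-variable inductivity of $\gamma$-factors and stability, exactly as in the proof of Lemma~\ref{Lintermediate}), one gets that every pole of $L(X,\rho,\pi_1)$ is a pole of some cuspidal $L(X,\rho,\sigma_1)$ with $\sigma_1\in\Z_{\rho_1}$. Hence $z$ is simultaneously a pole of $L(X,\rho,\sigma_1)$ with $\sigma_1\in\Z_{\rho_1}$, and a pole of $L(q^{-1}X^{-1},\rho^\vee,\sigma^\vee)$ with $\sigma\in\Z_{\rho_2}$.

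Finally I would make these pole loci explicit using Theorem~\ref{cuspidal}. Writing $f=f(\rho)=f(\rho^\vee)$: the first condition forces $\sigma_1\simeq\chi_1\rho^\vee$ for an unramified character $\chi_1$, with $z^f=\chi_1(\w)^{-f}$; the second forces $\sigma^\vee\simeq\chi_2\rho$, i.e.\ $\sigma\simeq\chi_2^{-1}\rho^\vee$, for an unramified $\chi_2$, and, since $L(q^{-1}X^{-1},\rho^\vee,\sigma^\vee)=\bigl(1-(\chi_2(\w)q^{-1}X^{-1})^f\bigr)^{-1}$, a pole at $z$ means $z^f=q^{-f}\chi_2(\w)^f$. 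Combining the two identities and using $\nu(\w)=q^{-1}$ yields $(\chi_1\chi_2\nu)(\w)^f=1$, so $\chi_1\chi_2\nu$ lies in the cyclic group of order $f$ of unramified characters fixing $\rho^\vee$; equivalently $\chi_1\rho^\vee\simeq\nu^{-1}\chi_2^{-1}\rho^\vee=\nu^{-1}\sigma$. Consequently $\Z_{\rho_1}=\Z_{\sigma_1}=\Z_{\chi_1\rho^\vee}=\Z_{\nu^{-1}\sigma}=\Z_{\sigma}=\Z_{\rho_2}$, contradicting the hypothesis, so no common pole exists. (This concluding computation is precisely Proposition~\ref{common pole implies equal}, transported across the functional equation.) The main obstacle is exactly this last bookkeeping: one must follow a single cuspidal line correctly through both the contragredient and the substitution $X\mapsto q^{-1}X^{-1}$, and, crucially, exploit that the unramified characters fixing a cuspidal representation form a cyclic group of order $f$; without this, the tempting shortcut ``$L(X,\rho,\pi_1)\neq1$ forces $\Z_\rho$ and $\Z_{\rho_1}$ to be related'' is false, since $\Z_{\chi\rho^\vee}\neq\Z_{\rho^\vee}$ in general.
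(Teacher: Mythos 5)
Your argument is correct, but it is substantially longer than the paper's and takes a genuinely different route. You transport the problem across the functional equation, so that the pole analysis of $L(q^{-1}X^{-1},\rho^\vee,\pi_2^\vee)$ is redirected through $\gamma(X,\rho,\pi_2,\theta)$, then you factor the $\gamma$-factor into cuspidal pieces via Theorem~\ref{inductivity-gamma}, and finally you re-derive the arithmetic of Proposition~\ref{common pole implies equal} inline (using $f=f(\rho)$ being the order of the group of unramified characters fixing $\rho$). The paper instead avoids $\gamma$-factors altogether: it first applies Lemma~\ref{Lintermediate} (with $\pi_2^\vee$ replaced by the product over its cuspidal support, by Lemma~\ref{div-subquotient}) so that any pole of $L(q^{-1}X^{-1},\rho^\vee,\pi_2^\vee)$ is already a pole of a single cuspidal factor $L(q^{-1}X^{-1},\rho^\vee,\nu^{k}\rho_2^\vee)$; then it observes, using the explicit shape of cuspidal $L$-factors together with the twisting identity $L(X,\pi,\chi\pi')=L(\chi(\w)X,\pi,\pi')$, that up to a unit of $R[X^{\pm1}]$
\[
L(q^{-1}X^{-1},\rho^\vee,\nu^{k}\rho_2^\vee)=L(qX,\rho,\nu^{-k}\rho_2)=L(X,\rho,\nu^{-1-k}\rho_2),
\]
and the right-hand side is still a factor attached to a cuspidal on the line $\Z_{\rho_2}$. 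At that point Corollary~\ref{cor-distinct-cusp-lines} applies verbatim and the proof is done in two lines. The payoff of the paper's approach is precisely that it never needs to re-establish the ``same-cuspidal-line'' dichotomy: the substitution $X\mapsto q^{-1}X^{-1}$ is absorbed into a shift along the cuspidal line rather than unpacked into explicit pole coordinates, so no further invocation of the cyclic group of unramified twists is required. Your version is sound (the pole bookkeeping, the use of $L$ having no zeros, the final identity $(\chi_1\chi_2\nu)(\w)^f=1$ and the step $\Z_{\nu^{-1}\sigma}=\Z_{\sigma}$ all check out), but it essentially reproves Proposition~\ref{common pole implies equal} rather than reusing Corollary~\ref{cor-distinct-cusp-lines} as a black box, which is what the slick substitution makes possible.
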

\begin{proof}
If~$L(q^{-1}X^{-1},\rho^\vee,\pi_2^\vee)$ has a pole, by Lemma \ref{Lintermediate}, it occurs as a pole of 
$L(q^{-1}X^{-1},\rho^\vee,\nu^{k}\rho_2^\vee)$ for some~$k\in \Z$. But, up to a unit of~$R[X^{\pm 1}]$, the rational map
~$L(q^{-1}X^{-1},\rho^\vee,\nu^{k}\rho_2^\vee)$ is equal to~$L(qX,\rho,\nu^{-k}\rho_2)=L(X,\rho,\nu^{-1-k}\rho_2)$. 
The result now follows from Corollary \ref{cor-distinct-cusp-lines}.
\end{proof}

\begin{LM}\label{no-common-pole}
Let~$\rho$ be a cuspidal~$\ell$-modular representation of~$G_m$, and let 
$$\pi=\L(\D_1)\times \dots \times \L(\D_r)$$ be a banal generic~$\ell$-modular representation 
of~$G_n$. If~$\D_1=[a_1,b_1]_{\rho_1}$ and 
$\D_2=[a_2,b_2]_{\rho_2}$, then if~$0 \leqslant k\leqslant \min(b_1-a_1,b_2-a_2)$, the factors~$L(X,\rho,\L(\D_1))$ and 
$L(q^{-1}X^{-1},\nutwo^{-k}\rho^\vee,\L(\D_2)^\vee)$ have no common pole.
\end{LM}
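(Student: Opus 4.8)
The plan is, after disposing of the trivial cases, to compute the two factors explicitly by means of Theorems~\ref{cuspidal} and~\ref{inductivity-segments}, read off the $f$-th powers of their poles, and then derive a contradiction from the fact that the segments occurring in the generic representation $\pi$ are pairwise unlinked. Since $\pi$ is banal generic, each $\D_i=[a_i,b_i]_{\rho_i}$ is a banal segment, so each $\rho_i$ is banal cuspidal and $k_i:=b_i-a_i+1<o(\rho_i)=e(\rho_i)$. Viewing $\rho=St(\rho,1)$ as a length-one segment, Theorem~\ref{inductivity-segments}(1) gives $L(X,\rho,\L(\D_1))=1$ whenever $\rho$ is non-banal, so we may assume $\rho$ banal; and by Lemmas~\ref{div-subquotient} and~\ref{Lintermediate} together with Theorem~\ref{cuspidal}, $L(X,\rho,\L(\D_1))=1$ unless $\rho_1$ is an unramified twist of $\rho^\vee$, while $L(q^{-1}X^{-1},\nu^{-k}\rho^\vee,\L(\D_2)^\vee)=1$ unless $\rho_2$ is an unramified twist of $(\nu^{-k}\rho^\vee)^\vee$, i.e. of $\rho^\vee$. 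So it remains to treat the case in which $\rho_1$, $\rho_2$ and $\rho^\vee$ all lie in a single class under unramified twisting.

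Next I compute the poles. By Theorem~\ref{inductivity-segments}(2) (with $\L(\D_1)$ the larger of the two segments) one has $L(X,\rho,\L(\D_1))=L(X,\nu^{b_1}\rho_1,\rho)$, which by Theorem~\ref{cuspidal} equals $(1-(\chi_1(\w_F)X)^{f})^{-1}$ with $f=f(\rho)$, where $\chi_1$ is an unramified character with $\nu^{b_1}\rho_1\simeq\chi_1\rho^\vee$; hence the $f$-th powers of its poles form the singleton $\{\chi_1(\w_F)^{-f}\}$. Likewise $\L(\D_2)^\vee=St(\rho_2^\vee,[-b_2,-a_2])$, and the same two theorems give $L(q^{-1}X^{-1},\nu^{-k}\rho^\vee,\L(\D_2)^\vee)=L(q^{-1}X^{-1},\nu^{-a_2}\rho_2^\vee,\nu^{-k}\rho^\vee)=(1-(\chi_2(\w_F)q^{-1}X^{-1})^{f})^{-1}$, where $\chi_2$ is an unramified character with $\nu^{-k}\rho^\vee\simeq\chi_2\nu^{a_2}\rho_2$; the $f$-th powers of its poles form the singleton $\{q(\rho)^{-1}\chi_2(\w_F)^{f}\}$. (To avoid any ambiguity coming from the stabiliser of $\rho^\vee$ under unramified twisting, I would read these off from the corresponding $\ell$-adic factors via Theorem~\ref{compat1}, using Proposition~\ref{cuspidal-preservation} to see that $f$ and $q(\rho)$ are unchanged by reduction modulo $\ell$.) Therefore a common pole forces $\chi_1(\w_F)^{-f}=q(\rho)^{-1}\chi_2(\w_F)^{f}$, i.e. that $\chi_1\chi_2\nu$ has order dividing $f$; combined with the isomorphisms above this yields $\rho_2\simeq\nu^{\,b_1-a_2-k+1}\mu'\rho_1$ for an unramified character $\mu'$ of order dividing $f$. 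A lifting argument then shows $\mu'$ stabilises $\rho_1$: lift $\rho_1$ to an $\ell$-adic cuspidal $\tau_1$, which has $f(\tau_1)=f$ and whose stabiliser under unramified twists is cyclic of order exactly $f$ by \cite[6.2.5]{BK93}; since $\mu'(\w_F)^{f}=1$ in $\Fl^{\times}$ the order of $\mu'$ is prime to $\ell$, so $\mu'$ lifts to an $\ell$-adic unramified character of order dividing $f$, which therefore lies in the stabiliser of $\tau_1$, and reducing modulo $\ell$ gives $\mu'\rho_1\simeq\rho_1$. Hence $\rho_2\simeq\nu^{\,b_1-a_2-k+1}\rho_1$; in particular $\D_1$ and $\D_2$ lie on the same cuspidal line $\Z_{\rho_1}$, and, writing $\D_2$ with base $\rho_1$, its left endpoint is $\equiv b_1-k+1\pmod{o(\rho_1)}$.

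Finally I extract the combinatorial contradiction. We now have two arcs $\D_1,\D_2$ of lengths $k_1,k_2<o(\rho_1)$ on the circle $\Z_{\rho_1}$ of circumference $o(\rho_1)$, unlinked because $\pi$ is generic, with the left endpoint of $\D_2$ congruent modulo $o(\rho_1)$ to $b_1-k+1$, a point lying in $\{a_1+1,\dots,b_1+1\}$ because $0\le k\le b_1-a_1$. If this endpoint is $\equiv b_1+1$, then $\D_2$ begins at the vertex immediately following $\D_1$; using $k_1,k_2<o(\rho_1)$ one checks that neither arc contains the other and that the two arcs cannot be disjoint with a gap on both sides, so $\D_1$ and $\D_2$ are linked, a contradiction. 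Otherwise the endpoint reduces into $\D_1\setminus\{a_1\}$; unlinkedness then forces one arc inside the other, but $\D_1\subseteq\D_2$ is impossible since covering $\D_1$ by an arc starting at an interior vertex of $\D_1$ would require an arc of length at least $o(\rho_1)$, while $\D_2\subseteq\D_1$ together with the relation $a_2=b_1-k+1$ gives $k=b_1-a_2+1\le b_2-a_2$ and $a_2+k_2-1\le b_1$, hence $a_2\ge b_1-k_2+2$ and $a_2\le b_1-k_2+1$, again a contradiction. This exhausts all cases, so the two factors have no common pole.

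I expect the main obstacle to be the third paragraph: organising the unramified-twist and stabiliser bookkeeping carefully enough that the common-pole condition becomes the clean congruence on the endpoints of $\D_1$ and $\D_2$, in particular handling the case $\ell\mid f(\rho)$ where the notions ``order dividing $f$'' and ``stabilises $\rho_1$'' have to be reconciled through lifting to characteristic zero; once that is in hand the final combinatorial step is elementary.
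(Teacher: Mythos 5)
Your proof is correct and follows the same strategy as the paper: compute both $L$-factors via Theorems~\ref{cuspidal} and~\ref{inductivity-segments}, extract a congruence $a_2\equiv b_1+1-k\pmod{o(\rho)}$ from the common pole, and derive a contradiction from the unlinkedness of $\D_1$ and $\D_2$. The only real differences are presentational: the paper passes to a single unramified-twist class via Corollary~\ref{cor-distinct-cusp-lines-2} and encodes the pole relation directly as $q(\rho)^{a_2}=q(\rho)^{b_1+1-k}$ (so the stabiliser bookkeeping and your lifting step for $\mu'$ are unnecessary), and it normalises the endpoints $a_1,b_1,a_2,b_2$ inside an interval of length $<e-1$ rather than running the final case analysis on the circle $\Z_{\rho_1}$.
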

\begin{proof}
Suppose there is a common pole~$x$. First by Corollary \ref{cor-distinct-cusp-lines-2},~$\rho_2$ and~$\rho_1$ are on the same cuspidal line, 
in particular we can take~$\rho_1=\rho_2=\tau$. If~$\tau$ is not equal to~$\rho^\vee$ up to twisting by an unramified character or~$\rho$ is non-banal, then both factors would be equal to~$1$ and 
we are done. Hence~$\rho$ is banal,~$\tau=\chi\rho^\vee$ for~$\chi$ an unramified character of~$F^\times$, and we set~$f=f(\rho)$.  Now 
$$L(X,\rho,\L(\D_1))=1/(1-(q^{-b_1}\chi(\w)X)^f)$$ and~$$L(q^{-1}X^{-1},\rho^\vee,\nutwo^{-k}\L(\D_2)^\vee)=1/(1-(q^{a_2+k-1}\chi(\w)^{-1}X^{-1})^f),$$ setting~$y=\chi(\w)x$, 
we get the relation~$y^f=q^{f(a_2+k-1)}=q^{b_1f}$, i.e. 
\begin{equation}\label{banal-equality} q(\rho)^{a_2}=q(\rho)^{b_1+1-k}.\end{equation}
 As~$\pi$ is banal, we can always choose~$a_1\leqslant b_1$ 
and~$a_2\leqslant b_2$ such that, if~$c=\max(b_1,b_2)$: either~$a_1\leqslant a_2$ and~$0\leqslant c-a_1<e-1$, or~$a_2\leqslant a_1$ and~$0\leqslant c-a_2<e-1$. In the first case, 
Equation (\ref{banal-equality}) implies~$q(\rho)^{a_2-a_1}=q(\rho)^{b_1-a_1+1-k}$, but as~$0\leqslant a_2-a_1< e-1$ and~$$1\leqslant b_1-a_1+1-k\leqslant b_1-a_1+1<e,$$ we deduce that~$a_2-a_1=b_1-a_1+1-k$, i.e.~$a_2=b_1+1-k$. In this case 
one would have~$a_1\leqslant a_2\leqslant b_1+1$ and~$b_2=b_1+1-k+b_2-a_2\geqslant b_1+1$, which is absurd 
as~$\D_1$ and~$\D_2$ would then be linked. In the second case, 
Equation (\ref{banal-equality}) implies~$q(\rho)^{1-k+b_1-a_2}=1$. 
As~$1\leqslant 1+[(b_1-a_1)-k]\leqslant 1+b_1-a_2-k \leqslant 1+b_1-a_2< e$, we deduce the equality~$b_1-a_2= k-1$, which is absurd as 
$k\leqslant b_1-a_1\leqslant b_1-a_2$. This ends the proof.
\end{proof}

We now prove the inductivity relation for banal generic representations.

\begin{thm}\label{inductivity-banal-generic}
Let~$\pi=\L(\D_1)\times \dots \times \L(\D_r)$ and 
$\pi'=\L(\D'_1)\times \dots \times \L(\D'_{r'})$ be two generic~$\ell$-modular representations of~$G_n$ and~$G_{n'}$ respectively, then 
\[L(X,\pi, \pi')=\prod_{i,j} L(X,\L(\D_i),\L(\D'_j)).\]
\end{thm}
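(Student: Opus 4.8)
The plan is to run the argument of the proof of Theorem~\ref{inductivity-segments} (modelled on the complex case treated in \cite{JPS2}), replacing the explicit location of the poles of a single pair of segments by the disjointness-of-poles results of this subsection. First I would reduce the statement to the triviality of a defect polynomial and to a single cuspidal line. As in the proof of Theorem~\ref{inductivity-segments}, Lemmas~\ref{div-subquotient} and~\ref{Lintermediate} (the latter applied in either argument, which is legitimate because the inductivity relation of Theorem~\ref{inductivity-gamma} is symmetric) produce a unique $P\in\Fl[X]$ with $P(0)=1$ such that
\[L(X,\pi,\pi')=P(X)\prod_{i,j}L(X,\L(\D_i),\L(\D'_j)),\]
and we must show $P=1$. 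Moreover $L(X,\L(\D_i),\L(\D'_j))=1$ unless $\D_i$ and $\D'_j$ lie on mutually dual cuspidal lines up to an unramified twist (Theorem~\ref{inductivity-segments}(1) and Theorem~\ref{cuspidal}), and by Corollary~\ref{cor-distinct-cusp-lines} the inverse factors attached to distinct cuspidal lines in the cuspidal supports of $\pi$ and $\pi'$ are pairwise coprime; hence $P$ is a product of contributions, one per cuspidal line, and it suffices to treat the case where every $\rho_i$ lies on a fixed banal line $\Z_\rho$ and, after twisting every segment of $\pi'$ by one fixed unramified character $\chi$ (which merely substitutes $\chi(\w)X$ for $X$ throughout, exactly as in the proof of Theorem~\ref{inductivity-segments}), every $\rho'_j$ equals $\rho^\vee$.

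Then I would use the functional equation to pin $P$ down. The contragredients $\widetilde\pi,\widetilde{\pi'}$ are again banal generic with segments $\L(\D_i)^\vee,\L(\D'_j)^\vee$, so the previous paragraph applied to them yields $Q\in\Fl[X]$ with $Q(0)=1$ and $L(q^{-1}X^{-1},\widetilde\pi,\widetilde{\pi'})=Q(q^{-1}X^{-1})\prod_{i,j}L(q^{-1}X^{-1},\L(\D_i)^\vee,\L(\D'_j)^\vee)$. Dividing the functional equation of Corollary~\ref{fcteq} for $(\pi,\pi')$ by the product over $(i,j)$ of those for the pairs $(\L(\D_i),\L(\D'_j))$ and cancelling $\gamma(X,\pi,\pi',\theta)=\prod_{i,j}\gamma(X,\L(\D_i),\L(\D'_j),\theta)$ (Theorem~\ref{inductivity-gamma}) shows that $Q(X)/P(X)$ is a unit of $\Fl[X^{\pm1}]$; since $P$ and $Q$ are polynomials with constant term $1$ this forces $P=Q$.

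Finally I would prove $P=1$ by induction on the total length $\sum_i(b_i-a_i+1)+\sum_j(b'_j-a'_j+1)$. If some segment $\D_{i_0}$ of $\pi$ has length $\geq 2$, one truncates it by one cuspidal (Corollary~\ref{important-cor}) and passes to the unique generic subquotient of the resulting representation of Whittaker type, getting a banal generic $\pi^\flat$ of strictly smaller total length with $L(X,\pi^\flat,\pi')=R(X)L(X,\pi,\pi')$ for some $R\in\Fl[X]$ (Lemma~\ref{div-subquotient}, the generic subquotient being a quotient). Since the product $\prod_{i,j}L(X,\L(\D_i),\L(\D'_j))$ depends, by Theorem~\ref{inductivity-segments}(\ref{part2inductivity-segments}), only on the right endpoints of the $\D_i$ and on the $\D'_j$, and these endpoints are unchanged by a left truncation followed by the linked-segment merging, this product is the same for $(\pi^\flat,\pi')$ as for $(\pi,\pi')$; the induction hypothesis for $(\pi^\flat,\pi')$ then gives $R(X)P(X)=1$, hence $P=1$. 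When no segment of $\pi$ has length $\geq 2$, one either truncates a long segment of $\pi'$ in the same way --- in which case the target product drops one product of Tate polynomials $\prod_i L(X,\nu^{b_i}\rho,\nu^{a'_{j_0}}\rho^\vee)^{-1}$, so that the induction hypothesis together with $P=Q$ confines $P$ to divide this product and its reflected analogue, which are coprime by Lemma~\ref{no-common-pole} (and Corollary~\ref{cor-distinct-cusp-lines-2}), whence $P=1$ --- or $\pi$ and $\pi'$ are both products of cuspidals, and one argues as at the end of the proof of Theorem~\ref{inductivity-segments}: $P$, and $P=Q$, divide two coprime products of Tate polynomials, again by Lemma~\ref{no-common-pole}.

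The hard part is this last coprimality argument, now for products of several segments rather than for a single pair, together with the bookkeeping needed to see that the truncation-and-re-merging step indeed decreases the total length while leaving the combinatorial data of $\prod_{i,j}L(X,\L(\D_i),\L(\D'_j))$ as claimed. The place at which the banality hypotheses $b_i-a_i+1<e(\rho)=o(\rho)$ are used is exactly the same as in the proof of Theorem~\ref{inductivity-segments}: they keep the relevant exponents of $q(\rho)$ inside a window on which no coincidence of poles between the direct and the reflected factors can occur.
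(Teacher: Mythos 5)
Your overall strategy does follow the paper: set up the defect polynomial $P$ via Lemmas~\ref{div-subquotient} and~\ref{Lintermediate}, use the inductivity of $\gamma$-factors (Theorem~\ref{inductivity-gamma}) to prove $P=Q$, and then kill $P=Q$ by an induction in which one truncates a segment (Corollary~\ref{important-cor}) and invokes Lemma~\ref{no-common-pole}. However, the central bookkeeping claim in your induction step is wrong, and this is precisely where the paper has to work.

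You assert that $\prod_{i,j}L(X,\L(\D_i),\L(\D'_j))$ ``depends only on the right endpoints of the $\D_i$ and on the $\D'_j$'' and is therefore unchanged when some $\D_{i_0}$ is truncated on the left. That is not what Theorem~\ref{inductivity-segments}(\ref{part2inductivity-segments}) says. For a pair of segments the $L$-factor is $\prod_{i}L(X,\nu^{b}\rho_{1},\nu^{i}\rho_{2})$ where $b$ is the right endpoint of the \emph{longer} segment and $i$ runs over the \emph{whole range of the shorter} one. Left-truncating $\D_{i_0}$ leaves $L(X,\L(\D_{i_0}),\L(\D'_j))$ unchanged only so long as $\D_{i_0}$ remains at least as long as $\D'_j$; as soon as truncation makes $\D_{i_0}$ strictly shorter than some $\D'_j$, a Tate factor is lost and the product genuinely changes. (Concretely, for $\D_{i_0}$ and $\D'_j$ of equal length $k$ on dual cuspidal lines, the quotient after one truncation is $L(q^{1-k}X,\rho,\rho')^{-1}\neq 1$.) So the equation ``$R(X)P(X)=1$'' that you extract from the induction hypothesis is not available in general, and your case split (``some $\D_{i_0}$ of length $\geq 2$'' versus ``$\pi$ a product of cuspidals'') does not cover the intermediate situations where $\pi$ has segments of length $\geq 2$ that are nevertheless shorter than segments of $\pi'$. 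Relatedly, you do not say \emph{which} segment to truncate; truncating an arbitrary segment need not preserve genericity. The paper handles both issues at once by truncating a segment of \emph{minimal} length (so unlinkedness is preserved) and by explicitly computing, via Theorem~\ref{inductivity-segments}, the polynomial $\prod_i L(X,\L(\D_1)^{(l)},\L(\D'_i))/L(X,\L(\D_1),\L(\D'_i))$ that $P$ must divide, then concludes with Lemma~\ref{no-common-pole}. Your extra preliminary reduction to a single cuspidal line via Corollary~\ref{cor-distinct-cusp-lines} is harmless but not needed once the quotient is tracked correctly.
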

\begin{proof}
We proceed by induction on~$n+n'$.  If~$n+n'=0$, then both sides of the equality are equal to~$1$.
We now do the induction step. Thanks to Lemma \ref{Lintermediate}, there is 
$P$ and~$Q$ in~$R[X]$, with~$P(0)=Q(0)=1$, such that 
\begin{equation}\label{ind-gen-1} L(X,\pi, \pi')=P\prod_{i,j} L(X,\L(\D_i),\L(\D'_j)),\end{equation} and 
\begin{equation}\label{ind-gen-2} L(q^{-1}X^{-1},\pi^\vee, \pi'^\vee)=Q\prod_{i,j} 
L(q^{-1}X^{-1},\L(\D_i)^\vee,\L(\D_j)^\vee).\end{equation}
The inductivity relation of~$\gamma$-factors (Theorem \ref{inductivity-gamma}) gives, up to units of~$R[X^{\pm 1}]$, the relation:
\begin{equation}\label{ind-gen-3} \frac{L(q^{-1}X^{-1},\pi^\vee, \pi'^\vee)}{L(X,\pi,\pi')}= 
\frac{\prod_{i,j} L(q^{-1}X^{-1},\L(\D_i)^\vee,\L(\D'_j)^\vee)}{\prod_{k,l} L(X,\L(\D_k),\L(\D'_l))}.
\end{equation}
In particular, Equations (\ref{ind-gen-1}), (\ref{ind-gen-2}) and (\ref{ind-gen-3}) give~$P=Q$ up to units of~$R[X^{\pm 1}]$, i.e.~$P=Q$ (as they are in~$R[X]$ with constant term equal to~$1$). It thus suffices to prove that they are equal to~$1$, which will follow from the fact that they have no common root.
Notice that if~$r=r'=1$ there is nothing to prove and~$P$ and~$Q$ 
are obviously equal to~$1$. If not, we can always assume that~$r\geqslant 2$ as the~$L$-function is by definition symmetric with respect to 
its two last variables. So we suppose that~$r\geqslant 2$. In this case, we order the segments~$\D_1,\dots,\D_r$ and~$\D'_1,\dots,\D'_{r'}$ so that~$\D_1$ has minimal length amongst the~$\D_i$  and, similarly,~$\D'_1$ has minimal length amongst the~$\D'_i$ , which is possible as~$\pi$ and~$\pi'$ are generic. We have~$\L(\D_1)=St(\rho,k)$, where~$\rho$ is a cuspidal representation of~$G_l$. We set~$\L(\D_1)^{(l)}=St(\rho,[1,k-1])$ (this is indeed the~$l$-th derivative of~$\L(\D_1)$ by Theorem \ref{derivatives-commute}, as the same formula holds for~$\ell$-adic representations according to \cite{Z}, we won't use this fact anyway). Thanks to our ordering, the representation~$\L(\D_1)^{(l)}\times \L(\D_2) \times \cdots\times \L(\D_r)$ is still generic. By induction hypothesis, we have 
\begin{align}\label{ind-gen-4}L(X,\L(\D_1)^{(l)}\times \L(\D_2)\times \cdots \times \L(\D_r),&\L(\D_1')\times \cdots \times \L(\D'_{r'}))\\
\notag&=\prod_{i=1}^{r'} L(X,\L(\D_1)^{(l)},\L(\D'_i))\prod_{j=2}^r\prod_{k=1}^{r'} L(X,\L(\D_j),\L(\D'_k)).\end{align} 
Now by Corollary \ref{important-cor}, 
we also know that~$L(X,\L(\D_1)^{(l)}\times \L(\D_2)\times \dots \times \L(\D_r),\L(\D_1')\times \dots \times \L(\D'_{r'}))^{-1}$ 
divides~$L(X,\pi, \pi')^{-1}$. Hence Equations (\ref{ind-gen-1}) and (\ref{ind-gen-4}) together imply that 
$P$ divides \[\prod_{i=1}^{r'} \frac{L(X,\L(\D_1)^{(l)},\L(\D'_i))}{L(X,\L(\D_1),\L(\D'_i)}.\] 
Write~$\L(\D'_i)=St(\rho'_i,k_i)$, and let~$\epsilon_i\in \{0,1\}$, be equal to~$1$ if and only if 
$\rho'_i$ is equal to~$\rho^\vee$ up to unramified twist and~$k<k_i$. The above product, by Theorem \ref{inductivity-segments}, 
is equal to \[\prod_{i=1}^{r'} L(X,\rho,\L(\D'_i))^{-\epsilon_i}.\] Similarly, we show that~$Q$ divides 
\[ \prod_{i=1}^{r'} L(q^{-1}X^{-1},\nu^{-k}\rho,\L(\D'_i))^{-\epsilon_i}.\]
In particular, if~$P$ and~$Q$ have a common root, there would be~$i$ and~$j$ in~$\{ 1,\ldots,r'\}$, with~$k_i$ and~$k_j$ both~$>k$, such that~$L(X,\rho,\L(\D'_i))$ and~$L(q^{-1}X^{-1},\nu^{-k}\rho,\L(\D'_j))$ would have a common pole~$x$. This is absurd according to Lemma \ref{no-common-pole}. This ends the proof.
\end{proof}

We obtain the following corollary. 

\begin{thm}
Let~$\pi$ and~$\pi$ be two~$\ell$-modular banal generic representations of~$G_n$, then~$\pi$ and~$\pi'$ 
admit generic lifts~$\tau$ and~$\tau'$, such that 
\[L(X,\pi,\pi')=r_\ell (L(X,\tau,\tau')).\] 
\end{thm}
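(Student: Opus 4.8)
The plan is to reduce the identity to the segment‑by‑segment statement already contained in Theorem~\ref{inductivity-segments}. Write $\pi$ for a representation of $G_n$ and $\pi'$ for one of $G_{n'}$. First I would decompose $\pi=\L(\D_1)\times\cdots\times\L(\D_r)$ and $\pi'=\L(\D'_1)\times\cdots\times\L(\D'_{r'})$ into generic segments (Theorem~\ref{gen}); since $\pi$ and $\pi'$ are banal generic, each $\L(\D_i)$ and each $\L(\D'_j)$ is a \emph{banal} generic segment (cf.\ Proposition~\ref{banal-part} and the discussion preceding it), say $\L(\D_i)=St(\rho_i,k_i)$ and $\L(\D'_j)=St(\rho'_j,k'_j)$ with $\rho_i,\rho'_j$ cuspidal $\ell$-modular representations. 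Using Remark~\ref{generic-lifts} I would choose cuspidal lifts $\tau_i$ of $\rho_i$ and $\tau'_j$ of $\rho'_j$, form the standard lifts $\L(D_i)=St(\tau_i,k_i)$ and $\L(D'_j)=St(\tau'_j,k'_j)$ of $\L(\D_i)$ and $\L(\D'_j)$, and arrange that $\tau:=\L(D_1)\times\cdots\times\L(D_r)$ and $\tau':=\L(D'_1)\times\cdots\times\L(D'_{r'})$ are irreducible, i.e.\ generic. Remark~\ref{generic-lifts} guarantees that such a choice is possible, and then $\tau$ and $\tau'$ are generic lifts of $\pi$ and $\pi'$.

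Next I would apply Theorem~\ref{inductivity-banal-generic} on the $\ell$-modular side and the classical multiplicativity of $\ell$-adic Rankin--Selberg $L$-factors for generic representations \cite{JPS2} on the $\ell$-adic side, obtaining
\[L(X,\pi,\pi')=\prod_{i,j}L(X,\L(\D_i),\L(\D'_j)),\qquad L(X,\tau,\tau')=\prod_{i,j}L(X,\L(D_i),\L(D'_j)).\]
It then suffices to match the two products factor by factor. For a fixed pair $(i,j)$, the segments $\L(\D_i)$ and $\L(\D'_j)$ are banal and generic, and $\L(D_i)$, $\L(D'_j)$ are generic segments lifting them with $\tau_i$, $\tau'_j$ cuspidal lifts of $\rho_i$, $\rho'_j$; so part~(\ref{part2inductivity-segments}) of Theorem~\ref{inductivity-segments} (applied, after swapping the two segments if necessary, using the symmetry of the $L$-factor in its two arguments) gives precisely
\[L(X,\L(\D_i),\L(\D'_j))=r_{\ell}\bigl(L(X,\L(D_i),\L(D'_j))\bigr).\]

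Finally I would combine the three displays. By Corollary~\ref{div} each $\ell$-adic factor $L(X,\L(D_i),\L(D'_j))$ is the reciprocal of a polynomial in $\Zl[X]$ with constant term $1$; hence the finite product $L(X,\tau,\tau')$ is of the same form, its reduction modulo~$\ell$ is well defined, and $r_{\ell}$ is multiplicative on such reciprocals, so that
\[L(X,\pi,\pi')=\prod_{i,j}r_{\ell}\bigl(L(X,\L(D_i),\L(D'_j))\bigr)=r_{\ell}\Bigl(\prod_{i,j}L(X,\L(D_i),\L(D'_j))\Bigr)=r_{\ell}\bigl(L(X,\tau,\tau')\bigr).\]
The one point requiring care is the choice of lifts in the first step: one must be able to lift all the banal generic segments of $\pi$ (resp.\ of $\pi'$) to $\ell$-adic segments that are pairwise unlinked, so that $\tau$ (resp.\ $\tau'$) is irreducible --- and this is exactly what Remark~\ref{generic-lifts} supplies. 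The compatibility of $r_{\ell}$ with the product of $L$-factors is immediate from Corollary~\ref{div}, and the rest is bookkeeping, so I do not anticipate a serious obstacle; Theorems~\ref{inductivity-banal-generic} and~\ref{inductivity-segments} carry all the real content.
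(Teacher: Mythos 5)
Your proof is correct and takes the same route as the paper: choose standard lifts which are generic (Remark~\ref{generic-lifts}), apply Theorem~\ref{inductivity-banal-generic} and the $\ell$-adic inductivity to reduce to a product of segment-pair factors, and match these factor by factor via part~(\ref{part2inductivity-segments}) of Theorem~\ref{inductivity-segments}. The paper condenses exactly this into one sentence; you have simply spelled out the bookkeeping (the banality of each segment, the irreducibility of the standard lifts, and the multiplicativity of $r_\ell$ via Corollary~\ref{div}), all of which is sound.
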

\begin{proof}
Take~$\tau$ and~$\tau'$ two standard lifts which are generic, and apply Theorems 
\ref{inductivity-banal-generic} and \ref{inductivity-segments}.
\end{proof}

\subsection{$L$-factors of generic representations}\label{Finalsubsect}

We now obtain the inductivity relation for all generic representations. It follows at once from 
the results of the previous subsection and the following equality.

\begin{thm}\label{banal-part-factors}
Let~$\pi=\pi_{b}\times \pi_{tnb}$ be a generic~$\ell$-modular representation of~$G_n$, and~$\pi'=\pi_{b}'\times \pi_{tnb}'$ be a generic~$\ell$-modular representation of 
$G_{n'}$, then we have
\[L(X,\pi,\pi')= L(X,\pi_b,\pi_b').\]
\end{thm}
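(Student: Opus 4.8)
The plan is to prove the two divisibilities $L(X,\pi,\pi')\mid L(X,\pi_b,\pi_b')$ and $L(X,\pi_b,\pi_b')\mid L(X,\pi,\pi')$; since both sides are Euler factors (inverses of polynomials with constant term $1$), this forces the equality. Write $\pi=\L(\D_1)\times\cdots\times\L(\D_r)$ and $\pi'=\L(\D'_1)\times\cdots\times\L(\D'_{r'})$, say with $\L(\D_i)$ banal exactly for $i$ in a subset $B\subseteq\{1,\dots,r\}$ and $\L(\D'_j)$ banal exactly for $j\in B'$, so that (after reordering, which is harmless for generic representations by Theorem~\ref{gen}) $\pi_b=\prod_{i\in B}\L(\D_i)$ and $\pi_b'=\prod_{j\in B'}\L(\D'_j)$; in particular $\pi_b$ and $\pi_b'$ are banal generic.

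For $L(X,\pi,\pi')\mid L(X,\pi_b,\pi_b')$: iterating Lemma~\ref{Lintermediate} (and using that the $L$-factor is symmetric in its two arguments) yields
\[L(X,\pi,\pi')\ \mid\ \prod_{i=1}^{r}\prod_{j=1}^{r'}L(X,\L(\D_i),\L(\D'_j)).\]
By Theorem~\ref{inductivity-segments}(1), every factor with $i\notin B$ or $j\notin B'$ is equal to $1$, so the right-hand side is $\prod_{i\in B}\prod_{j\in B'}L(X,\L(\D_i),\L(\D'_j))$, which equals $L(X,\pi_b,\pi_b')$ by the inductivity relation for banal generic representations (Theorem~\ref{inductivity-banal-generic}). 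This gives the first divisibility.

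For the reverse divisibility $L(X,\pi_b,\pi_b')\mid L(X,\pi,\pi')$, I would again use Theorem~\ref{gen} to write $\pi\simeq\pi_{tnb}\times\pi_b$ and $\pi'\simeq\pi'_{tnb}\times\pi_b'$. Proposition~\ref{div-induced} (in the form used in the proof of Theorem~\ref{inductivity-banal-generic}, i.e.\ allowing the two representations to have different sizes) says that adjoining a factor on the left of the first argument can only enlarge the set of poles; hence $L(X,\pi_b,\pi')\mid L(X,\pi_{tnb}\times\pi_b,\pi')=L(X,\pi,\pi')$, and applying it once more in the second argument (via symmetry of the $L$-factor) gives $L(X,\pi_b,\pi_b')\mid L(X,\pi_b,\pi'_{tnb}\times\pi_b')=L(X,\pi_b,\pi')$. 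Chaining these two divisibilities, together with the first part, gives $L(X,\pi,\pi')=L(X,\pi_b,\pi_b')$.

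The hard part is the reverse divisibility: one must know that inserting the totally non-banal segments neither creates a genuinely new pole nor destroys an existing one, and the clean route above hides this inside Proposition~\ref{div-induced} used in the generality of unequal sizes. If one prefers to argue directly, the substitute is the $\gamma$-factor computation of the previous proofs: writing $L(X,\pi_b,\pi_b')^{-1}=L(X,\pi,\pi')^{-1}S(X)$ from the first divisibility, comparing $\gamma(X,\pi,\pi')$ with $\gamma(X,\pi_b,\pi_b')$ through Theorem~\ref{inductivity-gamma} (the extra $\gamma$-factors being units, since the corresponding $L$- and $\widetilde L$-factors are trivial by Theorem~\ref{inductivity-segments}(1)), and then ruling out roots of $S$: such a root would be a pole of $L(X,\pi_b,\pi_b')$, hence would lie on a banal cuspidal line, while the functional equation would force a companion pole of $L(X,\pi_b^{\vee},\pi_b'^{\vee})$ at a linked location, contradicting the coprimality statements of Proposition~\ref{common pole implies equal} and Lemmas~\ref{poles-segments} and~\ref{no-common-pole}. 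In either presentation, the underlying point is that banal and totally non-banal segments sit on disjoint cuspidal lines and so cannot interact through the Rankin--Selberg integral.
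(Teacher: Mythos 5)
Your proof is correct and follows essentially the same route as the paper: the easy divisibility $L(X,\pi,\pi')\mid L(X,\pi_b,\pi_b')$ comes from iterating Lemma~\ref{Lintermediate} and killing the non-banal factors via Theorem~\ref{inductivity-segments}, and the reverse divisibility comes from Proposition~\ref{div-induced} applied in each variable. The only cosmetic difference is that the paper groups the factors as $L(X,\pi_b,\pi_b')L(X,\pi_{tnb},\pi_b')L(X,\pi_b,\pi_{tnb}')L(X,\pi_{tnb},\pi_{tnb}')$ rather than going all the way down to segments (so it does not explicitly invoke Theorem~\ref{inductivity-banal-generic}), and your remark that Proposition~\ref{div-induced} is being used in the form allowing unequal sizes is a legitimate reading of how the paper itself uses it.
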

\begin{proof}
From Proposition \ref{div-induced}, we know that~$L(X,\pi_b,\pi_b')$ divides~$L(X,\pi,\pi')$. Now from by Lemma \ref{Lintermediate} 
we know that~$L(X,\pi,\pi')$ divides~$L(X,\pi_b,\pi_b')L(X,\pi_{tnb},\pi_b')L(X,\pi_b,\pi_{tnb}')L(X,\pi_{tnb},\pi_{tnb}')$. Hence we only need 
to see that each factor~$L(X,\pi_{tnb},\pi_b')$,~$L(X,\pi_b,\pi_{tnb}')$ and~$L(X,\pi_{tnb},\pi_{tnb}')$ is equal to one. 
This follows easily from Lemma \ref{Lintermediate} and Theorem \ref{inductivity-segments}.
\end{proof}

An immediate corollary, which follows from the inductivity relation for~$L$ factors of~$\ell$-modular 
banal generic representations (Theorem \ref{inductivity-banal-generic}), is the inductivity relation for~$L$ factors of~$\ell$-modular generic representations.

\begin{cor}\label{generic-inductivity}
Let~$\pi=\L(\D_1)\times \dots \times \L(\D_r)$ and~$\pi'=\L(\D_1')\times \dots \times \L(\D_t')$ be two~$\ell$-modular 
generic representations, then \[L(X,\pi,\pi')=\prod_{i,j}L(X,\L(\D_i),\L(\D_j')).\]
\end{cor}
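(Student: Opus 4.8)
The plan is to deduce this from the inductivity relation in the banal case (Theorem~\ref{inductivity-banal-generic}) together with the reduction of generic $L$-factors to banal parts (Theorem~\ref{banal-part-factors}). First I would write $\pi=\pi_b\times\pi_{tnb}$ and $\pi'=\pi_b'\times\pi_{tnb}'$ for the decompositions into banal and totally non-banal parts, so that $\pi_b$ is the product of those $\L(\D_i)$ which are banal segments, $\pi_{tnb}$ the product of the remaining (non-banal) ones, and similarly for $\pi'$; this is legitimate since the product is commutative for generic representations by Theorem~\ref{gen}. By Theorem~\ref{banal-part-factors} we then have $L(X,\pi,\pi')=L(X,\pi_b,\pi_b')$, and applying Theorem~\ref{inductivity-banal-generic} to the banal generic representations $\pi_b$ and $\pi_b'$ gives
\[
L(X,\pi_b,\pi_b')=\prod L(X,\L(\D_i),\L(\D_j')),
\]
the product ranging over all pairs $(i,j)$ for which both $\L(\D_i)$ and $\L(\D_j')$ are banal.

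Next I would check that enlarging this product to range over \emph{all} pairs $(i,j)$ changes nothing. Indeed, whenever $\L(\D_i)$ or $\L(\D_j')$ is non-banal, part~(1) of Theorem~\ref{inductivity-segments} gives $L(X,\L(\D_i),\L(\D_j'))=1$. Hence
\[
\prod_{i,j} L(X,\L(\D_i),\L(\D_j'))=\prod_{\L(\D_i),\,\L(\D_j')\ \text{banal}} L(X,\L(\D_i),\L(\D_j'))=L(X,\pi_b,\pi_b')=L(X,\pi,\pi'),
\]
which is the claimed formula.

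Since both ingredients (Theorems~\ref{banal-part-factors} and \ref{inductivity-banal-generic}) are already established, there is essentially no obstacle: the only point requiring a check is that the partition of the segments into banal and non-banal ones used here matches the decomposition $\pi=\pi_b\times\pi_{tnb}$, which is immediate from the definition of the banal and totally non-banal parts and from Proposition~\ref{banal-part}, by which a segment $\L(\D)=St(\rho,k)$ occurring in a generic representation is banal exactly when $\rho$ is.
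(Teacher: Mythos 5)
Your proof is correct and is exactly the argument the paper intends: the paper presents this corollary as "immediate" from Theorems~\ref{banal-part-factors} and~\ref{inductivity-banal-generic}, and you have correctly supplied the one step the paper leaves implicit, namely that the missing factors $L(X,\L(\D_i),\L(\D_j'))$ with a non-banal segment are all trivial by Theorem~\ref{inductivity-segments}(1).
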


As a consequence, we get a nice result on congruences of~$\ell$-adic~$L$-factors. 
We start with the particular case of segments. 

\begin{prop}\label{gcd-segments}
Let~$\L(\D)$ and~$\L(\D')$ be two~$\ell$-modular generic segments of~$G_n$ and~$G_m$ respectively. 
Then there are~$\ell$-adic generic representations~$\tau_1,\dots,\tau_p$, and~$\tau_1',\dots,\tau_q'$ (with~$p$ and~$q$ at most~$2$) such that~$J_\ell(\tau_i)=\L(\D)$ and~$J_\ell(\tau_i)=\L(\D')$, such that 
\[L(X,\L(\D),\L(\D'))=\GCD_{i,j}(L(X,\tau_i,\tau_j')).\]
\end{prop}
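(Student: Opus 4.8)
Write $\L(\D)=St(\rho,k_1)$ and $\L(\D')=St(\rho',k_2)$. One direction is automatic regardless of which lifts we pick: by Remarks~\ref{lift-segments} and \ref{generic-lifts}, every $\ell$-adic cuspidal lift $\sigma$ of $\rho$ gives a generic standard lift $St(\sigma,k_1)$ of $\L(\D)$, which is a Whittaker lift by Theorem~\ref{integral-whittaker}; hence Theorem~\ref{compat1} yields $L(X,\L(\D),\L(\D'))\mid r_\ell(L(X,St(\sigma,k_1),St(\sigma',k_2)))$ for all such $\sigma,\sigma'$, so $L(X,\L(\D),\L(\D'))$ divides the $\GCD$ over any family of such lifts. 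It therefore suffices to produce a family of size at most two on each side realising the reverse division. In the banal case (both $\rho,\rho'$ banal) I would take a single generic standard lift on each side, and the last assertion of Theorem~\ref{inductivity-segments}(\ref{part2inductivity-segments}) gives $L(X,\L(\D),\L(\D'))=r_\ell(L(X,\L(D),\L(D')))$ directly, so $p=q=1$.

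If $\L(\D)$ or $\L(\D')$ is non-banal, then $L(X,\L(\D),\L(\D'))=1$ by Theorem~\ref{inductivity-segments}(1), and the task is to choose lifts so that the $\GCD$ of the relevant reductions is trivial. Expanding a single pair $St(\sigma,k_1)$, $St(\sigma',k_2)$ by the $\ell$-adic formula~(\ref{complex-L-segments}) and using the explicit form of cuspidal $\ell$-adic $L$-factors from Theorem~\ref{cuspidal}, one checks that $r_\ell(L(X,St(\sigma,k_1),St(\sigma',k_2)))$ is trivial unless $\sigma'$ is an unramified twist of $\sigma^\vee$; in particular it is trivial whenever $\rho'$ is not an unramified twist of $\rho^\vee$, which in turn always holds when exactly one of $\L(\D),\L(\D')$ is non-banal (since then $o(\rho)\neq o(\rho')$, while $o(\chi\rho^\vee)=o(\rho)$ for unramified $\chi$). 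So in every non-banal case except ``$\rho,\rho'$ both non-banal and $\rho'$ an unramified twist of $\rho^\vee$'' a single generic standard lift on each side already gives $\GCD=1=L(X,\L(\D),\L(\D'))$, i.e.\ $p=q=1$.

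In the remaining case ($\rho,\rho'$ both non-banal, $\rho'\simeq\chi\rho^\vee$ with $\chi$ unramified) the reduction of a single $\ell$-adic $L$-factor is genuinely nontrivial, so I would take two lifts $\tau_1=St(\sigma_1,k_1)$ and $\tau_2=St(\sigma_2,k_1)$ of $\L(\D)$, where $\sigma_1,\sigma_2$ are two $\ell$-adic cuspidal lifts of $\rho$ lying in distinct inertial classes, together with a single lift $\tau_1'=St(\sigma',k_2)$ of $\L(\D')$. Since $\sigma_1^\vee$ and $\sigma_2^\vee$ are not unramified twists of one another, $\sigma'$ is an unramified twist of at most one of them, so for the other index $i$ the factor $r_\ell(L(X,\tau_i,\tau_1'))$ is trivial by the computation above, whence $\GCD=1=L(X,\L(\D),\L(\D'))$ with $p\leqslant 2$ and $q=1$. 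The hard part of the argument is precisely the construction of these two inertially inequivalent lifts of $\rho$: when $\rho$ is supercuspidal it is exactly Proposition~\ref{supercuspliftingnoniso} (non-banality of $\rho$ is equivalent to $\ell\mid|k_E^\times|$, the hypothesis used there), and for a general non-banal cuspidal $\rho$ one descends through the Bushnell--Kutzko data of Section~\ref{liftingsection} to the cuspidal representation of the relevant finite general linear group and applies Proposition~\ref{lifts}. Once the two lifts are fixed, everything else is an immediate check with~(\ref{complex-L-segments}) and Theorem~\ref{cuspidal}; the main obstacle is this lifting step together with correctly isolating which case actually requires it.
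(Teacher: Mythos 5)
Your argument is fine right up to the final case, but there is a genuine gap exactly where you locate the ``hard part.'' In the case where $\rho$ and $\rho'$ are both non-banal with $\rho'\simeq\chi\rho^\vee$, you propose to produce two cuspidal $\ell$-adic lifts $\sigma_1,\sigma_2$ of $\rho$ lying in distinct inertial classes, invoking Proposition~\ref{supercuspliftingnoniso} when $\rho$ is supercuspidal and Proposition~\ref{lifts} otherwise. But Proposition~\ref{lifts} only guarantees two non-isomorphic cuspidal lifts unconditionally when the underlying finite-group representation is \emph{supercuspidal}; for cuspidal non-supercuspidal $\rho$ the conclusion is conditional on a hypothesis about generators of the $\ell$-singular part, which can fail. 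The paper explicitly records a counterexample in the remark following Proposition~\ref{lifts}: for $\ell=3$ the unique cuspidal $\ell$-modular representation of $\GL_2(\mathbb{F}_2)$ is non-supercuspidal and has \emph{exactly one} cuspidal lift. So the two inertially inequivalent cuspidal lifts you want simply need not exist, and the construction breaks down precisely when $\rho$ is cuspidal but not supercuspidal.

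The paper's proof circumvents this by taking the second lift of $\L(\D)$ to be of a fundamentally different shape. Writing $\rho=\St_r(\mu)$ with $\mu$ supercuspidal on a strictly smaller $G_d$, one takes $\tau_1=\L([a,b]_{\sigma_1})$ with $\sigma_1$ a cuspidal lift of $\rho$ on $G_t$, and $\tau_2$ the generic subquotient of $\nu^a\St_r(\nu)\times\cdots\times\nu^b\St_r(\nu)$ where $\nu$ is a cuspidal lift of $\mu$ on $G_d$. Both have $J_\ell(\tau_i)=\L(\D)$ by Lemma~\ref{Jl}, but their cuspidal supports live on general linear groups of \emph{different sizes} ($G_t$ versus $G_d$, $d<t$), so for any fixed lift $\L(D')$ of $\L(\D')$ at most one of $L(X,\tau_1,\L(D'))$, $L(X,\tau_2,\L(D'))$ can be nontrivial. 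This is a genuinely different idea from finding two cuspidal lifts: you only need two lifts with incompatible cuspidal supports, and cuspidality of the second lift is abandoned. Your argument in the banal case, the case where at most one segment is non-banal, and the non-banal supercuspidal case is correct and matches the paper's; it is only the non-banal, non-supercuspidal subcase that needs this replacement.
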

\begin{proof}
We write~$\D=[a,b]_\rho$ and~$\D'=[a',b']_\rho'$. In this case we already know the 
result if both segments are banal by Theorem \ref{inductivity-segments} (one only needs one lift). If one segment 
is nonbanal, for instance~$\D$, then~$\rho$ is nonbanal as well (Proposition \ref{banal-part}). We let~$\L(D')$ be a segment lifting 
$\L(\D)$ in any case. If~$\rho$ is supercuspidal, according to Proposition \ref{supercuspliftingnoniso},~$\rho$ has two cuspidal 
lifts~$\sigma_1$ and~$\sigma_2$ which are in different orbits under unramified twists. In particular, as the~$L$ factor of a pair 
of cuspidal representations which are not contragredient of one another up to unramified twist is equal to~$1$, using Lemmas \ref{div-subquotient} and \ref{Lintermediate} 
for~$L(X,L([a,b]_{\sigma_1}),\L(D'))$ and~$L(X,L([a,b]_{\sigma_2}),\L(D'))$, this implies that one of the factors is equal to~$1$, and we are done in this case (with~$\tau_i=\L([a,b]_{\sigma_i})$, and~$\tau_1'=\L(D')$). If~$\rho$ is not supercuspidal, we can write under the form~$St_r(\mu)$ for some supercuspidal representation~$\mu$. Let~$t$ 
be the positive integer such that~$\rho$ is a representation of~$G_t$, then~$\mu$ is a representation of~$G_d$ for~$d\leq t/\ell < t$, 
the important point here being~$d<t$. 
Then~$\rho$ can be lifted to a cuspidal representation~$\sigma_1$ of~$G_l$, but we also have~$\rho=J_\ell(St_r(\nu))$, for~$\nu$ a cuspidal 
lift of~$\mu$. We let~$\tau_1=\L([a,b]_{\sigma_1})$, and~$\tau_2$ be the generic subquotient of~$\nu^a St_r(\nu)\times \dots \times \nu^b St_r(\nu)$, then Lemma \ref{Jl} 
implies that~$\D=J_\ell(\tau_2)=J_\ell(\tau_1)$. 
As the~$L$ factor of a pair 
of cuspidal representations which are representations of different linear groups is equal to~$1$, by Lemmas \ref{div-subquotient} and \ref{Lintermediate} again, one of the factors~$L(X,\tau_1,\L(D'))$ and~$L(X,\tau_2,\L(D'))$ is equal to~$1$ and this ends the proof (with~$\tau_1'=\L(D')$).
\end{proof}

\begin{thm}\label{GCDtheorem}
Let~$\pi$ and~$\pi'$ be two generic~$\ell$-modular representations of~$G_n$ and~$G_m$, then:
\[L(X,\pi,\pi')=\GCD(r_\ell(L(X,\tau,\tau'))).\]
where the divisor is over all integral generic~$\ell$-adic representations~$\tau$ of~$G_n$ and~$\tau'$ of~$G_m$ which contain~$\pi$ and~$\pi'$, respectively, as subquotients after reduction modulo~$\ell$.
\end{thm}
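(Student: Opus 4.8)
The plan is to prove the two divisibilities
\[L(X,\pi,\pi')\mid \GCD(r_\ell(L(X,\tau,\tau')))\quad\text{and}\quad \GCD(r_\ell(L(X,\tau,\tau')))\mid L(X,\pi,\pi'),\]
the $\GCD$ running over the eligible pairs $(\tau,\tau')$ of the statement. For any such pair, $r_\ell(\tau)$ contains the generic representation $\pi$ as a subquotient; since $\tau$ is generic, Lemma~\ref{Jl} forces $\pi=J_\ell(\tau)$, and likewise $\pi'=J_\ell(\tau')$. For the first divisibility, fix an eligible pair and realise $\tau$ as a Whittaker-type quotient of $\sigma=r_1\times\dots\times r_t$, where $r_1,\dots,r_t$ is the integral $\ell$-adic cuspidal support of $\tau$, and similarly $\tau'$ as a quotient of $\sigma'=r_1'\times\dots\times r_{t'}'$. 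By Lemma~\ref{div-subquotient}, $L(X,\tau,\tau')=L(X,\sigma,\sigma')$, so $r_\ell(L(X,\tau,\tau'))=r_\ell(L(X,\sigma,\sigma'))$. Now $\sigma$ is the standard lift of the $\ell$-modular Whittaker-type representation $\rho_1\times\dots\times\rho_t$ with $\rho_i=r_\ell(r_i)$, hence a Whittaker lift by Theorem~\ref{integral-whittaker}, and similarly for $\sigma'$; Theorem~\ref{compat1} gives $L(X,\rho_1\times\dots\times\rho_t,\rho_1'\times\dots\times\rho_{t'}')\mid r_\ell(L(X,\sigma,\sigma'))$. Finally $\pi=J_\ell(\tau)$ is, by Lemma~\ref{Jl}, the generic subquotient of $\rho_1\times\dots\times\rho_t$, and likewise $\pi'$, so Lemma~\ref{div-subquotient} gives $L(X,\pi,\pi')\mid L(X,\rho_1\times\dots\times\rho_t,\rho_1'\times\dots\times\rho_{t'}')$. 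Concatenating the three relations proves $L(X,\pi,\pi')\mid r_\ell(L(X,\tau,\tau'))$ for every eligible pair, hence the first divisibility.

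For the second divisibility it suffices to exhibit a single eligible pair $(\tau_0,\tau_0')$ with $r_\ell(L(X,\tau_0,\tau_0'))=L(X,\pi,\pi')$, since then $\GCD(r_\ell(L(X,\tau,\tau')))$ divides $r_\ell(L(X,\tau_0,\tau_0'))=L(X,\pi,\pi')$. Write $\pi=\L(\D_1)\times\dots\times\L(\D_r)$ and $\pi'=\L(\D_1')\times\dots\times\L(\D_{r'}')$ with pairwise unlinked segments. I would build $\tau_0=\sigma_1\times\dots\times\sigma_r$ and $\tau_0'=\sigma_1'\times\dots\times\sigma_{r'}'$, where $\sigma_i$ (resp.\ $\sigma_j'$) is an $\ell$-adic generic representation with $J_\ell(\sigma_i)=\L(\D_i)$ (resp.\ $J_\ell(\sigma_j')=\L(\D_j')$), chosen as follows: for a banal segment take a standard lift; for a non-banal segment take one of the (at most two) lifts provided by Proposition~\ref{gcd-segments}, and whenever a non-banal cuspidal line underlies a segment of $\pi$ whose contragredient line underlies a segment of $\pi'$, use the two lifts of Proposition~\ref{gcd-segments} for these two segments so that the resulting $\ell$-adic $L$-factor is $1$. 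Since two non-banal generic segments on a common cuspidal line are always linked (Proposition~\ref{banal-part} and the definition of linked segments), $\pi$ and $\pi'$ each carry at most one non-banal segment per line, so these choices are made without conflict. One then checks that $\tau_0$ and $\tau_0'$ are irreducible — unlinkedness of the constituent $\ell$-adic segments on a banal line is inherited from $\pi$, and constituents attached to distinct cuspidal lines are automatically unlinked — hence integral generic, with $J_\ell(\tau_0)=\prod_i\L(\D_i)=\pi$ and $J_\ell(\tau_0')=\pi'$ by Lemma~\ref{Jl}, so $(\tau_0,\tau_0')$ is eligible.

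By the $\ell$-adic inductivity relation, $L(X,\tau_0,\tau_0')=\prod_{i,j}L(X,\sigma_i,\sigma_j')$, hence $r_\ell(L(X,\tau_0,\tau_0'))=\prod_{i,j}r_\ell(L(X,\sigma_i,\sigma_j'))$. Every factor in which $\L(\D_i)$ or $\L(\D_j')$ is non-banal equals $1$: either the two underlying cuspidal lines are unrelated, so the factor is $1$ by Theorem~\ref{cuspidal} together with inductivity of $\gamma$-factors, or they are contragredient non-banal lines and our choice of the two lifts of Proposition~\ref{gcd-segments} forces it to be $1$. The remaining product, over pairs of banal segments, equals $\prod_{\text{banal }i,j}r_\ell(L(X,\L(D_i),\L(D_j')))=\prod_{\text{banal }i,j}L(X,\L(\D_i),\L(\D_j'))$ by Theorem~\ref{inductivity-segments}, and by Theorem~\ref{inductivity-banal-generic} and Theorem~\ref{banal-part-factors} this equals $L(X,\pi_b,\pi_b')=L(X,\pi,\pi')$. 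This gives $r_\ell(L(X,\tau_0,\tau_0'))=L(X,\pi,\pi')$, hence the second divisibility and the theorem; a closing remark would note that the $\GCD$ formula also makes sense for arbitrary integral $\ell$-adic pairs, suggesting a definition of $L$-factors for pairs of irreducible $\ell$-modular representations.

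The hard part will be the construction of the single eligible pair $(\tau_0,\tau_0')$ in the second divisibility: one must verify that the chosen segment-lifts genuinely assemble into irreducible $\ell$-adic representations (so that they qualify as generic lifts in the sense of the statement), and that on each pair of contragredient non-banal cuspidal lines the two lifts of Proposition~\ref{gcd-segments} can be apportioned between $\pi$ and $\pi'$ so that every ``error'' Euler factor vanishes. Both points rest on the fact that non-banal generic segments sharing a cuspidal line are always linked — so each of $\pi$ and $\pi'$ has at most one non-banal segment per line, removing any interference between the choices — together with bookkeeping on cuspidal lines and the triviality of $L$-factors of pairs of $\ell$-adic cuspidal representations lying in distinct unramified-twist orbits (Theorem~\ref{cuspidal}). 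The first divisibility, by contrast, should be routine once one notes that an eligible $\tau$ is a quotient of the standard lift of its reduced cuspidal support.
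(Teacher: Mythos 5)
The first divisibility in your proof follows the paper's route almost verbatim: factor $\tau$ through its cuspidal support $\sigma$, apply Lemma~\ref{div-subquotient}, then Theorem~\ref{compat1} for the Whittaker lift $\sigma$, then Lemma~\ref{div-subquotient} again. That part is correct and essentially identical to the paper.

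The second divisibility is where you diverge, and there is a genuine gap. The paper does \emph{not} try to exhibit a single eligible pair $(\tau_0,\tau_0')$ achieving the reduction on the nose. Instead, it applies Proposition~\ref{gcd-segments} to \emph{each} pair of segments $(\L(\D_k),\L(\D_l'))$ to produce a finite family of lifts $\{\tau^k_{i_k}\}$, $\{{\tau'}^l_{j_l}\}$ (at most two per segment, independent of the pairing), takes the \emph{unique generic subquotient} $\tau_{i_1,\dots,i_r}$ of each product $\tau^1_{i_1}\times\dots\times\tau^r_{i_r}$, and then invokes the distributivity of $\GCD$ over a product ranging over independent multi-indices. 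The two crucial pieces of slack this buys are exactly the two places your argument becomes hand-wavy: (a) passing to the unique generic subquotient dissolves the need to verify that the chosen segment-lifts assemble into an irreducible representation — the subquotient is generic by construction, and still satisfies $J_\ell(\tau_{i_1,\dots,i_r})=\pi$ by Lemma~\ref{Jl}; and (b) allowing $i_k$ to run over the full (at most two-element) set of lifts from Proposition~\ref{gcd-segments} for each $k$ separately means one never needs a \emph{single} coherent assignment of lift to segment that simultaneously kills every potentially nontrivial $\ell$-adic Euler factor. Your argument needs both, and both are nontrivial: for (a), distinct mod-$\ell$ cuspidal lines on a non-banal family (all singletons, differing by unramified twist) can lift to segments on the \emph{same} $\ell$-adic cuspidal line depending on which unramified lift you pick, so linking can appear after lifting; for (b), a single non-banal segment of $\pi$ can ``interact'' with several segments of $\pi'$ (and vice versa), and when $\pi$ and $\pi'$ both carry segments on unramified-twist-related non-banal lines that are moreover self-dual up to twist, the choices of lift-orbits are genuinely constrained in a way your sketch does not track. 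Neither obstruction is addressed by the observation that two non-banal generic segments on a \emph{common} line are linked — that only bounds the count per line, not the interference across lines.

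To be clear: I do not claim that a single eligible pair with $r_\ell(L(X,\tau_0,\tau_0'))=L(X,\pi,\pi')$ never exists — a more careful bookkeeping (choosing all lifts on the $\rho$-family in one orbit, all on the $\rho^\vee$-family in a non-contragredient orbit, choosing unramified twists of cuspidal lifts to force the $\ell$-adic segments onto distinct lines, and handling the self-dual-orbit case by putting the $\pi$-side and $\pi'$-side into the same orbit when the orbit is \emph{not} self-dual, etc.) would likely close the gap. But as written the argument does not establish it, whereas the paper's multi-index $\GCD$ argument proves the required divisibility with none of this delicacy. If you want to keep your stronger ``one eligible pair suffices'' claim, you should either follow the paper's structure and \emph{then} observe a posteriori that one of the multi-index pairs realises the $\GCD$ (which again requires a coherent choice), or else carry out the global lift-assignment and irreducibility arguments explicitly.
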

\begin{proof}
We use notations of Lemma \ref{Jl}. Let~$\tau$ and~$\tau'$ be two~$\ell$-adic generic representations of~$G_n$, and 
let~$\pi=J_\ell(\tau)$ and~$\pi'=J_\ell(\tau')$. We have a surjection~$p: \sigma \twoheadrightarrow \tau$ (resp.~$p':\sigma'\twoheadrightarrow \tau'~$), where~$\sigma=r_1\times \dots \times r_t$ 
(resp.~$r_1'\times \dots \times r_s'$) is a product of cuspidal representations, hence a representation of Whittaker type. By Lemma \ref{div-subquotient}, we have~$L(X,\tau,\tau')=L(X,\sigma,\sigma')$. We set~$\rho_i=r_\ell(r_i)$ and~$\rho_i'=r_\ell(r_i')$, so that Lemma 
\ref{Jl} tells us that the representation~$\pi$ (resp.~$\pi'$) is the unique generic subquotient of 
$\rho_1\times \dots \times \rho_t$ (resp.~$\rho_1' \times \dots \times \rho_s'$).
By Lemma \ref{div-subquotient} again, the factor~$L(X,\pi,\pi')$ divides 
\[L(X,\rho_1\times \dots \times \rho_t,\rho_1' \times \dots \times \rho_s'),\] which in turn divides 
$r_\ell(L(X,\sigma,\sigma'))=r_\ell(L(X,\tau,\tau'))$ according to Theorem \ref{compat1}. This proves that~$L(X,\pi,\pi')$ divides the greatest common divisor considered in the statement.

We now have to show that this greatest common divisor divides~$L(X,\pi,\pi')$. We will thus find a finite number of~$\ell$-adic generic representations 
$\tau_1,\dots,\tau_u$ and~$\tau_1',\dots,\tau_v'$, with~$\pi=J_\ell(\tau_i)$ for all~$i$ and~$\pi'=J_\ell(\tau_j')$ for all 
$j$, such that~$\GCD_{i,j}(r_\ell(L(X,\tau_i,\tau_j')))$ divides~$L(X,\pi,\pi')$. We already know that this is true when~$\pi$ and 
$\pi'$ are generic segments according to Proposition \ref{gcd-segments}. In general, we write 
\[\pi=\L(\D_{1})\times \dots \times \L(\D_r),\quad \pi'=\L(\D_1')\times \dots \times \L(\D_s').\]
According to Proposition \ref{gcd-segments}, for each~$k$ between~$1$ and~$r$ (resp. each~$l$ between~$1$ and~$s$) we can choose a finite number of generic representations~$\tau_{i_k}^k$ (resp.~$\tau_{j_l}^l$) such that~$J_\ell(\tau_{i_k}^k)=\L(\D_k)$ (resp.~$J_\ell({\tau'}_{j_l}^l)=\L(\D_l')$), and 
\[L(X,\L(\D_k),\L(\D_l'))=\GCD_{i_k,j_l}(r_\ell(L(X,\tau_{i_k}^k,{\tau'}_{j_l}^l))).\] 
We denote by~$\tau_{i_1,\dots,i_r}$ the unique generic subquotient of~$\tau_{i_1}^1\times \dots \times \tau_{i_r}^r$, and~${\tau'}_{j_1,\dots,j_s}$ the unique generic subquotient of~${\tau'}_{j_1}^1\times \dots \times {\tau'}_{j_s}^s$, so that~$J_\ell(\tau_{i_1,\dots,i_r})=\pi$, and~$J_\ell(\tau_{i_1,\dots,i_s}')=\pi'$. By Lemmas \ref{div-subquotient} and 
\ref{Lintermediate}, we know that~$L(X,\tau_{i_1,\dots,i_r},\tau_{j_1,\dots,j_s}')$ divides the factor 
$\prod_{k=1,l=1}^{r,s} L(X,\tau_{i_k}^k,{\tau'}_{i_l}^l).$ Hence 
\[\underset{(i_1,\dots,i_r),(j_1,\dots,j_s)}{\GCD}\left(L(X,\tau_{i_1,\dots,i_r},\tau_{j_1,\dots,j_s}')\right)\mid \underset{(i_1,\dots,i_r),(j_1,\dots,j_s)}{\GCD}\left(\prod_{k=1,l=1}^{r,s} r_\ell(L(X,\tau_{i_k}^k,{\tau'}_{i_l}^l))\right).\]
This latter GCD is equal to
\[\prod_{k=1,l=1}^{r,s} \GCD_{i_k,j_l}\left(r_\ell(L(X,\tau_{i_k}^k,{\tau'}_{i_l}^l))\right)=
\prod_{k=1,l=1}^{r,s} L(X,\L(\D_k),\L(\D_l'))=L(X,\pi,\pi'),\]
the last equality according to Corollary \ref{generic-inductivity}. This ends the proof.
\end{proof}                                                                   
\begin{rem}
Taking the unique generic subquotient in the reduction modulo~$\ell$ of an integral generic~$\ell$-adic representation is an instance of Vign\'eras'~$J_{\ell}$ map (cf. \cite[1.8.4]{Viginv}).  As this map is defined on all integral irreducible~$\ell$-adic representations of~$G_n$, and is surjective onto all irreducible~$\ell$-modular representations of~$G_n$ [ibid.], it is tempting to define the~$L$-factor~$L(X,\pi,\pi')$, of pairs of irreducible (not necessarily generic)~$\ell$-modular representations~$\pi,\pi'$, by a~$\GCD$ of the reductions modulo~$\ell$ of the~$\ell$-adic~$L$-factors~$L(X,\tau,\tau')$ where~$\tau,\tau'$ vary over all integral~$\ell$-adic representations such that~$J_{\ell}(\tau)=\pi$ and~$J_{\ell}(\tau')=\pi'$.  Interesting future questions revolve around determining whether this is a natural definition, perhaps, by finding an integral representation for this~$\ell$-modular~$L$-factor.
\end{rem}

\bibliographystyle{plain}
\bibliography{Modlfactors}

\end{document}